\documentclass[3p]{elsarticle}

\usepackage{amsfonts,amsmath,amsthm,amssymb,mathtools}
\usepackage{mathrsfs}
\usepackage{bm}
\usepackage{bbm}

\usepackage{mathbbol}
\usepackage{cases}
\usepackage{dsfont}

\usepackage{tikz}
\usetikzlibrary{automata,arrows,positioning,calc}
\DeclareGraphicsExtensions{.png,.jpeg,.jpg,.pdf}
\graphicspath{{./Figures/}}
\usepackage{enumitem}
\usepackage{multicol}
\usepackage{booktabs}

\usepackage{xcolor}
\usepackage{graphicx}
\usepackage{hyperref}
\usepackage[nameinlink,capitalize]{cleveref}

\usepackage{todonotes}
\usepackage{amsopn}
\usepackage{comment}

\allowdisplaybreaks[4]

\newtheorem{theorem}{Theorem}[section]
\newtheorem{definition}[theorem]{Definition}

\newtheorem{corollary}[theorem]{Corollary}
\newtheorem{lemma}[theorem]{Lemma}
\newtheorem{assumption}[theorem]{Assumption}

\newtheorem{remark}[theorem]{Remark}

\crefname{assumption}{Asm.}{Assumptions}
\crefname{definition}{Def.}{Definitions}

\DeclarePairedDelimiter{\set}{\{}{\}}
\DeclarePairedDelimiter{\abs}{\lvert}{\rvert}
\DeclarePairedDelimiter{\norm}{\lVert}{\rVert}

\DeclareMathOperator{\diverg}{Div}
\DeclareMathOperator{\sign}{sign}
\DeclareMathOperator{\supp}{Supp}

\DeclareMathOperator{\loc}{loc}
\DeclareMathOperator{\dist}{dist}
\DeclareMathOperator*{\esssup}{ess-sup}

\newcommand{\R}{\mathbb{R}}

\newcommand{\NN}{\mathbb{N}}
\newcommand{\N}{\mathbb{N}}

\newcommand{\bx}{\bm{x}}
\newcommand{\bv}{\bm{v}}

\newcommand{\by}{\bm{y}}
\newcommand{\bgamma}{\boldsymbol{\gamma}}
\newcommand{\bq}{\bm{q}}

\newcommand{\bbF}{\bm{F}}

\newcommand{\bJ}{\bm{J}}
\newcommand{\bV}{\bm{V}}
\newcommand{\bPhi}{\boldsymbol{\Phi}}

\newcommand{\bPsi}{\boldsymbol{\Psi}}
\newcommand{\bz}{\boldsymbol{z}}
\newcommand{\bF}{\boldsymbol{F}}

\newcommand{\mBV}{\mathcal{BV}}
\newcommand{\mTV}{\mathcal{TV}}
\newcommand{\mL}{\mathsf{L}}
\newcommand{\mC}{\mathcal{C}}
\newcommand{\mK}{\mathcal{K}}
\newcommand{\mX}{\mathcal{X}}
\newcommand{\mY}{\mathcal{Y}}
\newcommand{\mI}{\mathcal{I}}

\newcommand{\mZ}{\mathcal{Z}}
\newcommand{\mU}{\mathcal{U}}
\newcommand{\mW}{\mathsf{W}}
\newcommand{\mQ}{\mathsf{Q}}
\newcommand{\mF}{\mathsf{F}}
\newcommand{\mbW}{\boldsymbol{\mathsf{W}}}

\newcommand{\Def}{\coloneqq}
\newcommand{\e}{\mathrm{e}}

\newcommand{\dd}{\;\mathrm{d}}
\newcommand{\D}{\partial}

\newcommand{\tup}[1]{\textup{(}#1\textup{)}}

\newcommand{\nnorm}[1]{{\left\vert\kern-0.25ex\left\vert\kern-0.25ex\left\vert #1 \right\vert\kern-0.25ex\right\vert\kern-0.25ex\right\vert}}

\begin{document}
\begin{frontmatter}
\title{Existence and uniqueness of nonlocal nonlinear conservation laws via fixed-point methods}

\author[inst1]{Xiaoqian Gong$^{1,}$}
\author[inst2]{Alexander Keimer$^{1,}$}
\author[inst3]{Lorenzo Liverani$^{1,}$}
\author[inst4]{Hossein Nick Zinat Matin$^{1,}$}

\affiliation[inst1]{organization={Amherst College, Department of Mathematics},
            city={Amherst, MA 01002},
            country={USA}}

\affiliation[inst2]{organization={Institute of Mathematics, University of Rostock},
            city={Rostock, 18051},
            country={Germany}}

\affiliation[inst3]{organization={FAU Erlangen-Nürnberg, Department of Mathematics, 91058},
            city={Erlangen},
            country={Germany}}

\affiliation[inst4]{organization={Laboratory of Computer Science (LIX), Ecole Polytechnique},
            city={Inria-Saclay, Alan Turing Building, Palaiseau, 91120},
            country={France}}

\fntext[fn1]{Authors are listed in alphabetic order.}


\begin{abstract}
We investigate the well-posedness of scalar conservation laws whose flux depends on the solution both pointwise and nonlocally through integral averages. Our analysis is based on a fixed-point formulation, in which the nonlocal dependence is incorporated as a space- and time-dependent component of the flux, together with classical stability estimates for entropy solutions. This framework unifies and extends several models previously considered in the literature and applies, in particular, to conservation laws with memory effects (nonlocality in time) or delay. We prove the existence and uniqueness of weak entropy solutions on a sufficiently short time horizon and show that under additional assumptions, existence and uniqueness can be obtained on any finite time horizon. In addition, we present numerical simulations to illustrate the qualitative effects of memory on the solution dynamics.

\end{abstract}
\begin{keyword}
    nonlocal conservation laws \sep entropy solutions \sep nonlocal conservation laws with memory \sep nonlocal conservation laws with delay
    \MSC[2020]{35L65, 35L03}
\end{keyword}
\end{frontmatter}
\vspace{-0.4cm} 

\section{Introduction}
\label{sec:related-work}

The mathematical theory of conservation laws with nonlocal interactions has developed rapidly over the past two decades. The distinguishing feature of these models is that the (possibly space- and time-dependent) flux depends both on the pointwise value of the solution and on averaged or delayed information about it, a situation that arises naturally in vehicular traffic flow, crowd dynamics, sedimentation processes, and material transport systems.

The classical formulation of a nonlocal conservation law reads
\begin{equation*}
    \partial_t q =-\diverg\Big(\bF\big(t, \bx,\mW[J(q),\gamma](t,\bx), q\big)\Big), \quad (t, \bx) \in (0, T) \times \R^n,
\end{equation*}
where $\diverg$ represents the total  divergence of the flux function $\bF$ with respect to the spatial variable $\bx$. The nonlocality is modeled by the term $\mW[J(q),\gamma](t,\bx)$, which represents a convolution-type integral of a nonlinear transformation $J(q)$ of the solution against a kernel $\gamma$ that may depend on space, time, or both. Generally, one is interested in studying the Cauchy problem associated with the conservation law, given a suitable initial datum $q_0$.

\subsection{Local conservation laws}
The theory of entropy solutions for scalar conservation laws was established in the seminal work of Kru\v{z}kov \citep{kruvzkov1970first}, which provides existence, uniqueness, and stability proofs for multidimensional scalar conservation laws with sufficiently regular fluxes. Comprehensive accounts of the theory and its analytical techniques can be found in the monographs of Bressan \citep{Bressan2000} and Holden and Risebro \citep{holden2015front}.

Among the many applications of scalar conservation laws, traffic models have been a particularly active area. The Lighthill--Whitham--Richards (LWR) model \citep{lighthill1955kinematic,lighthill1955kinematic2,richards1956shock} is the best known of these; it describes traffic flow using a scalar conservation law for vehicle density. The LWR model has since inspired numerous extensions incorporating additional physical effects, including higher-order variants such as the Aw--Rascle--Zhang model \citep{aw2000resurrection,zhang2000structural,zhang2002non} and models accounting for diffusion or relaxation mechanisms \citep{burger2003diffusively,nelson2002traveling}; broader reviews of macroscopic traffic models can be found in \citep{bellomo2002mathematical,bellomo2011modeling,piccoli2009vehicular,rosini2013macroscopic}.

\subsection{Nonlocal conservation laws} 
Nonlocal conservation laws ``extend'' the classical framework by allowing the flux to depend on spatial averages or convolution operators applied to the solution, capturing situations in which the dynamics are governed by long-distance interactions rather than purely local information. Examples include traffic flow models in which drivers respond to downstream density averages. One of the early analyses in this direction appeared in \citep{betancourt2011nonlocal}, which studied sedimentation models involving nonlocal fluxes. In the context of road traffic, Blandin and Goatin \citep{blandin2016well} analyzed well-posedness for a conservation law with nonlocal velocity and established the existence and uniqueness of entropy solutions; similar models with anisotropic kernels and traffic interactions were subsequently studied in \citep{chiarello2018global,goatin2016well,keimer2017existence}.

A general framework for nonlocal systems with several spatial dimensions was developed by Aggarwal, Colombo, and Goatin \citep{aggarwal2015nonlocal}, whose analysis uses finite-volume approximations to construct solutions and prove convergence to entropy solutions. Related existence and numerical results for nonlocal scalar conservation laws were obtained by Amorim, Colombo, and Teixeira \citep{amorim2015numerical}. Chiarello, Goatin, and Rossi \citep{chiarello2019stability} derived stability estimates and analyzed the continuous dependence of solutions with respect to the interaction kernel; existence results for models with discontinuous flux functions have also been obtained \citep{chiarello2023existence}.

Traffic models in which vehicles respond to both upstream and downstream conditions are also well studied. The qualitative behavior of such models, including shock formation and wave propagation, has been investigated in \citep{hu2021shock}, and related models incorporating nonlocal interactions and delay effects have been considered in \citep{ciaramaglia2024nonlocaltrafficflowmodels} by Ciaramaglia, Goatin, and Puppo, in \citep{contreras2025wellposedness} by Contreras, Goatin and Villada, in \cite{Keimer2019} by Keimer and Pflug, and in \cite{kloeden2016nonlocal} by Kloeden and Lorenz.

Beyond traffic flow, nonlocal conservation laws appear in crowd dynamics and pedestrian motion, for which Colombo, Garavello, and Lécureux-Mercier \citep{colombo2012class} introduced a class of nonlocal models and analyzed their well-posedness; multidimensional extensions were further investigated in \citep{bruno2011non}. Applications also arise in industrial processes, including material transport on conveyor belts \citep{goatin2025nonlocalmodelheterogeneousmaterial,Rossi2020} and clustered dynamics of interacting particle systems \citep{colombo2025non}. Borsche, Colombo, and Garavello \cite{borsche2015differential} introduced a general multidimensional conservation law model in which the nonlinear flux depends in an operator sense on the solutions. This includes, in particular, nonlocal (in space) conservation law models whose nonlocal kernel is sufficiently smooth.

\subsection{Existence and stability under weak regularity}
A central theme in the recent literature is the weakening of regularity assumptions on the interaction kernels or the flux functions, as many applications involve kernels that are only bounded or of bounded variation. Significant progress in this direction has been achieved by Coclite et al., who established the existence and uniqueness of weak solutions for scalar nonlocal conservation laws with kernels of bounded variation \citep{coclite2022existence} and Colombo, Crippa and Spinolo, who extended this to the multi-d case in \cite{colombo2026multi}.

On the stability side, Lécureux-Mercier \citep{Mercier2011improved} derived refined estimates that quantify the dependence of entropy solutions on perturbations of the flux, an idea that plays a central role in the present work, in which the nonlocal operator is treated as a space- and time-dependent coefficient in the flux and well-posedness is established within a fixed-point framework. Recent work has also considered conservation laws with rough or discontinuous fluxes. Aggarwal and Vaidya \citep{aggarwal2025convergence} analyzed the well-posedness and convergence of numerical approximations for nonlocal conservation laws with rough fluxes. Error estimates for related multilane traffic models were further developed in \citep{aggarwal2025error}. 

\subsection{Particle approximations and numerical methods} Another important line of research investigates nonlocal conservation laws using particle or Lagrangian approximations. Deterministic particle schemes approximating nonlocal transport equations were analyzed by Di Francesco, Fagioli, and Radici \citep{DiFrancesco2019deterministic}, who provided constructive approximations of entropy solutions that are particularly useful for singular kernels; Radici and Stra \citep{radici2023entropy} extended these techniques to mildly singular kernels and obtained results on convergence to entropy solutions. From a numerical perspective, Friedrich, Kolb, and Göttlich \citep{friedrich2018godunov} studied Godunov-type schemes adapted to nonlocal LWR traffic models, and high-order methods were proposed by Chalons, Goatin, and Villada \citep{chalons2018high}.

More broadly, numerical methods for conservation laws employ well-established techniques such as finite-volume methods \citep{leveque2002finite}, discontinuous Galerkin methods \citep{cockburn2012discontinuous,cockburn2001runge}, and high-order ENO/WENO schemes \citep{harten1997uniformly,shu1988efficient,shu1989efficient}. These approaches have been adapted to handle the additional complexity introduced by nonlocal operators.

\section{The contribution of this work}

In this work, we study a general class of scalar conservation laws whose flux depends on the solution both locally and nonlocally through integral operators in space and time. The goal of the paper is twofold.

The first objective is to derive generalized stability estimates for local conservation laws with space- and time-dependent fluxes. Specifically, we consider models of the form
\begin{equation}
\begin{aligned}    
\partial_t \rho + \diverg \big(\bPhi(t, \bx, \rho) \big) &= 0, &&(t, \bx) \in (0, T) \times \R^n,\\
\rho(0, \cdot) &\equiv \rho_{\circ},&& \text{ on }\bx\in\R^n.
\end{aligned}
    \label{E:model}
\end{equation}
The stability of \eqref{E:model} with respect to its components, such as the flux, is a classical topic in the analysis of conservation laws, with the first estimates dating back to the seminal work of Kru\v{z}kov \cite{kruvzkov1970first}. In this paper, we adapt and generalize the results of Lécureux-Mercier \cite{Mercier2011improved} to our setting via a mollification procedure, which allows us to use the estimates of \cite{Mercier2011improved} and recover their analogues for \eqref{E:model} in the relaxation limit, while requiring less regularity on the flux function. This constitutes our first main result, \cref{theo:existence_uniqueness_less_regular_flux}.

The second goal is to establish well-posedness for a general class of nonlinear nonlocal conservation laws, covering nonlocality both in space and in time. Depending on the structure of the nonlocal interaction, we distinguish two models:
\begin{description}
    \item[Nonlocal in space:] 
    \begin{equation}
    \begin{aligned}    
        \partial_t q &=-\diverg\Big(\bF\big(t, \bx,\mW[J(q),\gamma](t,\bx), q\big)\Big), &&(t, \bx) \in (0, T) \times \R^n,\\
        \mW[J(q),\gamma](t,\bx)&=\int_{\R^n}\gamma(\bx-\by)J(q(t,\by))\dd\by,&& (t,\bx)\in (0,T)\times\R^n,\\
        q(0, \cdot) &\equiv q_{\circ},&& \bx\in\R^n.
    \end{aligned}
    \label{E:model_nonlocal}
    \end{equation}
\item[Nonlocal in space, memory in time:]
    \begin{equation}
    \begin{aligned}    
        \partial_t q &=-\diverg\Big(\bF\big(t, \bx,\mW[J(q),\kappa](t,\bx), q\big)\Big), &&(t, \bx) \in (0, T) \times \R^n,\\
        \mW[J(q),\kappa](t,\bx)&=\int_{\R_{<0}}\int_{\R^n}\kappa(t-s,\bx-\by)J(q(s,\by))\dd\by\dd s,&& (t,\bx)\in (0,T)\times\R^n,\\
        q(t, \cdot) &= q_{\circ}(t,\bx),&& (t,\bx)\in\R_{\leq0}\times\R^n.
    \end{aligned}
    \label{E:model_nonlocal_memory}
    \end{equation}
\end{description}
Here, $\bF:(0,T)\times\R^n\times\R\times\R\rightarrow\R^n$ is the flux; $\gamma:\R^n\rightarrow\R_{\geq0}$ and $\kappa:(0,T)\times\R^n\rightarrow\R_{\geq0}$ are the spatial and space--time memory kernels, respectively; $J:\R\rightarrow\R$ is the nonlinear nonlocal transformation; $q_{\circ}:\R^n\rightarrow\R$ is the initial datum; and $q_{\circ}:\R_{<0}\times\R^n\rightarrow\R$ is the historical datum. We refer to $\mW$ as the nonlocal operator.

The well-posedness analysis presented for both models is based on a fixed-point formulation, in which the nonlocal dependence is incorporated into a space- and time-dependent flux to reduce the nonlocal problem to a family of local ones. Using the stability estimates developed in the first part of the paper, together with classical compactness arguments, we establish the existence and uniqueness of entropy solutions under Lipschitz regularity assumptions on the flux. The results are stated in \cref{theo:existence_uniqueness_nonlocal_small_time_horizon} for the spatially nonlocal model \eqref{E:model_nonlocal} and in \cref{theo:existence_uniqueness_nonlocal_memory_small_time_horizon} for the model with memory \eqref{E:model_nonlocal_memory}.

Notably, the suggested fixed-point approach when dealing with nonlocal conservation laws has only been considered for two different model classes: When the flux is affine linear in the local quantity (see for instance \cite{Coron2010analysis,keimer2017existence,crippa2012existence}) where the method of characteristics and a specifically tailored fixed-point problem guarantees the existence and uniqueness of weak solutions, or when the kernel is smooth \cite{Mercier2011improved,colombo2012class,colombo2011control}.

The proposed framework is sufficiently general that it can be applied directly to several nonlocal models studied in the literature. These include \cite{amorim2015numerical,betancourt2011nonlocal,chiarello2019stability,ciaramaglia2024nonlocaltrafficflowmodels,DiFrancesco2019deterministic,hu2021shock,radici2023entropy} for the scalar case and \cite{aggarwal2015nonlocal,beckers2026monotone,colombo2025non,colombo2012class,colombo2011control,courcel2025well-posedness,goatin2025nonlocalmodelheterogeneousmaterial,Rossi2020} for related multidimensional models. In some cases, our results improve upon those in the existing literature by requiring less regularity on the input data or on the flux function, at least on a sufficiently short time horizon.

\medskip
\noindent
\textbf{Structure of the paper.}
The rest of the paper is organized as follows. In \cref{sec:notation}, we introduce the functional setting and notation used throughout. In \cref{sec:stability}, we establish existence, uniqueness, and stability estimates for the local conservation law \eqref{E:model} under minimal regularity assumptions on the flux, culminating in \cref{theo:existence_uniqueness_less_regular_flux} and the more specialized \cref{cor:specific_tailored}. \cref{sec:nonlocal_conservation_laws} contains the core well-posedness analysis: for each of the two nonlocal models \eqref{E:model_nonlocal} and \eqref{E:model_nonlocal_memory}, we construct a fixed-point mapping, show that it is a self-mapping and a contraction on a suitable admissible set, and prove the existence and uniqueness of entropy solutions. Extensions to systems and delay equations are also discussed. \cref{sec:numerics} presents numerical simulations illustrating the qualitative effects of memory on the solution dynamics, using a Lax--Friedrichs scheme within the fixed-point iteration. Finally, \cref{s:future} collects some open problems and future perspectives.

\section{Functional setting and notation}
\label{sec:notation}
\subsection{Functional spaces}
We will frequently use the classical Lebesgue and Sobolev spaces. Let $\Omega \subset \R^n$ 
be a domain of sufficient regularity. We denote by $\mL^p(\Omega)$ the Lebesgue spaces 
and by $\mathcal{W}^{k,p}(\Omega)$ the Sobolev spaces of functions in $\mL^p(\Omega)$ whose weak 
derivatives up to order $k$ belong to $\mL^p(\Omega)$, with their standard norms 
$\|\cdot\|_{\mL^p(\Omega)}$ and $\|\cdot\|_{\mathcal{W}^{k,p}(\Omega)}$. The local versions are 
denoted with the subscript ${\loc}$. More generally, given a Banach space $X$, we denote 
by $\mL^p(\Omega; X)$ the Bochner space of strongly measurable functions $f:\Omega\to X$ 
with finite norm
\[
\|f\|_{\mL^p(\Omega;X)} \coloneqq 
\begin{cases} 
\displaystyle\left(\int_\Omega \|f(x)\|_X^p \,\mathrm{d}x\right)^{1/p}, & p \in [1,\infty), \\[6pt] 
\displaystyle\esssup_{x\in\Omega}\|f(x)\|_X, & p = \infty. 
\end{cases}
\]
The choices $X = \mL^q(\Omega')$, $X = \R^k$, and $X = \R^{k\times l}$ recover, 
respectively, the mixed-norm spaces $\mL^p(\Omega;\mL^q(\Omega'))$ and the standard 
vector- and matrix-valued Lebesgue spaces. We also recall the following definition.

\begin{definition}[Total variation seminorm]
Let $\Omega \subset \R^n$ be open. The total variation seminorm of a function
$f \in \mL^1(\Omega)$ is defined as
\[
|f|_{\mTV(\Omega)}
\coloneqq
\sup_{\substack{\boldsymbol{\phi}\in \mC^{1}_{\mathrm{c}}(\Omega;\R^n)\\
\|\boldsymbol{\phi}\|_{L^{\infty}(\Omega)} \le 1}}
\int_{\Omega}\diverg(\boldsymbol{\phi}(\mathbf{x}))\,f(\mathbf{x})\,\mathrm{d}\mathbf{x}.
\]
\end{definition}

In light of this definition, we introduce the following classical functional spaces.

\begin{definition}[Space of bounded variation]
Let $\Omega\subset\R^n$ be an open set. We define the seminormed space
\[
\mTV(\Omega)
\coloneqq
\left\{ f \in \mL^1(\Omega) : |f|_{\mTV(\Omega)} < \infty \right\}.
\]
Furthermore, we define the Banach space
\[
\mBV(\Omega)
\coloneqq
\left\{ f \in \mL^1(\Omega) :
\|f\|_{\mBV(\Omega)}
=
\|f\|_{\mL^{1}(\Omega)} + |f|_{\mTV(\Omega)} < \infty
\right\}.
\]
\end{definition}

\subsection{Notation}

We introduce for \(n\in\N_{\geq 1}\) the cylindrical domain
\[
\Omega_T \Def (0,T)\times\R^n.
\]
We also represent by $W_n$ the value of the Wallis integral
\[
W_n \coloneqq \int_0^{\frac{\pi}{2}} \cos(\theta)^n d\theta.
\]
We recall that this can also be computed as 
\[
W_n = \tfrac{\omega_n}{2\omega_{n-1}},
\]
where $\omega_n$ is the volume of the unit ball $B(0,1)$ in $\R^n$, which is set to $1$ for consistency when $n=0$. 
Finally, for \(a\in\R\) we use the notation $\R_{\geq a}$ and $R_{\leq a}$ to denote the unbounded intervals $[a,+\infty)$ and $(-\infty,a]$, respectively.

For a function $(t, \bx, u) \mapsto \bPhi(t, \bx, u)$, $\nabla \bPhi$ and $\diverg \bPhi$ denote the gradient and local divergence with respect to the spatial variable---that is, $\bx$---and $\diverg(\bPhi) = \diverg \bPhi + \partial_u \bPhi \circ \nabla u$ represents the total divergence of the function $\bPhi$.

\section{Stability estimates for the local conservation law}
\label{sec:stability}

This section analyzes and generalizes stability estimates for the local model \eqref{E:model}.
We work within the following minimal assumptions on the flux $\Phi$ and initial datum $\rho_0$.

\begin{assumption}[assumptions on flux and initial datum]
\label{Ass:General}
Let $T\in\R_{>0}$ be fixed. We make the following assumptions.
For every open and bounded $ \mU \subset \R$, the flux satisfies the following:
      \begin{align}
    \bPhi &\in \mL^{\infty}\big((0, T); \mathcal{W}^{1,\infty}(\R^n\times \mU;\R^n)\big) \label{E:Lipschitz-data},\\
    \nabla\bPhi&\in\mL^{\infty}(\Omega_T;\mathcal{W}^{1, \infty}(\mU))\label{E:D2-Phi-Linf-Lipschitz},\\
    \diverg\bPhi&\in\mL^{1}(\Omega_T;\mL^{\infty}(\mU))\label{E:div_Phi_L1_uniform},\\
\nabla\diverg\bPhi&\in\mL^{1}(\Omega_T;\mL^{\infty}(\mU)).\label{E:integrable-TV-norm}
     \end{align}
     The initial datum satisfies $\rho_{\circ}\in \mL^{\infty}(\R^n)\cap \mTV(\R^n)$.
\end{assumption}

We now define the notion of an entropy solution for the conservation law \eqref{E:model}, following Kru\v{z}kov's framework \cite{kruvzkov1970first}. In this work, we will consider solutions with slightly stronger time regularity than in the classical theory, namely,
\[
\rho \in \mC([0,T]; \mL^1_{\mathrm{loc}}(\R^n)) \cap \mL^{\infty}(\Omega_T),
\]
rather than merely $\mL^{\infty}(\Omega_T)$.

\begin{definition}[entropy solution]
\label{defi:entropy_solution_local}
Let \cref{E:model} be given and let $\rho_{\circ}\in \mL^{\infty}(\R^n)$. We say that  
\[\rho\in \mC\big([0,T];\mL^{1}_{\textnormal{loc}}(\R^n)\big)\cap \mL^{\infty}(\Omega_T)
\] is a Kru\v{z}kov entropy solution on the time horizon $[0,T]$ with $T\in\R_{>0}$ if, for every test function $\phi\in \mC^{1}_{\textnormal{c}}((0,T)\times\R^n;\R_{\geq0})$ and every $k\in\R$, it holds that
\begin{gather*}
    \int_{0}^{T}\int_{\R^n}|\rho(t,\bx)-k|\partial_{t}\phi(t,\bx)+\sign(\rho(t,\bx)-k)\big(\bPhi(t,\bx,\rho(t,\bx))-\bPhi(t,\bx,k)\big)\circ \nabla\phi(t,\bx)\dd\bx\dd t\\
    -\int_{0}^{T}\int_{\R^n}\sign(\rho(t,\bx)-k)\diverg \bPhi(t,\bx,k)\phi(t,\bx)\dd \bx\dd t\geq 0,
\end{gather*}
where $\circ $ is the scalar product between two vectors in $\R^n$. In addition, it holds that
\[
\rho(0,\cdot)\equiv\rho_{\circ}\ \text{ in } \mL^{1}_{\textnormal{loc}}(\R^n).
\]
\end{definition}

Our goal is to show that the local conservation law \eqref{E:model} admits an entropy solution, in the sense of the above definition, for short time horizons given the weaker assumptions on the flux in \cref{Ass:General}. Along with the well-posedness, we will devise suitable a priori bounds on the solution, as well as stability estimates with respect to time and the initial datum. 
Because of the lack of general existence and uniqueness results for conservation laws under \cref{Ass:General}, we will show well-posedness by relaxing the problem and then passing to the limit. The relaxation is achieved by smoothing the flux function so that classical estimates can be applied. 
\begin{definition}[mollified flux]\label{defi:bPhi_delta}
    Let $\Theta \in \mC_c^\infty(\R; \R)$, with $\supp \Theta \subset (-1, 1)$. For $\bm{z}\in\R^{n+2}$ and $\delta\in\R_{>0}$, we define the mollifier $\Theta_\delta(\bm z) \coloneqq \Theta \Big(\norm{\bm z}/\delta\Big) / \delta^{n+2}$. Furthermore, we introduce the extended flux
\begin{equation}\label{E:Phi_extension}
    \tilde \bPhi(t, \bx, \rho) \Def \begin{cases}
        \bPhi(t, \bx, \rho), & (t, \bm x, \rho) \in [0, T] \times \R^n \times \mathcal \R, \\
        0, & (t,\bx, \rho)\in (\R \setminus [0, T]) \times \R^n \times \R.
    \end{cases}
\end{equation}
The mollified flux is thus defined as
\begin{equation}\label{E:lambda_moll}
  \bPhi_\delta(t, \bm x, \rho) \Def (\tilde \bPhi * \Theta_\delta)(t, \bm x, \rho),
\end{equation}
for $ (t, \bx ,\rho) \in \R \times \R^n  \times \R $, with support
\begin{equation*} 
\supp(\bPhi_\delta) \subset \Omega_T^\delta \times \R \Def (- \delta, T + \delta) \times \R^n \times \R. 
\end{equation*}
\end{definition}


\begin{lemma}[properties of the mollified flux]\label{lem:properties_mollified_flux}
Let \cref{Ass:General} hold. Then, for every open bounded $\mU\subset\R$, the  mollified $\bPhi_{\delta}$ defined at \cref{defi:bPhi_delta} satisfies 
\begin{align} 
    \bPhi_\delta& \in \mC^{\infty}(\Omega_T \times \mU), \label{E:regularity_Phi}\\
    \partial_3 \bPhi_\delta&\in \mL^\infty(\Omega_T \times\mU),\label{E:regularity_partial_u_Phi}\\
   \nabla\bPhi_\delta &\in \mL^\infty(\Omega_T \times\mU), \label{E:D-Phi-delta}\\
    \partial_3 \nabla\bPhi_\delta&\in \mL^\infty(\Omega_T \times \mU), \label{E:du-grad-Phi}\\
    \diverg\bPhi_{\delta}&\in \mL^{1}(\Omega_T;\mL^{\infty}(\mathcal{\mU})), \label{E:div-Phi-delta}\\
    \nabla \diverg \bPhi_{\delta}&\in \mL^{1}(\Omega_T;\mL^{\infty}(\mathcal{\mU})). \label{E:grad-div-Phi}
\end{align}
Moreover, \crefrange{E:regularity_partial_u_Phi}{E:grad-div-Phi} are uniform in $\delta$. Specifically, let 
\begin{equation} \label{E:U-delta}
    \mU^\delta \Def \mU + \set{u \in \R\setminus \mU : \dist(u, \partial \mU) \le \delta}
    \end{equation} 
    and $\tilde {\mU} \coloneqq \mU^1$. Then, 
for all $\mU\subset\R$, it holds that
\begin{align}
        \sup_{\delta\in\R_{>0}}\|\partial_3 \bPhi_\delta\|_{\mL^\infty(\Omega_T \times \mU)}&\leq \|\partial_3 \bPhi\|_{\mL^\infty(\Omega_T \times \tilde{\mU})}, \label{E:du-phi-uniform}\\
    \sup_{\delta\in\R_{>0}}\|\nabla \bPhi_\delta\|_{\mL^\infty(\Omega_T \times\mU)} &\leq \|\nabla\bPhi\|_{\mL^\infty(\Omega_T \times\tilde {\mU})}, \label{E:grad-phi-delta-uniform}\\
    \sup_{\delta\in\R_{>0}}\|\partial_3 \nabla\bPhi_\delta\|_{\mL^\infty(\Omega_T \times \mU)}&\leq \|\partial_3 \nabla\bPhi\|_{\mL^\infty(\Omega_T \times \tilde{\mU})}, \label{E:du-grad-phi-delta-uniform}\\
    \sup_{\delta\in\R_{>0}}\|\diverg\bPhi_{\delta}\|_{\mL^{1}(\Omega_T;L^{\infty}(\mU))}&\leq \|\diverg\bPhi\|_{\mL^{1}(\Omega_T;L^{\infty}(\tilde{\mU}))}, \label{E:Div-PhiDelta-uniform}\\
    \sup_{\delta\in\R_{>0}}\|\nabla\diverg \bPhi_{\delta}\|_{\mL^{1}(\Omega_T;L^{\infty}(\mU))} &\leq \|\nabla \diverg \bPhi\|_{\mL^{1}(\Omega_T;L^{\infty}(\tilde{\mU}))}. \label{E:grad-Div-PhiDelta-uniform}
\end{align}
Finally, we have that 
\begin{equation}
\lim_{\delta\rightarrow0} \|\bPhi_{\delta}-\bPhi\|_{\mL^{1}(\Omega_T;\mL^{\infty}(\mU))}=0,\label{eq:uniform_convergence_bphi}
\end{equation}
as well as
\begin{equation}
\lim_{\delta\rightarrow0} \|\diverg \bPhi_{\delta}-\diverg \bPhi\|_{\mL^{1}(\Omega_T;\mL^{\infty}(\mU))}=0.
\label{eq:uniform_convergence_divergenz_bphi}
\end{equation}
\end{lemma}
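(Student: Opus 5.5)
The plan is to use only the standard facts about convolution with a nonnegative, normalized mollifier. I assume, as is implicit in the statement, that $\Theta\ge 0$ and $\int_{\R^{n+2}}\Theta_\delta=1$, and I restrict to $\delta\in(0,1]$. This restriction makes $\tilde{\mU}=\mU^1$ the natural enlargement, since $\rho+\mathrm{supp}$ of the $\rho$-shift lies in $\mU^\delta\subset\tilde{\mU}$ whenever $\rho\in\mU$.

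\textbf{Regularity \eqref{E:regularity_Phi}.} This follows because $\tilde\bPhi$ is locally bounded by \eqref{E:Lipschitz-data}, and convolution of an $\mL^1_{\loc}$ function with a $\mC^\infty_c$ kernel is $\mC^\infty$.

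\textbf{Uniform bounds \eqref{E:du-phi-uniform}--\eqref{E:du-grad-phi-delta-uniform}.} Derivatives commute with convolution in the distributional sense, for instance $\partial_3\bPhi_\delta=(\partial_3\tilde\bPhi)*\Theta_\delta$. For $\partial_3$ and $\nabla$ this holds because the extension \eqref{E:Phi_extension} only creates jumps in $t$, not in $\bx$ or $\rho$. Young's inequality with $\|\Theta_\delta\|_{\mL^1}=1$ then bounds the sup over $(t,\bx,\rho)\in\Omega_T\times\mU$ by the ess-sup of the integrand over the $\delta$-neighbourhood. Since $\tilde\bPhi\equiv0$ outside $[0,T]$, only times in $(0,T)$ contribute, and the $\rho$-range is contained in $\tilde{\mU}$. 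Together with \eqref{E:Lipschitz-data} and \eqref{E:D2-Phi-Linf-Lipschitz}, this gives \eqref{E:regularity_partial_u_Phi}--\eqref{E:du-grad-Phi} and the uniform bounds.

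\textbf{Mixed bounds \eqref{E:Div-PhiDelta-uniform}--\eqref{E:grad-Div-PhiDelta-uniform}.} For the $\mL^1(\Omega_T;\mL^\infty(\mU))$ quantities, write $g(t,\bx)\coloneqq\|\diverg\tilde\bPhi(t,\bx,\cdot)\|_{\mL^\infty(\tilde{\mU})}$. Pointwise we have
\[
\sup_{\rho\in\mU}|\diverg\bPhi_\delta(t,\bx,\rho)|\le\int\Theta_\delta(s,\by,v)\,g(t-s,\bx-\by)\dd s\dd\by\dd v=(g*\bar\Theta_\delta)(t,\bx),
\]
where $\bar\Theta_\delta$ is the marginal of $\Theta_\delta$ in $(t,\bx)$, which again has unit mass. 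Integrating over $\Omega_T$ and applying Fubini and Young, with $g$ vanishing for $t\notin(0,T)$, yields \eqref{E:Div-PhiDelta-uniform}. The bound \eqref{E:grad-Div-PhiDelta-uniform} follows in the same way using \eqref{E:integrable-TV-norm}.

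\textbf{Convergence \eqref{eq:uniform_convergence_bphi}--\eqref{eq:uniform_convergence_divergenz_bphi}.} This is the main obstacle, because convergence must hold in $\mL^1$ in $(t,\bx)$ but uniformly in $\rho$. For the $\rho$-direction I would split
\[
\bPhi_\delta-\bPhi=\big(\bPhi_\delta-\bPhi*_{t,\bx}\bar\Theta_\delta\big)+\big(\bPhi*_{t,\bx}\bar\Theta_\delta-\bPhi\big).
\]
The first term is bounded by $\delta\,\|\partial_3\bPhi\|_{\mL^\infty}$ using Lipschitz continuity in $\rho$. Because of the $\mL^1$ in $\bx$ over $\R^n$, I would instead use integrability: \eqref{E:div_Phi_L1_uniform} gives the analogous bound with an $\mL^1$ function. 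For $\bPhi$ itself I will interpret the first limit with the flux, possibly after subtracting $\bPhi(t,\bx,0)$, or locally in $\bx$ if integrability fails.

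For the second term I would use continuity of translations in $\mL^1(\Omega_T;X)$ with $X=\mC(\overline{\mU})$, which is separable. Lipschitz continuity in $\rho$ makes $\bPhi(t,\bx,\cdot)$ continuous, so $\bPhi$ is a Bochner-measurable $X$-valued function. Vector-valued mollification then converges in $\mL^1(\Omega_T;X)$. The time cutoff at $0$ and $T$ contributes only $O(\delta)$ mass near the boundary, which vanishes by absolute continuity of the integral. For the divergence, $\diverg\bPhi$ is Lipschitz in $\rho$ by \eqref{E:D2-Phi-Linf-Lipschitz} and lies in $\mL^1(\Omega_T;X)$ by \eqref{E:div_Phi_L1_uniform}, so the same argument gives \eqref{eq:uniform_convergence_divergenz_bphi}.
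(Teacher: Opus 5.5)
Your treatment of the regularity, the uniform bounds and \eqref{eq:uniform_convergence_divergenz_bphi} follows the paper's skeleton: derivatives commute with the convolution because the extension only jumps in $t$; Young's inequality with $\|\Theta_\delta\|_{\mL^1}=1$; Fubini for the mixed norms; and the same split into a shift in $\rho$ and a translation in $(t,\bx)$. It is sounder exactly where the paper's proof is not.

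\textbf{Where your version improves on the paper.} For the divergence, the paper bounds the $\rho$-shift by $\delta\|\partial_3\diverg\bPhi\|_{\mL^1(\Omega_T;\mL^\infty(\tilde{\mU}))}$, but \eqref{E:D2-Phi-Linf-Lipschitz} only puts this derivative in $\mL^\infty$. The paper also cites scalar translation continuity for what is a Bochner-space statement; your passage to the separable space $\mC(\overline{\mU})$ makes that step rigorous. Your restriction to $\delta\le 1$ is necessary, not cosmetic: for $\delta>1$ the shifted arguments leave $\tilde{\mU}=\mU^1$, and the stated uniform bounds can fail.

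\textbf{One step to make precise.} \eqref{E:div_Phi_L1_uniform} does not give an $O(\delta)$ bound on the $\rho$-shift. It only gives a $\delta$-independent integrable majorant, so you must combine it with the Lipschitz bound. Set $g(t,\bx)\coloneqq\|\diverg\tilde{\bPhi}(t,\bx,\cdot)\|_{\mL^\infty(\tilde{\mU})}$, and let $L$ be the Lipschitz constant in $\rho$ of $\diverg\bPhi$ on $\tilde{\mU}$ (finite by \eqref{E:D2-Phi-Linf-Lipschitz}). Then the integrand is at most $\min\{\delta L,\,2g(t-s,\bx-\by)\}$. After integrating over $\Omega_T$, the term is bounded by $\|\min\{\delta L,2g\}\|_{\mL^1(\R^{n+1})}$, which tends to $0$ by dominated convergence.

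\textbf{The first limit is false as stated.} The claim you leave open, \eqref{eq:uniform_convergence_bphi}, cannot be proved, because it is false under \cref{Ass:General}. The paper's argument for it uses $\bPhi,\partial_3\bPhi\in\mL^1(\Omega_T;\mL^\infty(\tilde{\mU}))$, which the assumption does not provide. Take $\bPhi(t,\bx,\rho)=\rho\,\mathbf{e}_1$, which satisfies all of \cref{Ass:General}. For $\delta<T/2$ and $0<t<\delta$, the time cut-off and the symmetry of $\Theta_\delta$ in $v$ give $\bPhi_\delta(t,\bx,\rho)-\bPhi(t,\bx,\rho)=(m_\delta(t)-1)\rho\,\mathbf{e}_1$. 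Here $m_\delta(t)=\int_{\{s\le t\}}\Theta_\delta\to\tfrac12$ as $t\downarrow 0$. This difference is nonzero and independent of $\bx$, so $\|\bPhi_\delta-\bPhi\|_{\mL^1(\Omega_T;\mL^\infty(\mU))}=\infty$ for every such $\delta$. The cut-off is not the only obstruction: for $\bPhi=\rho^2\mathbf{e}_1$ the difference is a nonzero constant of order $\delta^2$ even for $t\in(\delta,T-\delta)$. Your fallback of subtracting $\bPhi(t,\bx,0)$ does not help, since it vanishes in both examples.

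\textbf{What does hold.} The local-in-$\bx$ version, convergence in $\mL^1((0,T)\times K;\mL^\infty(\mU))$ for compact $K\subset\R^n$, is true and follows from exactly your split. The $\rho$-shift is at most $\delta\,T|K|\,\|\partial_3\bPhi\|_{\mL^\infty(\Omega_T\times\tilde{\mU})}$, and the translation term vanishes by Bochner translation continuity on a neighbourhood of $K$. This local version is all the paper uses later, in the term $I_2$ of the proof of \cref{theo:existence_uniqueness_less_regular_flux}, where the norm is taken over $\supp\phi$.
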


\begin{proof}
    The regularities of \crefrange{E:regularity_Phi}{E:grad-div-Phi} follow immediately from the definition \cref{E:lambda_moll} under \cref{Ass:General}. In particular, \cref{E:du-grad-Phi} directly follows from the assumption of \cref{E:D2-Phi-Linf-Lipschitz}. 
    To prove \cref{E:du-phi-uniform}, we use \cref{defi:bPhi_delta} and the classic Young inequality for convolutions. In particular, we have 
    \begin{equation*}
        \begin{split}
            \norm{\D_3 \bPhi_\delta}_{\mL^\infty(\Omega_T \times \mU)}  &\le  \norm{\D_3 \tilde{\bPhi} * \Theta_\delta }_{\mL^\infty(\Omega_T \times {\mU})} \le \norm{\D_3 \bm \Phi}_{\mL^\infty(\Omega_T \times \tilde{\mU})}.
        \end{split}
    \end{equation*}
    Here, we have used \cref{E:Phi_extension} and the definition \eqref{E:U-delta} of $\tilde {\mU}$. The proof of \cref{E:grad-phi-delta-uniform} and \eqref{E:du-grad-phi-delta-uniform} follows the same approach.
    To prove \cref{E:Div-PhiDelta-uniform}, by invoking the definition \cref{E:lambda_moll} and \cref{Ass:General}, we see that 
    \begin{equation}
    \begin{split}    
       & \int_0^T \int_{\R^n} \Big\{\esssup_{u \in \mU} \Big(\iiint_{(-\delta, \delta)^{n+2}} \abs[\big]{\diverg \tilde{\bPhi}(t-s, \bx-\by, u - v) \Theta_\delta(s, \by, v)} \dd s \dd \by \dd v\Big) \Big\} \dd \bx \dd t \\
       & \hspace{0.5in} \le \int_0^T \!\!\!\int_{\R^n}  \Bigg\{ \iint_{(-\delta,\delta)^{n+1}} \Big(\norm[\big]{\diverg \tilde{\bPhi}(t-s, \bx-\by,\cdot)}_{\mL^\infty(\tilde{\mU})}\int_{(-\delta,\delta)}\!\!\!\!\! \Theta_\delta(s, \by, v) \dd v \Big) \dd s \dd \by\Bigg\} \dd \bx \dd t \\
       & \hspace{0.5in} =    \iiint_{(-\delta,\delta)^{n+2}}\Theta_\delta(s, \by, v) \Big(\int_0^T \!\!\! \int_{\R^n}\norm[\big]{\diverg \tilde{\bPhi}(t-s, \bx-\by,\cdot)}_{\mL^\infty(\tilde{\mU})}\dd \bx \dd t \Big) \dd s \dd \by \dd v  \\
       & \hspace{0.5in} \le \norm[\big]{\diverg \bPhi}_{\mL^1(\Omega_T; \mL^\infty(\tilde {\mU}))}.
    \end{split}
    \end{equation}
    Here, we are using Fubini's theorem, \cref{Ass:General}, and \cref{defi:bPhi_delta}. The proof of \cref{E:grad-Div-PhiDelta-uniform} follows similarly. 

Concerning \cref{eq:uniform_convergence_bphi}, we estimate directly. 
Since $\Theta_\delta$ is a mollifier with $\norm{\Theta_\delta}_{\mL^1(\R^n)} = 1$, we write
\begin{align*}
&\int_{0}^{T}\!\!\!\int_{\R^n}\esssup_{u\in\mU}\bigg|\iiint_{(-\delta, \delta)^{n+2}} \tilde{\bPhi}(t-s, \bx-\by, u - v) \Theta_\delta(s, \by, v) \dd s \dd \by \dd v-\tilde{\bPhi}(t,\bx,u)\bigg| \dd \bx \dd t\\
&\leq \int_{0}^{T}\int_{\R^n}\esssup_{u\in\mU}\bigg|\iiint_{(-\delta, \delta)^{n+2}}\Big( \tilde{\bPhi}(t-s, \bx-\by, u - v)-\tilde{\bPhi}(t-s,\bx-\by,u)\\
&\qquad+\tilde{\bPhi}(t-s,\bx-\by,u) -\tilde{\bPhi}(t,\bx,u) \Big)\Theta_\delta(s, \by, v)\dd s \dd \by \dd v\bigg| \dd \bx \dd t\\
&\leq\int_{0}^{T}\!\!\!\int_{\R^n}\esssup_{u\in\mU}\iiint_{(-\delta, \delta)^{n+2}}\|\partial_{3}\tilde{\bPhi}(t-s,\bx-\by,\cdot)\|_{L^{\infty}(\tilde{\mU})}|v|\Theta_{\delta}(s,\by,v)\dd s\dd\by\dd v\dd \bx\dd t\\
&\quad +\int_{0}^{T}\int_{\R^n}\esssup_{u\in\mU}\iiint_{(-\delta, \delta)^{n+2}}\big|\tilde{\bPhi}(t-s,\bx-\by,u)-\tilde{\bPhi}(t,\bx,u)\big|\Theta_{\delta}(s,\by,v)\dd s\dd \by\dd v\dd\bx\dd t\\
&\leq \delta\|\partial_{3}\tilde{\bPhi}\|_{L^{1}(\Omega_T;\mL^{\infty}(\tilde{\mU}))}\\
&\quad +\int_{0}^{T}\int_{\R^n}\iiint_{(-\delta, \delta)^{n+2}}\big\|\tilde{\bPhi}(t-s,\bx-\by,\cdot)-\tilde{\bPhi}(t,\bx,\cdot)\big\|_{L^{\infty}(\mU)}\Theta_{\delta}(s,\by,v)\dd s\dd \by\dd v\dd\bx\dd t.
\intertext{At this point, exchanging the order of integration,}
&\leq \delta\|\partial_{3}\tilde{\bPhi}\|_{L^{1}(\Omega_T;\mL^{\infty}(\tilde{\mU}))}\\
&\quad+\iiint_{(-\delta, \delta)^{n+2}}\Big(\int_{0}^{T}\int_{\R^n}\big\|\tilde{\bPhi}(t-s,\bx-\by,\cdot)-\tilde{\bPhi}(t,\bx,\cdot)\big\|_{L^{\infty}(\mU)}\dd\bx\dd t\Big)\Theta_{\delta}(s,\by,v)\dd s\dd \by\dd v\\
&\leq \delta\|\partial_{3}\bPhi\|_{L^{1}(\Omega_T;\mL^{\infty}(\tilde{\mU}))}+\sup_{\by\in[-\delta,\delta]^{n},\ s\in(-\delta,\delta)}\|\tilde{\bPhi}(\cdot -s,\ast-\by,\star)-\tilde{\bPhi}(\cdot,\ast,\star)\|_{\mL^{1}(\Omega_T;\mL^{\infty}(\mU))}.
\end{align*}
The first term vanishes as $\delta\to 0$, since 
$\|\partial_3\bPhi\|_{\mL^1(\Omega_T;\mL^\infty(\tilde{\mU}))}$ is finite by 
\cref{Ass:General}. The second term vanishes by the $\mL^1$-continuity of translations; 
see \cite[Lemma~4.3]{brezis2011}.

We now turn to \cref{eq:uniform_convergence_divergenz_bphi}. The same 
argument as above, now applied to $\diverg\bPhi$ instead of $\bPhi$, yields
\begin{align}
    \big\|\diverg\bPhi_{\delta}-\diverg \bPhi\big\|_{\mL^{1}(\Omega_T;\mL^{\infty}(\mU))} =\int_{0}^{T}\int_{\R^n}\esssup_{u\in \mU}
    |\diverg\bPhi_{\delta}(t,\bx,u)-\diverg \bPhi(t,\bx,u)|\dd \bx\dd t.\label{eq:div_Phi_Phi_delta}
\end{align}
Since $\bPhi$ is regular enough for the divergence to commute with the convolution, 
we may write $\diverg\bPhi_\delta = \diverg\widetilde{\bPhi}*\Theta_\delta$ and use 
$\int\Theta_\delta = 1$ to obtain
\begin{align*}
    \eqref{eq:div_Phi_Phi_delta}&=\int_{0}^{T}\!\!\!\int_{\R^n}\!\!\!\!\esssup_{u\in \mU}\bigg|
    \iiint_{(-\delta,\delta)^{n+2}}\!\!\!\!\!\!\!\!\!\!\!\!\!\!\!
    \big(\diverg\bPhi(t-s,\bx-\by,u-v)-\diverg\bPhi(t,\bx,u)\big)
    \Theta_{\delta}(s,\by,v)\dd s\dd\by\dd v\bigg|\dd \bx\dd t.
\intertext{Adding and subtracting $\diverg\bPhi(t-s,\bx-\by,u)$ inside the integral and applying the triangle inequality gives}
    &\leq\int_{0}^{T}\!\!\int_{\R^n}\!\!\esssup_{u\in \mU}\!\bigg|\!
    \iiint_{(-\delta,\delta)^{n+2}}\!\!\!\!\!\!\!\!\!\!\!\!\!\!\!
    \big(\diverg\bPhi(t-s,\bx-\by,u-v)-\diverg\bPhi(t-s,\bx-\by,u)\big)
    \Theta_{\delta}(s,\by,v)\!\dd s\!\dd\by\dd v\bigg|\!\dd \bx\!\dd t\\
    &\quad +\int_{0}^{T}\!\!\!\int_{\R^n}\!\!\!\!\esssup_{u\in \mU}\bigg|
    \iiint_{(-\delta,\delta)^{n+2}}\!\!\!\!\!\!\!\!\!\!\!\!\!\!\!
    \big(\diverg\bPhi(t-s,\bx-\by,u)-\diverg \bPhi(t,\bx,u)\big)
    \Theta_{\delta}(s,\by,v)\dd s\dd\by\dd v\bigg|\dd \bx\dd t.
\intertext{At this point, using straightforward estimates, we obtain}
    &\leq\int_{0}^{T}\int_{\R^n}\iiint_{(-\delta,\delta)^{n+2}}\big\|\partial_{3}\diverg\bPhi(t-s,\bx-\by,\cdot)\big\|_{\mL^{\infty}(\tilde{\mathcal U})}|v|\Theta_{\delta}(s,\by,v)\dd s\dd\by\dd v\dd \bx\dd t\\
    &\quad +\int_{0}^{T}\int_{\R^n}\iiint_{(-\delta,\delta)^{n+2}}\!\!\!\!\!\!\!\!\!\big\|\diverg\bPhi(t-s,\bx-\by,\cdot)-\diverg\bPhi(t,\bx,\cdot)\big\|_{\mL^{\infty}(\mU)}\Theta_{\delta}(s,\by,v)\dd s\dd\by\dd v\dd \bx\dd t\\
    &\leq \delta\big\|\partial_{3}\diverg\bPhi\big\|_{\mL^{1}(\Omega_T;\mL^{\infty}(\tilde{\mathcal U}))}+\sup_{\by\in[-\delta,\delta]^{n},s\in(-\delta,\delta)}\big\|\diverg\bPhi(\cdot-s,\ast-\by,\star)-\diverg\bPhi(\cdot,\ast,\star)\big\|_{\mL^{1}(\Omega_T;\mL^{\infty}(\mU))}.
\end{align*}
Both terms vanish as $\delta\to 0$ by the same reasoning: the first by the bound on
$\|\partial_3\diverg\bPhi\|_{\mL^1(\Omega_T;\mL^\infty(\tilde{\mU}))}$, which follows 
from \cref{E:D2-Phi-Linf-Lipschitz}, and the second by the $\mL^1$-continuity of translations.
\end{proof}

We are now in a position to apply the results in \cite{Mercier2011improved} to the smoothed conservation law for $\delta\in\R_{>0}$. In particular, we consider 
\begin{equation}
\begin{aligned}    
\partial_t \rho + \diverg \bPhi_{\delta}(t, \bx, \rho) &= 0, &&(t, \bx) \in (0, T) \times \R^n,\\
\rho(0, \cdot) &\equiv \rho_{\circ},&& \text{ on }\R^n.
\end{aligned}
    \label{E:model_smoothed}
\end{equation} 
The definition of an entropy solution for this equation is exactly the same as in \cref{defi:entropy_solution_local}, except with $\Phi_\delta$ in place of $\Phi$. 

\medskip
\noindent
\textbf{A word of warning.}
\Cref{Ass:General} requires us to fix a time horizon $T>0$.
Accordingly, throughout the remainder of the paper, we work with this fixed $T$, which will no longer be stated explicitly in the results.

We can now state our first existence result.
\begin{theorem}[existence, uniqueness, and regularity for the mollified equation]\label{theo:existence_uniqueness_smooth}
Let \cref{Ass:General} hold and $\delta>0$ be given, and let $\bPhi_\delta$ be the mollified flux from \cref{defi:bPhi_delta}. Then there exists a time horizon $T^* \in (0,T]$ and a unique weak entropy solution of the mollified problem \eqref{E:model_smoothed},
\[
\rho_\delta \in \mC([0,T^*]; \mL^1(\R^n)) \cap \mL^\infty((0,T^*); \mL^\infty(\R^n) \cap \mTV(\R^n)),
\]
in the sense of \cref{defi:entropy_solution_local}.
A lower bound on $T^*$ is provided by the maximal existence interval of the solution to the initial value problem 
\begin{equation}
\tfrac{d}{dt} \alpha(t) = \norm{\diverg \bPhi_\delta(t,\cdot,\alpha(t))}_{\mL^\infty(\R^n)}, 
\quad \alpha(0) = \|\rho_0\|_{\mL^\infty(\R^n)} .
\label{eq:L_infty_increase}
\end{equation}
Moreover, if we set 
\begin{equation} \label{E:bar-rho} 
\bar{\rho} \coloneqq 42 \max\{\|\rho_0\|_{\mL^\infty(\R^n)},1\},
\end{equation} 
for
\begin{equation}\label{E:small-time-delta}
t \in \Big[0, T^* \coloneqq \min\Big\{T, \tfrac{41 \max\{\|\rho_0\|_{\mL^\infty(\R^n)},1\}}{\|\diverg \bPhi\|_{\mL^\infty(\Omega_T \times (-\bar{\rho}, \bar{\rho}))}} \Big\} \Big],
\end{equation}
then the solution is uniformly bounded in both the $\mL^\infty$-norm and the total variation. More precisely, over $[0, T^*]$, we have
\begin{align} 
\|\rho_\delta(t,\cdot)\|_{\mL^\infty(\R^n)} &\le \bar{\rho}, \label{E:boundedness-delta}\\
|\rho_\delta(t,\cdot)|_{\mTV(\R^n)} &\le |\rho_0|_{\mTV(\R^n)} \, \e^{\kappa t} 
+ n W_n \int_0^t \e^{\kappa(t-s)} \int_{\R^n} 
\|\nabla \diverg \bPhi(s,\bx,\cdot)\|_{\mL^\infty((-\bar{\rho},\bar{\rho});\R^n)} \, \dd {\bx} \, \dd s, \label{E:TV-bound-delta}
\end{align}
where
\[
\kappa \coloneqq (2n+1) \|\diverg \partial_3 \bPhi\|_{\mL^\infty(\Omega_{T^*} \times (-\bar{\rho},\bar{\rho}))}.
\]
Finally, the solution is Lipschitz continuous in time with respect to the $\mL^1$-norm: for any $s,t \in [0,T^*]$ with $s < t$,
\begin{equation}
\begin{split}
\|\rho_\delta(t,\cdot) - \rho_\delta(s,\cdot)\|_{\mL^1(\R^n)} 
&\le \int_s^t \int_{\R^n} \|\diverg \bPhi(\tau,\bx,\cdot)\|_{\mL^\infty((-\bar{\rho},\bar{\rho}); \R)} \, \dd \bx \, \dd\tau \\
&\quad + |t-s| \, \|\partial_3 \bPhi\|_{\mL^\infty(\Omega_{T^*} \times (-\bar{\rho},\bar{\rho}))} \, \|\rho_\delta\|_{\mL^\infty((0,T^*); \mTV(\R^n))}.
\end{split}
\label{eq:time_continuity-delta}
\end{equation}
Let $\rho_\delta$ and $\tilde{\rho}_\delta$ be the solutions of \cref{E:model_smoothed} with fluxes $\bPhi_\delta$ and $\hat{\bPhi}_\delta$ and initial data $\rho_\circ$ and $\tilde{\rho}_\circ$, respectively. Then, the following general stability result holds: \begin{equation}\label{E:rho_delta_general_statbility}
    \begin{split}
    \|\rho_\delta(t,\cdot) &- \tilde{\rho}_\delta(t,\cdot)\|_{\mL^1(\R^n)} \le \e^{\kappa_{\bPhi} t} \|\rho_\circ - \tilde{\rho}_\circ\|_{\mL^1(\R^n)}
      + t \, \e^{\kappa_{\bPhi} t} |\rho_\circ|_{\mTV(\R^n)} \|\partial_3 \bPhi_\delta - \partial_3 \hat{\bPhi}_\delta\|_{\mL^\infty(\Omega_T \times [-\bar{\rho},\bar{\rho}])} \\
    &+ n W_n \|\partial_3 \bPhi_\delta - \partial_3 \hat{\bPhi}_\delta\|_{\mL^\infty(\Omega_T \times [-\bar{\rho},\bar{\rho}])} 
          \int_0^t s \, \e^{\kappa_{\bPhi} (t-s)} \|\nabla \diverg \bPhi_\delta(s, \cdot, \cdot)\|_{\mL^1(\R^n;\mL^\infty((-\bar{\rho},\bar{\rho})))} \dd s \\
    &+ \int_0^t \e^{\kappa_{\bPhi_\delta }(t-s)} \|\diverg \bPhi_\delta(s, \cdot, \cdot) - \diverg \hat{\bPhi}_\delta(s, \cdot, \cdot)\|_{\mL^1(\R^n;\mL^\infty((-\bar{\rho},\bar{\rho})))} \dd s,
    \end{split}
    \end{equation}
    where
      \[
    \kappa_{\bPhi_\delta} \coloneqq \max\Big\{ \|\partial_3 \bPhi_\delta - \partial_3 \hat{\bPhi}_\delta\|_{\mL^\infty(\Omega_T \times [-\bar{\rho},\bar{\rho}])}, \, 
    (2n+1) \|\nabla \partial_3 \bPhi_\delta\|_{\mL^\infty(\Omega_T \times (-\bar{\rho},\bar{\rho}))} \Big\}.
    \]
\end{theorem}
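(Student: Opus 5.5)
The plan is to reduce everything to the classical theory for smooth fluxes, using Lécureux-Mercier \cite{Mercier2011improved} together with Kru\v{z}kov \cite{kruvzkov1970first}, applied to $\bPhi_\delta$. By \cref{lem:properties_mollified_flux}, $\bPhi_\delta$ is $\mC^\infty$. It also has bounded $\partial_3\bPhi_\delta$, $\nabla\bPhi_\delta$ and $\partial_3\nabla\bPhi_\delta$ on $\Omega_T\times\mU$, and $\diverg\bPhi_\delta,\nabla\diverg\bPhi_\delta\in\mL^1(\Omega_T;\mL^\infty(\mU))$, for every bounded $\mU$. These are exactly the hypotheses of the existence, $\mTV$, and stability theorems in \cite{Mercier2011improved}, but only locally in $u$. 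I would therefore first localize the state space.

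\textbf{Step 1 ($\mL^\infty$ bound).} Freeze $\bPhi_\delta$ outside $[-\bar\rho,\bar\rho]$ by a smooth cutoff in $u$, which gives globally Lipschitz data. Kru\v{z}kov's maximum-principle argument then shows that $\|\rho_\delta(t)\|_{\mL^\infty}\le\alpha(t)$, with $\alpha$ solving \eqref{eq:L_infty_increase}. To see this, compare with the constants $\pm\alpha(t)$: these are a supersolution and a subsolution, since $\partial_t\alpha\ge|\diverg\bPhi_\delta(t,\bx,\alpha)|$. As long as $\alpha\le\bar\rho$, the cutoff is inactive, so the cut-off solution solves the original mollified problem. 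Integrating the ODE gives $\alpha(t)\le\max\{\|\rho_0\|_\infty,1\}+t\|\diverg\bPhi\|_{\mL^\infty(\Omega_T\times(-\bar\rho,\bar\rho))}$, using the uniform bound of the type \eqref{E:grad-phi-delta-uniform}. On the interval \eqref{E:small-time-delta} this is at most $42\max\{\cdot,1\}=\bar\rho$, which yields \eqref{E:boundedness-delta}. Uniqueness follows from Kru\v{z}kov's $\mL^1$-contraction for the cut-off flux.

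\textbf{Step 2 ($\mTV$ bound and time continuity).} Apply the $\mTV$ estimate of \cite[Thm.~2.2]{Mercier2011improved} with state range $(-\bar\rho,\bar\rho)$. It yields a Gronwall-type bound with rate $(2n+1)\|\nabla\partial_3\bPhi_\delta\|_\infty$ and source $nW_n\int\|\nabla\diverg\bPhi_\delta\|_{\mL^1(\mL^\infty)}$. Replacing the $\delta$-quantities by the $\bPhi$-quantities via \eqref{E:du-grad-phi-delta-uniform} and \eqref{E:grad-Div-PhiDelta-uniform} gives \eqref{E:TV-bound-delta}. The $\tilde\mU$ enlargement is harmless here, after renaming $\bar\rho$ or noting the support. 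For the time estimate \eqref{eq:time_continuity-delta}, I would take the standard route. Test the weak formulation with $\phi(\bx)\chi_{[s,t]}$, where $\phi\in\mC^1_c$ and $\|\phi\|_\infty\le1$. The total divergence splits into $\diverg\bPhi_\delta$, which is bounded in $\mL^1$ by \eqref{E:Div-PhiDelta-uniform}, and $\partial_3\bPhi_\delta\circ\nabla\rho$, which is bounded by $\|\partial_3\bPhi\|_\infty|\rho|_{\mTV}$. Both bounds are justified by approximating $\rho$ by a smooth function and passing to the limit.

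\textbf{Step 3 (stability \eqref{E:rho_delta_general_statbility}).} Both solutions stay in $[-\bar\rho,\bar\rho]$ by Step 1, so both fluxes may be cut off identically outside this range. I would then invoke the stability theorem of \cite[Thm.~2.6]{Mercier2011improved}, which is built on the doubling-of-variables technique with $\mTV$ control of one solution. It produces exactly the structure of the right-hand side. The first term is the initial-data term $\e^{\kappa t}\|\rho_\circ-\tilde\rho_\circ\|_{\mL^1}$. The second is the $\partial_3$-difference term, multiplied by the $\mTV$ bound of $\rho_\delta$ from Step 2. Inserting \eqref{E:TV-bound-delta}, with the exponent bounded by $\kappa_{\bPhi_\delta}$, gives the $t|\rho_\circ|_{\mTV}$ term and the $nW_n\int s\,\e^{\kappa(t-s)}\cdots$ term. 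The third is the $\diverg$-difference term.

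\textbf{Main obstacle.} I expect the main difficulty to be the bookkeeping in the stability step. The Mercier theorem is stated for fluxes that are globally regular in $u$, with $\kappa$ defined through global suprema. I must check that the cutoff preserves every norm appearing in the estimate, restricted to $[-\bar\rho,\bar\rho]$, and that the $\mTV$ bound inserted is the time-dependent one. Otherwise the extra factor $s$ in the integral would not come out correctly.
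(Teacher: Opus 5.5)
Your route is the paper's own. You apply \cite{Mercier2011improved} to $\bPhi_\delta$, get the $\mL^\infty$ bound by comparison with \eqref{eq:L_infty_increase}, and import the $\mTV$, time-continuity and stability estimates. Your cutoff in $u$ and the explicit comparison principle are more careful than the paper's bare citation and its heuristic ``tracking of the maximum''. Your direct proof of \eqref{eq:time_continuity-delta} by testing with $\phi(\bx)\mathbf{1}_{[s,t]}$ is fine. But in three places you claim a step delivers the displayed estimate when it does not.

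\textbf{First}, in Step~1 the constant $-\alpha(t)$ is a subsolution only if $\alpha'(t)\ge\diverg\bPhi_\delta(t,\bx,-\alpha(t))$ for a.e.\ $\bx$. The ODE \eqref{eq:L_infty_increase} controls $\diverg\bPhi_\delta$ only at $+\alpha(t)$, so you get $\rho_\delta\le\alpha$ and no lower bound. This is a real defect, and the paper's remark on the ``asymmetric interval'' has the same blind spot. Take $n=1$ and $\bPhi=a(x)g(u)$ with $a\in\mC^\infty_c$, $a(x)=cx$ near $0$, $g\equiv0$ on $\R_{\ge0}$, $g(u)=u^2$ for $u\le-\tfrac{1}{2}$, and $\rho_\circ=-\mathbf{1}_{[-r,r]}$. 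Then $\alpha\equiv1$, while near $x=0$ the solution obeys $\dot\rho=-c\rho^2$ and blows up at $t=1/c$. For \eqref{E:boundedness-delta} the repair is immediate. Compare with $\pm\beta(t)$, where $\beta(t)\coloneqq\|\rho_\circ\|_{\mL^\infty}+t\,\|\diverg\bPhi_\delta\|_{\mL^\infty(\Omega_T\times(-\bar\rho,\bar\rho))}$; these are a super- and a subsolution while $\beta\le\bar\rho$. The ODE claim itself holds only in the symmetrized form $\alpha'=\max_\pm\|\diverg\bPhi_\delta(t,\cdot,\pm\alpha)\|_{\mL^\infty}$.

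\textbf{Second}, in Step~3, insert the time-dependent $\mTV$ bound into the Gronwall term $\int_0^t\e^{\kappa(t-\tau)}|\rho_\delta(\tau,\cdot)|_{\mTV}\dd\tau$ and exchange integrals. With $N(s)\coloneqq\|\nabla\diverg\bPhi_\delta(s,\cdot,\cdot)\|_{\mL^1(\R^n;\mL^\infty)}$ this gives $\int_0^t N(s)\int_s^t\e^{\kappa(t-\tau)}\e^{\kappa(\tau-s)}\dd\tau\dd s=\int_0^t(t-s)\,\e^{\kappa(t-s)}N(s)\dd s$. The weight is $(t-s)$, which is also what the factor $(\e^{\kappa_0(t-s)}-\e^{\kappa^*(t-s)})/(\kappa_0-\kappa^*)$ in \cite{Mercier2011improved} gives, not the $s$ you expect. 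The version of \eqref{E:rho_delta_general_statbility} with $s$ is in fact false. Take $n=1$, $\rho_\circ=\tilde\rho_\circ=0$, $\bPhi=\beta(t)h(x)+u$ and $\hat\bPhi=\beta(t)h(x)$. Choose $\supp h'\subset[0,\tfrac{1}{4}]$ with $h$ small, $T\ge1$, and $0\le\beta\le2$ supported in $(0,\epsilon)$ with $\int\beta=\epsilon<\tfrac{1}{2}$. Then $\tilde\rho(1,\cdot)=-\epsilon h'$ and $\rho(1,\cdot)$ is supported in $[\tfrac{1}{2},\tfrac{5}{4}]$. So the left-hand side at $t=1$ is at least $\epsilon\|h'\|_{\mL^1}$. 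The right-hand side reduces to $\|h''\|_{\mL^1}\int_0^\epsilon s\,\e^{1-s}\beta(s)\dd s\le\e\,\|h''\|_{\mL^1}\epsilon^2$, and a small $\delta$ changes nothing.

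\textbf{Third}, in Step~2 the rate you correctly take from Mercier is $(2n+1)\|\nabla\partial_3\bPhi_\delta\|_{\mL^\infty}$. However, $\kappa$ in \eqref{E:TV-bound-delta} contains the smaller quantity $\|\diverg\partial_3\bPhi\|_{\mL^\infty}$, so for $n\ge2$ no replacement yields that display. Indeed, $\bPhi=(\psi(x_2),0)\,u$ has $\diverg\partial_3\bPhi=0=\nabla\diverg\bPhi$, while $\rho(t,\bx)=\rho_\circ(x_1-t\psi(x_2),x_2)$ has growing total variation.

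So your argument, like the paper's citation, proves the theorem with the symmetrized ODE, with $\nabla\partial_3\bPhi$ in $\kappa$, and with $(t-s)$ in place of $s$. You should state that version rather than claim the displayed one.
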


\begin{proof}
The Assumptions \crefrange{E:regularity_Phi}{E:grad-div-Phi} on the flux $\bPhi_\delta$ provide the required regularities of \cite[Assumptions $(\bm H1^*)$--$(\bm H3^*)$]{Mercier2011improved} which ensure the well-posedness of \cref{E:model_smoothed}.\\
To show that the solution remains bounded uniformly in $\delta$, our approach follows from the fact that the evolution of the $\mL^\infty$-norm in \cref{eq:L_infty_increase} arises from tracking the maximal value of the PDE solution and obtaining a worst-case estimate with respect to the spatial variable.

The right-hand side of the ODE in \cref{eq:L_infty_increase} is locally Lipschitz continuous. To see this, consider the map 
\begin{equation}
h \mapsto \|\diverg \bPhi_\delta(t,\cdot,h)\|_{\mL^\infty(\R^n)}.
\label{eq:Lipschitz_continuity_Div_bPhi}
\end{equation}
For any $h, \tilde{h} \in \R$, classical estimates \tup{using the Minkowski inequality} yield
\begin{equation*}
\begin{split}
\big|\norm{\diverg \bPhi_\delta(t,\cdot,h)}_{\mL^\infty(\R^n)} - \norm{\diverg \bPhi_\delta(t,\bx,\tilde{h})}_{\mL^\infty(\R^n)}\big| & \le \|\diverg \bPhi_\delta(t,\cdot,h) - \diverg \bPhi_\delta(t,\cdot,\tilde{h})\|_{\mL^\infty(\R^n)} \\
& \le \|\diverg \partial_3 \bPhi_\delta(t,\cdot,\ast)\|_{\mL^\infty(\R^n \times \mathcal{H})} |h - \tilde{h}|,
\end{split}
\end{equation*}
where $\mathcal{H} \coloneqq (\min\{h,\tilde{h}\}, \max\{h,\tilde{h}\})$. Since the quantity $\|\diverg \partial_3 \bPhi_\delta(t,\cdot,\ast)\|_{\mL^\infty(\R^n \times \mathcal{H})}$ is finite by \cref{Ass:General} and in particular \cref{E:du-grad-phi-delta-uniform}, the differential equation
\[
\tfrac{d}{dt}\alpha(t) = \|\diverg \bPhi_\delta(t,\cdot,\alpha(t))\|_{\mL^\infty(\R^n)}, \quad \alpha(0) = \|\rho_0\|_{\mL^\infty(\R^n)},
\]
admits a solution on a sufficiently short time interval. To make this interval uniform in $\delta$, we apply the comparison principle for ODEs. Let $T>0$ be such that $|\alpha(t)| \le 41 \|\rho_0\|_{\mL^\infty(\R^n)}$ for every $t \in [0,T]$ \tup{note that such a $T$ exists by continuity, since $\alpha(0) = \|\rho_0\|_{\mL^\infty(\R^n)}$}. Then, in light of \cref{lem:properties_mollified_flux}---in particular, \cref{E:grad-phi-delta-uniform}---we have
\[
\tfrac{d}{dt}\alpha(t) = \|\diverg \bPhi_\delta(t,\cdot,\alpha(t))\|_{\mL^\infty(\R^n)} \le \|\diverg \bPhi\|_{\mL^\infty(\Omega_T \times (\|\rho_0\|_{\mL^\infty(\R^n)}, \bar \rho))}.
\]
Note that we can take the asymmetric interval in the $\mL^\infty$-norm on the right-hand side, since the initial datum of the ODE is positive, and so is the value of the derivative.
Integrating in time gives
\[
\alpha(t) \le \|\rho_0\|_{\mL^\infty(\R^n)} + t \, \|\diverg \bPhi\|_{\mL^\infty(\Omega_T \times (\|\rho_0\|_{\mL^\infty}, 42\|\rho_0\|_{\mL^\infty}))} \overset{!}{\le} 42 \norm{\rho_0}_{\mL^\infty(\R^n)}.
\]
Here, $\overset{!}{\le}$ denotes the desired bound on the right-hand side. Consequently, we obtain the uniform time horizon
\[
\overline{T} = \frac{41 \|\rho_0\|_{\mL^\infty(\R^n)}}{\|\diverg \bPhi\|_{\mL^\infty(\Omega_T\times (\|\rho_0\|_{\mL^\infty}, \bar \rho))}}.
\]
In fact, we have established a time horizon for which the solution remains bounded uniformly in $\delta$, with the stated $\mL^\infty$ bound. In fact, this proves \cref{E:boundedness-delta}.

The bounds \crefrange{E:TV-bound-delta}{E:rho_delta_general_statbility} follows directly from \cite{Mercier2011improved} under the assumptions \crefrange{E:regularity_Phi}{E:grad-div-Phi}. This completes the proof.
\end{proof}
\begin{remark}[the domain of existence of a solution]
It should be noted that $T^*$ is only a lower bound on the existence of such a time horizon---i.e. is not a tight bound. In addition, it depends on the constant $42 \norm{\rho_\circ}_{\mL^\infty(\R^n)}$. 
\end{remark}
Having established a uniform time horizon for existence, we now turn to the compactness and convergence of entropy solutions corresponding to the smoothed flux. In particular, we show that, up to subsequences, these solutions converge to a limit in $\mC([0,T^*];\mL^1_{\textnormal{loc}}(\R^n))$.

\begin{lemma}[{convergence of $\rho_\delta$ as $\delta \to 0$ in $\mC([0,T^*]; \mL^{1}_{\textnormal{loc}}(\R^n))$}]
\label{lem:compactness_rho_delta}
Let \cref{Ass:General} hold, and suppose that a sufficiently small $T^*$ is chosen to satisfy the condition of \cref{theo:existence_uniqueness_smooth}. More precisely, the sequence of weak entropy solutions $\{\rho_\delta\}_{\delta>0}$---as in \cref{defi:entropy_solution_local}---satisfying
\[
\rho_\delta \in \mC([0,T^*];\mL^1_{\textnormal{loc}}(\R^n)) \cap \mL^\infty((0,T^*);\mL^\infty(\R^n))
\]
is relatively compact in $\mC([0,T^*];\mL^1(K))$ for any bounded measurable set $K \subset \R^n$. That is,
\[
\forall \text{ bounded measurable sets } K \subset \R^n: \quad 
\Big\{\rho_\delta|_{[0,T^*]\times K} : \delta \in \R_{>0} \Big\} \text{ is totally bounded in } \mC([0,T^*];\mL^1(K)).
\]
In particular, there exists a subsequence $(\delta_k)_{k \in \mathbb{N}} \subset \R_{>0}$ with $\lim_{k \to \infty} \delta_k = 0$ and a limit function $\rho_* \in \mC([0,T^*];\mL^1(K))$ such that
\[
\lim_{k \to \infty} \|\rho_{\delta_k} - \rho_*\|_{\mC([0,T^*];\mL^1(K))} = 0.
\]
\label{lem:rhostar-limit}
\end{lemma}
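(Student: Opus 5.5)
We prove the statement with an Arzelà--Ascoli type argument in the Banach space $\mC([0,T^*];\mL^1(K))$: uniform-in-$\delta$ spatial compactness at each fixed time, together with a uniform-in-$\delta$ modulus of continuity in time. Both ingredients come from the $\delta$-uniform bounds of \cref{theo:existence_uniqueness_smooth}. Those bounds are uniform because every constant there is expressed through norms of the unmollified $\bPhi$ on $\tilde{\mU}$, by \cref{lem:properties_mollified_flux}, namely \eqref{E:du-phi-uniform}--\eqref{E:grad-Div-PhiDelta-uniform}.

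\textbf{Step 1 (pointwise-in-time compactness).} Fix $t\in[0,T^*]$. By \eqref{E:boundedness-delta}, $\|\rho_\delta(t,\cdot)\|_{\mL^\infty}\le\bar\rho$, so $\|\rho_\delta(t,\cdot)\|_{\mL^1(K)}\le\bar\rho\,|K|$. By \eqref{E:TV-bound-delta}, $|\rho_\delta(t,\cdot)|_{\mTV(\R^n)}\le C_{\mTV}$ with $C_{\mTV}$ independent of $\delta$ and $t$. Here we use that $\kappa$ and the integral $\int_0^t\!\int\|\nabla\diverg\bPhi\|$ are controlled by \cref{Ass:General}, through \eqref{E:integrable-TV-norm} and \eqref{E:D2-Phi-Linf-Lipschitz}. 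Choose a bounded open set $K'\supset K$ with smooth boundary. Then the restrictions to $K'$ are bounded in $\mBV(K')$. By the compact embedding $\mBV(K')\hookrightarrow\mL^1(K')$ (Helly/Rellich), the set $\{\rho_\delta(t,\cdot)|_K\}_\delta$ is totally bounded in $\mL^1(K)$ for every $t$.

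\textbf{Step 2 (uniform equicontinuity in time).} By \eqref{eq:time_continuity-delta} and \eqref{E:div_Phi_L1_uniform},
\[
\|\rho_\delta(t,\cdot)-\rho_\delta(s,\cdot)\|_{\mL^1(K)}\le \omega(|t-s|)\Def \sup_{|I|\le |t-s|}\int_I\!\int_{\R^n}\|\diverg\bPhi(\tau,\bx,\cdot)\|_{\mL^\infty((-\bar\rho,\bar\rho))}\dd\bx\dd\tau+|t-s|\,L\,C_{\mTV},
\]
where $L\Def\|\partial_3\bPhi\|_{\mL^\infty(\Omega_{T^*}\times(-\bar\rho,\bar\rho))}$. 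The first term tends to $0$ as $|t-s|\to0$ by absolute continuity of the Lebesgue integral of the $\mL^1$ function $\tau\mapsto\|\diverg\bPhi(\tau,\cdot,\cdot)\|_{\mL^1(\R^n;\mL^\infty)}$. Hence $\omega$ is a modulus of continuity that is independent of $\delta$.

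\textbf{Step 3 (conclusion).} The vector-valued Arzelà--Ascoli theorem states that a family in $\mC([0,T^*];X)$, with $X=\mL^1(K)$ a Banach space, is relatively compact if and only if it is equicontinuous and pointwise relatively compact. Steps 1 and 2 supply exactly these two conditions, so the family is totally bounded. Total boundedness in a complete space gives a convergent subsequence $\rho_{\delta_k}\to\rho_*$ in $\mC([0,T^*];\mL^1(K))$. Taking $K=B(0,m)$ for $m\in\N$ and extracting a diagonal subsequence yields a single subsequence and a single limit $\rho_*\in\mC([0,T^*];\mL^1_{\textnormal{loc}}(\R^n))$. The limits on different balls agree a.e. on their overlaps, so they patch together consistently.

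\textbf{Expected main obstacle.} The delicate point is verifying that the constants in \eqref{E:TV-bound-delta} and \eqref{eq:time_continuity-delta} are truly $\delta$-independent. This requires that $T^*$ and $\bar\rho$ from \cref{theo:existence_uniqueness_smooth} do not depend on $\delta$, and that the mollified norms are dominated by the unmollified ones on the enlarged set $\tilde{\mU}$ via \cref{lem:properties_mollified_flux}. We will make this explicit by replacing every $\bPhi_\delta$-norm in those estimates with its bound on $(-\bar\rho-1,\bar\rho+1)$. A secondary subtlety is that \eqref{eq:time_continuity-delta} is stated in $\mL^1(\R^n)$, while the solutions are only assumed locally integrable. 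This is harmless here, since restricting to $K$ only decreases the norm.
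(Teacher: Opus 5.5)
Your proposal is correct and takes essentially the same route as the paper. Spatial compactness comes from the $\delta$-uniform $\mL^\infty$ and $\mTV$ bounds together with the compact embedding $\mBV\hookrightarrow\mL^1$ on bounded sets, and uniform equicontinuity in time comes from \eqref{eq:time_continuity-delta}; the paper concludes with Simon's compactness criterion in $\mC([0,T^*];\mL^1(K))$ where you use the vector-valued Arzel\`a--Ascoli theorem, and your choice of a smooth open $K'\supset K$ and your diagonal extraction over balls make explicit two points the paper leaves implicit.
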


\begin{proof}
We start by showing the following conditions.

\smallskip
\noindent\textit{Step 1: Compact embedding.}
For any bounded measurable set $K\subset\R^n$, the embedding
$\mBV(K)\overset{\mathrm{c}}{\hookrightarrow}\mL^{1}(K)$
is compact; see \cite[Theorems 13.32, 13.35]{leoni2024first}, which proves that the corresponding map is a compact operator.

\smallskip
\noindent\textit{Step 2: Uniform boundedness in 
$\mL^{1}((0,T^{*});\mBV(K))$.}
By \cref{theo:existence_uniqueness_smooth}, the family 
$\{\rho_{\delta}\}_{\delta>0}$ satisfies uniform bounds in 
$\mL^{\infty}$ and $\mTV$ on $[0,T^*]$, provided that the 
solution remains bounded in $\mL^{\infty}(\R^n)$ on that interval.
In particular,
\[
\sup_{\delta>0}\|\rho_{\delta}\|_{\mL^{\infty}((0,T^{*});\mBV(K))}<\infty,
\]
from which uniform boundedness in $\mL^{1}((0,T^{*});\mBV(K))$
follows immediately by integrating in time.

\smallskip
\noindent\textit{Step 3: Equicontinuity in time.}
It remains to show that the family $\{\rho_\delta\}_{\delta > 0}$ 
is equicontinuous in time with values in $\mL^1(K)$, that is,
\[
\lim_{h\to 0}\;\sup_{\delta>0}\;
\sup_{t\in[0,T^{*}-h]}
\|\rho_{\delta}(t+h,\cdot)-\rho_{\delta}(t,\cdot)\|_{\mL^{1}(K)}
= 0.
\]
This follows directly from the time continuity estimate 
\cref{eq:time_continuity-delta}. This follows directly from \cref{eq:time_continuity-delta} and \crefrange{E:D2-Phi-Linf-Lipschitz}{E:div_Phi_L1_uniform}. 
\smallskip
Putting these together and considering \cite[Chapter 6, Theorem 3]{simon1986compact}, we observe that
$\big\{\rho_{\delta}|_{[0,T^{*}]\times K}:\delta \in \R_{>0}\big\}$
is relatively compact in $\mC([0,T^{*}];\mL^{1}(K))$; the claim follows. 
\end{proof}

We finally turn to our main result of this section. 

\begin{theorem}[existence and uniqueness on a short time horizon for \cref{E:model}]
\label{theo:existence_uniqueness_less_regular_flux}
Let \cref{Ass:General} hold, let $\rho_\delta$ be the solution of \cref{E:model_smoothed}, and suppose that $\rho_
*$ is defined as in \cref{lem:rhostar-limit}. Then, there exists a short time horizon $T^*\in\R_{>0}$ such that 
\[\rho_{*}\in\mC\big([0,T^{*}];\mL^{1}_{\loc}(\R^n)\big)\cap\mL^{\infty}((0,T^*);\mL^{\infty}(\R^n))\]
is the unique Kru\v{z}kov entropy solution \cref{E:model} in the sense of \cref{defi:entropy_solution} over $[0,T^*]$.
Furthermore, $\rho*$ satisfies the following:
\begin{description}
    \item[$\mL^\infty$ bound:] Considering the $\bar \rho$ defined as in \cref{E:bar-rho}, we have for \(t\in(0,T^{*})\) a.e.
    \begin{equation}\label{E:rho_star_bound}
    \|\rho_*(t,\cdot)\|_{\mL^\infty(\R^n)} \le \bar{\rho}.
    \end{equation}

    \item[$\mTV$ bound:] We have for \(t\in(0,T^{*})\) a.e.
    \begin{equation}\label{E:rho_star_TVbound}
    \abs{\rho_*(t,\cdot)}_{\mTV(\R^n)} 
    \le \abs{\rho_\circ}_{\mTV(\R^n)} \, \e^{\kappa t}
    + n W_n \int_0^t \e^{\kappa(t-s)} 
      \int_{\R^n} \|\nabla \diverg \bPhi(s, \bx, \cdot)\|_{\mL^\infty((-\bar{\rho},\bar{\rho}); \R^n)} \, \dd\bx \, \dd s,
    \end{equation}
    with 
    \[
    \kappa \coloneqq (2n+1) \|\diverg \partial_3 \bPhi\|_{\mL^\infty(\Omega_T\times(-\bar{\rho},\bar{\rho}))}.
    \]

    \item[Time continuity:]
    For $t,s \in [0,T^*]$ with \(s<t\),
    \begin{equation}\label{E:rho_star-timecontinuity}
    \begin{split}
    \|\rho_*(t,\cdot) - \rho_*(s,\cdot)\|_{\mL^1(\R^n)} &\le \int_s^t \int_{\R^n} \|\diverg \bPhi(\tau, \bx, \cdot)\|_{\mL^\infty([-\bar{\rho},\bar{\rho}])} \, \dd\bx \, \dd\tau \\
    &\quad + |t-s| \, \|\partial_3 \bPhi\|_{\mL^\infty((0,T)\times \R^n \times (-\bar{\rho},\bar{\rho}))} \, \|\rho_*\|_{\mL^\infty((0,T);\mTV(\R^n))}.
    \end{split}
    \end{equation}

    \item[General stability:] Let $\tilde{\rho}_\circ \in \mL^\infty(\R^n) \cap \mTV(\R^n)$ and $\hat{\bPhi}$ satisfy \cref{Ass:General}, with corresponding existence horizon $T^{*}$. Denote by $\rho$ and $\tilde{\rho}$ the Kru\v{z}kov entropy solutions for $\rho_\circ$ and $ \tilde{\rho}_\circ$ with fluxes $\bPhi$ and $\hat{\bPhi}$, respectively. Then, for all $t \in [0,T^{*})$,
    \begin{equation}\label{E:rho_star_general_statbility}
    \begin{split}
    \|\rho(t,\cdot) &- \tilde{\rho}(t,\cdot)\|_{\mL^1(\R^n)} \le \e^{\kappa_{\bPhi} t} \|\rho_\circ - \tilde{\rho}_\circ\|_{\mL^1(\R^n)}
      + t \, \e^{\kappa_{\bPhi} t} |\rho_\circ|_{\mTV(\R^n)} \|\partial_3 \bPhi - \partial_3 \hat{\bPhi}\|_{\mL^\infty(\Omega_T \times [-\bar{\rho},\bar{\rho}])} \\
    &+ n W_n \|\partial_3 \bPhi - \partial_3 \hat{\bPhi}\|_{\mL^\infty(\Omega_T \times [-\bar{\rho},\bar{\rho}])} 
          \int_0^t s \, \e^{\kappa_{\bPhi} (t-s)} \|\nabla \diverg \bPhi(s, \cdot, \cdot)\|_{\mL^1(\R^n;\mL^\infty((-\bar{\rho},\bar{\rho})))} \dd s \\
    &+ \int_0^t \e^{\kappa_{\bPhi}(t-s)} \|\diverg \bPhi(s, \cdot, \cdot) - \diverg \hat{\bPhi}(s, \cdot, \cdot)\|_{\mL^1(\R^n;\mL^\infty((-\bar{\rho},\bar{\rho})))} \dd s,
    \end{split}
    \end{equation}
    \[
    \kappa_{\bPhi} \coloneqq \max\Big\{ \|\partial_3 \bPhi - \partial_3 \hat{\bPhi}\|_{\mL^\infty(\Omega_T \times [-\bar{\rho},\bar{\rho}])}, \, 
    (2n+1) \|\nabla \partial_3 \bPhi\|_{\mL^\infty(\Omega_T \times (-\bar{\rho},\bar{\rho}))} \Big\}.
    \]
\end{description}
\end{theorem}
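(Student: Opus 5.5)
We pass to the limit $\delta\to0$ in the mollified problems \eqref{E:model_smoothed}. Every estimate in \cref{theo:existence_uniqueness_smooth} is uniform in $\delta$ by \cref{lem:properties_mollified_flux}. Fix $T^*$ as in \eqref{E:small-time-delta} and take the subsequence $\delta_k\to0$ from \cref{lem:rhostar-limit}. A diagonal argument over an exhaustion $K_m=B(0,m)$ gives a single subsequence with $\rho_{\delta_k}\to\rho_*$ in $\mC([0,T^*];\mL^1(K))$ for every bounded $K$, and, after a further extraction, almost everywhere in $\Omega_{T^*}$. This already yields $\rho_*\in\mC([0,T^*];\mL^1_{\loc}(\R^n))$ and $\rho_*(0)=\rho_\circ$, since $\rho_{\delta_k}(0)=\rho_\circ$.

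\textbf{Step 1: the bounds pass to the limit.} The $\mL^\infty$ bound \eqref{E:rho_star_bound} follows from \eqref{E:boundedness-delta} by a.e. convergence. The $\mTV$ bound \eqref{E:rho_star_TVbound} follows from \eqref{E:TV-bound-delta} and lower semicontinuity of the total variation under $\mL^1_{\loc}$ convergence, for each fixed $t$. For time continuity \eqref{E:rho_star-timecontinuity}:
\begin{itemize}
\item first restrict \eqref{eq:time_continuity-delta} to $\mL^1(K)$ and let $k\to\infty$;
\item then let $K\uparrow\R^n$;
\item bound the $\mTV$ factor of $\rho_{\delta_k}$ by the limit of the right-hand side of \eqref{E:TV-bound-delta}.
\end{itemize}
This gives the estimate with the uniform $\mTV$ bound; replacing it by $\|\rho_*\|_{\mL^\infty\mTV}$ may require the slightly stronger statement read with that bound.

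\textbf{Step 2: $\rho_*$ is an entropy solution.} Fix $k\in\R$ and $\phi\ge0$ in $\mC^1_c$, and write the Kružkov inequality for $\rho_{\delta_k}$ with flux $\bPhi_{\delta_k}$. The term $|\rho_{\delta_k}-k|\partial_t\phi$ converges by $\mL^1_{\loc}$ convergence.

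For the flux term, split
\[
\bPhi_{\delta_k}(t,\bx,\rho_{\delta_k})-\bPhi(t,\bx,\rho_*)=\big[\bPhi_{\delta_k}-\bPhi\big](t,\bx,\rho_{\delta_k})+\big[\bPhi(t,\bx,\rho_{\delta_k})-\bPhi(t,\bx,\rho_*)\big].
\]
The first piece tends to $0$ in $\mL^1$ by \eqref{eq:uniform_convergence_bphi}, with $\mU=(-\bar\rho,\bar\rho)$. The second is bounded by $\|\partial_3\bPhi\|_{\mL^\infty}|\rho_{\delta_k}-\rho_*|$. The term $\sign(\rho_{\delta_k}-k)$ converges a.e. only where $\rho_*\neq k$. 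We handle this in the standard way: the function $(\rho-k)_\pm$ formulation is continuous, and on $\{\rho_*=k\}$ the flux difference $\bPhi(\rho_*)-\bPhi(k)$ vanishes anyway. For the divergence term, $\diverg\bPhi_{\delta_k}(\cdot,\cdot,k)\to\diverg\bPhi(\cdot,\cdot,k)$ in $\mL^1$ by \eqref{eq:uniform_convergence_divergenz_bphi}. The $\sign$ factor is the delicate part: on $\{\rho_*=k\}$ we use $\liminf$ and the fact that $-\sign(\cdot)\diverg\bPhi(k)\phi$ is bounded below by $-|\diverg\bPhi(k)|\phi$. To avoid that, one can alternatively pass to the limit for all but countably many $k$ (those with $|\{\rho_*=k\}|=0$) and recover the remaining $k$ by continuity of both sides of the Kružkov inequality in $k$. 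I expect this to be the main technical obstacle, and would use the countable-exceptional-set argument first.

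\textbf{Step 3: stability and uniqueness.} Apply \eqref{E:rho_delta_general_statbility} to $\rho_{\delta_k},\tilde\rho_{\delta_k}$ on balls, and pass to the limit using \eqref{eq:uniform_convergence_divergenz_bphi} and the uniform bounds \crefrange{E:du-phi-uniform}{E:grad-Div-PhiDelta-uniform}. The factor $\|\partial_3\bPhi_\delta-\partial_3\hat\bPhi_\delta\|_{\mL^\infty}$ is dominated by the corresponding norm of $\bPhi-\hat\bPhi$ via Young's inequality. The exponents $\kappa_{\bPhi_\delta}$ are likewise dominated by $\kappa_{\bPhi}$, and the integrals converge by dominated convergence. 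This gives \eqref{E:rho_star_general_statbility} for the limits.

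Uniqueness requires that every entropy solution coincides with such a limit. For this, run Kružkov's doubling of variables directly on two entropy solutions with the same flux and datum. Under \cref{Ass:General}, the Lipschitz regularity in $\bx$ \eqref{E:D2-Phi-Linf-Lipschitz} and the integrability of $\diverg\bPhi$ suffice for the classical $\mL^1$-contraction computation. This yields $\|\rho(t)-\tilde\rho(t)\|_{\mL^1(K)}\le$ (terms vanishing when data coincide), hence $\rho=\rho_*$. In particular the whole family $\rho_\delta$ converges, not just a subsequence.
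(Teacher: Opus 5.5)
Your overall route is the paper's: uniform-in-$\delta$ bounds, compactness, passage to the limit in the Kru\v{z}kov inequality, lower semicontinuity of the total variation, and the limit in the stability estimate. The flux term is also fine once you phrase it via the map $u\mapsto\sign(u-k)\big(\bPhi(t,\bx,u)-\bPhi(t,\bx,k)\big)$, which is Lipschitz in $u$.

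\textbf{The gap.} It lies in the step you yourself flag, and neither of your two mechanisms closes it. Write the limit inequality as $L(k)\le\zeta(k)$, where $L(k)\coloneqq\iint\sign(\rho_*-k)\,\diverg\bPhi(t,\bx,k)\,\phi\dd\bx\dd t$ and $\zeta$ collects the other two integrals. The function $\zeta$ is Lipschitz in $k$. The function $L$, however, is discontinuous precisely at the exceptional $k$, i.e.\ where $\mathcal{R}_k\coloneqq\{\rho_*=k\}$ has positive measure. For $k_m\nearrow k$ one has $\sign(\rho_*-k_m)=+1$ on $\mathcal{R}_k$, and for $\eta_m\searrow k$ it equals $-1$. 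Hence
\[
\lim_{m\to\infty}L(k_m)=L(k)+I_k,\qquad \lim_{m\to\infty}L(\eta_m)=L(k)-I_k,\qquad I_k\coloneqq\iint_{\mathcal{R}_k}\diverg\bPhi(t,\bx,k)\,\phi\dd\bx\dd t .
\]
So ``continuity of both sides of the Kru\v{z}kov inequality in $k$'' fails exactly where you need it. Your $\liminf$ variant is also insufficient: it only yields
\[
L(k)-\iint_{\mathcal{R}_k}|\diverg\bPhi(t,\bx,k)|\,\phi\dd\bx\dd t\le\zeta(k),
\]
which is weaker than required unless you separately prove $\diverg\bPhi(\cdot,\cdot,k)=0$ a.e.\ on $\mathcal{R}_k$.

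\textbf{The fix.} The missing idea is the paper's averaging. Passing to the limit in $L(k_m)\le\zeta(k_m)$ and $L(\eta_m)\le\zeta(\eta_m)$ gives $L(k)+I_k\le\zeta(k)$ and $L(k)-I_k\le\zeta(k)$. Adding these cancels $I_k$. Since $\sign(0)=0$ on $\mathcal{R}_k$, the result is exactly $L(k)\le\zeta(k)$.

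\textbf{Uniqueness.} Here you take a genuinely different route from the paper. The paper obtains uniqueness only implicitly, through \eqref{E:rho_star_general_statbility}. That estimate is derived by passing to the limit in the mollified problems. On its own it shows the limit does not depend on the subsequence, not that an arbitrary entropy solution coincides with it. Your direct doubling of variables is what actually closes this point. It is not quite ``classical'', though. Kru\v{z}kov assumes the flux is $C^1$ in $t$, whereas \cref{Ass:General} gives only $\mL^\infty$ in time. You therefore need:
\begin{itemize}
\item separate doubling scales in time and space, sending the time scale to zero first;
\item $\mL^1$-continuity of translations of $t\mapsto\bPhi(t,\cdot,\cdot)$ and of $t\mapsto\diverg\bPhi(t,\cdot,\cdot)$;
\item the uniform Lipschitz dependence of $\nabla\bPhi$ on $u$ from \eqref{E:D2-Phi-Linf-Lipschitz}, to handle the frozen values $u(t,\bx)$, $v(s,\by)$ in the spatial difference quotients.
\end{itemize}

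\textbf{Time continuity.} Your caveat on \eqref{E:rho_star-timecontinuity} is justified. Lower semicontinuity of the total variation goes the wrong way for replacing the $\mTV$ norm of $\rho_\delta$ by that of $\rho_*$. The paper's limit passage does not address this either.
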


\begin{proof}
We establish existence by an approximation argument, exploiting 
\cref{theo:existence_uniqueness_smooth}. That result provides, for each 
$\delta>0$, a unique Kru\v{z}kov entropy solution
\[
\rho_{\delta}\in \mC\big([0,T^{*}];\mL^{1}_{\textnormal{loc}}(\R^n)\big)
\]
that satisfies, for all $k\in\R$ and all 
$\phi\in \mC^{1}_{\mathrm{c}}\big(\Omega_T;\R_{\geq0}\big)$,
\begin{equation}\label{E:Kruzkov-rho-delta}
\begin{split}
    &\int_{0}^{T}\!\!\!\!\int_{\R^n}\!\!\!
        |\rho_{\delta}-k|\,\partial_{t}\phi(t,\bx)
        +\sign(\rho_{\delta}-k)
        \big(\bPhi_{\delta}(t,\bx,\rho_{\delta})-\bPhi_{\delta}(t,\bx,k)\big)
        \circ \nabla\phi(t,\bx)\,\dd\bx\dd t\\
    &\hspace{1in}
        -\int_{0}^{T}\!\!\!\int_{\R^n}
        \sign(\rho_{\delta}-k)\,
        \diverg\bPhi_{\delta}(t,\bx,k)\,\phi(t,\bx)\,\dd \bx\dd t
    \geq 0,
\end{split}
\end{equation}
together with $\rho_{\delta}(0,\cdot)\equiv \rho_{\circ}$ on $\R^n$.
Our goal is to pass to the limit $\delta\to 0$ in each term of 
\cref{E:Kruzkov-rho-delta}.

\smallskip
\noindent\textit{First term.}
Since $f\mapsto |f-k|$ is Lipschitz continuous with constant $1$, and 
$\phi$ has compact support in the spatial variable $\bx$, we have
\begin{align*}
    &\bigg|\int_{0}^{T}\!\!\!\int_{\R^n}
        \big(|\rho_{\delta}-k|-|\rho_{*}-k|\big)
        \partial_{t}\phi\,\dd\bx\dd t\bigg|
    \leq \|\partial_t\phi\|_{\mL^\infty(\Omega_T)}
        \|\rho_{\delta}-\rho_{*}\|_{\mL^{1}(\supp\phi)}.
\end{align*}
By \cref{lem:compactness_rho_delta}, $\rho_\delta\to\rho_*$ strongly in 
$\mL^{1}_{\mathrm{loc}}(\Omega_T)$, so the right-hand side 
vanishes as $\delta\to 0$, giving
\[
\lim_{\delta\to 0}
\int_{0}^{T}\!\!\!\int_{\R^n}
    |\rho_{\delta}-k|\,\partial_{t}\phi\,\dd\bx\dd t
=\int_{0}^{T}\!\!\!\int_{\R^n}
    |\rho_{*}-k|\,\partial_{t}\phi\,\dd\bx\dd t.
\]

\smallskip
\noindent\textit{Second term.}
We decompose the error via a triangle inequality:
\begin{align*}
    &\bigg|\iint \sign(\rho_\delta - k)
        \big(\bPhi_\delta(t,\bx,\rho_\delta)-\bPhi_\delta(t,\bx,k)\big)
        \circ\nabla\phi\,\dd\bx\dd t \\
    &\hspace{0.5in} -\iint \sign(\rho_* - k)
        \big(\bPhi(t,\bx,\rho_*)-\bPhi(t,\bx,k)\big)
        \circ\nabla\phi\,\dd\bx\dd t\bigg| \leq I_1 + I_2,
\end{align*}
where
\begin{align*}
    I_1 &\coloneqq \bigg|\iint
        \Big[
        \sign(\rho_\delta-k)\big(\bPhi_\delta(t,\bx,\rho_\delta)
            -\bPhi_\delta(t,\bx,k)\big)
        -\sign(\rho_*-k)\big(\bPhi_\delta(t,\bx,\rho_*)
            -\bPhi_\delta(t,\bx,k)\big)
        \Big]\circ\nabla\phi\,\dd\bx\dd t\bigg|,\\
    I_2 &\coloneqq \bigg|\iint
        \sign(\rho_*-k)
        \Big[
        \big(\bPhi_\delta(t,\bx,\rho_*)-\bPhi_\delta(t,\bx,k)\big)
        -\big(\bPhi(t,\bx,\rho_*)-\bPhi(t,\bx,k)\big)
        \Big]\circ\nabla\phi\,\dd\bx\dd t\bigg|.
\end{align*}
For $I_1$, we have that the mapping $u\mapsto\sign(u-k)\big(\bPhi_\delta(t,\bx,u)-\bPhi_\delta(t,\bx,k)\big)$ 
is Lipschitz in $u$, uniformly in $\delta$, with Lipschitz constant 
$\|\partial_3\bPhi_\delta\|_{\mL^\infty}$, which is uniformly bounded 
by \cref{E:du-phi-uniform}. Together with the compact support of $\phi$ 
and the strong convergence $\rho_\delta\to\rho_*$ in 
$\mL^1_\mathrm{loc}$ from \cref{lem:compactness_rho_delta}, we obtain 
$I_1\to 0$ as $\delta\to 0$.
Concerning $I_2$, since $|\sign(\cdot)|\leq 1$ and $\phi$ has compact support,
\[
I_2\leq \|\nabla\phi\|_{\mL^\infty}
    \|\bPhi_\delta(\cdot,\cdot,\rho_*)-\bPhi(\cdot,\cdot,\rho_*)\|_{\mL^1(\supp\phi)}
    +\|\nabla\phi\|_{\mL^\infty}
    \|\bPhi_\delta(\cdot,\cdot,k)-\bPhi(\cdot,\cdot,k)\|_{\mL^1(\supp\phi)},
\]
which tends to zero as $\delta\to 0$ by \cref{eq:uniform_convergence_bphi}.
Hence, the second term converges to the claimed limit.

\noindent\textit{Third term.}
The convergence of the last term in \cref{E:Kruzkov-rho-delta} 
is more delicate, since strong $\mL^1$ convergence of $\rho_\delta$ 
does not directly imply the convergence of $\sign(\rho_\delta - k)$.
We argue as follows.

Let $K\subset\R^n$ be any compact set containing the spatial
support of $\phi$. Since $\rho_\delta\to\rho_*$ in 
$\mC([0,T^*];\mL^1(K))$ by \cref{lem:compactness_rho_delta},
there exists a subsequence $(\delta_n)_{n\in\NN}$ along which 
$\rho_{\delta_n}\to\rho_*$ pointwise a.e.\ on $[0,T^*]\times K$.
At any point $(t,\bx)$ where this convergence holds and 
$\rho_*(t,\bx)\neq k$, the sign function is continuous and we 
immediately obtain
\begin{equation}\label{E:sign-convergence}
    \sign(\rho_{\delta_n}(t,\bx)-k)
    \to\sign(\rho_*(t,\bx)-k),
    \quad\text{a.e.\ on }
    ([0,T^*]\times K)\setminus\mathcal{R}_k,
\end{equation}
where we have set
\[
\mathcal{R}_k
\coloneqq
\big\{(t,\bx)\in[0,T^*]\times K:\rho_*(t,\bx)=k\big\}.
\]
The question is, therefore, for which values of $k$ the 
exceptional set $\mathcal{R}_k$ has measure zero, so that 
\cref{E:sign-convergence} holds a.e.\ on the full 
product $[0,T^*]\times K$.

To answer this, consider for each $m\in\NN$ the set
\[
\mathcal{Q}_m
\coloneqq
\big\{k\in\R:\mathrm{meas}(\mathcal{R}_k)\geq\tfrac{1}{m}\big\}.
\]
Since the sets $\{\mathcal{R}_k\}_{k\in\mathcal{Q}_m}$ are 
pairwise disjoint measurable subsets of the bounded domain 
$[0,T^*]\times K$, each of measure at least $1/m$,
the set $\mathcal{Q}_m$ must be finite. Consequently,
\[
\Xi\coloneqq\bigcup_{m\in\NN}\mathcal{Q}_m
\]
is at most countable and thus has Lebesgue measure 
zero. For every $k\notin\Xi$, the set $\mathcal{R}_k$ has 
measure zero, so \cref{E:sign-convergence} holds a.e.\ on 
$[0,T^*]\times K$, that is,
\begin{equation}\label{E:sign-convergence_measurezero}
    \sign(\rho_{\delta_n}(t,\bx)-k)
    \to\sign(\rho_*(t,\bx)-k),
    \quad\text{a.e.\ on }[0,T^*]\times K,
    \quad\forall\,k\notin\Xi.
\end{equation}

It remains to handle the exceptional values $k\in\Xi$.
Fix any such $k$. Since $\Xi$ is countable, we can find 
sequences $k_m\nearrow k$ and $\eta_m\searrow k$ with 
$k_m,\eta_m\notin\Xi$ for all $m$. By definition, at any point 
$(t,\bx)\notin\mathcal{R}_k$, we have $\rho_*(t,\bx)\neq k$,
so the sign function is locally constant near $k$, and we 
conclude that
\begin{align}
    \lim_{m\to\infty}\sign(\rho_*(t,\bx)-k_m)
    &=\sign(\rho_*(t,\bx)-k),
    \quad\text{a.e.\ on }
    ([0,T^*]\times K)\setminus\mathcal{R}_k,
    \label{E:kappa-sequence-from-below}\\
    \lim_{m\to\infty}\sign(\rho_*(t,\bx)-\eta_m)
    &=\sign(\rho_*(t,\bx)-k),
    \quad\text{a.e.\ on }
    ([0,T^*]\times K)\setminus\mathcal{R}_k.
    \label{E:kappa-sequence-from-above}
\end{align}
These two approximations from below and above will allow us 
to pass to the limit for all $k\in\Xi$ in the next step.

By combining \cref{E:sign-convergence_measurezero} with the convergence 
results for the first and second terms, and applying dominated 
convergence (justified by the uniform $\mL^\infty$ bounds on 
$\rho_*$, $\bPhi$, and $\phi$), we may pass to the limit 
$\delta\to 0$ along the subsequence $(\delta_n)$ in 
\cref{E:Kruzkov-rho-delta}. For every $k\notin\Xi$, this yields
\begin{equation}\label{E:Kruzkov-rho-star}
\begin{split}
    &\int_{0}^{T}\!\!\!\int_{\R^n}
        |\rho_*-k|\,\partial_{t}\phi(t,\bx)
        +\sign(\rho_*-k)
        \big(\bPhi(t,\bx,\rho_*)-\bPhi(t,\bx,k)\big)
        \circ\nabla\phi(t,\bx)\,\dd\bx\dd t\\
    &\hspace{1in}
        -\int_{0}^{T}\!\!\!\int_{\R^n}
        \sign(\rho_*-k)\,
        \diverg\bPhi(t,\bx,k)\,\phi(t,\bx)\,\dd\bx\dd t
    \geq 0.
\end{split}
\end{equation}
To bring this into a convenient form, define
\[
L(k)\coloneqq
\int_{0}^{T}\!\!\!\int_{\R^n}
    \sign(\rho_*(t,\bx)-k)\,
    \diverg\bPhi(t,\bx,k)\,\phi(t,\bx)\,\dd\bx\dd t,
\]
and let $\zeta(k)$ denote the sum of the remaining two integrals 
in \cref{E:Kruzkov-rho-star}. Then \cref{E:Kruzkov-rho-star} 
reads \[L(k)\leq\zeta(k),\] for all $k\notin\Xi$. The function 
$\zeta$ is continuous in $k$, which follows from applying dominated 
convergence to each integral, using the regularity of 
$\bPhi$ from \cref{Ass:General} and the $\mL^\infty$ bound on $\rho_*$.

It remains to extend the inequality $L(k)\leq\zeta(k)$ to every 
$k\in\Xi$. Fix $k\in\Xi$ and let $k_m\nearrow k$ and 
$\eta_m\searrow k$ be the sequences constructed above, both 
taking values outside $\Xi$, so that \[L(k_m)\leq\zeta(k_m) \quad \text{ and } \quad L(\eta_m)\leq\zeta(\eta_m)\] for every $m$.

To pass the limit $m\to \infty$, we need to properly adjust the integral domain. More precisely, using the fact that $k_m < k$ for all $m \in \NN$, we have
\begin{equation}\label{E:limit-left}
    \begin{split}
    &\lim_{m \to \infty} \bigg(\iint_{\Omega_T \setminus \mathcal R_{k}}\sign(\rho_*(t,\bx)-k_m)\diverg\bPhi(t,\bx,k_m)\phi(t,\bx)\dd \bx\dd t \\
    & \hspace{1in} + \iint_{\mathcal R_{k}}\sign(\rho_*(t,\bx)-k_m)\diverg\bPhi(t,\bx,k_m)\phi(t,\bx)\dd \bx\dd t \bigg) \\
    & = \iint_{\Omega_T \setminus \mathcal R_{k}}\sign(\rho_*(t,\bx)-k )\diverg\bPhi(t,\bx,k)\phi(t,\bx)\dd \bx\dd t \\
     & \hspace{1in} + \iint_{\mathcal R_{k}}\diverg\bPhi(t,\bx,k)\phi(t,\bx)\dd \bx\dd t \le \zeta(k),
    \end{split}
\end{equation}
where we are using the continuity of $\zeta$.
By the same token, for $\eta_m$, we obtain
\begin{equation}\label{E:right-limit}
    \begin{split}
    &\lim_{m \to \infty} \bigg(\iint_{\Omega_T \setminus \mathcal R_{k}}\sign(\rho_*(t,\bx)-\eta_m)\diverg\bPhi(t,\bx,\eta_m)\phi(t,\bx)\dd \bx\dd t \\
    & \hspace{1in} + \iint_{\mathcal R_{k}}\sign(\rho_*(t,\bx)-\eta_m)\diverg\bPhi(t,\bx,\eta_m)\phi(t,\bx)\dd \bx\dd t\bigg) \\
    & = \iint_{\Omega_T \setminus \mathcal R_{k}}\sign(\rho_*(t,\bx)-k )\diverg\bPhi(t,\bx,k)\phi(t,\bx)\dd \bx\dd t \\
     & \hspace{1in} - \iint_{\mathcal R_{k}}\diverg\bPhi(t,\bx,k)\phi(t,\bx)\dd \bx\dd t \le \zeta(k).
    \end{split}
\end{equation}
If we add \cref{E:limit-left} and \cref{E:right-limit} and divide by \(2\), the $\mathcal{R}_k$ contributions cancel, and we obtain
\begin{equation}\begin{split}
    \tfrac{1}{2} \lim_{m \to \infty} \big(L(k_m) + L(\eta_m)\big)&=  \iint_{\Omega_T \setminus \mathcal R_{k}}\sign(\rho_*(t,\bx)-k )\diverg\bPhi(t,\bx,k)\phi(t,\bx)\dd \bx\dd t \\
    & \le \zeta(k).
\end{split}\end{equation}
On the other hand, over $\mathcal R_{k}$, we have $\sign(\rho_*(t, \bm x) - k) = 0$. Therefore, 
\begin{equation*}
    \tfrac{1}{2} \lim_{m \to \infty} \big( L(k_m) +  L(\eta_m)\big)= \iint_{\Omega_T}\sign(\rho_*(t,\bx)-k )\diverg\bPhi(t,\bx,k)\phi(t,\bx)\dd \bx\dd t = L(k) \le \zeta (k).
\end{equation*}
Putting these together, we can see that \cref{E:Kruzkov-rho-star} is satisfied for any $k \in \R$.

It follows from the proof of \cref{theo:existence_uniqueness_smooth} for the flux function $\bPhi$ that there exists a sufficiently small $T^*$ such that over $[0, T^*]$, 
\begin{equation*}
    \norm{\rho_*(t, \cdot)}_{\mL^\infty(\R^n)} <\infty, 
\end{equation*}
which proves \cref{E:rho_star_bound}. Furthermore, let us recall \cref{E:TV-bound-delta} and in particular for any domain $K \subset \R^n$,
\begin{equation*}
    \abs{\rho_\delta (t, \cdot)}_{\mTV(K)} <\infty, \quad \delta \in \R_{>0},
\end{equation*}
and the bound is uniform in $\delta$. In addition, by \cref{lem:compactness_rho_delta}, we have that $\rho_\delta(t, \cdot) \to \rho_*(t, \cdot)$ in $\mL^1(K)$. Therefore, by the lower semicontinuity of the total variation \cite{leoni2024first},
\[
\abs{\rho_*(t, \cdot)}_{\mTV(K)} \le \liminf_{\delta \to 0} \abs{\rho_\delta(t, \cdot)}_{\mTV(K)}, \quad t \in [0, T^*].
\]
Furthermore, since the result holds for any domain $K$, it holds on $\R^n$. By considering \cref{E:TV-bound-delta} and passing $\delta \to 0$, the \cref{E:rho_star_TVbound} directly follows.

Regarding the time continuity, we consider
\begin{equation*}
    \norm{\rho_*(t, \cdot) - \rho_*(s, \cdot)}_{\mL^1(\R^n)} \le \norm{\rho_*(t , \cdot) - \rho_\delta(t, \cdot)}_{\mL^1(\R^n)} + \norm{\rho_*(s , \cdot) - \rho_\delta(s, \cdot)}_{\mL^1(\R^n)} + \norm{\rho_\delta(t, \cdot) - \rho_\delta(s, \cdot)}_{\mL^1(\R^n)}.
\end{equation*}
The first two terms on the right-hand side vanish as $\delta \to 0$ by \cref{lem:compactness_rho_delta}. Therefore, \cref{E:rho_star-timecontinuity} follows immediately by letting $\delta \to 0$ from \cref{eq:time_continuity-delta}. 

The general stability result of \cref{E:rho_star_general_statbility} follows in a similar way, by considering \cref{E:rho_delta_general_statbility} and letting $\delta \to 0$. Collecting all the results, the proof is completed.
\end{proof}

We now adapt the previous result to a specific choice of the flux $\bPhi$. In particular, let $ w =  w(t,\bx):\Omega_T\to\R$ be a scalar function. We consider 
\[\bPhi(t,\bx,\rho)\coloneqq \bPsi(t,\bx, w(t,\bx),\rho),\]
where $\bPsi:(0,T)\times\R^n\times\R\times\R\rightarrow\R$. This structure will be central to the subsequent investigation of the well-posedness of nonlocal conservation laws. Specifically, we look at the equation 
\begin{equation}
\label{eq:tailored}
\begin{aligned}
        \partial_{t}\rho(t,\bx) + \diverg\big(\bPsi(t,\bx, w(t,\bx),\rho)\big)&=0, && (t,\bx)\in\Omega_{T},\\
        \rho(0,\cdot)&\equiv \rho_{\circ}, &&\text{ on } \R^n.
    \end{aligned}
\end{equation}
We then have the following result.
\begin{corollary}\label{cor:specific_tailored}
    Let $ w\in \mL^{\infty}\big((0,T);\mathcal{W}^{1, \infty}(\R^n;\R)\cap\mathcal{W}^{1, 1}(\R^n;\R)\big)$ be given with $\nabla w\in \mL^{\infty}((0,T);\mTV(\R^n;\R^n))$. Let $\rho_{\circ}\in\mL^{\infty}(\R^n)\cap\mBV(\R^n)$ and $\bPsi:(0,T)\times\R^n\times\R\times\R\rightarrow\R$ satisfy, for all open and bounded $\mU\ \subset\R$,
    \begin{equation}
    \begin{aligned}
        \bPsi&\in\mL^{\infty}((0,T);\mL^{\infty}(\R^n\times\mU^{2})),\\
        \nabla\bPsi,\partial_{3}\bPsi&\in\mL^{\infty}(\Omega_T\times\mU;\mathcal{W}^{1, \infty}(\mU)),\\
\partial_{3}\nabla\bPsi&\in\mL^{\infty}(\Omega_T\times\mU^{2}),\\
\partial_{3}^{2}\bPsi&\in \mL^{\infty}(\Omega_T\times\mU^{2}),
\end{aligned}
\label{eq:assumptions_bPsi_specifically_tailord}
\end{equation}
as well as 
\begin{align}
    \diverg\bPsi\in\mL^{1}(\Omega_T;\mL^{\infty}(\mU^2))\ \text{ or }\
    \diverg\bPsi(\cdot,\ast,0,\star)\equiv 0\label{eq:flux_spatial_dependency_TV}
\end{align}
and
\begin{equation}
\begin{split}
   &\nabla\diverg\bPsi\in\mL^{1}(\Omega_T;\mL^{\infty}(\mU^2)) \,\, or \,\,
    \nabla\diverg\bPsi(\cdot,\ast,0,\star)\equiv 0 \wedge 
    \partial_{3}\nabla\diverg\bPsi\in\mL^{\infty}(\Omega_T\times\mU^{2}).
\end{split}
\label{eq:flux_spatial_dependency_TV_2}
\end{equation}
Then, there exists a unique entropy solution $\rho$, in the sense of \cref{defi:entropy_solution_local}, to the Cauchy problem \eqref{eq:tailored}. Furthermore, the entropy solution satisfies 
\begin{equation}
\label{E:rho_star_w}
\begin{aligned}
\|\rho(t,\cdot)\|_{\mL^{\infty}(\R^n)}
    &\leq 42\|\rho_{\circ}\|_{\mL^{\infty}(\R^n)}\\
|\rho(t,\cdot)|_{\mTV(\R^n)}
    &\leq \abs{\rho_{\circ}}_{\mTV(\R^n)}\,\mathrm{e}^{\kappa t}
    + nW_n\mathrm{e}^{\kappa t}C^{1,\infty},\\
\|\rho(t,\cdot)-\rho(s,\cdot)\|_{\mL^{1}(\R^n)}
    &\leq 
    C^{1,\infty}(t,s)+ |t-s|C^{\infty,\infty}\,
    |\rho|_{\mL^{\infty}((0,T);\mTV(\R^n))},
\end{aligned}
\end{equation}
where the corresponding constants will be defined later in the proof in \cref{eq:C_1_infty,eq:C_infty_infty} and particularly \(C^{1,\infty}(t,s)\) for \((t,s)\in[0,T]^{2}\) vanishes for \(s\rightarrow t\).
\end{corollary}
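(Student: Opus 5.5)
The plan is to reduce the statement to \cref{theo:existence_uniqueness_less_regular_flux}. We set $\bPhi(t,\bx,\rho)\coloneqq\bPsi(t,\bx,w(t,\bx),\rho)$, verify that this $\bPhi$ satisfies \cref{Ass:General}, and then read off the bounds in \cref{E:rho_star_w} from \cref{E:rho_star_bound,E:rho_star_TVbound,E:rho_star-timecontinuity}. Existence, uniqueness and the stated estimates then follow directly from that theorem.

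Throughout, fix a bounded open $\mU$ and the compact range $\mathcal{V}\supset$ of values of $w$; this range is bounded because $w\in\mL^\infty((0,T);\mathcal{W}^{1,\infty})$. All $\bPsi$-norms are taken over $\mU^2$, after enlarging $\mU$ to contain $\mathcal V$. By the chain rule for a Lipschitz outer function composed with a $\mathcal{W}^{1,\infty}$ inner function,
\begin{align*}
\partial_\rho\bPhi&=\partial_4\bPsi(t,\bx,w,\rho),\\
\nabla\bPhi&=\nabla\bPsi+\partial_3\bPsi\,\nabla w,\\
\diverg\bPhi&=\diverg\bPsi+\partial_3\bPsi\circ\nabla w,\\
\nabla\diverg\bPhi&=\nabla\diverg\bPsi+\partial_3\nabla\bPsi\,\nabla w+\partial_3\diverg\bPsi\,\nabla w+\partial_3^2\bPsi\,(\nabla w\circ\nabla w)+\partial_3\bPsi\,D^2w.
\end{align*}
These identities are to be understood in the measure sense for the last term, since $\nabla w$ is only $\mTV$.

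Conditions \eqref{E:Lipschitz-data} and \eqref{E:D2-Phi-Linf-Lipschitz} follow from the $\mL^\infty$ bounds in \eqref{eq:assumptions_bPsi_specifically_tailord} together with $\nabla w\in\mL^\infty$. For \eqref{E:div_Phi_L1_uniform}, the term $\partial_3\bPsi\circ\nabla w$ lies in $\mL^1$ because $\nabla w\in\mL^\infty((0,T);\mL^1)$, which comes from $w\in\mathcal W^{1,1}$. The term $\diverg\bPsi$ is handled by the first alternative in \eqref{eq:flux_spatial_dependency_TV}. Under the second alternative, we write $\diverg\bPsi(t,\bx,w,\rho)=\int_0^{w}\partial_3\diverg\bPsi\,\dd v$, which is bounded by $C|w|\in\mL^1$. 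The same device handles the alternative in \eqref{eq:flux_spatial_dependency_TV_2}: $|\nabla\diverg\bPsi(t,\bx,w,\rho)|\le\|\partial_3\nabla\diverg\bPsi\|_{\mL^\infty}|w|$. The quadratic term is $\mL^1$ since $|\nabla w|^2\le\|\nabla w\|_\infty|\nabla w|$.

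With \cref{Ass:General} verified, \cref{theo:existence_uniqueness_less_regular_flux} yields a unique entropy solution with the $\mL^\infty$ bound $\bar\rho=42\max\{\|\rho_\circ\|_{\mL^\infty},1\}$. For the $\mTV$ bound we define
\[
C^{1,\infty}\coloneqq\sup_{s}\|\nabla\diverg\bPhi(s,\cdot,\cdot)\|_{\mL^1(\R^n;\mL^\infty((-\bar\rho,\bar\rho)))}\cdot T,
\]
written out explicitly in terms of the $\bPsi$-quantities and $\|w\|_{\mathcal W^{1,1}}$, $\|\nabla w\|_{\mL^\infty}$, $|\nabla w|_{\mTV}$. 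We then estimate $\e^{\kappa(t-s)}\le\e^{\kappa t}$, which gives the form stated in \cref{E:rho_star_w}. For time continuity we set $C^{1,\infty}(t,s)\coloneqq\int_s^t\|\diverg\bPhi(\tau,\cdot,\cdot)\|_{\mL^1(\R^n;\mL^\infty)}\dd\tau$, which vanishes as $s\to t$ by absolute continuity of the integral. We also set $C^{\infty,\infty}\coloneqq\|\partial_4\bPsi\|_{\mL^\infty(\Omega_T\times\mU^2)}$.

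The main obstacle is the term $\partial_3\bPsi\,D^2w$. With only $\nabla w\in\mL^\infty((0,T);\mTV)$, the second derivative $D^2w$ is a measure rather than an $\mL^1$ function, so \eqref{E:integrable-TV-norm} holds only in a measure-valued sense. I would handle this through the mollification argument of \cref{theo:existence_uniqueness_smooth}. First, mollify $w$ to $w_\varepsilon$, which satisfies $\|D^2w_\varepsilon\|_{\mL^1}\le|\nabla w|_{\mTV}$ uniformly in $\varepsilon$. Next, apply the theorem to the resulting flux $\bPhi_\varepsilon$, whose bounds are uniform in $\varepsilon$. Finally, pass $\varepsilon\to0$ using the stability estimate \eqref{E:rho_star_general_statbility}, compactness as in \cref{lem:compactness_rho_delta}, and lower semicontinuity of $\mTV$. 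This is exactly how $|\nabla w|_{\mTV}$ enters $C^{1,\infty}$.
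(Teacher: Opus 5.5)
Your final plan is essentially the paper's proof. You mollify $w$, apply \cref{theo:existence_uniqueness_less_regular_flux} to $\bPsi(t,\bx,w_\varepsilon,\rho)$ with $\varepsilon$-uniform constants (the key point being $\|D^2w_\varepsilon\|_{\mL^1}\le|\nabla w|_{\mTV}$), and pass to the limit by compactness and lower semicontinuity of $\mTV$; the one thing to make explicit is uniqueness for the original flux $\bPhi=\bPsi(t,\bx,w,\rho)$, which is not inherited from the uniqueness of each $\rho_\varepsilon$ (and, as you observe, the theorem does not apply to $\bPhi$ directly), so, as in the paper, obtain it from the $\mL^1$-contraction estimate for $\bPhi$, which needs only the Lipschitz-type bounds you already verified, e.g.\ $\nabla\partial_\rho\bPhi=\nabla\partial_4\bPsi+\partial_3\partial_4\bPsi\,\nabla w\in\mL^\infty$, and not \eqref{E:integrable-TV-norm}.
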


\begin{proof}
We approximate $ w$ by smooth functions via convolution. 
Specifically, for each $\delta\in\R_{>0}$, set
\[
 w_{\delta}\coloneqq \Theta_{\delta}\ast w\in \mC^{\infty}(\Omega_T),
\]
where $\Theta_\delta$ is the---properly to two dimensions adjusted---mollifier of \cref{defi:bPhi_delta}.
Arguing as in the proof of \cref{lem:properties_mollified_flux}, 
we obtain the convergences 
\[
\lim_{\delta\to 0}
    \| w_{\delta}- w\|_{\mL^{1}((0,T);\mL^{\infty}\cap\mL^{1}(\R^n))}
    = 0,
\qquad
\lim_{\delta\to 0}
    \|\nabla w_{\delta}-\nabla w\|_{\mL^{1}(\Omega_T;\R^n)}
    = 0,
\]
together with the following uniform bounds, valid for all $\delta\in\R_{>0}$:
\begin{equation}
\begin{aligned}
    \| w_{\delta}\|_{\mL^{\infty}(\Omega_T)}
        &\leq \| w\|_{\mL^{\infty}(\Omega_T)}
        \eqqcolon  w_{\infty},\\
    \| w_{\delta}\|_{\mL^{\infty}((0,T);\mL^{1}(\R^n))}
        &\leq \| w\|_{\mL^{\infty}((0,T);\mL^{1}(\R^n))}
        \eqqcolon  w_{1},\\
    \|\nabla w_{\delta}\|_{\mL^{\infty}(\Omega_T)}
        &\leq \|\nabla w\|_{\mL^{\infty}(\Omega_T)}
        \eqqcolon  w'_{\infty},\\
    \|\nabla w_{\delta}\|_{\mL^{\infty}((0,T);\mL^{1}(\R^n))}
        &\leq \|\nabla w\|_{\mL^{\infty}((0,T);\mL^{1}(\R^n))}
        \eqqcolon  w'_{1},\\
    \|\nabla\nabla w_{\delta}\|_{\mL^{\infty}((0,T);\mL^{1}(\R^n;\R^n))}
        &\leq |\nabla w|_{\mL^{\infty}((0,T);\mTV(\R^n;\R^n))}
        \eqqcolon  w'_{\mTV}.
\end{aligned}
\label{eq:omega_delta_bounds_uniform}
\end{equation}
Since $\| w_\delta\|_{\mL^\infty(\Omega_T)}\leq w_\infty$, 
the values of $ w_\delta$ are confined to the interval
\[
\mathcal{I}_ w \coloneqq (- w_\infty, w_\infty)\subset\R.
\]
We also define the approximating flux
\[
\bPhi_{\delta}(t,\bx,\rho)
\coloneqq \bPsi\big(t,\bx, w_{\delta}(t,\bx),\rho\big),
\quad (t,\bx,\rho)\in\Omega_T\times\R,
\]
and verify that $\bPhi_\delta$ satisfies each part of \cref{Ass:General} 
uniformly in $\delta$:
\begin{description}
\item[\cref{E:Lipschitz-data}:]
By the chain rule, \cref{eq:assumptions_bPsi_specifically_tailord}, and \cref{eq:omega_delta_bounds_uniform},
\begin{align}
    \|\bPhi_{\delta}\|_{\mL^{\infty}(\Omega_T;\mL^\infty(\mU))}
        &\leq \|\bPsi\|_{\mL^{\infty}(\Omega_T\times\mathcal{I}_ w\times\mU)},\notag\\
    \|\nabla_{(\bx,\rho)}\bPhi_{\delta}\|_{\mL^{\infty}(\Omega_T;\mL^\infty(\mU))}
        &\leq \|\nabla\bPsi\|_{\mL^{\infty}(\Omega_T\times\mathcal{I}_ w\times\mU)}
        + w'_{\infty}
            \|\partial_{3}\bPsi\|_{\mL^{\infty}(\Omega_T\times\mathcal{I}_ w\times\mU)}
        +\|\partial_{4}\bPsi\|_{\mL^{\infty}(\Omega_T\times\mathcal{I}_ w\times\mU)}\notag\\
        &\eqqcolon C^{\infty,\infty},\label{eq:C_infty_infty}
\end{align}
which is finite and uniform in $\delta$ given the assumptions on $\bPsi$.

\item[\cref{E:D2-Phi-Linf-Lipschitz}:]
Similarly,
\begin{align*}
    \|\nabla\bPhi_{\delta}\|_{\mL^{\infty}(\Omega_T;\mathcal{W}^{1,\infty}(\mU;\R^n))}
        &\leq \|\nabla\bPsi\|_{\mL^{\infty}(\Omega_T\times\mathcal{I}_ w;
            \mathcal{W}^{1,\infty}(\mU;\R^n))}
        + w'_{\infty}
            \|\partial_{3}\bPsi\|_{\mL^{\infty}(\Omega_T\times\mathcal{I}_ w;
            \mathcal{W}^{1,\infty}(\mU;\R^n))}\\
        &\eqqcolon C^{1,\infty,\infty},
\end{align*}
which is again uniform in $\delta$.

\item[\cref{E:div_Phi_L1_uniform}:]
Under the first alternative of \cref{eq:flux_spatial_dependency_TV},
\begin{align*}
    \|\diverg\bPhi_{\delta}\|_{\mL^{1}(\Omega_T;\mL^{\infty}(\mU))}
        &\leq \|\diverg\bPsi\|_{\mL^{1}(\Omega_T;\mL^{\infty}(\mathcal{I}_ w\times\mU))}
        +T w'_{1}
            \|\partial_{3}\bPsi\|_{\mL^{\infty}(\Omega_T\times\mathcal{I}_ w\times\mU)}.
\end{align*}
Under the second alternative (i.e., $\diverg\bPsi(\cdot,\ast,0,\star)\equiv 0$),
\begin{align*}
    \|\diverg\bPhi_{\delta}\|_{\mL^{1}(\Omega_T;\mL^{\infty}(\mU))}
        &\leq T w_{1}
            \|\partial_{3}\diverg\bPsi\|_{\mL^{\infty}(\Omega_T\times\mathcal{I}_ w\times\mU)}
        +T w'_{1}
            \|\partial_{3}\bPsi\|_{\mL^{\infty}(\Omega_T\times\mathcal{I}_ w\times\mU)}.
\end{align*}
Both bounds are finite and uniform in $\delta$
\item[\cref{E:integrable-TV-norm}:]
Under the first alternative of \cref{eq:flux_spatial_dependency_TV_2},
\begin{equation}
\label{eq:C_1_infty}
\begin{aligned}
    \|\nabla\diverg\bPhi_{\delta}\|_{\mL^{1}(\Omega_T;\mL^{\infty}(\mU;\R^n))}
        &\leq \|\nabla\diverg\bPsi\|_{\mL^{1}(\Omega_T;\mL^{\infty}(\mathcal{I}_ w\times\mU))}
        +T w'_{1}
            \|\partial_{3}\nabla\bPsi\|_{\mL^{\infty}(\Omega_T\times\mathcal{I}_ w\times\mU)}\\
        &\quad +T w'_{1} w'_{\infty}
            \|\partial_{3}^{2}\bPsi\|_{\mL^{\infty}(\Omega_T\times\mathcal{I}_ w\times\mU)}
        +T w'_{\mTV}
            \|\partial_{3}\bPsi\|_{\mL^{\infty}(\Omega_T\times\mathcal{I}_ w\times\mU)}\\
        &\eqqcolon C^{1,\infty},
\end{aligned}
\end{equation}
where $C^{1, \infty} = C^{1, \infty}(T)$ which vanishes as $T \to 0$. 

Under the second alternative, the same estimate holds with 
$\|\nabla\diverg\bPsi\|_{\mL^1(\Omega_T;\mL^\infty(\mathcal{I}_ w\times\mU))}$ 
replaced by 
$ w_{1}\|\partial_{3}\nabla\diverg\bPsi\|_{\mL^{\infty}(\Omega_T\times\mathcal{I}_ w\times\mU)}$;
we denote the resulting bound by the same symbol $C^{1,\infty}$.
\end{description}

Having verified all conditions of \cref{Ass:General} uniformly 
in $\delta$, \cref{theo:existence_uniqueness_less_regular_flux} 
provides, for each $\delta\in\R_{>0}$, a unique weak entropy solution 
on some short time horizon $T^*\in(0,T]$,
\[
\rho_{\delta}
\in \mC\big([0,T^*];\mL^{1}_{\mathrm{loc}}(\R^n)\big)
    \cap\mL^{\infty}\big((0,T^*);\mL^{\infty}(\R^n)\cap\mTV(\R^n)\big),
\]
that satisfies, for all $t\in[0,T^*]$, the uniform bounds
\begin{align}
\|\rho_{\delta}(t,\cdot)\|_{\mL^{\infty}(\R^n)}
    &\leq 42\|\rho_{\circ}\|_{\mL^{\infty}(\R^n)}, \label{E:rho_w_delta_bound}\\
|\rho_{\delta}(t,\cdot)|_{\mTV(\R^n)}
    &\leq \abs{\rho_{\circ}}_{\mTV(\R^n)}\,\mathrm{e}^{\eta t}
    + nW_n\mathrm{e}^{\eta t}C^{1,\infty},
    \label{eq:TV_bound_tailored}\\
\|\rho_{\delta}(t,\cdot)-\rho_{\delta}(s,\cdot)\|_{\mL^{1}(\R^n)}
    &\leq C^{1,\infty}(t,s)
    +|t-s|\,C^{\infty,\infty}\,
    |\rho_{\delta}|_{\mL^{\infty}((0,T);\mTV(\R^n))}, \label{E:rho_w_delta_timecontinuity}
\end{align}
where $C^{1, \infty}(t,s)$ vanishes as $t - s\to 0$ and  $\eta\coloneqq(2n+1)C^{\infty,\infty}$.
Since all bounds are uniform in $\delta$, the compactness argument 
of \cref{lem:compactness_rho_delta} applies, and any limit point 
$\rho^*$ is a Kru\v{z}kov entropy solution by the same 
passage-to-the-limit argument as in 
\cref{theo:existence_uniqueness_less_regular_flux}. 

In addition, following the proof of \cref{theo:existence_uniqueness_less_regular_flux}, the same bounds \crefrange{E:rho_w_delta_bound}{E:rho_w_delta_timecontinuity} hold true for $\rho_{*}$ which proves \cref{E:rho_star_general_statbility}.

Furthermore, the general stability result of \cref{E:rho_star_general_statbility} will be satisfied in this case. In particular, for any two 
solutions corresponding to initial data, we have
$\rho_{\circ},\tilde{\rho}_{\circ}\in\mL^{\infty}(\R^n)\cap\mBV(\R^n)$,
\[
\|\rho(t,\cdot)-\tilde{\rho}(t,\cdot)\|_{\mL^{1}(\R^n)}
\leq \mathrm{e}^{(2n+1)C^{1,\infty,\infty}\,t}
    \|\rho_{\circ}-\tilde{\rho}_{\circ}\|_{\mL^{1}(\R^n)},
\]
which provides the uniqueness of $\rho_*$. Collecting all together, the claims follow. 
\end{proof}

\begin{remark}[reduced spatial regularity on $w$ in \cref{cor:specific_tailored}]
Compared with \cref{theo:existence_uniqueness_less_regular_flux}, the structured 
dependence of the flux on $w$ in \cref{cor:specific_tailored} allows us to work 
under weaker spatial regularity. Indeed, it suffices to assume 
$w \in \mL^\infty((0,T);\mathcal{W}^{1,1}(\R^n))$ with 
$\nabla w \in \mL^\infty((0,T);\mBV(\R^n;\R^n))$, rather than requiring 
$\nabla w \in \mL^\infty((0,T);\mathcal{W}^{1,1}(\R^n;\R^n))$ as would be needed 
to apply \cref{theo:existence_uniqueness_less_regular_flux} directly. 
Although this gain may appear marginal in isolation, it turns out to be essential in
\cref{sec:nonlocal_conservation_laws}, where the nonlocal operator $\mathcal{W}$ 
naturally produces functions of this regularity class when the kernel 
$\gamma$ is only assumed to be in $\mBV(\R^n)$.
\end{remark}

\begin{remark}[concrete instances of \cref{cor:specific_tailored}]
\label{rem:specifically_structured_flux}
The assumptions on $\bPsi$ in \cref{cor:specific_tailored} may be difficult to 
verify in concrete situations. We therefore discuss three simplistic and representative cases:

\begin{itemize}

\item \textbf{Lack of explicit space and time dependence.}
Assume $\partial_1\bPsi = 0$ and $\nabla\bPsi = 0$, so that the flux depends 
on $(t,\bx)$ only through $w$. Then there exists a
$\tilde{\bPsi}:\R\times\R\to\R^n$ such that
\begin{equation}
\bPsi(t,\bx,w,\rho) = \tilde{\bPsi}(w,\rho)
\qquad\forall\,(t,\bx,w,\rho)\in\Omega_T\times\R^2,\label{eq:autonomous_flux}
\end{equation}
and the assumptions of \cref{cor:specific_tailored} reduce, for every compact 
$\mU\subset\R$, to
\begin{itemize}
    \item $\tilde{\bPsi}\in\mathcal{W}^{1,\infty}(\mU^2)$,
    \item $\partial_1\tilde{\bPsi}\in\mL^\infty(\mU;\mathcal{W}^{1,\infty}(\mU))$,
    \item $\partial_1^2\tilde{\bPsi}\in\mL^\infty(\mU^2)$.
\end{itemize}
In particular, \cref{eq:flux_spatial_dependency_TV,eq:flux_spatial_dependency_TV_2} 
are satisfied trivially, since the divergence in $\bx$ vanishes identically. 
Any $\tilde{\bPsi}\in\mC^\infty(\R^2;\R^n)$ satisfies these conditions.

\item \textbf{Product structure in the spatial variable.}
Assume
\[
\bPsi(t,\bx,w,\rho) = v(t,\bx)\,\tilde{\bPsi}(w,\rho),
\quad (t,\bx,w,\rho)\in\Omega_T\times\R^2,
\]
with $v:\Omega_T\to\R$ scalar and $\tilde{\bPsi}$ as in the previous case.
The assumptions of \cref{cor:specific_tailored} are satisfied if
\begin{itemize}
    \item $v\in\mL^\infty\big((0,T);\mL^1\cap\mL^\infty(\R^n)\big)$,
    \item $\nabla v\in\mL^1(\Omega_T)$ or $\tilde{\bPsi}(0,\cdot)\equiv 0$,
    \item $\nabla^2 v\in\mL^1(\Omega_T)$ or
          $\nabla^2 v\in\mL^\infty(\Omega_T)$ and $\tilde{\bPsi}(0,\cdot)\equiv 0$.
\end{itemize}
The conditions on $v$ amount to spatial Lipschitz continuity together with 
$\mL^1$-integrability of the second spatial derivatives, or to a cancellation 
condition on $\tilde{\bPsi}$ at zero; neither is particularly restrictive.

\item \textbf{Vector-valued spatial factor.}
This case is somewhat symmetrical to the previous one. Assume
\[
\bPsi(t,\bx,w,\rho) = \bv(t,\bx)\,\tilde{\Psi}(w,\rho),
\quad (t,\bx,w,\rho)\in\Omega_T\times\R^2,
\]
with $\bv:\Omega_T\to\R^n$ and $\tilde{\Psi}:\R^2\to\R$ scalar. Here, the 
spatial vector structure is carried entirely by $\bv$, while $\tilde{\Psi}$ 
encodes the dependence on the local and nonlocal variables. The assumptions 
of \cref{cor:specific_tailored} are satisfied if
\begin{itemize}
    \item $\tilde{\Psi}\in\mathcal{W}^{1,\infty}(\mU^2)$, with 
          $\partial_1\tilde{\Psi}\in\mL^\infty(\mU;\mathcal{W}^{1,\infty}(\mU))$ and 
          $\partial_1^2\tilde{\Psi}\in\mL^\infty(\mU^2)$,
    \item $\bv\in\mL^\infty\big((0,T);\mathcal{W}^{1,\infty}(\R^n;\R^n)\big)$,
    \item $\diverg\bv\in\mL^1(\Omega_T)$ or $\tilde{\Psi}(0,\cdot)=0$,
    \item $\nabla\diverg\bv\in\mL^1(\Omega_T)$ or $\tilde{\Psi}(0,\cdot)=0$.
\end{itemize}
These are mild regularity conditions on both $\bv$ and $\tilde{\Psi}$.
\end{itemize}
\end{remark}


\section{Application to nonlocal nonlinear conservation laws}
\label{sec:nonlocal_conservation_laws}

We now turn to the core part of this work: the analysis of well-posedness in the nonlocal setting. In particular, we focus on the main equations \eqref{E:model_nonlocal} and \eqref{E:model_nonlocal_memory}. For the reader's convenience, we recall the nonlocal operator in the case of \cref{E:model_nonlocal},
\[\mW[J(q),\gamma](t,\bx)=\int_{\R^n}\gamma(\bx-\by)J(q(t,\by))\dd\by, \quad (t,\bx)\in (0,T)\times\R^n,\] 
and in the  case of the model with memory, \cref{E:model_nonlocal_memory}
\[\mW[J(q),\kappa](t,\bx)=\int_{\R_{<t}}\int_{\R^n}\kappa(t-s,\bx-\by)J(q(s,\by))\dd\by\dd s, \quad (t,\bx)\in (0,T)\times\R^n.\] 
Note that, in the latter integral, the solution $q(s,\by)$ takes values for $s<0$ as well. This is standard in the treatment of equations with memory: the solution prior to $t=0$ is prescribed as a \emph{historical 
datum} $q_{\circ}(t,\bx)$ for $(t,\bx)\in\R_{\leq 0} \times \R^n$.

We have the following definition of entropy solution, depending on the model under consideration.

\begin{definition}[entropy solution for nonlinear nonlocal conservation laws]
\label{defi:entropy_solution}
Let $T>0$. A function
\[
q \in \mC\big([0,T];\mL^1_{\loc}(\R^n)\big)
   \cap \mL^\infty\big((0,T);\mL^\infty(\R^n)\big)
\]
is called a Kru\v{z}kov entropy solution if the following conditions hold:
\begin{description}[leftmargin=10pt]

\item[Case of \cref{E:model_nonlocal}:]
For every $\phi \in \mC_c^{1}(\Omega_T;\R_{\ge 0})$ and every $\alpha \in \R$,
\begin{align*}
&\int_0^T \!\!\int_{\R^n}
    |q-\alpha|\,\partial_t \phi
    + \sign(q-\alpha)
      \Big(
        \bF(t,\bx,\mW[J(q),\gamma],q)
        - \bF(t,\bx,\mW[J(q),\gamma],\alpha)
      \Big)
      \cdot \nabla \phi
    \, d\bx\, dt \\
&\quad
- \int_0^T \!\!\int_{\R^n}
    \sign(q-\alpha)\,
    \diverg \bF(t,\bx,\mW[J(q),\gamma],\alpha)
    \,\phi
    \, d\bx\, dt \\
&\quad
- \int_0^T \!\!\int_{\R^n}
    \sign(q-\alpha)\,
    \partial_3 \bF(t,\bx,\mW[J(q),\gamma],\alpha)
    \circ \nabla \mW[J(q),\gamma]
    \,\phi
    \, d\bx\, dt
\;\ge 0 .
\end{align*}
Here, we have omitted the explicit dependence of $q, \mW[J(q),\gamma]$, and $\phi$ on $(t,\bx)$ to simplify the notation.
Furthermore, the initial condition
\[
q(0,\cdot) \equiv q_\circ \quad \text{on } \R^n
\]
is satisfied, for a given initial datum $q_\circ \in \mL^\infty(\R^n)$.

\item[Case of \cref{E:model_nonlocal_memory}:]
For every $\phi \in \mC_c^{1}(\Omega_T;\R_{\ge 0})$ and every $\alpha \in \R$,
\begin{align*}
&\int_0^T \!\!\int_{\R^n}
    |q-\alpha|\,\partial_t \phi
    + \sign(q-\alpha)
      \Big(
        \bF(t,\bx,\mW[J(q),\kappa],q)
        - \bF(t,\bx,\mW[J(q),\kappa],\alpha)
      \Big)
      \cdot \nabla \phi
    \, d\bx\, dt \\
&\quad
- \int_0^T \!\!\int_{\R^n}
    \sign(q-\alpha)\,
    \diverg \bF(t,\bx,\mW[J(q),\kappa],\alpha)
    \,\phi
    \, d\bx\, dt \\
&\quad
- \int_0^T \!\!\int_{\R^n}
    \sign(q-\alpha)\,
    \partial_3 \bF(t,\bx,\mW[J(q),\kappa],\alpha)
    \circ \nabla \mW[J(q),\kappa]
    \,\phi
    \, d\bx\, dt
\;\ge 0 .
\end{align*}
Here, we have omitted the explicit dependence of $q, \mW[J(q),\gamma]$, and $\phi$ on $(t,\bx)$ to simplify the notation.
Furthermore, the historical datum is prescribed by
\[
q(t,\bx) \equiv q_\circ(t,\bx),
\qquad (t,\bx)\in \R_{\le 0}\times\R^n,
\]
for a given function
\[
q_\circ \in \mL^\infty(\R_{<0};\mL^\infty(\R^n)).
\]

\end{description}
\end{definition}

In both cases, we will construct solutions employing the well-posedness and stability results obtained in the previous section. In particular, we consider the conservation law with a given space- and time-dependent term $w$,
\begin{equation}
    \begin{aligned}
    \partial_{t}q &=-\diverg\big(\bbF\big(t,\bx,w(t,\bx), q\big)\big), &&(t, \bx) \in (0, T) \times \R^n,\\
    q(0,\cdot)&=
    \begin{cases}
        q_{\circ}, &\text{ on }\R^n \qquad\text{ (nonlocal in space)}, \\
        q_{\circ}(0,\cdot) &\text{ on }\R^n \qquad\text{ (nonlocal in space and time; memory)}.
    \end{cases}
    \end{aligned}
    \label{eq:nonlocal_explicit_w}
\end{equation}
Observe that, in the memory case, the initial condition at $t=0$ is simply the trace of the historical 
datum $q_{\circ}(t,\bx)$, prescribed for all $(t,\bx)\in\R_{\leq 0}\times\R^n$, rather 
than a standalone datum as in the spatially nonlocal case.

A key ingredient of the fixed-point 
argument developed in each subsection below is the solution operator associated 
with \cref{eq:nonlocal_explicit_w}, which maps a given $w$ to the corresponding entropy solution. 
To introduce it, we first state our standing assumptions on the flux $\bF$.

\begin{assumption}[the flux \(\bF\)]
\label{ass:flux_nonlocal}
    Let $\mU\subset\R$ be an open and bounded set. We assume that the flux $\bF$ satisfies the following regularity properties:
        \begin{enumerate}
          \item $\bF\in\mL^{\infty}((0,T);\mL^{\infty}(\R^n\times\mU^{2}))$,\label{item:1_flux}
          \item $\nabla\bF\in\mL^{\infty}((0,T)\times\R^n\times\mU;\mathcal{W}^{1,\infty}(\mU))$,\label{item:2_flux}
          \item $\partial_{3}\bF\in\mL^{\infty}((0,T)\times\R^n\times\mU;\mathcal{W}^{1,\infty}(\mU))$,\label{item:3_flux}
          \item $\partial_{3}\nabla\bF\in\mL^{\infty}((0,T)\times\R^n\times\mU^{2})$,\label{item:4_flux}
          \item $\partial_{3}^{2}\bF\in\mL^{\infty}((0,T)\times\R^n\times\mU^{2})$,\label{item:5_flux}
          \item $\diverg\bF\in\mL^{1}((0,T)\times\R^n;\mL^{\infty}(\mU^{2}))\ or\ 
          \diverg\bF(\cdot,\ast,0,\star)\equiv 0$,\label{item:6_flux}
          \item $\nabla\diverg\bF\in\mL^{1}((0,T)\times\R^n;\mL^{\infty}(\mU^{2}))$
          or
          $\nabla\diverg\bF(\cdot,\ast,0,\star)\equiv 0 \wedge
          \partial_{3}\nabla\diverg\bF\in\mL^{\infty}((0,T)\times\R^n\times\mU^{2})$.
          \label{item:7_flux}
        \end{enumerate}
\end{assumption}

We can now define the solution operator.

\begin{definition}[solution operator]\label{defi:solution_operator_fixed_w}
    Let \cref{ass:flux_nonlocal} hold and let $w\in \mL^{\infty}\big((0,T);\mathcal{W}^{1, \infty}(\R^n;\R)\big)$ be given. Consider the conservation law \eqref{eq:nonlocal_explicit_w}. Let \[\mZ(T)\coloneqq\Big\{w\in\mL^{\infty}\big((0,T);\mathcal{W}^{1, \infty}\cap\mathcal{W}^{1, 1}(\R^n;\R)\big): |\nabla w|_{\mL^{\infty}((0,T);\mTV(\R^n;\R^n))}<\infty\Big\}.\] 
    Then, we define the solution operator 
    \[
    \mQ:\begin{cases}\mZ(T)
    &\rightarrow \mC\big([0,T];\mL^{1}(\R^n)\big)\cap \mL^{\infty}((0,T);\mL^{\infty}(\R^n)\cap \mTV(\R^n)),\ \\
    w&\mapsto \text{entropy solution of \cref{eq:nonlocal_explicit_w}},
    \end{cases}
    \]
    which maps $w$ to the corresponding weak entropy solution of \eqref{eq:nonlocal_explicit_w}.
\end{definition}

\begin{lemma}[well-definedness of $\mQ$]\label{T:Q-welldefined} \label{lem:well-posed-Q}
Let \cref{ass:general_nonlocal} hold. Then the solution operator $\mQ$ in 
\cref{defi:solution_operator_fixed_w} is well-defined; that is, for every $w \in \mZ(T)$, 
there exists a unique weak entropy solution of \cref{eq:nonlocal_explicit_w} in the 
sense of \cref{defi:entropy_solution_local}.
\end{lemma}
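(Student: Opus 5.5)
The plan is to reduce the claim directly to \cref{cor:specific_tailored}, taking $\bPsi\coloneqq\bF$. Fix $w\in\mZ(T)$. By the definition of $\mZ(T)$ we have $w\in\mL^{\infty}\big((0,T);\mathcal{W}^{1,\infty}(\R^n)\cap\mathcal{W}^{1,1}(\R^n)\big)$ and $\nabla w\in\mL^{\infty}((0,T);\mTV(\R^n;\R^n))$. These are exactly the hypotheses on $w$ required in \cref{cor:specific_tailored}.

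Next, the hypotheses on $\bPsi$ in \cref{eq:assumptions_bPsi_specifically_tailord}, \cref{eq:flux_spatial_dependency_TV} and \cref{eq:flux_spatial_dependency_TV_2} have to be matched with \cref{ass:flux_nonlocal} item by item:
\begin{itemize}
\item item \ref{item:1_flux} gives the $\mL^\infty$ bound on $\bPsi$;
\item items \ref{item:2_flux} and \ref{item:3_flux} give $\nabla\bPsi,\partial_3\bPsi\in\mL^\infty(\Omega_T\times\mU;\mathcal{W}^{1,\infty}(\mU))$;
\item items \ref{item:4_flux} and \ref{item:5_flux} give the bounds on $\partial_3\nabla\bPsi$ and $\partial_3^2\bPsi$;
\item items \ref{item:6_flux} and \ref{item:7_flux} are verbatim the alternatives \cref{eq:flux_spatial_dependency_TV} and \cref{eq:flux_spatial_dependency_TV_2}.
\end{itemize}
The bounded set $\mU$ must be chosen large enough to contain both the range interval $\mathcal{I}_w=(-\|w\|_{\mL^\infty},\|w\|_{\mL^\infty})$ in the third argument and the interval $(-\bar\rho,\bar\rho)$ in the fourth. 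Since the assumption holds for every bounded open $\mU$, this is harmless. The hypothesis on $\partial_4\bPsi$ used in \cref{eq:C_infty_infty} comes from item \ref{item:1_flux} together with the Lipschitz information in items \ref{item:2_flux} and \ref{item:3_flux}. If this is not literally stated, I would read it from the implicit requirement, built into \cref{ass:general_nonlocal}, that $\bF$ be Lipschitz in the last variable.

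With these hypotheses checked, \cref{cor:specific_tailored} yields existence and uniqueness of an entropy solution of \cref{eq:nonlocal_explicit_w} in the sense of \cref{defi:entropy_solution_local}, for the flux $\bPhi(t,\bx,\rho)=\bF(t,\bx,w(t,\bx),\rho)$. It also gives the $\mL^\infty$, $\mTV$ and $\mL^1$-time-continuity bounds \cref{E:rho_star_w}. These bounds place the solution in $\mC([0,T];\mL^1(\R^n))\cap\mL^\infty((0,T);\mL^\infty\cap\mTV(\R^n))$, which is the codomain of $\mQ$.

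In the memory case, the initial datum is $q_\circ(0,\cdot)$. I need it to lie in $\mL^\infty\cap\mBV(\R^n)$, which I would take from the standing assumption on the historical datum. Uniqueness from the $\mL^1$-stability estimate then shows that $\mQ(w)$ is single-valued, so $\mQ$ is well defined.

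The main obstacle is the time horizon. \cref{cor:specific_tailored} only guarantees a solution on some $T^*\in(0,T]$, determined by \cref{E:small-time-delta}. This $T^*$ depends on $\|\diverg\bPhi\|_{\mL^\infty}$, which through the chain rule depends on $\|\nabla w\|_{\mL^\infty}$. I would handle this in one of two ways. The first is to interpret $T$ in $\mZ(T)$ as already chosen below this horizon, uniformly over the admissible set used later in the fixed-point argument. The second is to use that \cref{ass:general_nonlocal} presumably provides a bound on $\diverg\bF$ uniform in the third argument; then $\frac{d}{dt}\alpha$ in \cref{eq:L_infty_increase} is bounded independently of $w$, and $T^*=T$ can be arranged. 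I would state the lemma with this horizon made explicit and note that it is uniform on bounded subsets of $\mZ(T)$.
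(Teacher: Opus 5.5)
Your proposal is the paper's proof: set $\bPsi\coloneqq\bF$, note that $w\in\mZ(T)$ has exactly the regularity required in \cref{cor:specific_tailored} and that \cref{ass:flux_nonlocal} reproduces \crefrange{eq:assumptions_bPsi_specifically_tailord}{eq:flux_spatial_dependency_TV_2} item by item, and conclude from the corollary (the $\partial_4\bF$ digression is unnecessary, since only the corollary's listed hypotheses need checking, and items \ref{item:1_flux}--\ref{item:3_flux} would not supply it anyway). Your time-horizon caveat is a fair point that the paper's one-line proof passes over, and your first remedy is the right one: a horizon depending on $w$ only through $\|w\|_{\mL^{\infty}}$ and $\|\nabla w\|_{\mL^{\infty}}$, hence uniform on the bounded subsets of $\mZ(T)$ that the fixed-point argument actually uses; your second remedy fails, because the spatial divergence of $\bF(t,\bx,w(t,\bx),\rho)$ contains $\partial_3\bF\cdot\nabla w$ and all flux bounds are only local in $\rho$.
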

\begin{proof}
For any $w \in \mZ(T)$, the flux $\bPhi(t,\bx,q) \coloneqq \bF(t,\bx,w(t,\bx),q)$ 
satisfies the assumptions of \cref{cor:specific_tailored}: the regularity of $w$ matches 
that required of $\omega$, and the assumptions on $\bF$ in \cref{ass:general_nonlocal} 
coincide with \crefrange{eq:assumptions_bPsi_specifically_tailord}{eq:flux_spatial_dependency_TV_2}. 
The conclusion follows directly from \cref{cor:specific_tailored}.
\end{proof}

We are now equipped with all the basic tools needed to address the well-posedness of the two nonlocal models. We separate our analysis into two cases.

\subsection{Nonlocal in space}

We begin by considering the spatially nonlocal nonlinear conservation law \cref{E:model_nonlocal}. In addition to \cref{ass:flux_nonlocal}, we make the following requirements on the initial datum and nonlocal kernel.

\begin{assumption}[nonlocal kernel, nonlocal nonlinearity and initial datum]
\label{ass:general_nonlocal}
    We assume the following:
    \begin{description}
            \item[Nonlocal nonlinearity:] \ $J\in \mathcal{W}^{1, \infty}_{\textnormal{loc}}(\R),\quad J(0)=0$.
            \item[Kernel:] $\gamma\in \mBV(\R^n)\cap\mL^{\infty}(\R^n): \|\gamma\|_{\mL^{1}(\R^n)}=1$.
        \item[Initial datum:] $q_{\circ}\in \mBV(\R^n)\cap \mL^{\infty}(\R^n)$.
    \end{description}
\end{assumption}

\begin{remark}[required regularity on the datum]
Note that the assumptions on the flux $\bF$ in \cref{ass:flux_nonlocal} are tailored exactly to fit the requirements in \cref{Ass:General} and are identical to what was postulated in \cref{cor:specific_tailored}. Furthermore, the conditions on the nonlocal kernel and the nonlinearity are standard. The assumption of $q_{\circ}\in \mBV(\R^n)$, however, entails in particular $\mL^{1}(\R^n)$, which we require for uniform $\mTV$ bounds in the nonlocal equation. It seems that this requirement can only be avoided when postulating significantly stronger conditions on the flux function $\bF$, which we chose not to do.
 An illustration of how ``weak'' the assumptions on $\bF$ are was outlined in \cref{rem:specifically_structured_flux}.
\end{remark}

The first step is to make precise how $\mW$ inherits regularity from $q$.

\begin{lemma}[bounds on the nonlocal operator]
\label{lem:bounds_nonlocal_space}
Let \cref{ass:general_nonlocal} hold and let $T\in\R_{>0}$. Define
\begin{equation}\label{E:solution-space}
\mX \coloneqq \mC\big([0,T];\mL^{1}(\R^n)\big)\cap \mL^{\infty}\big((0,T);\mL^{\infty}(\R^n)\cap \mTV(\R^n)\big).
\end{equation}
Then, the nonlocal operator
\[
\mW:\mX \times \mBV(\R^n) \rightarrow \mC\big([0,T];\mL^{1}(\R^n)\big),
\]
defined by
\begin{equation}
\mW[J(q),\gamma](t,\bx) \coloneqq \int_{\R^n}\gamma(\bx-\by)J(q(t,\by))\dd\by,
\quad (t,\bx)\in (0,T)\times \R^n,
\label{defi:nonlocal_operator}
\end{equation}
satisfies the following bounds for every $t\in[0,T]$:
\begin{align}
\|\mW[J(q),\gamma](t,\cdot)\|_{\mL^{1}(\R^n)}
    &\leq \|J'\|_{\mL^{\infty}(Q(t))}\|q(t,\cdot)\|_{\mL^{1}(\R^n)},
    \label{E:WL1}\\
\|\mW[J(q),\gamma](t,\cdot)\|_{\mL^{\infty}(\R^n)}
    &\leq \|J'\|_{\mL^{\infty}(Q(t))}\|q(t,\cdot)\|_{\mL^{\infty}(\R^n)},
    \label{E:WLinfty}\\
\|\nabla \mW[J(q),\gamma](t,\cdot)\|_{\mL^{1}(\R^n;\R^n)}
    &\leq |\gamma|_{\mTV(\R^n)}\|J'\|_{\mL^{\infty}(Q(t))}\|q(t,\cdot)\|_{\mL^{1}(\R^n)},
    \label{E:DWL1}\\
\|\nabla \mW[J(q),\gamma](t,\cdot)\|_{\mL^{\infty}(\R^n;\R^n)}
    &\leq |\gamma|_{\mTV(\R^n)}\|J'\|_{\mL^{\infty}(Q(t))}\|q(t,\cdot)\|_{\mL^{\infty}(\R^n)},
    \label{E:DWLinfty}\\
|\nabla \mW[J(q),\gamma](t,\cdot)|_{\mTV(\R^n;\R^{n\times n})}
    &\leq |\gamma|_{\mTV(\R^n)}\|J'\|_{\mL^{\infty}(Q(t))}|q(t,\cdot)|_{\mTV(\R^n)},
    \label{E:DWTV}
\end{align}
where $Q(t)\coloneqq \big( -\|q(t,\cdot)\|_{\mL^{\infty}(\R^n)},\|q(t,\cdot)\|_{\mL^{\infty}(\R^n)}\big)$.
\end{lemma}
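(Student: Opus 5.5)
The plan is to reduce every estimate to Young's convolution inequality, applied either to $\gamma$ itself or to its distributional gradient $D\gamma$. The latter is a finite $\R^n$-valued Radon measure with total mass $|\gamma|_{\mTV(\R^n)}$, since $\gamma\in\mBV(\R^n)$.

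\textbf{Step 1: pointwise control of $J(q)$.} Fix $t$ and set $f\coloneqq J(q(t,\cdot))$. Since $J(0)=0$ and $J\in\mathcal{W}^{1,\infty}_{\loc}(\R)$, we have $|f(\by)|\le \|J'\|_{\mL^\infty(Q(t))}|q(t,\by)|$ for a.e. $\by$, because $q(t,\by)$ lies in the closure of $Q(t)$. This immediately gives
\[
\|f\|_{\mL^p(\R^n)}\le \|J'\|_{\mL^\infty(Q(t))}\|q(t,\cdot)\|_{\mL^p(\R^n)}, \qquad p\in\{1,\infty\}.
\]
For the total variation of $f$, I will avoid the BV chain rule and use the translation characterisation
\[
|g|_{\mTV(\R^n)}=\sup_{\bm h\neq 0}\frac{\|g(\cdot+\bm h)-g\|_{\mL^1(\R^n)}}{|\bm h|}.
\]
The Lipschitz bound $|J(a)-J(b)|\le\|J'\|_{\mL^\infty(Q(t))}|a-b|$ for $a,b\in\overline{Q(t)}$ then yields $|f|_{\mTV}\le \|J'\|_{\mL^\infty(Q(t))}|q(t,\cdot)|_{\mTV}$.

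\textbf{Step 2: bounds \cref{E:WL1,E:WLinfty}.} Since $\mW(t,\cdot)=\gamma*f$ and $\|\gamma\|_{\mL^1}=1$, Young's inequality together with Step 1 gives both bounds.

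\textbf{Step 3: bounds \cref{E:DWL1,E:DWLinfty}.} Since $\gamma\in\mBV$ and $f\in\mL^1\cap\mL^\infty$, we have $\nabla\mW(t,\cdot)=D\gamma*f$ in the distributional sense. I will justify this by testing against $\mC^1_c$ vector fields and applying Fubini. Young's inequality for a measure convolved with an $\mL^p$ function then gives $\|D\gamma*f\|_{\mL^p}\le |D\gamma|(\R^n)\,\|f\|_{\mL^p}=|\gamma|_{\mTV}\|f\|_{\mL^p}$ for $p=1,\infty$. Combined with Step 1, this proves both bounds. In particular $\nabla\mW(t,\cdot)$ is an $\mL^1\cap\mL^\infty$ function, so $\mW(t,\cdot)\in\mathcal{W}^{1,1}\cap\mathcal{W}^{1,\infty}$.

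\textbf{Step 4: bound \cref{E:DWTV}.} This is the step I expect to be the main obstacle, because it requires second derivatives of $\mW$ while $\gamma$ is only $\mBV$. I plan to use the same translation characterisation as in Step 1, applied componentwise to $\nabla\mW$. For $\bm h\in\R^n$, translation commutes with convolution, so
\[
\nabla\mW(t,\cdot+\bm h)-\nabla\mW(t,\cdot)=D\gamma*\big(f(\cdot+\bm h)-f\big).
\]
The measure version of Young's inequality then gives an $\mL^1$ bound of $|\gamma|_{\mTV}\,\|f(\cdot+\bm h)-f\|_{\mL^1}\le |\gamma|_{\mTV}\,|f|_{\mTV}\,|\bm h|$. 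Dividing by $|\bm h|$, taking the supremum, and using Step 1 yields \cref{E:DWTV}, possibly up to a dimensional constant depending on the matrix norm used for $\mTV(\R^n;\R^{n\times n})$, which I would track or absorb through the chosen norm convention. As an alternative route, one can mollify $\gamma$ to $\gamma_\varepsilon$, compute $\nabla^2(\gamma_\varepsilon*f)=\nabla\gamma_\varepsilon* Df$ explicitly, and pass to the limit using lower semicontinuity of the total variation.

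\textbf{Step 5: mapping into $\mC([0,T];\mL^1(\R^n))$.} Linearity of the convolution gives $\|\mW(t)-\mW(s)\|_{\mL^1}\le \|J'\|_{\mL^\infty(\overline{Q})}\,\|q(t)-q(s)\|_{\mL^1}$, where $\overline{Q}$ is the interval determined by $\|q\|_{\mL^\infty(\Omega_T)}$. Since $q\in\mC([0,T];\mL^1(\R^n))$, the map $t\mapsto\mW[J(q),\gamma](t,\cdot)$ is continuous into $\mL^1(\R^n)$, which shows that $\mW$ maps into the claimed space.
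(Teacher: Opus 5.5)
Your architecture matches the paper's: Young's inequality for $\gamma$ and for the measure $D\gamma$, and $D^2\mW = D\gamma * DJ(q)$ with one derivative on each factor. The paper's double difference quotient puts the $h'$-difference on $J(q)$ and the $h$-difference on $\gamma$; you do the reverse, which is immaterial. Steps 2, 3 and 5 are correct. Step 3 is in fact cleaner than the paper's directional argument for \eqref{E:DWL1}.

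The step that fails is the identity $|g|_{\mTV(\R^n)}=\sup_{\bm h\neq 0}\|g(\cdot+\bm h)-g\|_{\mL^1(\R^n)}/|\bm h|$, which you use in Step 1 and at the end of Step 4. Only $\|g(\cdot+\bm h)-g\|_{\mL^1}\le |\bm h|\,|g|_{\mTV}$ holds. The supremum equals the maximal directional variation $\sup_{|\bm e|=1}|\bm e\cdot Dg|(\R^n)$, which for $n\ge 2$ is in general strictly smaller than $|Dg|(\R^n)$. For $g=\mathbf{1}_{B_r}\subset\R^2$ it equals $4r$, while $|g|_{\mTV}=2\pi r$. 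Averaging over directions gives only $|g|_{\mTV}\le nW_n\sup_{\bm h\neq 0}\|g(\cdot+\bm h)-g\|_{\mL^1}/|\bm h|$, and this is sharp, since $nW_n=\pi/2$ for $n=2$.

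Hence your translation arguments yield $|J(q(t,\cdot))|_{\mTV}\le\|J'\|_{\mL^\infty(Q(t))}|q(t,\cdot)|_{\mTV}$ and \eqref{E:DWTV} only up to a dimensional factor. This loss is not caused by the matrix norm. It cannot be absorbed by any norm convention compatible with the paper's definition of $\mTV$. The paper's own directional difference quotients in the proofs of \eqref{E:DWL1} and \eqref{E:DWTV} make the same identification silently, so the issue is not specific to your route.

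To get the constants exactly as stated, make two replacements.

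In Step 1, replace the translation argument by the chain rule for Lipschitz compositions. The mollifications $q_k$ of $q(t,\cdot)$ take values in $\overline{Q(t)}$ and satisfy $\|\nabla q_k\|_{\mL^1}\le|q(t,\cdot)|_{\mTV}$. They also satisfy the pointwise bound $|\nabla(J\circ q_k)|\le\|J'\|_{\mL^\infty(Q(t))}|\nabla q_k|$ a.e., so no directional supremum is taken after integrating. Lower semicontinuity of the total variation then gives the bound with constant $1$.

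In Step 4, use your alternative route. Set $\gamma_\varepsilon=\rho_\varepsilon*\gamma$. The Hessian $\nabla^2(\gamma_\varepsilon*f)=\nabla\gamma_\varepsilon*Df$ is a superposition of rank-one matrices, and $|\bm a\otimes\bm b|=|\bm a|\,|\bm b|$ in both the Frobenius and the operator norm. Therefore $\|\nabla^2(\gamma_\varepsilon*f)\|_{\mL^1}\le\|\nabla\gamma_\varepsilon\|_{\mL^1}|f|_{\mTV}\le|\gamma|_{\mTV}|f|_{\mTV}$. Since $\nabla(\gamma_\varepsilon*f)=\rho_\varepsilon*\nabla\mW(t,\cdot)\to\nabla\mW(t,\cdot)$ in $\mL^1$, lower semicontinuity yields \eqref{E:DWTV} with constant $1$.

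With these two replacements your proof is complete.
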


\begin{proof}
Inequalities \eqref{E:WL1} and \eqref{E:WLinfty} are an immediate consequence of \cref{ass:general_nonlocal} on $\gamma$ and $J$. To prove \eqref{E:DWL1} and \eqref{E:DWLinfty}, let $\bv\in\R^n$ be any unit-norm vector. Then, for any $h>0$ small enough we have
\begin{equation}\label{E:W-Lipschitz}
\begin{split}
&\abs[\big]{\mW[J(q), \gamma](t, \bx + h\bv)-\mW[J(q, \gamma](t, \bx))}\\
&\leq\Big| \int_{\R}\!\!\gamma(\bx+h\bv-\by)J(q(t,\by))\dd y-\!\! \int_{\R}\!\!\gamma(\bx-\by)J(q(t,\by))\dd y\Big|\\
&\leq  \int_{\R}|\gamma(\bx+h\bv-\by)-\gamma(\bx-\by)||J(q(t,\by))|\dd y\\
&\leq \|J(q(t,\cdot))\|_{\mL^{\infty}(\R^n)} \int_{\R}|\gamma(\bx+h\bv-\by)-\gamma(\bx-\by)|\dd y\\
&\leq \|J(q(t,\cdot))\|_{\mL^{\infty}(\R)} \abs{h} |\gamma|_{\mTV(\R^n)}\\
&\leq \|J'\|_{\mL^{\infty}(Q(t))}\|q(t,\cdot)\|_{\mL^{\infty}(\R^n)}\abs{h} |\gamma|_{\mTV(\R^n)},
\end{split}\end{equation}
and dividing by $\abs{h}$ and taking the limit as $h \to 0$, we obtain 
\begin{equation*}
    \mW[J(q), \gamma](t, \cdot) \in \mathcal W^{1, \infty}(\R^n; \R), \quad t \in [0, T],
\end{equation*}
proving \eqref{E:DWLinfty}. Next, keeping \cref{E:W-Lipschitz} in mind, we can calculate
\begin{equation*}
    \begin{split}
        \int_{\R^n} |\nabla \mW[J(q), \gamma](t, \bm x) |\dd \bm x& \le \|J(q(t,\cdot))\|_{L^{1}(\R^n)} |\gamma|_{\mTV(\R^n)}\\
        &\leq \|J'\|_{\mL^{\infty}(Q(t))}\|q(t,\cdot)\|_{L^{1}(\R^n)}  |\gamma|_{\mTV(\R^n)},
    \end{split}
\end{equation*}
which proves \cref{E:DWL1}. Finally, for \cref{E:DWTV}, we consider \(\bv,\bv'\in\R^n,\) of unitary norm. We are interested in bounding
\begin{align*}
       \mathbb{I}_{\nabla \mW} \coloneqq & \tfrac{1}{\abs{h}}\int_{\R^n} \abs[\big]{\nabla\mW[J(q), \gamma](t, \bm x + \bv h) - \nabla \mW[J(q), \gamma](t,\bm x)} \dd \bx  \\
        & \quad =\tfrac{1}{\abs{h}} \lim_{h'\to 0} \bigg(\int_{\R^n} \abs[\Big]{\tfrac{1}{\abs{h'}} \abs[\big]{\mW[J(q), \gamma](t, \bm x + h\bv + h'\bv') - \mW[J(q), \gamma](t, \bm x + h\bv)}\\
        &\qquad\qquad\qquad- \tfrac{1}{\abs{h'}} \abs[\big]{\mW[J(q), \gamma](t, \bm x+ h'\bv') - \mW[J(q), \gamma](t,\bm x)}} \dd \bx \bigg) .
\end{align*}
To this end, we observe that
\begin{equation*}
    \begin{split}
       &\mW[J(q), \gamma](t, \bm x + h \bm v + h' \bm v') -\mW[J(q), \gamma](t, \bm x + h \bm v) \\
        & =  \int_{\R^n}\gamma(\bm y) \set[\big]{J(q(t, \bm x + h \bm v + h' \bm v'- \bm y)) -  J(q(t, \bm x + h \bm v - \bm y))} \dd \bm y.
    \end{split}
\end{equation*}
This implies, in particular, that
\begin{equation*}
    \begin{split}
        \mathbb I_{\nabla \mW} & = \tfrac{1}{\abs{h}} \int_{\R^n} \lim_{h' \to 0} \tfrac{1}{\abs{h'}}\set[\bigg]{\int_{\R^n} \abs{J(q(t , \bm x + h \bm v + h' \bm v' - \bm y)) - J(q(t, \bm x + h \bm v - \bm y))} \abs{\gamma(\bm y)}\dd \bm y \\
        & \hspace{1.5in} \int_{\R^n} \abs{J(q(t, \bm x + h' \bm v' - \bm y)) - J(q(t, \bm x  - \bm y))} \abs{\gamma(\bm y)}\dd \bm y} \dd \bm x. \\
        \intertext{By the change of variables $\bm z = \bm x + h \bm v - \bm y$ and $\zeta = \bm x - \bm y$, we have } \\
         &=  \tfrac{1}{\abs{h}} \int_{\R^n} \lim_{h' \to 0} \tfrac{1}{\abs{h'}}\set[\bigg]{\int_{\R^n} \abs{J(q(t , \bm z +h' \bm v')) - J(q(t , \bm z))} \abs{\gamma(\bm x + h \bm v - \bm z)}\dd \bm z \\
        & \hspace{1.5in} -  \int_{\R^n} \abs{J(q(\tau , \bm z + h' \bm v')) - J(q(\tau , \bm z))} \abs{\gamma(\bm x - \zeta)}\dd \zeta} \dd \bm x .
\end{split}
\end{equation*}
Applying Fubini's theorem to $\dd \bm z \dd \bm x = \dd \bm x \dd \bm z$, we obtain
\begin{equation*}
\begin{split}
        \mathbb I_{\nabla \mW} & \le \lim_{h' \to 0} \tfrac{1}{\abs{h'}}\tfrac{1}{\abs{h}} \norm{J'}_{\mL^\infty(Q(T))} \int_{\R^n} \abs{q(t, \bm z + h' \bm v') - q(t, \bm z)} \dd \bm z \\
        & \hspace{2in} \times \int_{\R^n} \abs{\gamma(t, \bm x + h \bm v - \bm z) - \gamma(\bm x - \bm z)} \dd \bm x  \dd \bm z\\
        & \le \norm{J'}_{\mL^\infty(Q(T))} \norm{q(t, \cdot)}_{\mTV(\R^n))}  \norm{\gamma}_{\mTV(\R^n))}.
    \end{split}
\end{equation*}
This completes the proof.
\end{proof}

We now formulate the well-posedness of \cref{E:model_nonlocal} as a fixed-point problem. 
To this end, we introduce the admissible set and the fixed-point mapping.

\begin{definition}[admissible set $\mY(T)$]\label{defi:admissible_set}
Let \cref{ass:general_nonlocal} hold, let $T \in \R_{>0}$, and set
\[
B^{\infty} \coloneqq 42\|q_{\circ}\|_{\mL^{\infty}(\R^n)}, \qquad 
B^{\mTV} \coloneqq 42|q_{\circ}|_{\mTV(\R^n)}.
\]
Recalling \cref{E:solution-space}, we define the admissible set
\[
\mY(T) \coloneqq \left\{ q \in \mathcal X \;:\; 
\begin{array}{l}
    \|q\|_{\mL^{\infty}((0,T);\mL^{\infty}(\R^n))} \leq B^{\infty},\\[4pt]
    |q|_{\mL^{\infty}((0,T);\mTV(\R^n))} \leq B^{\mTV},\\[4pt]
    \|q(t,\cdot)\|_{\mL^{1}(\R^n)} \leq 42 \|q_{\circ}\|_{\mL^{1}(\R^n)} 
    \quad \forall\, t\in[0,T]
\end{array}
\right\}.
\]
\end{definition}

\begin{definition}[fixed-point mapping $\mF$]\label{defi:fixed_point}
Let \cref{ass:general_nonlocal} hold, let $T \in \R_{>0}$, and suppose that $\mQ$, $\mW$, and $\mY(T)$ 
are as in \cref{defi:solution_operator_fixed_w}, \cref{defi:nonlocal_operator}, and 
\cref{defi:admissible_set}, respectively. We define the fixed-point mapping
\[
\mF : \mY(T) \rightarrow \mathcal X, 
\qquad
\mF[q] \coloneqq \mQ\big[\mW[J(q),\gamma]\big],
\]
where, for each $q \in \mY(T)$, the function $\mW[J(q),\gamma]\in\mZ(T)$ is the nonlocal 
term evaluated along $q$.
\end{definition}

\begin{lemma}[self-mapping property of $\mF$]\label{lem:self_mapping}
Let \cref{ass:general_nonlocal} hold, and let $\mF$ and $\mY(T)$ be as in 
\cref{defi:fixed_point} and \cref{defi:admissible_set}. Then there exists a $T \in \R_{>0}$ sufficiently small such that
\[
\mF\big[\mY(T)\big] \subset \mY(T).
\]
\end{lemma}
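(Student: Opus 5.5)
Fix $q\in\mY(T)$, set $w\coloneqq\mW[J(q),\gamma]$, and prove the self-mapping property in three steps: show $w\in\mZ(T)$ with bounds independent of $q$ and $T$, apply \cref{cor:specific_tailored} to get bounds on $\mF[q]=\mQ[w]$, and then shrink $T$ so that these bounds fall below the constants in \cref{defi:admissible_set}.

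\textbf{Step 1: $w\in\mZ(T)$ with uniform bounds.} Since $\|q(t,\cdot)\|_{\mL^\infty}\le B^\infty$, we have $Q(t)\subset(-B^\infty,B^\infty)$. Hence $L_J\coloneqq\|J'\|_{\mL^\infty((-B^\infty,B^\infty))}$ is a fixed constant. \cref{lem:bounds_nonlocal_space} then gives, uniformly in $t$ and in $q\in\mY(T)$:
\begin{itemize}
\item $\|w\|_{\mL^\infty}\le L_JB^\infty$ and $\|w\|_{\mL^1}\le 42L_J\|q_\circ\|_{\mL^1}$;
\item $\|\nabla w\|_{\mL^\infty}\le|\gamma|_{\mTV}L_JB^\infty$ and $\|\nabla w\|_{\mL^1}\le 42|\gamma|_{\mTV}L_J\|q_\circ\|_{\mL^1}$;
\item $|\nabla w|_{\mTV}\le|\gamma|_{\mTV}L_JB^{\mTV}$.
\end{itemize}
Measurability and continuity of $w$ in time follow from $q\in\mC([0,T];\mL^1)$ and Young's inequality. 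So $w\in\mZ(T)$, and the quantities $w_\infty,w_1,w'_\infty,w'_1,w'_{\mTV}$ of \eqref{eq:omega_delta_bounds_uniform} are bounded by constants $M$ depending only on $q_\circ,\gamma,J$, not on $T$ or $q$.

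\textbf{Step 2: bounds on $\mF[q]$.} By \cref{lem:well-posed-Q} and \cref{cor:specific_tailored}, $\rho\coloneqq\mQ[w]$ exists. It satisfies the $\mL^\infty$ bound $42\|q_\circ\|_{\mL^\infty}=B^\infty$ on the horizon $T^*$, which is uniform over $q$ since it depends only on $\mU=\mI_w$ and $\bF$. It also satisfies
\[
|\rho(t,\cdot)|_{\mTV}\le|q_\circ|_{\mTV}e^{\eta t}+nW_ne^{\eta t}C^{1,\infty}(t).
\]
Here $\eta$ and $C^{1,\infty}(t)$ depend only on $M$, $\bF$ restricted to $\mI_w\times(-B^\infty,B^\infty)$, and $t$. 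By \eqref{eq:C_1_infty}, $C^{1,\infty}(t)\to0$ as $t\to0$. This requires either the second alternatives in \cref{ass:flux_nonlocal}, or absorbing the $\mL^1$-in-time terms $\|\nabla\diverg\bF\|_{\mL^1((0,t)\times\R^n)}\to0$ by absolute continuity of the integral.

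\textbf{Step 3: the $\mL^1$ bound.} Use the general stability estimate \eqref{E:rho_star_general_statbility}, comparing $\rho$ to the zero solution, since $q\equiv0$ solves the problem with the same flux when $\diverg\bF(\cdot,\ast,0,\star)\equiv0$. Alternatively, use the time-continuity estimate \eqref{E:rho_star_w} with $s=0$:
\[
\|\rho(t)\|_{\mL^1}\le\|q_\circ\|_{\mL^1}+C^{1,\infty}(t,0)+tC^{\infty,\infty}\sup|\rho|_{\mTV},
\]
where the last two terms vanish as $t\to0$ uniformly in $q$.

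\textbf{Choosing $T$.} Take $T$ small enough that $e^{\eta T}\le 2$, $nW_n e^{\eta T}C^{1,\infty}(T)\le 40|q_\circ|_{\mTV}$, and the $\mL^1$ remainder is at most $41\|q_\circ\|_{\mL^1}$. Then $\mF[q]\in\mY(T)$.

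The \textbf{main obstacle} is that $C^{1,\infty}$ and $C^{\infty,\infty}$ must be independent of $q$. This is exactly what Step 1 provides. If $|q_\circ|_{\mTV}=0$ or $\|q_\circ\|_{\mL^1}=0$, the smallness requirements degenerate, and one either handles these cases directly ($q\equiv0$) or replaces the constants by $\max\{\cdot,1\}$.
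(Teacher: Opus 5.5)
Your proposal is correct and follows the paper's proof essentially step by step. You obtain uniform bounds on $w=\mW[J(q),\gamma]$ from \cref{lem:bounds_nonlocal_space}, take the $\mL^\infty$ and $\mTV$ bounds on $\mF[q]$ from \cref{cor:specific_tailored}, get the $\mL^1$ bound from the time-continuity estimate, and then shrink $T$; the paper differs only in re-deriving the $\mL^\infty$ growth inequality explicitly for $\diverg\bF+\partial_3\bF\cdot\nabla w$ rather than quoting the corollary's horizon. Discard the first option in your Step 3: the condition $\diverg\bF(\cdot,\ast,0,\star)\equiv0$ puts the zero in the nonlocal slot, not the local one, so it does not make $\rho\equiv0$ a solution with the frozen flux $\bF(t,\bx,w(t,\bx),\cdot)$; the time-continuity route you list as the alternative is the one that works.
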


\begin{proof}
For conciseness, we define the intervals
\begin{equation}
\label{intervals}
\mI_\rho(B^\infty) \coloneqq (-B^\infty, B^\infty) \subset \R, \qquad
\mI_{\omega,\rho}(B^\infty) \coloneqq \big(-\|J'\|_{\mL^\infty(\mI_\rho)}B^\infty,\,\|J'\|_{\mL^\infty(\mI_\rho)}B^\infty\big) \times \mI_\rho \subset \R^2.
\end{equation}
Additionally, we omit the dependence on $B^\infty$ throughout the proof. We also recall that $\Omega_T \coloneqq (0, T) \times \R^n$. Let $T\in\R_{>0}$ be fixed and take $\tilde{q}\in\mY(T)$. Considering \crefrange{E:WL1}{E:DWTV}, 
the nonlocal term satisfies
\begin{equation}
\begin{aligned}
    \|\mW[J(\tilde{q}),\gamma]\|_{\mL^{\infty}((0,T);\mL^{1}(\R^n))}
        &\leq \|J'\|_{\mL^{\infty}(\mI_\rho)}\|q_{\circ}\|_{\mL^{1}(\R^n)},\\
    \|\mW[J(\tilde{q}),\gamma]\|_{\mL^{\infty}(\Omega_T)}
        &\leq \|J'\|_{\mL^{\infty}(\mI_\rho)}B^{\infty},\\
    \|\nabla\mW[J(\tilde{q}),\gamma]\|_{\mL^{\infty}((0,T);\mL^{1}(\R^n))}
        &\leq |\gamma|_{\mTV(\R^n)}\|J'\|_{\mL^{\infty}(\mI_\rho)}\|q_{\circ}\|_{\mL^{1}(\R^n)},\\
    \|\nabla\mW[J(\tilde{q}),\gamma]\|_{\mL^{\infty}(\Omega_T)}
        &\leq |\gamma|_{\mTV(\R^n)}\|J'\|_{\mL^{\infty}(\mI_\rho)}B^{\infty},\\
    |\nabla\mW[J(\tilde{q}),\gamma]|_{\mL^{\infty}((0,T);\mTV(\R^n))}
        &\leq |\gamma|_{\mTV(\R^n)}\|J'\|_{\mL^{\infty}(\mI_\rho)}B^{\mTV}.
\end{aligned}
\label{eq:nonlocal_estimates_fixed_point}
\end{equation}
Thus, by \cref{T:Q-welldefined}, there exists a unique weak entropy solution 
$q\coloneqq\mF[\tilde{q}] \in \mathcal X$ of
\begin{align*}
    \partial_{t}q+\diverg_{\bx}\big(\bF(t,\bx,\mW[J(\tilde{q}),\gamma](t,\bx),q)\big)
        &=0, && (t,\bx)\in\Omega_T,\\
    q(0,\cdot)&=q_{\circ}, && \text{on }\R^n.
\end{align*}
It remains to show that $q\in \mY(T)$ for $T$ small enough, which we establish in 
three steps:
\begin{description}
\item[The $\mL^{\infty}$ bound:]
Set $\bPhi(t,\bx,q)\coloneqq \bF\big(t,\bx,\mW[J(\tilde{q}),\gamma](t,\bx),q\big)$ 
for $(t,\bx,q)\in\Omega_T\times\R$. Proceeding as in \cref{theo:existence_uniqueness_smooth}, 
we note that the $\mL^\infty$-norm of $q \coloneqq \mF[\tilde{q}]$ satisfies the differential inequality
\begin{align*}
\partial_{t}\|q(t,\cdot)\|_{\mL^{\infty}(\R^n)}
    &\leq\esssup_{\bz\in\R^n}\Big|\diverg\bF\big(t,\bz,\mW[J(\tilde{q}),\gamma](t,\bz),
        \|q(t,\cdot)\|_{\mL^{\infty}(\R^n)}\big)\Big|\\
    &\quad +\esssup_{\bz\in\R^n}\Big|\partial_{3}\bF\big(t,\bz,\mW[J(\tilde{q}),\gamma](t,\bz),
        \|q(t,\cdot)\|_{\mL^{\infty}(\R^n)}\big)
        \cdot\nabla\mW[J(\tilde{q}),\gamma](t,\bz)\Big|.
\end{align*}
Considering \cref{E:rho_star_w} and \cref{eq:nonlocal_estimates_fixed_point}, we have that
\[\big(\mW[J(\tilde{q}),\gamma](t,\bx),\|q(t,\cdot)\|_{\mL^\infty}\big) \in \mI_{\omega,\rho},\] 
and hence
\begin{align*}
\partial_{t}\|q(t,\cdot)\|_{\mL^{\infty}(\R^n)}
    &\leq \|\diverg\bF(t,\cdot,\ast,\star)\|_{\mL^{\infty}(\R^n\times\mI_{\omega,\rho})}
    + \|\partial_{3}\bF(t,\cdot,\ast,\star)\|_{\mL^{\infty}(\R^n\times\mI_{\omega,\rho})}
      |\gamma|_{\mTV(\R^n)}\|J'\|_{\mL^{\infty}(\mI_\rho)}B^{\infty}.
\end{align*}
Integrating in time from $0$ to $t$ gives
\begin{align*}
\|q(t,\cdot)&\|_{\mL^{\infty}(\R^n)} \\
    &\leq \|q_{\circ}\|_{\mL^{\infty}(\R^n)}
    + t\Big(\|\diverg\bF\|_{\mL^{\infty}(\Omega_T\times\R^n\times\mI_{\omega,\rho})}
    + \|\partial_{3}\bF\|_{\mL^{\infty}(\Omega_T\times\R^n\times\mI_{\omega,\rho})}
      |\gamma|_{\mTV(\R^n)}\|J'\|_{\mL^{\infty}(\mI_\rho)}B^{\infty}\Big).
\end{align*}
Defining
\[
T_{1}\coloneqq\frac{41\|q_\circ\|_{\mL^\infty(\R^n)}}{
    \|\diverg\bF\|_{\mL^{\infty}(\Omega_T\times\R^n\times\mI_{\omega,\rho})}
    +\|\partial_{3}\bF\|_{\mL^{\infty}(\Omega_T\times\R^n\times\mI_{\omega,\rho})}
    |\gamma|_{\mTV(\R^n)}\|J'\|_{\mL^{\infty}(\mI_\rho)}B^{\infty}},
\]
which is finite by \cref{item:2_flux,item:3_flux} of \cref{ass:flux_nonlocal}, implies that for all $t\in[0,T_{1}]$,
\[
\|q(t,\cdot)\|_{\mL^{\infty}(\R^n)}
    \leq  42\|q_\circ\|_{\mL^\infty(\R^n)} = B^{\infty}.
\]
which confirms the a priori assumption and establishes the claimed $\mL^\infty$ bound on $[0,T_1]$.

\item[The $\mTV$ bound:]
Next, we establish a $\mTV$ bound on $q = \mF[\tilde{q}]$ for small time horizons. Applying the $\mTV$ bound from \cref{cor:specific_tailored} to $q$ 
and substituting the estimates \cref{eq:nonlocal_estimates_fixed_point} for the nonlocal 
term, we obtain from \cref{eq:TV_bound_tailored} that, for all $t\in[0,T_{1}]$,
\begin{align}
    |\mF[\tilde{q}](t,\cdot)|_{\mTV(\R^n)}
    \leq \abs{q_{\circ}}_{\mTV(\R^n)} \mathrm{e}^{\eta(B^{\infty}) t} 
    + n W_n \mathrm{e}^{\eta(B^{\infty}) t}C\big(t,B^{\infty},B^{\mTV}\big),
\label{eq:TV_bound_fixed_point}
\end{align}
where $\eta(B^\infty)$ is given by
\begin{equation}\label{E:eta-B}
\begin{split}
    \eta(B^{\infty})\coloneqq (2n+1)\Big(
        &\|\nabla\bF\|_{\mL^{\infty}(\Omega_T\times\R^n\times\mI_{\omega,\rho})}
        +\|\partial_{4}\bF\|_{\mL^{\infty}(\Omega_T\times\R^n\times\mI_{\omega,\rho})}\\
        &+ |\gamma|_{\mTV(\R^n)}\|J'\|_{\mL^{\infty}(\mI_\rho)}B^{\infty}
        \|\partial_{3}\bF\|_{\mL^{\infty}(\Omega_T\times\R^n\times\mI_{\omega,\rho})}
    \Big),
\end{split}
\end{equation}
which is finite by \cref{item:2_flux,item:3_flux} of \cref{ass:flux_nonlocal}. Here, $C\big(t,B^{\infty},B^{\mTV}\big)$ 
is given by
\begin{equation}
\begin{aligned}
C\big(t,B^{\infty},B^{\mTV}\big)\coloneqq\,
    &\|\nabla\diverg\bF\|_{\mL^{1}(\Omega_{T};\mL^{\infty}(\mI_{\omega,\rho}))}\\
    &+t\|J'\|_{\mL^{\infty}(\mI_\rho)}\Big(
        |\gamma|_{\mTV(\R^n)}\|q_{\circ}\|_{\mL^{1}(\R^n)}
        \|\partial_{3}\diverg\bF\|_{\mL^{\infty}(\Omega_T\times\R^n\times\mI_{\omega,\rho})}\\
    &\quad +|\gamma|_{\mTV(\R^n)}^{2}\|q_{\circ}\|_{\mL^{1}(\R^n)}
        \|J'\|_{\mL^{\infty}(\mI_\rho)}B^{\infty}
        \|\partial_{3}^{2}\bF\|_{\mL^{\infty}(\Omega_T\times\R^n\times\mI_{\omega,\rho})}\\
    &\quad +|\gamma|_{\mTV(\R^n)}B^{\mTV}
        \|\partial_{3}\bF\|_{\mL^{\infty}(\Omega_T\times\R^n\times\mI_{\omega,\rho})}
    \Big)
\end{aligned}
\label{eq:C_t_B_infty_TV_1}
\end{equation}
under the first part of \cref{item:7_flux} or
\begin{equation}
\begin{aligned}
C\big(t,B^{\infty},B^{\mTV}\big)\coloneqq\,
    &t\|J'\|_{\mL^{\infty}(\mI_\rho)}\Big(
        \|q_{\circ}\|_{\mL^{1}(\R^n)}
        \|\partial_{3}\nabla\diverg\bF\|_{\mL^{\infty}(\Omega_T\times\R^n\times\mI_{\omega,\rho})}\\
    &\quad +|\gamma|_{\mTV(\R^n)}\|q_{\circ}\|_{\mL^{1}(\R^n)}
        \|\partial_{3}\diverg\bF\|_{\mL^{\infty}(\Omega_T\times\R^n\times\mI_{\omega,\rho})}\\
    &\quad +|\gamma|_{\mTV(\R^n)}^{2}\|q_{\circ}\|_{\mL^{1}(\R^n)}
        \|J'\|_{\mL^{\infty}(\mI_\rho)}B^{\infty}
        \|\partial_{3}^{2}\bF\|_{\mL^{\infty}(\Omega_T\times\R^n\times\mI_{\omega,\rho})}\\
    &\quad +|\gamma|_{\mTV(\R^n)}B^{\mTV}
        \|\partial_{3}\bF\|_{\mL^{\infty}(\Omega_T\times\R^n\times\mI_{\omega,\rho})}
    \Big)
\end{aligned}
\label{eq:C_t_B_infty_TV_2}
\end{equation}
under the second part of \cref{item:7_flux}. In both cases, $C\big(t,B^{\infty},B^{\mTV}\big)$ 
is finite by \cref{item:3_flux,item:4_flux,item:5_flux,item:7_flux}. Since 
$C\big(t,B^{\infty},B^{\mTV}\big)\to 0$ as $t\to 0$, we may choose $T_{2}\in(0,T_{1}]$ sufficiently
small such that for all $t\in[0,T_{2}]$,
\[
\abs{q_{\circ}}_{\mTV(\R^n)} \mathrm{e}^{\eta(B^{\infty}) t} 
+ n W_n \mathrm{e}^{\eta(B^{\infty}) t}C\big(t,B^{\infty},B^{\mTV}\big)
\leq 42|q_{\circ}|_{\mTV(\R^n)} = B^{\mTV},
\]
which gives $|\mF[\tilde{q}](t,\cdot)|_{\mTV(\R^n)}\leq B^{\mTV}$ for all $t\in[0,T_{2}]$.

\item[The \(\mL^{1}\)-bound:] This is a consequence of the fact that we are dealing with a conservation law. In particular, let us consider the $\mL^1$-time continuity result of \cref{theo:existence_uniqueness_less_regular_flux}. 
Letting $\bPhi(t, \bx, q) = \bF(t, \bx, w(t, \bx), q)$, we have
\begin{equation}\label{E:conservation-bound}
    \begin{split}
        \|\mF[\tilde q](t,\cdot)\|_{\mL^1(\R^n)}&\leq \norm{q_0}_{\mL^1(\R^n)} + \|\diverg\bF\|_{\mL^1(\Omega_T; \mL^{\infty}(\mI_{w, \rho})))} + t \, D(t, B^\infty, B^{TV})
    \end{split}
\end{equation}
where, 
\begin{equation}
    \begin{split}
        D(T_1, B^\infty, B^{TV}) &\coloneqq \norm{J'}_{\mL^\infty(Q(T_1))} \abs{\gamma}_{\mTV(\R^n)}  \Big( \abs{\gamma}_{\mTV(\R^n)} \norm{J'}_{\mL^\infty(\R^n)} \norm{q_\circ}_{\mL^1(\R^n)} B^\infty \norm{\partial_3^2 F}_{\mL^\infty(\Omega_T \times \mathcal U^2)}\\
        & \qquad +\norm{q_\circ}_{\mL^\infty(\R^n)} + B^{\mTV} \norm{\partial_3 F}_{\mL^\infty(\Omega_T \times \mathcal I_{\rho}^2)}\Big) +  \norm{\nabla F}_{\mL^\infty(\Omega_T \times \mathcal I_{\rho}^2)} \\
        &\qquad+ \abs{\gamma}_{\mTV(\R^n)} \norm{J'}_{\mL^\infty(Q(T_1))} B^\infty  \norm{\partial_3 F}_{\mL^\infty(\Omega_T \times \mathcal I_\rho^2)} + \norm{\partial_4 F}_{\mL^\infty(\Omega_T \times \mathcal I_\rho^2)}.
    \end{split}
\end{equation}
Thanks to \cref{ass:flux_nonlocal}, the constant $D(t, B^\infty, B^{\mTV})$ is bounded and hence for sufficiently small $T$, we can set 
\begin{equation*}
     \norm{q_0}_{\mL^1(\R^n)} + \|\diverg\bF\|_{\mL^1(\Omega_T; \mL^{\infty}(\mathcal U^2))} + t \, D(T_1, B^\infty, B^{TV}) \le 42 \norm{q_\circ}_{\mL^1(\R^n)}
\end{equation*}
In particular, one can choose $T_2\in(0,T_1]$ as 
\[
0 < T_2 \le  \frac{41 \norm{q_\circ}_{\mL^1(\R^n)} - \|\diverg\bF\|_{\mL^1(\Omega_{T_2}; \mL^{\infty}(\mI_{w, \rho}))}} {D(T_1, B^\infty, B^{\mTV})},
\]
 which is well-defined $\norm{\diverg \bF}_{\mL^1(\Omega_T; \mL^\infty(\mathcal U^2))}$ is monotonically decreasing as $T \to 0$.  
Therefore, 
\[\norm{\mF[\tilde q](t, \cdot)}_{\mL^1(\R^n)} \le 42\norm{q_\circ}_{\mL^1(\R^n)}\] for all $t \in [0, T_2]$. 
\end{description}
Collecting all of the above estimates, the self-mapping property of $\mF$ follows. 
\end{proof}

\begin{remark}[$\mL^1$ continuity in time]
    The $\mL^1$ time continuity of $\mF[q]$ for $q \in \mathcal Y(T)$, as in \cref{theo:existence_uniqueness_less_regular_flux}, holds true immediately, following from the bound defined at \cref{E:conservation-bound}. 
\end{remark}

\begin{lemma}[contraction of $\mF$ in $\mL^{1}(\R^n)$]
\label{lem:contraction_mapping_L_1}
Let \cref{ass:general_nonlocal} hold, and let $\mF$ be as in \cref{defi:fixed_point}. 
Then there exists a $T\in\R_{>0}$ such that for any $q,\tilde{q}\in \mY(T)$,
\[
\|\mF[q]-\mF[\tilde{q}]\|_{\mC([0,T];\mL^{1}(\R^n))}
\leq \tfrac{1}{2}\|q-\tilde{q}\|_{\mC([0,T];\mL^{1}(\R^n))};
\]
that is, $\mF$ is a contraction on $\mC\big([0,T];\mL^{1}(\R^n)\big)$.
\end{lemma}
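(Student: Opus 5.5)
The plan is to apply the general stability estimate \cref{E:rho_star_general_statbility} of \cref{theo:existence_uniqueness_less_regular_flux} (in the tailored form of \cref{cor:specific_tailored}). Fix $q,\tilde q\in\mY(T)$ and set $w\coloneqq\mW[J(q),\gamma]$, $\tilde w\coloneqq\mW[J(\tilde q),\gamma]$. Then $\mF[q]$ and $\mF[\tilde q]$ solve \cref{eq:nonlocal_explicit_w} with the same initial datum $q_\circ$ and with fluxes $\bPhi(t,\bx,\rho)=\bF(t,\bx,w(t,\bx),\rho)$ and $\hat\bPhi(t,\bx,\rho)=\bF(t,\bx,\tilde w(t,\bx),\rho)$. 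Both fluxes satisfy the hypotheses uniformly over $\mY(T)$, by \cref{eq:nonlocal_estimates_fixed_point} and \cref{lem:self_mapping}. The initial-data term in \cref{E:rho_star_general_statbility} therefore vanishes. Moreover, $\kappa_{\bPhi}$ is bounded by a constant $K$ depending only on $B^\infty$, $B^{\mTV}$, $\|q_\circ\|_{\mL^1}$, $|\gamma|_{\mTV}$, $J$ and the norms in \cref{ass:flux_nonlocal}, and not on $q,\tilde q$.

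Next I would bound the flux differences by $w-\tilde w$. By the mean value theorem in the third argument,
\[
\|\partial_4\bPhi-\partial_4\hat\bPhi\|_{\mL^\infty}\le\|\partial_3\partial_4\bF\|_{\mL^\infty}\|w-\tilde w\|_{\mL^\infty(\Omega_T)},
\]
where $\partial_3\bF\in\mL^\infty(\cdots;\mathcal W^{1,\infty}(\mU))$ supplies the mixed derivative. For the total divergence,
\[
\diverg\bPhi-\diverg\hat\bPhi=\big[\diverg\bF(\cdot,w,\cdot)-\diverg\bF(\cdot,\tilde w,\cdot)\big]+\big[\partial_3\bF(\cdot,w,\cdot)-\partial_3\bF(\cdot,\tilde w,\cdot)\big]\circ\nabla w+\partial_3\bF(\cdot,\tilde w,\cdot)\circ\nabla(w-\tilde w).
\]
Taking $\mL^1(\R^n;\mL^\infty)$ norms gives the bound
\[
\big(\|\partial_3\nabla\bF\|_\infty+\|\partial_3^2\bF\|_\infty\|\nabla w\|_\infty\big)\|w-\tilde w\|_{\mL^1}+\|\partial_3\bF\|_\infty\|\nabla(w-\tilde w)\|_{\mL^1}.
\]
Linearity of $\mW$ in $J(q)$ and the Lipschitz continuity of $J$ on $\mI_\rho$ then give, by the same convolution arguments as in \cref{lem:bounds_nonlocal_space},
\[
\|(w-\tilde w)(s)\|_{\mL^1}\le\|J'\|\,\|(q-\tilde q)(s)\|_{\mL^1},\qquad \|\nabla(w-\tilde w)(s)\|_{\mL^1}\le|\gamma|_{\mTV}\|J'\|\,\|(q-\tilde q)(s)\|_{\mL^1},
\]
together with
\[
\|w-\tilde w\|_{\mL^\infty}\le\|\gamma\|_{\mL^\infty}\|J'\|\sup_s\|(q-\tilde q)(s)\|_{\mL^1}.
\]
The last bound uses Young's inequality with $\gamma\in\mL^\infty$; this is exactly where $\gamma\in\mL^\infty$ in \cref{ass:general_nonlocal} enters.

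Inserting these bounds into \cref{E:rho_star_general_statbility}, every term on the right-hand side carries a prefactor $t$ or $\int_0^t$. This gives, for $t\in[0,T]$,
\[
\|\mF[q](t)-\mF[\tilde q](t)\|_{\mL^1}\le C\,t\,e^{KT}\big(1+|q_\circ|_{\mTV}+\|\nabla\diverg\bPhi\|_{\mL^1(\Omega_T;\mL^\infty)}\big)\|q-\tilde q\|_{\mC([0,T];\mL^1)}.
\]
Choosing $T$ (at most the self-mapping horizon of \cref{lem:self_mapping}) so small that the prefactor is at most $\tfrac12$ yields the claim.

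The main obstacle is the $\partial_3\bF\circ\nabla(w-\tilde w)$ term and, more generally, making sure the norms appearing in \cref{E:rho_star_general_statbility} really are controlled by $\|q-\tilde q\|_{\mL^1}$ rather than by a TV-type norm of $q-\tilde q$. This works only because the derivative falls on $\gamma$, using $\gamma\in\mBV$. A further subtlety is that the $\kappa_{\bPhi}$ term in \cref{E:rho_star_general_statbility} contains $\|\partial\bPhi-\partial\hat\bPhi\|$. This is harmless, since it is uniformly bounded on $\mY(T)$.
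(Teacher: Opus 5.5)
Your proposal is correct and takes essentially the same route as the paper. Both apply the flux-stability estimate \cref{E:rho_star_general_statbility} to the frozen fluxes $\bPhi,\hat{\bPhi}$, where the initial-data term vanishes; both bound $\partial_4\bPhi-\partial_4\hat{\bPhi}$ in $\mL^\infty$ by $\|\gamma\|_{\mL^\infty}\|q-\tilde q\|_{\mL^1}$ and bound $\diverg\bPhi-\diverg\hat{\bPhi}$ in $\mL^1(\R^n;\mL^\infty)$ through the same three-term product-rule splitting, with the gradient landing on $\gamma\in\mBV$; and both conclude because every remaining term carries a factor $t$ or $\int_0^t$ (you merely pair the $\partial_3^2\bF$ term with $\nabla w$ instead of $\nabla\tilde w$, which changes nothing).
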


\begin{proof}
By \cref{lem:self_mapping}, there exists a $T_{2}\in\R_{>0}$ such that 
$\mF[\mY(T_{2})]\subset\mY(T_{2})$, so we may take $q,\tilde{q}\in \mY(T_{2})$ 
throughout. Define the fluxes 
\begin{equation}
\label{fluxes}
    \bPhi(t,\bx, \rho)\coloneqq \bF\big(t,\bx,\mW[J(q),\gamma](t,\bx),\rho\big),
\qquad
\hat{\bPhi}(t,\bx,\rho)\coloneqq \bF\big(t,\bx,\mW[J(\tilde{q}),\gamma](t,\bx),\rho\big),
\end{equation}
for $(t,\bx,\rho)\in\Omega_{T_2}\times\R$, and set
\[
\kappa_{\bF}\coloneqq \max\Big\{
    \|\partial_{3}\bPhi-\partial_{3}\hat{\bPhi}\|_{\mL^{\infty}(\Omega_T\times\R^n\times\mI_\rho)},\ 
    (2n+1)\|\nabla\partial_{3}\bPhi\|_{\mL^{\infty}(\Omega_T\times\R^n\times\mI_\rho)}
\Big\}.
\]
Applying the stability estimate of \cref{theo:existence_uniqueness_less_regular_flux} 
to $\bPhi$ and $\hat{\bPhi}$ yields
\begin{equation}
\begin{split}
    &\|\mF[q](t,\cdot)-\mF[\tilde{q}](t,\cdot)\|_{\mL^{1}(\R^n)}\\
    &\leq t\e^{\kappa_{\bF}t}|q_{\circ}|_{\mTV(\R^n)}
        \esssup_{(t,\bx,\rho)\in\Omega_T\times\mI_\rho}
        \big|\partial_{4}\bPhi(t,\bx,\rho)
            -\partial_{4}\hat{\bPhi}(t,\bx,\rho)\big|\\
    &\quad +nW_{n}
        \esssup_{(t,\bx,\rho)\in\Omega_T\times\mI_\rho}
        \big|\partial_{4}\bPhi(t,\bx,\rho)
            -\partial_{4}\hat{\bPhi}(t,\bx,\rho)\big|
        \int_{0}^{t}s\e^{\kappa_{\bF}(t-s)}
        \|\nabla\diverg\bPhi(s,\cdot,\ast)\|_{\mL^{1}(\R^n;\mL^{\infty}(\mI_\rho))}\dd s\\
    &\quad +\int_{0}^{t}\e^{\kappa_{\bF}(t-s)}
        \|\diverg\bPhi(s,\ast,\cdot)
            -\diverg\hat{\bPhi}(s,\ast,\cdot)\|_{\mL^{1}(\R^n;\mL^{\infty}(\mI_\rho))}\dd s.
\end{split}
\label{eq:contraction_mapping_the_final_frontier}
\end{equation}
It remains to bound each term on the right-hand side in terms of 
$\|q-\tilde{q}\|_{\mC([0,t];\mL^{1}(\R^n))}$, which we do in the following steps.

\begin{description}
\item[Bound on $\esssup|\partial_4\bPhi-\partial_4\hat{\bPhi}|$.]
Using the Lipschitz continuity of $\partial_4\bF$ in its third argument and the linearity 
of $\mW$, we estimate
\begin{align*}
&\esssup_{(t,\bx,\rho)\in\Omega_{T}\times\mI_\rho}
    \big|\partial_{4}\bPhi(t,\bx,\rho)-\partial_{4}\hat{\bPhi}(t,\bx,\rho)\big|\\
& \hspace{1.5in} \leq \|\partial_{3}\partial_{4}\bF\|_{\mL^{\infty}(\Omega_T\times\R^n\times\mI_{\omega,\rho})}
    \|\mW[J(q)-J(\tilde{q}),\gamma]\|_{\mL^{\infty}(\Omega_{T_{2}})}\\
&\hspace{1.5in} \leq \underbrace{
    \|\partial_{3}\partial_{4}\bF\|_{\mL^{\infty}(\Omega_T\times\R^n\times\mI_{\omega,\rho})}
    \|\gamma\|_{\mL^{\infty}(\R^n)}
    }_{=:\,C_{1}(B^{\infty})}
    \|q(t,\cdot)-\tilde{q}(t,\cdot)\|_{\mL^{1}(\R^n)},
\end{align*}
where the last step uses \cref{E:WLinfty} and $J(0)=0$.

\item [Bound on $\|\nabla\diverg\bPhi\|$]
For $s\in(0,t)$, since $q\in\mY(T_2)$, the bound from \cref{lem:self_mapping} gives
\[
\|\nabla\diverg\bPhi(s,\cdot,\ast)\|_{\mL^{1}(\R^n;\mL^{\infty}(\mI_\rho))}
\leq C\big(s,B^{\infty},B^{\mTV}\big),
\]
with $C\big(s,B^{\infty},B^{\mTV}\big)$ as in \cref{eq:C_t_B_infty_TV_1} or \cref{eq:C_t_B_infty_TV_2}, which is finite 
by \cref{ass:general_nonlocal}.

\item [Bound on $\|\diverg\bPhi-\diverg\hat{\bPhi}\|$.]
Expanding the divergence and using the product rule, for $(t,\bx)\in(0,T_2)\times\R^n$, leads to
\begin{align*}
    &\|\diverg\bPhi(t,\bx,\cdot)-\diverg\hat{\bPhi}(t,\bx,\cdot)\|_{\mL^{\infty}(\mI_\rho)}\\
    &\leq\|\diverg\bF(t,\bx,\mW[J(q),\gamma](t,\bx),\cdot)
        -\diverg\bF(t,\bx,\mW[J(\tilde{q}),\gamma](t,\bx),\cdot)\|_{\mL^{\infty}(\mI_\rho)}\\
    &\quad +\big\|\partial_{3}\bF(t,\bx,\mW[J(q),\gamma](t,\bx),\cdot)
            \nabla\mW[J(q),\gamma](t,\bx)\\
            &\qquad\qquad\qquad\qquad\qquad\qquad
            -\partial_{3}\bF(t,\bx,\mW[J(\tilde{q}),\gamma](t,\bx),\cdot)
            \nabla\mW[J(\tilde{q}),\gamma](t,\bx)\big\|_{\mL^{\infty}(\mI_\rho)}\\
    &\leq \|\partial_{3}\diverg\bF\|_{\mL^{\infty}(\Omega_T\times\R^n\times\mI_{\omega,\rho})}
        \|J'\|_{\mL^{\infty}(\mI_\rho)}\mW[|q-\tilde{q}|,\gamma](t,\bx)\\
    &\quad +\|\partial_{3}\bF\|_{\mL^{\infty}(\Omega_T\times\R^n\times\mI_{\omega,\rho})}
        |\nabla\mW[J(q)-J(\tilde{q}),\gamma](t,\bx)|\\
    &\quad +\|\nabla\mW[J(\tilde{q}),\gamma]\|_{\mL^{\infty}(\Omega_{T_{2}})}
        \|\partial_{3}^{2}\bF\|_{\mL^{\infty}(\Omega_T\times\R^n\times\mI_{\omega,\rho})}
        \|J'\|_{\mL^{\infty}(\mI_\rho)}\mW[|q-\tilde{q}|,\gamma](t,\bx).
\end{align*}
Integrating over $\R^n$ and applying \cref{E:WL1}, \cref{E:DWL1}, and 
\cref{eq:nonlocal_estimates_fixed_point} yields
\begin{align*}
    &\|\diverg\bPhi(t,\ast,\cdot)-\diverg\hat{\bPhi}(t,\ast,\cdot)\|_{\mL^{1}(\R^n;\mL^{\infty}(\mI_\rho))}\\
    &\leq \|J'\|_{\mL^{\infty}(\mI_\rho)}\Big(
        \|\partial_{3}\diverg\bF\|_{\mL^{\infty}(\Omega_T\times\R^n\times\mI_{\omega,\rho})}
        +|\gamma|_{\mTV(\R^n)}B^{\infty}
        \|\partial_{3}^{2}\bF\|_{\mL^{\infty}(\Omega_T\times\R^n\times\mI_{\omega,\rho})}
        \|J'\|_{\mL^{\infty}(\mI_\rho)} \\
    &\quad+|\gamma|_{\mTV(\R^n)}
        \|\partial_{3}\bF\|_{\mL^{\infty}(\Omega_T\times\R^n\times\mI_{\omega,\rho})}
    \Big)
    \|q(t,\cdot)-\tilde{q}(t,\cdot)\|_{\mL^{1}(\R^n)}\\
    &\eqqcolon C_{2}(B^{\infty})\|q(t,\cdot)-\tilde{q}(t,\cdot)\|_{\mL^{1}(\R^n)}.
\end{align*}

\smallskip
\noindent
\item[Expression for $\kappa_{\bF}$.]
Since both $\bPhi$ and $\hat{\bPhi}$ have arguments lying in $\mI_{\omega,\rho}$, 
$\kappa_{\bF}$ reduces to
\[
\begin{split}
\kappa_{\bF}(B^{\infty})&\coloneqq \max\Big\{
    \|\partial_{4}\bF\|_{\mL^{\infty}(\Omega_T\times\R^n\times\mI_{\omega,\rho})},\\
    &\qquad\qquad\qquad
    (2n+1)\Big(
        \|\nabla\partial_{4}\bF\|_{\mL^{\infty}(\Omega_T\times\R^n\times\mI_{\omega,\rho})}
        +\|\partial_{3}\partial_{4}\bF\|_{\mL^{\infty}(\Omega_T\times\R^n\times\mI_{\omega,\rho})}B^{\infty}
    \Big)
\Big\},
\end{split}
\]
which is finite by \cref{ass:general_nonlocal}. Consequently, $C_1$, $C_2$, $C$, and 
$\kappa_{\bF}$ are all finite.

Substituting the bounds on $C_1$, $C_2$, $C$, and $\kappa_{\bF}$ back into 
\cref{eq:contraction_mapping_the_final_frontier} yields
\begin{align*}
    \|\mF[q](t,\cdot)-\mF[\tilde{q}](t,\cdot)\|_{\mL^{1}(\R^n)}
    &\leq \Big(t\e^{\kappa_{\bF}(B^{\infty})t}|q_{\circ}|_{\mTV(\R^n)}C_{1}(B^{\infty})\\
    &\quad +nW_{n}C_{1}(B^{\infty})\!\!\int_{0}^{t}\!\!\!s\e^{\kappa_{\bF}(B^{\infty})(t-s)}
        C(s,B^{\infty},B^{\mTV})\dd s\Big)
    \|q(t,\cdot)-\tilde{q}(t,\cdot)\|_{\mL^{1}(\R^n)}\\
    &\quad + C_{2}(B^{\infty})\int_{0}^{t}\e^{\kappa_{\bF}(B^{\infty})(t-s)}
        \|q(s,\cdot)-\tilde{q}(s,\cdot)\|_{\mL^{1}(\R^n)}\dd s.
\end{align*}
Taking the supremum over $[0,t]$ gives
\begin{align*}
\|\mF[q]-\mF[\tilde{q}]\|_{\mC([0,t];\mL^{1}(\R^n))}
    &\leq \Lambda(t)\|q-\tilde{q}\|_{\mC([0,t];\mL^{1}(\R^n))},
\end{align*}
where
\[
\Lambda(t)\coloneqq t\e^{\kappa_{\bF}t}|q_{\circ}|_{\mTV(\R^n)}C_{1}(B^{\infty})
+nW_{n}C_{1}(B^{\infty})\int_{0}^{t}\!\!s\e^{\kappa_{\bF}(t-s)}C(s,B^{\infty},B^{\mTV})\dd s
+ C_{2}(B^{\infty})\int_{0}^{t}\!\!\e^{\kappa_{\bF}(t-s)}\dd s.
\]
Since $\Lambda(t)\to 0$ as $t\to 0$, we may choose $T_{3}\in(0,T_{2}]$ small enough 
that $\Lambda(t)\leq\tfrac{1}{2}$ for all $t\in[0,T_{3}]$, which gives
\[
\|\mF[q]-\mF[\tilde{q}]\|_{\mC([0,T_{3}];\mL^{1}(\R^n))}
\leq \tfrac{1}{2}\|q-\tilde{q}\|_{\mC([0,T_{3}];\mL^{1}(\R^n))};
\]
that is, $\mF$ is a contraction on $\mC\big([0,T_{3}];\mL^{1}(\R^n)\big)$.\qedhere
\end{description}
\end{proof}

\begin{lemma}[existence and uniqueness of a fixed point of $\mF$]
\label{lem:existence_uniqueness_fixed-point}
Let \cref{ass:general_nonlocal} hold, and let $\mF$ be as in \cref{defi:fixed_point}. 
Then, there exists a time horizon $T\in\R_{>0}$ and a unique
\[
q^*\in \mC\big([0,T];\mL^{1}(\R^n)\big)\cap\mL^{\infty}\big((0,T);\mL^{\infty}(\R^n)\cap\mTV(\R^n)\big)
\]
such that $\mF[q^*]\equiv q^*$ in $\mC\big([0,T];\mL^{1}(\R^n)\big)$. 
Moreover, $q^*\in \mY(T)$.
\end{lemma}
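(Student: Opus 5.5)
The plan is to apply the Banach fixed-point theorem to $\mF$ on the admissible set $\mY(T)$. We equip it with the metric induced by $\mC([0,T];\mL^{1}(\R^n))$. Fix $T\coloneqq T_3$, the horizon from \cref{lem:contraction_mapping_L_1}. This ensures $T_3\le T_2$, so \cref{lem:self_mapping} gives $\mF[\mY(T)]\subset\mY(T)$. Together with \cref{lem:contraction_mapping_L_1}, this gives $\|\mF[q]-\mF[\tilde q]\|_{\mC([0,T];\mL^1)}\le\tfrac12\|q-\tilde q\|_{\mC([0,T];\mL^1)}$ for all $q,\tilde q\in\mY(T)$. The set $\mY(T)$ is nonempty; for instance, $q(t,\cdot)\equiv q_\circ$ lies in it, since $q_\circ\in\mBV\cap\mL^\infty$ by \cref{ass:general_nonlocal}.

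The main step, and the one I expect to be the real obstacle, is the completeness of $(\mY(T),\|\cdot\|_{\mC([0,T];\mL^1)})$. The metric only controls the $\mL^1$ distance, while membership requires uniform $\mL^\infty$, $\mTV$ and $\mL^1$ bounds. I will show that $\mY(T)$ is closed in the Banach space $\mC([0,T];\mL^1(\R^n))$. Let $q_k\in\mY(T)$ with $q_k\to q$ in $\mC([0,T];\mL^1)$. Then $q\in\mC([0,T];\mL^1)$, and $q_k(t,\cdot)\to q(t,\cdot)$ in $\mL^1$ for every $t$.
\begin{itemize}
\item \emph{$\mL^1$ bound.} The bound $\|q(t,\cdot)\|_{\mL^1}\le 42\|q_\circ\|_{\mL^1}$ passes to the limit by continuity of the norm.
\item \emph{$\mL^\infty$ bound.} Along a subsequence, $q_k(t,\cdot)\to q(t,\cdot)$ a.e., so $|q(t,\bx)|\le B^\infty$ a.e.
\item \emph{$\mTV$ bound.} Lower semicontinuity of the total variation under $\mL^1_{\loc}$ convergence, as used in the proof of \cref{theo:existence_uniqueness_less_regular_flux}, gives $|q(t,\cdot)|_{\mTV}\le\liminf_k|q_k(t,\cdot)|_{\mTV}\le B^{\mTV}$ for every $t$.
\end{itemize}
Hence $q\in\mX$ and all three constraints hold, so $q\in\mY(T)$. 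One should check that the essential-supremum-in-time requirements are met. They are, because all bounds hold pointwise for every $t$.

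Banach's theorem then yields a unique $q^*\in\mY(T)$ with $\mF[q^*]=q^*$. It can be obtained as the limit of the Picard iterates $q_{k+1}=\mF[q_k]$ starting from $q_0\equiv q_\circ$. By construction $q^*\in\mY(T)\subset\mX$, which is precisely the regularity class asserted in the statement.

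Uniqueness in the class stated in the lemma, beyond uniqueness within $\mY(T)$, follows from the contraction estimate. If the statement is read as uniqueness of fixed points in $\mX$, take a second fixed point $\bar q\in\mX$. Shrink $T$ so that $\bar q$ also satisfies the bounds defining $\mY(T)$. This is possible because $\bar q(0)=q_\circ$ and $\bar q$ is continuous in time in $\mL^1$, with $\mL^\infty$ and $\mTV$ bounds obtained from \cref{cor:specific_tailored} applied to $\bar q=\mQ[\mW[J(\bar q),\gamma]]$. Alternatively, one can simply state uniqueness within $\mY(T)$, which is what the contraction directly provides. In either reading, \cref{lem:contraction_mapping_L_1} gives $\|q^*-\bar q\|\le\tfrac12\|q^*-\bar q\|$, hence $q^*=\bar q$.
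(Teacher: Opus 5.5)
Your proposal is correct and follows the paper's own route: Banach's fixed-point theorem applied to $\mF$ on $\mY(T)$, viewed as a closed subset of the Banach space $\mC([0,T];\mL^{1}(\R^n))$, with the self-mapping and contraction properties taken from \cref{lem:self_mapping,lem:contraction_mapping_L_1}; the paper only asserts that $\mY(T)$ is closed, while you verify it (continuity of the $\mL^1$ norm, a.e.\ convergent subsequences for the $\mL^\infty$ bound, lower semicontinuity of $\mTV$), which is a useful addition. Since $\mF$ is defined only on $\mY(T)$, uniqueness within $\mY(T)$ is exactly what the lemma claims, so your aside on uniqueness in all of $\mX$ is unnecessary---and as sketched it would only give agreement on the shrunken interval, not on all of $[0,T]$.
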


\begin{proof}
By \cref{defi:admissible_set}, the set $\mY(T)$ is closed in $\mC\big([0,T];\mL^{1}(\R^n)\big)$, 
which is a Banach space. The result then follows from \cref{lem:self_mapping,lem:contraction_mapping_L_1} by the Banach fixed-point theorem.
\end{proof}
Collecting all the results in \cref{lem:self_mapping}---specifically the proven bounds \cref{eq:TV_bound_fixed_point}, \cref{E:conservation-bound} and the general stability result of \cref{eq:contraction_mapping_the_final_frontier}---as well as \cref{lem:existence_uniqueness_fixed-point} we have proven the following result.
\begin{theorem}[existence and uniqueness for \cref{E:model_nonlocal} on a small time horizon]
\label{theo:existence_uniqueness_nonlocal_small_time_horizon}
Let \cref{ass:general_nonlocal} hold. Then there exists a time horizon $T\in\R_{>0}$ 
and a unique weak entropy solution
\[
q\in\mC\big([0,T];\mL^{1}(\R^n)\big)\cap\mL^{\infty}\big((0,T);\mL^{\infty}(\R^n)\cap\mTV(\R^n)\big)
\]
of \cref{E:model_nonlocal} in the sense of \cref{defi:entropy_solution}. Furthermore, 
\begin{equation}
\label{E:q_fixed_point_solution}
\begin{aligned}
\|q(t,\cdot)\|_{\mL^{\infty}(\R^n)}
    &\leq 42\|q_{\circ}\|_{\mL^{\infty}(\R^n)}\\
|q(t,\cdot)|_{\mTV(\R^n)}
    &\leq \abs{q_{\circ}}_{\mTV(\R^n)} \e^{\eta(B^{\infty}) t} + n W_n\mathrm{e}^{\eta(B^{\infty}) t}C\big(t,B^{\infty},B^{\mTV}\big),\\
\|q(t,\cdot)-q(s,\cdot)\|_{\mL^{1}(\R^n)}
    &\leq C^{\infty}(t,s)) + \abs{t -s}\, D(T_1, B^\infty, B^{\mTV})
\end{aligned}
\end{equation}
where $C^{\infty}(t,s)\coloneqq \int_{\min\{s,t\}}^{\max\{t,s\}}\int_{\R^n} \|\diverg\bF(\tau, \bx, *)\|_{\mL^{\infty}(\mI(B^\infty)))} \dd \bx \dd \tau $ for \((t,s)\in[0,T]^{2}\) which vanishes as $\abs{t -s } \to 0$, and $\eta(B^\infty)$ is defined as in \cref{E:eta-B}.

Furthermore, let $q$ and $\tilde q$ be solutions \cref{E:model_nonlocal} with respect to the initial conditions $q_0$ and $\tilde{q}_0$, respectively. In addition, let $\bPhi$, $\hat{\bPhi}$ be defined as in \cref{fluxes}. Then, we have the following general stability estimate
\begin{equation}
\begin{split}
    &\|q(t,\cdot)-\tilde{q}(t,\cdot)\|_{\mL^{1}(\R^n)}\\
    &\leq t\e^{\kappa_{\bF}t}|q_{\circ}|_{\mTV(\R^n)}
        \esssup_{(t,\bx,q)\in\Omega_T\times\mI_\rho}
        \big|\partial_{4}\bPhi(t,\bx,q)
            -\partial_{4}\hat{\bPhi}(t,\bx,q)\big|\\
    &\quad +nW_{n}
        \esssup_{(t,\bx,q)\in\Omega_T\times\mI_\rho}
        \big|\partial_{4}\bPhi(t,\bx,q)
            -\partial_{4}\hat{\bPhi}(t,\bx,q)\big|
        \int_{0}^{t}s\e^{\kappa_{\bF}(t-s)}
        \|\nabla\diverg\bPhi(s,\cdot,\ast)\|_{\mL^{1}(\R^n;\mL^{\infty}(\mI_\rho))}\dd s\\
    &\quad +\int_{0}^{t}\e^{\kappa_{\bF}(t-s)}
        \|\diverg\bPhi(s,\ast,\cdot)
            -\diverg\hat{\bPhi}(s,\ast,\cdot)\|_{\mL^{1}(\R^n;\mL^{\infty}(\mI_\rho))}\dd s.
\end{split}
\end{equation}
\end{theorem}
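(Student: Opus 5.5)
The plan is to obtain \cref{theo:existence_uniqueness_nonlocal_small_time_horizon} by assembling the fixed-point lemmas already proved, and then identifying fixed points of $\mF$ with entropy solutions of \cref{E:model_nonlocal}.

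\emph{Step 1: existence.} Take $T\coloneqq T_3$ from \cref{lem:contraction_mapping_L_1}, so that \cref{lem:self_mapping} also holds on $[0,T]$. \Cref{lem:existence_uniqueness_fixed-point} then gives a unique $q^*\in\mY(T)$ with $\mF[q^*]=q^*$.

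\emph{Step 2: a fixed point is an entropy solution.} Set $w\coloneqq\mW[J(q^*),\gamma]$. By \cref{lem:bounds_nonlocal_space} and $q^*\in\mY(T)$, we have $w\in\mZ(T)$. Hence $q^*=\mQ[w]$ is the Kru\v{z}kov solution of \cref{eq:nonlocal_explicit_w} with flux $\bPhi(t,\bx,\rho)=\bF(t,\bx,w(t,\bx),\rho)$, in the sense of \cref{defi:entropy_solution_local}. In that definition the source term is $\diverg\bPhi(t,\bx,k)$, meaning the divergence in $\bx$ at frozen $\rho=k$. Since $w$ is spatially Lipschitz, this splits by the chain rule as
\[
\diverg\bPhi(t,\bx,k)=\diverg\bF(t,\bx,w,k)+\partial_3\bF(t,\bx,w,k)\circ\nabla w(t,\bx)
\]
almost everywhere. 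Substituting this splitting gives exactly the inequality of \cref{defi:entropy_solution} with $\alpha=k$. The initial condition is inherited from $\mQ$.

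\emph{Step 3: uniqueness.} Conversely, suppose $q$ is any entropy solution of \cref{E:model_nonlocal} in $\mC([0,T];\mL^1)\cap\mL^\infty((0,T);\mL^\infty\cap\mTV)$. Reading Step 2 backwards, $q=\mQ[\mW[J(q),\gamma]]=\mF[q]$. The obstacle is that such a $q$ need not lie in $\mY(T)$, where the contraction was proved. I would handle this with a continuity argument. Because $q$ is continuous in $\mL^1$, and its $\mL^\infty$ and $\mTV$ norms start at those of $q_\circ$, which are strictly below the factor-$42$ thresholds, there is a $\tau>0$ with $q|_{[0,\tau]}\in\mY(\tau)$. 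The norms along $\mTV$ and $\mL^\infty$ are controlled through the a priori estimates of \cref{cor:specific_tailored} applied with this $w$. On $[0,\tau]$ the contraction of \cref{lem:contraction_mapping_L_1} (with $\Lambda(\tau)\le\Lambda(T)\le\tfrac12$) forces $q=q^*$. A standard sup/connectedness argument then extends the equality to all of $[0,T]$: the set of times where $q=q^*$ is closed, and it is open because on it $q$ inherits the $\mY$ bounds from $q^*$. If this fails for lack of a priori bounds, I would instead state uniqueness within the class $\mY(T)$ and note that this suffices for the fixed-point interpretation. I expect this step to be the main obstacle.

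\emph{Step 4: estimates.} The $\mL^\infty$ bound follows from $q^*\in\mY(T)$, or equivalently from the $T_1$ argument. The $\mTV$ bound is \cref{eq:TV_bound_fixed_point} with $\tilde q=q^*$. The time-continuity estimate comes from the time-continuity bound of \cref{theo:existence_uniqueness_less_regular_flux} (as in \cref{E:conservation-bound}) applied to $\bPhi$: the $\mL^1$ part of $\diverg\bF$ gives $C^\infty(t,s)$, while the chain-rule terms and the $\partial_4\bF$ transport term, bounded via \cref{eq:nonlocal_estimates_fixed_point} and $B^{\mTV}$, are collected into $|t-s|\,D(T_1,B^\infty,B^{\mTV})$. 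Finally, the stability estimate is exactly \cref{E:rho_star_general_statbility} applied to the two local problems with the frozen fluxes \cref{fluxes}, now allowing different initial data, as in \cref{eq:contraction_mapping_the_final_frontier} plus the $\e^{\kappa_{\bF}t}\|q_\circ-\tilde q_\circ\|_{\mL^1}$ term.
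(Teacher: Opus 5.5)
Your Steps 1 and 4 match the paper's proof, which only collects \cref{lem:self_mapping,lem:contraction_mapping_L_1,lem:existence_uniqueness_fixed-point} together with the bounds \cref{eq:TV_bound_fixed_point}, \cref{E:conservation-bound} and \cref{eq:contraction_mapping_the_final_frontier}. Step 2 usefully makes explicit the identification of fixed points of $\mF$ with entropy solutions in the sense of \cref{defi:entropy_solution}, which the paper leaves tacit. Your correction of the stability estimate is also right. Without the term $\e^{\kappa_{\bF}t}\|q_\circ-\tilde q_\circ\|_{\mL^1(\R^n)}$ the displayed inequality fails for distinct data: take $\bF$ independent of its third argument, so that $\bPhi=\hat\bPhi$ and the right-hand side vanishes. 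Moreover, \cref{eq:contraction_mapping_the_final_frontier}, which the paper invokes, was derived only for a common initial datum.

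Step 3 goes beyond the paper. The Banach argument yields uniqueness only among fixed points in $\mY(T)$, and $\mF$ is not even defined outside $\mY(T)$. Your fallback is therefore exactly what the paper establishes. The argument you give for the full class, however, has a gap.
\begin{itemize}
\item $\mL^1$-continuity gives only lower semicontinuity of $t\mapsto\|q(t,\cdot)\|_{\mL^\infty(\R^n)}$ and $t\mapsto|q(t,\cdot)|_{\mTV(\R^n)}$, which is the wrong direction. The initial window must come from the a priori estimates of \cref{cor:specific_tailored} for $w=\mW[J(q),\gamma]$, whose horizon and constants depend on the given bounds $M$ of $q$; you do point to this.
\item More seriously, the openness step fails. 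Knowing $q=q^*$ on $[0,t_0]$ says nothing about $q(t)$ for $t>t_0$. Since $q^*(t_0)$ may already sit at the thresholds $B^\infty$, $B^{\mTV}$, there is no reason for $q$ to stay in $\mY$ beyond $t_0$, so you cannot reuse the contraction on $\mY$ there.
\end{itemize}

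The repair is to remove $\mY$ from the uniqueness argument altogether. The proof of \cref{lem:contraction_mapping_L_1} uses the self-mapping property only to confine the ranges of $\mF[q]$ and $\mF[\tilde q]$ to the box in which \cref{E:rho_star_general_statbility} is evaluated. For two fixed points these images are $q$ and $q^*$ themselves. All constants ($\kappa_{\bF}$, $C_1$, $C_2$, $C$) can then be built from $\max\{M,B^\infty\}$ and the analogous $\mTV$ and $\mL^1$ maxima. Running the same estimate from any $t_0$ with $q(t_0)=q^*(t_0)$ gives $\sup_{[t_0,t_0+\tau]}\|q-q^*\|_{\mL^1(\R^n)}\le\Lambda_M(\tau)\sup_{[t_0,t_0+\tau]}\|q-q^*\|_{\mL^1(\R^n)}$, with $\Lambda_M(\tau)\to0$ as $\tau\to0$ uniformly in $t_0$. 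Fix $\tau$ with $\Lambda_M(\tau)\le\tfrac{1}{2}$ and march through $[0,T]$ in finitely many windows of length $\tau$. This gives $q\equiv q^*$, and no connectedness argument is needed.
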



We conclude with the following corollary, which establishes global well-posedness 
under the additional assumption that the flux vanishes at the extremal values of the 
initial datum, yielding a maximum principle.

\begin{corollary}[global well-posedness under a maximum principle condition]
\label{cor:large_time_horizon}
Let \cref{ass:general_nonlocal} hold, and assume that there exist $\R\ni q_{\min} \leq  q_{\max}\in\R$ 
such that
\begin{equation}
\bF(\cdot,\ast,\star,q_{\min})=\boldsymbol{0}=\bF(\cdot,\ast,\star,q_{\max}).
\label{eq:F_maximum_principle}
\end{equation}
Let $q_{\circ}\in \mBV(\R^n)\cap\mL^{\infty}(\R^n)$ satisfy
\[
q_{\min}\leq q_{\circ}\leq q_{\max} \qquad\text{a.e. on } \R^n.
\]
Then, for any $T\in\R_{>0}$, there exists a unique weak entropy solution of \cref{E:model_nonlocal} 
in the sense of \cref{defi:entropy_solution}, and it satisfies
\[
q_{\min}\leq q(t,\bx)\leq q_{\max} \qquad\text{a.e. on } (0,T)\times\R^n.
\]
\end{corollary}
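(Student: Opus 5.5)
The plan is to combine the short-time result of \cref{theo:existence_uniqueness_nonlocal_small_time_horizon} with an invariant-region (maximum principle) argument. The maximum principle makes the $\mL^\infty$ bound independent of time. We then continue the solution step by step. The key observation is that the constants governing the small existence horizon depend only on $B^\infty$, $B^{\mTV}$ and $\|q_\circ\|_{\mL^1}$, and the first of these no longer grows.

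\textbf{Step 1: maximum principle for the frozen problem.} Fix $w\in\mZ(T)$ and consider the local problem \cref{eq:nonlocal_explicit_w} with flux $\bPhi(t,\bx,\rho)=\bF(t,\bx,w(t,\bx),\rho)$.

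By \cref{eq:F_maximum_principle}, $\bPhi(\cdot,\ast,q_{\min})\equiv 0\equiv\bPhi(\cdot,\ast,q_{\max})$. Hence $\diverg(\bPhi(t,\bx,k))=0$ for $k\in\{q_{\min},q_{\max}\}$, including the $\partial_3\bF\circ\nabla w$ contribution. The constants $q_{\min}$ and $q_{\max}$ are therefore entropy solutions.

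We then apply the stability estimate \cref{E:rho_star_general_statbility} with $\hat\bPhi=\bPhi$. Better still, we use the $\mL^1$-contraction together with the comparison property that Kru\v{z}kov's doubling-of-variables method gives for $(\rho-k)^{\pm}$ when the source term vanishes at $k$. This yields $q_{\min}\le\mQ[w]\le q_{\max}$ whenever $q_{\min}\le q_\circ\le q_{\max}$.

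Concretely, I would redo the Kru\v{z}kov inequality with $\sign^+$ and $k=q_{\max}$. The term $\sign^+(\rho-k)\diverg\bPhi(t,\bx,k)$ vanishes. Choosing test functions approximating $\mathds{1}_{[0,t]}\times\mathds{1}_{B_R}$ then gives $\int(\rho(t)-q_{\max})^+\le 0$.

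Applied to the mollified problems, this should pass to the limit as in \cref{theo:existence_uniqueness_less_regular_flux}. One caveat needs care: mollification in $\rho$ destroys the exact vanishing at $q_{\max}$. I would therefore run the argument directly at the level of the entropy inequality of $\rho_*$, using finite propagation speed from $\|\partial_3\bPhi\|_{\mL^\infty}$ to handle the cutoff in $R$.

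\textbf{Step 2: uniform bounds and iteration.} Set $M\coloneqq\max\{|q_{\min}|,|q_{\max}|\}$. Every fixed point $q=\mF[q]$ stays in $[q_{\min},q_{\max}]$, so $\|q(t)\|_{\mL^\infty}\le M$ for all times it exists.

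I would redefine the admissible set $\mY(T)$ using the $\mL^\infty$ bound $M$ in place of $42\|q_\circ\|_{\mL^\infty}$, intersected with the invariant region. Step 1 makes this set invariant under $\mF$ without any time restriction.

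The $\mTV$ bound \cref{eq:TV_bound_fixed_point} and the $\mL^1$ bound \cref{E:conservation-bound} still involve the time-step length. However, the constants $\eta(M)$ and $C(t,M,\cdot)$ are now fixed. On any interval the $\mTV$ bound grows at most exponentially, via a Gronwall argument: plugging the a priori bound of the fixed point itself into the $B^{\mTV}$ term in \cref{eq:C_t_B_infty_TV_1} gives a linear Gronwall inequality for $|q(t)|_{\mTV}$. The $\mL^1$ norm grows at most linearly plus the finite quantity $\|\diverg\bF\|_{\mL^1}$.

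Consequently the local existence time $\tau$ given by \cref{lem:self_mapping,lem:contraction_mapping_L_1}, restarted at time $t_0$ with datum $q(t_0)$, is bounded below by a constant depending only on $T$ and these a priori bounds on $[0,T]$. Restarting at $t_0=\tau,2\tau,\dots$ covers $[0,T]$ in finitely many steps.

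Uniqueness on $[0,T]$ follows from the contraction on each subinterval, or equivalently from the stability estimate and Gronwall. Concatenation preserves the entropy inequalities, since these are local in time and the solution is continuous in $\mL^1$.

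\textbf{Main obstacle.} I expect the main difficulty to be Step 1: establishing the invariant region rigorously despite the mollification in $\rho$ and the low regularity of the flux.

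A secondary difficulty is the choice of $B^{\mTV}$ and of the $\mL^1$ level: the factor $42$ of the initial quantity must be replaced by Gronwall-type bounds at the restart times. The restart datum is only in $\mBV\cap\mL^\infty$, which matches \cref{ass:general_nonlocal}, so the short-time theorem applies verbatim at each step.
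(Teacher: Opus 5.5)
Your proposal is correct and has the same skeleton as the paper's proof. Both arguments use three ingredients. First, a time-independent $\mL^\infty$ bound from the maximum principle. Second, an at most exponential $\mTV$ bound, obtained by inserting $\|\nabla\diverg\bPhi(s,\cdot,\cdot)\|_{\mL^1(\R^n;\mL^\infty)}\le C_0+C_{\mTV}|q(s,\cdot)|_{\mTV(\R^n)}$ into the integral form \eqref{E:rho_star_TVbound} and applying Gr\"onwall. Third, a restart with steps bounded away from zero.

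The genuine difference is the invariant-region step. The paper reuses the ODE comparison for $t\mapsto\|q(t,\cdot)\|_{\mL^\infty(\R^n)}$ from the proof of \cref{theo:existence_uniqueness_smooth}, whose right-hand side vanishes at $q_{\max}$. This is short but only formal: that differential inequality is never derived for entropy solutions, and bounding the two-sided quantity $\|q\|_{\mL^\infty}$ by $q_{\max}$ really requires tracking the essential supremum and infimum separately.

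You instead combine the weak formulation with the Kru\v{z}kov inequality to obtain the one-sided inequality for $(\rho-q_{\max})^+$, and likewise for $(\rho-q_{\min})^-$. In it the source $\sign^+(\rho-k)\diverg\bPhi(t,\bx,k)$ vanishes identically, because both $\diverg\bF$ and $\partial_3\bF\circ\nabla w$ vanish at $k$, and a cut-off argument concludes. This route has three advantages:
\begin{itemize}
\item Working on the entropy inequality of the limit solution avoids the loss of exact vanishing under mollification.
\item It needs no doubling of variables, since the comparison function is the constant $k$ itself.
\item Because you prove it for $\mQ[w]$ with arbitrary $w\in\mZ(T)$, the $\mL^\infty$ constraint in $\mY$ becomes invariant under $\mF$ with no time restriction.
\end{itemize}

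Your restart bookkeeping is also sharper than the paper's. With levels equal to $42$ times the restart datum, the margins in \cref{lem:self_mapping} are $41|q(t_0,\cdot)|_{\mTV(\R^n)}$ and $41\|q(t_0,\cdot)\|_{\mL^1(\R^n)}$, and these can degenerate. Fixed Gr\"onwall-based levels instead make the step length depend only on upper bounds. It also depends on the smallness, uniform in $t_0$, of the contributions of $\diverg\bF$ and $\nabla\diverg\bF$ over $(t_0,t_0+\tau)$, which follows from absolute continuity of the integral.

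Two minor points:
\begin{itemize}
\item The first option you list, \eqref{E:rho_star_general_statbility} with $\hat{\bPhi}=\bPhi$, cannot yield the comparison. It is an unsigned $\mL^1$ estimate, and the constants $q_{\min},q_{\max}$ are not integrable unless they vanish. Keep only the $\sign^+$ argument.
\item The $\mL^1$ norm in general satisfies a Gr\"onwall-type rather than a linear bound. For example, under the second alternative in \cref{item:6_flux}, $\diverg\bF$ is controlled by $\|w\|_{\mL^1}\lesssim\|q\|_{\mL^1}$. This is equally sufficient on $[0,T]$.
\end{itemize}
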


\begin{proof}
By \cref{theo:existence_uniqueness_nonlocal_small_time_horizon}, there exists a time 
horizon $T\in\R_{>0}$ on which a unique weak entropy solution to \cref{E:model_nonlocal} 
exists. To extend it to any finite time horizon, we establish a uniform $\mL^{\infty}(\R^n)$ 
bound and a uniform $\mTV$ bound on the solution, which then allows us to apply a time 
clustering argument.

\begin{description}
\item[$\mL^{\infty}(\R^n)$ bound:]
Proceeding as in \cref{theo:existence_uniqueness_smooth}, we note that the $\mL^{\infty}(\R^n)$-norm 
of the solution satisfies
\begin{align*}
\tfrac{\dd}{\dd t} \|q(t,\cdot)\|_{\mL^{\infty}(\R^n)}
    &\leq\big\|\diverg \bF\big(t,\ast,\mW[J(q),\gamma](t,\ast),\|q(t,\cdot)\|_{\mL^{\infty}(\R^n)}\big)\big\|_{\mL^{\infty}(\R^n)}\\
    &\quad +\big\|\partial_{3}\bF\big(t,\ast,\mW[J(q),\gamma](t,\ast),\|q(t,\cdot)\|_{\mL^{\infty}(\R^n)}\big)\circ\nabla\mW[J(q),\gamma](t,\ast)\big\|_{\mL^{\infty}(\R^n)}.
\end{align*}
By \cref{eq:F_maximum_principle}, the right-hand side vanishes when 
$\|q(t,\cdot)\|_{\mL^{\infty}(\R^n)}$ reaches $q_{\max}$, so the ODE comparison 
principle yields $\|q(t,\cdot)\|_{\mL^{\infty}(\R^n)}\leq q_{\max}$ for all $t\in[0,T]$. 
An analogous argument gives the lower bound $q_{\min}\leq q(t,\cdot)$, establishing a 
uniform (in time) $\mL^{\infty}(\R^n)$ bound.

\item[$\mTV(\R^n)$ bound:]
Rather than invoking \cref{eq:TV_bound_fixed_point} directly, we use the finer $\mTV$ 
estimate from \cref{theo:existence_uniqueness_less_regular_flux}, which can be exploited for a 
Grönwall argument. Set
\[
\mathcal{Q}\coloneqq (q_{\min},q_{\max})\subset\R,\]
\[J\mathcal{Q}\coloneqq \big(-\|J'\|_{\mL^{\infty}(\mathcal{Q})}\max\{|q_{\min}|,|q_{\max}|\},\,
\|J'\|_{\mL^{\infty}(\mathcal{Q})}\max\{|q_{\min}|,|q_{\max}|\}\big)\subset\R,
\]
and $\bPhi(t,\bx,q)\coloneqq \bF(t,\bx,\mW[J(q),\gamma](t,\bx),q)$ for 
$(t,\bx,q)\in\Omega_T\times\mathcal{Q}$. The estimate reads, for all $t\in[0,T]$,
\begin{align}
    |q(t,\cdot)|_{\mTV(\R^n)}&\leq |q_{\circ}|_{\mTV(\R^n)} \e^{\kappa t}
    + n W_n \int_0^t \e^{\kappa(t-s)}
    \int_{\R^n} 
        \|\nabla\diverg \bPhi(s, \bx, \cdot)\|_{\mL^{\infty}(\mathcal{Q})} 
    \dd \bx \dd s,\label{eq:TV_Gronwall}
\end{align}
with $\kappa\coloneqq (2n+1)\|\diverg\partial_{3}\bPhi\|_{\mL^{\infty}(\Omega_T\times\R^n\times\mathcal{Q})}$.
Since $q(t,\cdot)\in\mathcal{Q}$ by the $\mL^\infty$ bound, the nonlocal term satisfies
\[
\|\mW[J(q),\gamma]\|_{\mL^{\infty}(\Omega_T)}\leq \|J'\|_{\mL^{\infty}(\mathcal{Q})}\max\{|q_{\min}|,|q_{\max}|\},
\]
so the arguments of $\bF(t,\bx,*,\cdot)$ lie in $J\mathcal{Q}\times\mathcal{Q}$. Converting $\kappa$ 
into an expression involving $\bF$ and $\mW$, analogously to \cref{lem:self_mapping}, gives
\begin{align*}
\kappa=(2n+1)\Big(
    &\|\diverg\partial_{4}\bF\|_{\mL^{\infty}(\Omega_T\times\R^n\times J\mathcal{Q}\times\mathcal{Q})}\\
    &+\|\partial_{3}\partial_{4}\bF\|_{\mL^{\infty}(\Omega_T\times\R^n\times J\mathcal{Q}\times\mathcal{Q})}
    |\gamma|_{\mTV(\R^n)}\|J'\|_{\mL^{\infty}(\mathcal{Q})}\max\{|q_{\min}|,|q_{\max}|\}
\Big),
\end{align*}
which is finite and uniform in the solution by \cref{ass:general_nonlocal}. For the 
integrand in \cref{eq:TV_Gronwall}, under the first part of \cref{item:7_flux} of \cref{ass:flux_nonlocal},
\begin{align*}
    \|\nabla\diverg\bPhi(s,\ast,\cdot)\|_{\mL^{1}(\R^n;\mL^{\infty}(\mathcal{Q}))}
    &\leq \|\nabla\diverg\bF(s,\star,\ast,\cdot)\|_{\mL^{1}(\R^n;\mL^{\infty}(J\mathcal{Q}\times\mathcal{Q}))}\\
    &\quad +\|\partial_{3}^{2}\bF\|_{\mL^{\infty}(\Omega_{T}\times J\mathcal{Q}\times\mathcal{Q})}
        \|J'\|_{\mL^{\infty}(\mathcal{Q})}|\gamma|_{\mTV(\R^n)}\|q_{\circ}\|_{\mL^{1}(\R^n)}\\
    &\quad +\|\partial_{3}\diverg\bF\|_{\mL^{\infty}(\Omega_{T}\times J\mathcal{Q}\times\mathcal{Q})}
        \|J'\|_{\mL^{\infty}(\mathcal{Q})}|\gamma|_{\mTV(\R^n)}\|q_{\circ}\|_{\mL^{1}(\R^n)}\\
    &\quad +\|\partial_{3}\bF\|_{\mL^{\infty}(\Omega_{T}\times J\mathcal{Q}\times\mathcal{Q})}
        \|J'\|_{\mL^{\infty}(\mathcal{Q})}|\gamma|_{\mTV(\R^n)}|q(s,\cdot)|_{\mTV(\R^n)},
\end{align*}
and under the second part of \cref{item:7_flux},
\begin{align*}
    \|\nabla\diverg\bPhi(s,\ast,\cdot)\|_{\mL^{1}(\R^n;\mL^{\infty}(\mathcal{Q}))}
    &\leq \|\partial_{3}\nabla\diverg\bF\|_{\mL^{\infty}(\Omega_{T}\times J\mathcal{Q}\times\mathcal{Q})}
        \|J'\|_{\mL^{\infty}(\mathcal{Q})}\|q_{\circ}\|_{\mL^{1}(\R^n)}\\
    &\quad +\|\partial_{3}^{2}\bF\|_{\mL^{\infty}(\Omega_{T}\times J\mathcal{Q}\times\mathcal{Q})}
        \|J'\|_{\mL^{\infty}(\mathcal{Q})}|\gamma|_{\mTV(\R^n)}\|q_{\circ}\|_{\mL^{1}(\R^n)}\\
    &\quad +\|\partial_{3}\diverg\bF\|_{\mL^{\infty}(\Omega_{T}\times J\mathcal{Q}\times\mathcal{Q})}
        \|J'\|_{\mL^{\infty}(\mathcal{Q})}|\gamma|_{\mTV(\R^n)}\|q_{\circ}\|_{\mL^{1}(\R^n)}\\
    &\quad +\|\partial_{3}\bF\|_{\mL^{\infty}(\Omega_{T}\times J\mathcal{Q}\times\mathcal{Q})}
        \|J'\|_{\mL^{\infty}(\mathcal{Q})}|\gamma|_{\mTV(\R^n)}|q(s,\cdot)|_{\mTV(\R^n)}.
\end{align*}

In both cases, the first three summands are uniformly bounded by \cref{ass:general_nonlocal} 
and the $\mL^{\infty}$ bound established above, whereas the fourth depends linearly on 
$|q(s,\cdot)|_{\mTV(\R^n)}$. Accordingly, in each case, for suitable constants $C_0, C_{\mTV}$,
\[
 \|\nabla\diverg\bPhi(s,\ast,\cdot)\|_{\mL^{1}(\R^n;\mL^{\infty}(\mathcal{Q}))} \leq C_0 + C_{\mTV} |q(s,\cdot)|_{\mTV(\R^n)}.
\]
Substituting this into \cref{eq:TV_Gronwall} gives
\[
|q(t,\cdot)|_{\mTV(\R^n)}
\leq \underbrace{|q_{\circ}|_{\mTV(\R^n)} \e^{\kappa t} 
+ C_0 \int_0^t \e^{\kappa(t-s)}\dd s}_{=:\,\alpha(t)}
+ C_{\mTV}\int_0^t \e^{\kappa(t-s)}|q(s,\cdot)|_{\mTV(\R^n)}\dd s.
\]
Thus, by Grönwall's lemma,
\[
|q(t,\cdot)|_{\mTV(\R^n)} \leq \alpha(t)\,\e^{C_{\mTV}, t},
\]
so the $\mTV$ seminorm of the solution grows at most exponentially in time.
\end{description}

In conclusion, with a uniform $\mL^{\infty}$ bound and an at-most-exponential (in time) $\mTV$ bound in hand, the 
solution remains in a bounded subset of $\mathcal X$ on any finite time interval. We can 
therefore apply the clustering argument of 
\cref{theo:existence_uniqueness_nonlocal_small_time_horizon}: cover $[0,T]$ by a finite 
sequence of subintervals, each short enough for the local existence result to apply, with 
the terminal value of each subproblem serving as the initial datum for the next. Since the 
$\mTV$ bound ensures that the required smallness condition does not degenerate, the time steps 
are bounded away from zero, and the argument exhausts any finite horizon. This concludes 
the proof.

\end{proof}

\begin{remark}[time-dependent nonlocal kernel]\label{rem:time_dependent_kernel}
The kernel \(\gamma\) in \cref{ass:general_nonlocal} may also depend on time, as considered in \cite{radici2023entropy}. Our theory extends to such kernels under the assumptions \(\gamma\in \mC([0,T];\mL^{1}(\R^n))\cap \mL^{\infty}((0,T);\mL^{\infty}\cap\mBV(\R^n))\) and \(\|\gamma(t,\cdot)\|_{\mL^{1}(\R^n)}\leq 1\) for all \(t\in[0,T]\). The proofs require only minor modifications, replacing total variation bounds of \(\gamma\) by \(\mL^{\infty}\) bounds in time.
\end{remark}
\begin{remark}[Admissible fluxes and applications to results in the literature]
The fluxes discussed in \cref{rem:specifically_structured_flux} are also applicable for the class of nonlocal conservation laws discussed in this section. Thus, the results apply for instance to the model classes considered in \cite{amorim2015numerical,ciaramaglia2024nonlocaltrafficflowmodels,DiFrancesco2019deterministic,hu2021shock,radici2023entropy} for the scalar case and \cite{colombo2011control,courcel2025well-posedness,goatin2025nonlocalmodelheterogeneousmaterial,Rossi2020} for the multi-dimensional case as already pointed out in \cref{sec:related-work}. They are also applicable (see particularly \cref{eq:autonomous_flux}) to nonlocal dynamics where the local dependency is linear, i.e., one has for \(\bV:\R\rightarrow\R^{n}\) a given velocity function
\[
\partial_{t}q +\diverg\big(\bV(\mW[J(q),\gamma](t,\bx))q\big)=0,
\]
with \(\mW\) the nonlocal operator introduced in \cref{defi:nonlocal_operator}.
Such dynamics has been studied intensively (sometimes with higher regularity of the nonlocal kernel) in \cite{crippa2012existence,keimer2017existence,colombo2026multi,coclite2022existence,keimer2018existence} and related results. However, when realizing that the nonlocal term and with such the velocity of the dynamics are Lipschitz-continuous in space, one can invoke the method of characteristics and obtains uniqueness of weak solutions. Thus, the type of entropy solutions \cref{defi:entropy_solution}  considered in this manuscript is not required for obtaining uniqueness, and as such one would work with the methods mentioned in the named papers.
\end{remark}


\subsection{Nonlocal in space and time \tup{memory}}\label{subsec:memory}
We now move to the analysis of the conservation law with memory \cref{E:model_nonlocal_memory}. 
Once again, we work under the \cref{ass:flux_nonlocal}. We recall the solution operator $\mQ$ defined as in \cref{defi:solution_operator_fixed_w} which is well-defined by \cref{lem:well-posed-Q} for any $w \in \mathcal Z(T)$.  \\
We assume the following statements about the nonlocality and the historical datum.

\begin{assumption}[nonlocal kernel, nonlocal nonlinearity and historical datum]
\label{ass:general_nonlocal_memory}
    We assume the following:
    \begin{description}
        \item[Nonlocal nonlinearity and kernel:] $J\in\mathcal{W}^{1,\infty}_{\mathrm{loc}}(\R), \quad J(0)=0$.
        \item[Kernel:] $\kappa\in\mL^{\infty}\cap\mL^{1}(\R_{>0};\mBV(\R^n)\cap\mL^{\infty}(\R^n))$ with $\|\kappa\|_{\mL^{1}(\R_{>0}\times\R^n)}=1$.
        \item[Historical datum:] $q_{\circ}\in\mL^{\infty}\cap\mL^{1}(\R_{\leq0};\mBV(\R^n)\cap\mL^{\infty}(\R^n))\cap\mC\big(\R_{\leq0};\mL^{1}(\R^n)\big)$.
    \end{description}
\end{assumption}

We begin by stating the counterpart of \cref{lem:bounds_nonlocal_space}.

\begin{lemma}[bounds on the nonlocal operator with memory]
\label{lem:bounds_nonlocal_memory}
Let \cref{ass:general_nonlocal_memory} hold, and let $T\in\R_{>0}$. We define
\begin{align}
\mathcal S &\coloneqq \mC\big(\R_{\leq T};\mL^{1}(\R^n)\big)
    \cap\mL^{\infty}\big(\R_{\leq T};\mL^{\infty}(\R^n)\cap\mTV(\R^n)\big), \label{E:solution_space_memory}\\
\mK &\coloneqq \big\{\kappa\in\mL^{\infty}\cap\mL^{1}(\R_{>0};\mBV(\R^n)\cap\mL^{\infty}(\R^n))
    :\ \|\kappa\|_{\mL^{1}(\R_{>0}\times\R^n)}=1\big\} \notag.
\end{align}
Then, the nonlocal operator
\[
\mW:\mathcal S \times\mK\rightarrow \mC\big([0,T];\mL^{1}(\R^n)\big),
\]
defined by
\begin{equation}
\mW[J(q),\kappa](t,\bx)
\coloneqq \int_{\R_{<t}}\int_{\R^n}\kappa(t-s,\bx-\by)J(q(s,\by))\dd\by\dd s,
\quad (t,\bx)\in[0,T]\times\R^n,
\label{defi:nonlocal_operator_memory}
\end{equation}
satisfies
\begin{align}
\|\mW[J(q),\kappa](t,\cdot)\|_{\mL^{1}(\R^n)}
    &\leq \|J'\|_{\mL^{\infty}(Q(t))}\|q\|_{\mL^{\infty}(\R_{\leq t};\mL^{1}(\R^n))},
    \label{E:WL1-memory}\\
\|\mW[J(q),\kappa](t,\cdot)\|_{\mL^{\infty}(\R^n)}
    &\leq \|J'\|_{\mL^{\infty}(Q(t))}\|q\|_{\mL^{\infty}(\R_{\leq t}\times\R^n)},
    \label{E:WLinfty-memory}\\
\|\nabla\mW[J(q),\kappa](t,\cdot)\|_{\mL^{1}(\R^n;\R^n)}
    &\leq \|\kappa\|_{\mL^{1}(\R_{>0};\mTV(\R^n))}\|J'\|_{\mL^{\infty}(Q(t))}
    \|q\|_{\mL^{\infty}(\R_{\leq t};\mL^{1}(\R^n))},
    \label{E:DWL1-memory}\\
\|\nabla\mW[J(q),\kappa](t,\cdot)\|_{\mL^{\infty}(\R^n;\R^n)}
    &\leq \|\kappa\|_{\mL^{1}(\R_{>0};\mTV(\R^n))}\|J'\|_{\mL^{\infty}(Q(t))}
    \|q\|_{\mL^{\infty}(\R_{\leq t}\times\R^n)},
    \label{E:DWLinfty-memory}\\
|\nabla\mW[J(q),\kappa](t,\cdot)|_{\mTV(\R^n;\R^{n\times n})}
    &\leq \|\kappa\|_{\mL^{1}(\R_{>0};\mTV(\R^n))}\|J'\|_{\mL^{\infty}(Q(t))}
    |q|_{\mL^{\infty}(\R_{\leq t};\mTV(\R^n))},
    \label{E:DWTV-memory}
\end{align}
where, $Q(t)\coloneqq\big(-\|q\|_{\mL^{\infty}(\R_{\leq t}\times\R^n)},\|q\|_{\mL^{\infty}(\R_{\leq t}\times\R^n)}\big)$.
\end{lemma}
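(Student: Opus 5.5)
The plan is to reduce each bound to the corresponding spatial estimate of \cref{lem:bounds_nonlocal_space}, applied at each fixed past time $s<t$, and then integrate in $s$ against the temporal profile of the kernel. For fixed $t\in[0,T]$ we write
\[
\mW[J(q),\kappa](t,\bx)=\int_{0}^{\infty}\int_{\R^n}\kappa(\tau,\bx-\by)J(q(t-\tau,\by))\dd\by\dd\tau ,
\]
so that, for a.e.\ $\tau>0$, the inner integral is a spatial convolution of $\kappa(\tau,\cdot)$ with $J(q(t-\tau,\cdot))$. Since $J(0)=0$ and $J$ is locally Lipschitz, we have pointwise $|J(q(s,\by))|\le \|J'\|_{\mL^\infty(Q(t))}|q(s,\by)|$ for all $s\le t$. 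This is why $Q(t)$ is taken with respect to the sup over $\R_{\le t}$.

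For \eqref{E:WL1-memory} and \eqref{E:WLinfty-memory}, we apply Minkowski's integral inequality in $\tau$ and then Young's inequality in $\bx$. This gives a bound by
\[
\int_0^\infty \|\kappa(\tau,\cdot)\|_{\mL^1(\R^n)}\,\|J(q(t-\tau,\cdot))\|_{\mL^p(\R^n)}\dd\tau
\]
for $p=1,\infty$. We then pull out $\sup_{s\le t}\|q(s,\cdot)\|_{\mL^p}$ and use $\|\kappa\|_{\mL^1(\R_{>0}\times\R^n)}=1$.

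For \eqref{E:DWL1-memory} and \eqref{E:DWLinfty-memory}, we repeat the difference-quotient computation \eqref{E:W-Lipschitz} at each fixed $\tau$. Shifting the argument of $\kappa(\tau,\cdot)$ by $h\bv$ costs at most $|h|\,|\kappa(\tau,\cdot)|_{\mTV(\R^n)}$ in $\mL^1$. Integrating in $\tau$ replaces $|\gamma|_{\mTV}$ by $\|\kappa\|_{\mL^1(\R_{>0};\mTV(\R^n))}$. The $\mL^1$ version follows in the same way after an application of Fubini.

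For \eqref{E:DWTV-memory}, we follow the last part of the proof of \cref{lem:bounds_nonlocal_space}. We place one difference quotient on $\kappa(\tau,\cdot)$ and the other on $J(q(t-\tau,\cdot))$, using the translation invariance of the convolution. The result is the bound $|\kappa(\tau,\cdot)|_{\mTV}\,\|J'\|_{\mL^\infty(Q(t))}\,|q(t-\tau,\cdot)|_{\mTV}$ for each $\tau$, and we then integrate over $\tau$.

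Finally, we must check that $\mW$ maps into $\mC([0,T];\mL^1(\R^n))$. Writing $\mW(t)-\mW(t')$ in the $\tau$-form, the difference is controlled by
\[
\int_0^\infty\|\kappa(\tau,\cdot)\|_{\mL^1}\,\|J(q(t-\tau))-J(q(t'-\tau))\|_{\mL^1}\dd\tau .
\]
This tends to zero by dominated convergence. Here we use the continuity of $q$ in $\mL^1$ on $\R_{\le T}$ (assumed in $\mathcal S$, which includes the historical datum) together with the $\mL^1$ bound on the temporal profile of $\kappa$.

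The main obstacle is the measurability and Fubini bookkeeping in the space–time double integral, particularly for the TV estimate, where limits in $h,h'$ must be exchanged with the $\tau$-integral. We plan to handle this by establishing each bound for a.e.\ fixed $\tau$ first and then appealing to lower semicontinuity of the total variation and Minkowski's inequality, rather than differentiating under the integral.
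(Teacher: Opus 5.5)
Your proposal is correct. Its ingredients are the paper's: Fubini, the difference-quotient argument of \eqref{E:W-Lipschitz-memory} for the gradient bounds, and, for \eqref{E:DWTV-memory}, one difference quotient on $\kappa$ and the other on $J(q)$. What differs is the order of operations, and here it genuinely matters.

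You freeze the lag $\tau$, carry out the purely spatial estimate, and only then integrate in $\tau$ and take the supremum over past times. For the TV step the spatial estimate reads, with $\Delta_{h\bv}f\coloneqq f(\cdot+h\bv)-f$,
$\|(\Delta_{h\bv}\kappa(\tau,\cdot))*(\Delta_{h'\bv'}J(q(t-\tau,\cdot)))\|_{\mL^1(\R^n)}\le\|\Delta_{h\bv}\kappa(\tau,\cdot)\|_{\mL^1(\R^n)}\|\Delta_{h'\bv'}J(q(t-\tau,\cdot))\|_{\mL^1(\R^n)}$.

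The paper instead bounds $|J(q(s,\by))|$ by $\|J(q(\cdot,\by))\|_{\mL^\infty(\R_{<t})}$ pointwise in $\by$ before integrating in $\by$. In the TV step it likewise uses $\|q(\cdot,\bz+h'\bv')-q(\cdot,\bz)\|_{\mL^\infty(\R_{<t})}$. This produces a norm of type $\mL^1(\R^n;\mL^\infty(\R_{<t}))$, which the paper then replaces by the $\mL^\infty(\R_{<t};\mL^1(\R^n))$ norm. That replacement goes in the wrong direction, since $\sup_s\int_{\R^n}|f(s,\by)|\dd\by\le\int_{\R^n}\sup_s|f(s,\by)|\dd\by$. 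A fixed bump translated in space as $s$ ranges over $(-N,0)$ makes the left side of order one and the right side of order $N$.

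So it is your ordering that actually delivers \eqref{E:WL1-memory}, \eqref{E:DWL1-memory} and \eqref{E:DWTV-memory} with the mixed norms as stated. The $\mL^\infty$ bounds \eqref{E:WLinfty-memory} and \eqref{E:DWLinfty-memory} are unaffected.

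You also verify that $\mW$ maps into $\mC([0,T];\mL^1(\R^n))$, which the statement asserts but the paper does not check. In that dominated-convergence step the dominating function additionally requires $\sup_{s\le T}\|q(s,\cdot)\|_{\mL^1(\R^n)}<\infty$. This does not follow from $q\in\mathcal S$ alone, since continuity on an unbounded half-line gives no bound. It does hold for the functions of interest, thanks to the assumption on the historical datum in \cref{ass:general_nonlocal_memory}. It is needed anyway for the right-hand side of \eqref{E:WL1-memory} to be finite.
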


\begin{proof}
To prove \cref{E:WL1-memory}, using the definition of $\mW$ and Fubini's theorem, we have
    \begin{equation*}
        \begin{split}
            \norm{\mW[J(q), \kappa](t, \cdot)}_{\mL^1(\R^n)}
            & \le \int_{\R^n} \int_{\R_{<t}}\int_{\R^n} \abs{\kappa(t- s, \bm x - \bm y) J(q(s, \bm y))} \dd \bm y \dd s \dd \bm x \\
            & \le \int_{\R^n} \norm{J(q(\cdot, \bm y))}_{\mL^\infty(\R_{<t})} \int_{\R_{<t}} \int_{\R^n} \abs{\kappa(t - s, \bm x - \bm y)} \dd \bm x \dd s \dd \bm y \\
            & \le \norm{J(q)}_{\mL^\infty(\R_{<t}; \mL^1(\R^n))} \\
            & \le \norm{J'}_{\mL^\infty(Q(t))} \norm{q}_{\mL^\infty(\R_{<t}; \mL^1(\R^n))},
        \end{split}
    \end{equation*}
where we used the fact that $\|\kappa\|_{\mL^1(\R_{>0}\times\R^n)}=1$. The proof of \cref{E:WLinfty-memory} follows similarly.

To prove \cref{E:DWL1-memory} and \cref{E:DWLinfty-memory}, let $\bv\in\R^n$ be any unit-norm vector. Then, for any $h>0$ small enough and $(t,\bx)\in(0,T)\times\R^n$, we have
\begin{equation}\label{E:W-Lipschitz-memory}
\begin{split}
&\abs[\big]{\mW[J(q), \kappa](t, \bx + h\bv)-\mW[J(q), \kappa](t, \bx)}\\
&\leq \int_{\R_{<t}} \int_{\R^n}|\kappa(t - s, \bx+h\bv-\by)-\kappa(t - s, \bx-\by)||J(q(s,\by))|\dd \bm y \dd s\\
&\leq \|J(q)\|_{\mL^{\infty}(\R_{<t} \times \R^n)} \int_{\R_{<t}}\int_{\R^n}|\kappa(t - s, \bx+h\bv-\by)-\kappa(t - s,\bx-\by)|\dd \bm y \dd s\\
&\leq \|J'\|_{\mL^{\infty}(Q(t))}\|q\|_{\mL^{\infty}(\R_{<t} \times\R^n)}\abs{h}\, \norm{\kappa}_{\mL^1(\R_{>0}; \mTV(\R^n))}.
\end{split}
\end{equation}
Dividing by $\abs{h}$ and taking the limit as $\abs{h} \to 0$, we conclude that
\begin{equation*}
    \mW[J(q), \kappa](t, \cdot) \in \mathcal{W}^{1, \infty}(\R^n; \R), \quad t \in [0, T].
\end{equation*}
Taking the supremum for $x \in \R^n$ yields \eqref{E:DWLinfty-memory}. 

In addition, keeping \cref{E:W-Lipschitz-memory} in mind and applying Fubini's theorem, we can calculate
\begin{equation*}
    \begin{split}
       \int_{\R^n} |\nabla\mW[J(q), \kappa]&(t, \bm x) |\dd \bm x \\
       & \le \int_{\R^n} \norm{J(q(\cdot, \bm y))}_{\mL^\infty(\R_{<t})} \int_{\R_{<t}} \int_{\R^n} |\kappa(t - s, \bx+h\bv-\by)-\kappa(t - s,\bx-\by)|\dd \bm x \dd s \dd \bm y\\
       & \le \norm{J(q)}_{\mL^{\infty}(\R_{<t}; \mL^1(\R^n))} \norm{\kappa}_{\mL^1(\R_{>0};\mTV(\R^n))}\\
        &\leq \|J'\|_{\mL^{\infty}(Q(t))}\|q\|_{\mL^{\infty}(\R_{<t}; \mL^1(\R^n))} \norm{\kappa}_{\mL^1(\R_{>0};\mTV(\R^n))},
    \end{split}
\end{equation*}
which proves \cref{E:DWL1-memory}.

Finally, for \cref{E:DWTV-memory} we consider $\bv,\bv'\in\R^n$ with unitary norm $\|\bv\|_{2}=\|\bv'\|_{2}=1$, and we bound
\begin{align*}
       \mathbb{I}_{\nabla \mW} \coloneqq & \tfrac{1}{\abs{h}}\int_{\R^n} \abs[\big]{\nabla\mW[J(q), \kappa](t, \bm x + \bv h) - \nabla \mW[J(q), \kappa](t,\bm x)} \dd \bx  \\
        & \quad =\tfrac{1}{\abs{h}} \lim_{h'\to 0} \bigg(\int_{\R^n} \abs[\Big]{\tfrac{1}{\abs{h'}} \abs[\big]{\mW[J(q), \kappa](t, \bm x + h\bv + h'\bv') - \mW[J(q), \kappa](t, \bm x + h\bv)}\\
        &\qquad\qquad\qquad- \tfrac{1}{\abs{h'}} \abs[\big]{\mW[J(q), \kappa](t, \bm x+ h'\bv') - \mW[J(q), \kappa](t,\bm x)}} \dd \bx \bigg).
\end{align*}
We have that
\begin{equation*}
    \begin{split}
       &\mW[J(q), \kappa](t, \bm x + h \bm v + h' \bm v') -\mW[J(q), \kappa](t, \bm x + h \bm v) \\
        & = \int_0^\infty \int_{\R^n}\kappa(s, \bm y) \set[\big]{J(q(t- s, \bm x + h \bm v + h' \bm v'- \bm y)) -  J(q(t- s, \bm x + h \bm v - \bm y))} \dd \bm y \dd s.
    \end{split}
\end{equation*}
This implies that
\begin{equation*}
    \begin{split}
        \mathbb I_{\nabla \mW} & = \tfrac{1}{\abs{h}} \int_{\R^n} \lim_{h' \to 0} \tfrac{1}{\abs{h'}}\set[\bigg]{\int_{0}^{\infty} \int_{\R^n} \abs{J(q(t -s , \bm x + h \bm v + h' \bm v' - \bm y)) - J(q(t -s , \bm x + h \bm v - \bm y))} \abs{\kappa(s, \bm y)}\dd \bm y \dd s \\
        & \hspace{1.5in} -\int_{0}^{\infty} \int_{\R^n} \abs{J(q(t -s , \bm x + h' \bm v' - \bm y)) - J(q(t -s , \bm x  - \bm y))} \abs{\kappa(s, \bm y)}\dd \bm y \dd s} \dd \bm x, \\
        \intertext{by the change of variables $\bm z = \bm x + h \bm v - \bm y$, $\zeta = \bm x - \bm y$, and $\tau = t -s$,}
         &=  \tfrac{1}{\abs{h}} \int_{\R^n} \lim_{h' \to 0} \tfrac{1}{\abs{h'}}\set[\bigg]{\int_{-\infty}^{t} \int_{\R^n} \abs{J(q(\tau , \bm z +h' \bm v')) - J(q(\tau , \bm z))} \abs{\kappa(t - \tau, \bm x + h \bm v - \bm z)}\dd \bm z \dd \tau \\
        & \hspace{1.5in} - \int_{-\infty}^{t} \int_{\R^n} \abs{J(q(\tau , \bm z + h' \bm v')) - J(q(\tau , \bm z))} \abs{\kappa(t - \tau, \bm x - \zeta)}\dd \zeta \dd \tau} \dd \bm x.
        \end{split}
        \end{equation*}
        Applying Fubini's theorem to exchange $\dd \bm z$ and $\dd \bm x$, we get
        \begin{equation*}
        \begin{split}
        \mathbb I_{\nabla \mW} & \le \lim_{h' \to 0} \tfrac{1}{\abs{h'}}\tfrac{1}{\abs{h}} \norm{J'}_{\mL^\infty(Q(t))} \int_{\R^n} \norm{q(\cdot, \bm z + h' \bm v') - q(\cdot, \bm z)}_{\mL^\infty(\R_{<t})} \\
        & \hspace{2in} \times \int_{-\infty}^{t} \int_{\R^n} \abs{\kappa(t - \tau, \bm x + h \bm v - \bm z) - \kappa(t- \tau, \bm x - \bm z)} \dd \bm x \dd \tau \dd \bm z\\
        & \le \norm{J'}_{\mL^\infty(Q(t))} \norm{q}_{\mL^\infty(\R_{<t}; \mTV(\R^n))}  \norm{\kappa}_{\mL^1(\R_{>0}; \mTV(\R^n))}.
    \end{split}
\end{equation*}
This completes the proof.
\end{proof}
As in \cref{defi:admissible_set}, we introduce an admissible set---tailored to
the memory structure of the nonlocal term---which requires the solution to be defined 
on the half-line $(-\infty, T)$.
\begin{definition}[admissible set $\mY(T)$]\label{defi:admissible_set_memory}
Let \cref{ass:general_nonlocal} hold, let $T \in \R_{>0}$, and set
\[
B^{\infty} \coloneqq 42\|q_{\circ}(0,\cdot)\|_{\mL^{\infty}(\R^n)}, \qquad
B^{\mTV} \coloneqq 42|q_{\circ}(0,\cdot)|_{\mTV(\R^n)}.
\]
Recalling the spaces $\mathcal X$, \cref{E:solution-space}, and $\mathcal S$, \cref{E:solution_space_memory}, we define the admissible set
\[
\mY(T) \coloneqq \left\{ q \in \mathcal S\;:\;
\begin{array}{l}
    q \equiv q_{\circ} \text{ on } (-\infty,0] \times \R^n,\\[4pt]
    \|q\|_{\mL^{\infty}((-\infty,T);\mL^{\infty}(\R^n))} \leq B^{\infty},\\[4pt]
    \|q\|_{\mL^{\infty}((-\infty,T);\mTV(\R^n))} \leq B^{\mTV},\\[4pt]
    \|q(t,\cdot)\|_{\mL^{1}(\R^n)} \leq 42\|q_{\circ}(0,\cdot)\|_{\mL^{1}(\R^n)}
    \quad \forall\, t\in(-\infty,T)
\end{array}
\right\}.
\]
\end{definition}
\begin{definition}[fixed-point mapping] \label{defn:fixedpoint_memory}
Let $\mQ$ be the solution operator as in \cref{defi:solution_operator_fixed_w}. We define the mapping
    \[
    \mF[\cdot]\coloneqq \mQ[\mW[J(\cdot),\kappa]]:\begin{cases}\mY(T)&\rightarrow \mC\big((-\infty,T];\mL^{1}(\R^{n})\big)\\
    q&\mapsto \bigg(\R_{\leq T}\times\R^n\ni (t,\bx)\mapsto \begin{cases}\mQ[\mW[J(q),\kappa]](t,\bx), &t\in[0,T]\\
    q_{\circ},& t\in\R_{<0}
    \end{cases}
    \bigg)
    \end{cases}
    \]
    Here, \(\mW\) is the nonlocal operator defined in \cref{defi:nonlocal_operator_memory}.
\end{definition}
\begin{lemma}[self-mapping of \(\mF\)]\label{lem:self_mapping_memory}
    Let \cref{ass:general_nonlocal_memory} holds, the flux function $\bF$ has the properties of \cref{ass:flux_nonlocal} and let the mapping \(\mF\) be as in \cref{defn:fixedpoint_memory}. Then, there exists a time horizon \(T\in\R_{>0}\) such that
    \[
        \mF[\mY(T)]\subset \mY(T).
    \]
    That is,\ \(\mF\) is a self-mapping.
\end{lemma}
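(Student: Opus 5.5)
The argument mirrors the proof of \cref{lem:self_mapping}, with \cref{lem:bounds_nonlocal_memory} taking the place of \cref{lem:bounds_nonlocal_space}. Fix $T>0$ and $\tilde q\in\mY(T)$. Since $\tilde q\equiv q_\circ$ on $\R_{\le 0}$ and $\tilde q$ obeys the bounds of \cref{defi:admissible_set_memory} on $(-\infty,T)$, the estimates \crefrange{E:WL1-memory}{E:DWTV-memory} show that $w\coloneqq\mW[J(\tilde q),\kappa]$ lies in $\mZ(T)$. More precisely, the following hold uniformly on $[0,T]$:
\[
\|w(t,\cdot)\|_{\mL^1}\le \|J'\|_{\mL^\infty(\mI_\rho)}\,42\|q_\circ(0,\cdot)\|_{\mL^1},\qquad \|w\|_{\mL^\infty}\le \|J'\|_{\mL^\infty(\mI_\rho)}B^\infty,
\]
and $\|\nabla w\|_{\mL^1}$, $\|\nabla w\|_{\mL^\infty}$, $|\nabla w|_{\mTV}$ are bounded by the same quantities multiplied by $\|\kappa\|_{\mL^1(\R_{>0};\mTV(\R^n))}$. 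One point needs care: the historical part $q_\circ$ must itself satisfy the bounds defining $\mY(T)$, namely $\sup_{s\le 0}\|q_\circ(s,\cdot)\|_{\mL^\infty}\le B^\infty$ and the analogous $\mTV$ and $\mL^1$ bounds. I will ensure this either by reading the constants $B^\infty,B^{\mTV}$ as $42$ times the suprema over $s\le 0$ (these are finite by \cref{ass:general_nonlocal_memory}), or by noting that this is implicitly required for $\mY(T)\neq\emptyset$. With $w\in\mZ(T)$, \cref{lem:well-posed-Q} yields a unique entropy solution $q=\mQ[w]$ on $[0,T]$ with initial datum $q_\circ(0,\cdot)$. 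Extending it by $q_\circ$ for $t<0$ gives $\mF[\tilde q]$, which is continuous into $\mL^1$ across $t=0$ because $q_\circ\in\mC(\R_{\le0};\mL^1)$ and $q(0)=q_\circ(0)$.

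It then remains to verify the three bounds on $[0,T]$, with $T$ shrunk successively; on $\R_{\le0}$ they hold by the remark above. First, the $\mL^\infty$ bound. The differential inequality for $\|q(t,\cdot)\|_{\mL^\infty}$ from \cref{theo:existence_uniqueness_smooth}, combined with the bounds on $w,\nabla w$, gives
\[
\|q(t)\|_{\mL^\infty}\le\|q_\circ(0)\|_{\mL^\infty}+t\big(\|\diverg\bF\|_{\mL^\infty}+\|\partial_3\bF\|_{\mL^\infty}\|\kappa\|_{\mL^1(\mTV)}\|J'\|B^\infty\big),
\]
so this bound holds up to a time $T_1$ defined exactly as in \cref{lem:self_mapping}, with $|\gamma|_{\mTV}$ replaced by $\|\kappa\|_{\mL^1(\R_{>0};\mTV)}$. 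Second, the $\mTV$ bound. Applying \eqref{eq:TV_bound_tailored} of \cref{cor:specific_tailored} with the constant $C(t,B^\infty,B^{\mTV})$ from \eqref{eq:C_t_B_infty_TV_1} or \eqref{eq:C_t_B_infty_TV_2} (same replacement) gives a bound of the form $|q_\circ(0)|_{\mTV}e^{\eta t}+nW_ne^{\eta t}C(t)$. Since $C(t)\to0$ as $t\to0$, this is at most $B^{\mTV}$ for $t\le T_2$. Third, the $\mL^1$ bound follows from \eqref{E:rho_star-timecontinuity} together with $\|q(0)\|_{\mL^1}=\|q_\circ(0)\|_{\mL^1}$. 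The increment is bounded by $\|\diverg\bF\|_{\mL^1(\Omega_t;\mL^\infty)}$ plus $t\,D$, and both tend to $0$ as $t\to0$. This yields $T_3$, and taking $T=\min\{T_1,T_2,T_3\}$ completes the argument.

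I expect the main obstacle to be the second step. $C(t)$ tends to $0$ only under the first alternative of \cref{item:7_flux} if one also uses that $\|\nabla\diverg\bF\|_{\mL^1(\Omega_t;\cdot)}\to0$ as $t\to0$. In the paper's notation this term is written over the full horizon $\Omega_T$, so I will rewrite it over $\Omega_t$ and invoke absolute continuity of the integral. A further subtlety is that $\mL^1$-, $\mL^\infty$- and $\mTV$-norms of $w$ involve the whole past through $\kappa$, including $q_\circ$. The bounds for $w$ must therefore use the $\mY(T)$-bounds on all of $(-\infty,T)$ rather than only on $[0,T]$, and this is why the constants have to dominate the historical datum.
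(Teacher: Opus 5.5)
Your proposal is correct and follows the paper's proof essentially step for step. You show $\mW[J(\tilde q),\kappa]\in\mZ(T)$ via \cref{lem:bounds_nonlocal_memory}, invoke $\mQ$, and then verify the $\mL^\infty$, $\mTV$ and $\mL^1$ bounds on successively shrunk horizons, using the same constants with $|\gamma|_{\mTV(\R^n)}$ replaced by $\|\kappa\|_{\mL^1(\R_{>0};\mTV(\R^n))}$. Your refinements fill points the paper leaves implicit. The first term of $C(t,B^\infty,B^{\mTV})$ must indeed be taken over $\Omega_t$ for $C\to 0$ to hold. The bounds on $\R_{\le 0}$ hold because any element of a nonempty $\mY(T)$ coincides with $q_\circ$ there; it is this second option of yours, not redefining $B^\infty,B^{\mTV}$, that proves the lemma as stated.
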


\begin{proof} 
   We follow the template adopted in \cref{lem:self_mapping}. First, we recall the intervals $\mI_\rho(B^\infty)$ and $\mI_{\omega,\rho}(B^\infty)$ defined at \cref{intervals}. As in the previous lemma, we will omit the dependence on $B^\infty$ throughout this proof. 
   
   Next, we fix $T\in\R_{>0}$ and set \(\tilde{q}\in\mY(T)\). Then, we can take advantage of the estimates in \crefrange{E:WL1-memory}{E:DWTV-memory} to obtain their counterparts
    \begin{equation}
    \begin{aligned}
        \|\mW[J(\tilde{q}),\kappa]\|_{\mL^{\infty}((0,T);\mL^{1}(\R^n))}&\leq 42 \|J'\|_{\mL^{\infty}(\mI_\rho)}\|q_{\circ}(0, \cdot)\|_{\mL^{1}(\R^n)}\\
        \|\mW[J(\tilde{q}),\kappa]\|_{\mL^{\infty}(\Omega_T)}&\leq \|J'\|_{\mL^{\infty}(\mI_\rho)}B^{\infty}\\
        \|\nabla\mW[J(\tilde{q}),\kappa]\|_{\mL^{\infty}((0,T);\mL^{1}(\R^n))}&\leq 42\norm{\kappa}_{\mL^1(\R_{>0}; \mTV(\R^n;\R^n))}\|J'\|_{\mL^{\infty}(\mI_\rho)}\|q_{\circ}(0, \cdot)\|_{\mL^{1}(\R^n)}\\
    \|\nabla\mW[J(\tilde{q}),\kappa]\|_{\mL^{\infty}((0,T);\mL^{\infty}(\R^n))}&\leq \norm{\kappa}_{\mL^1(\R_{>0};\mTV(\R^n;\R^n))}\|J'\|_{\mL^{\infty}(\mI_\rho)}B^{\infty}\\
    \|\nabla\mW[J(\tilde{q}),\kappa]\|_{\mL^1((0,T);\mTV(\R^n))}&\leq \norm{\kappa}_{\mL^1(\R_{>0}; \mTV(\R^n;\R^n)}\|J'\|_{\mL^{\infty}(\mI_\rho)}B^{\mTV}.
    \end{aligned}
    \label{eq:nonlocal_estimates_fixed_point_memory}
    \end{equation}
    In particular, \cref{eq:nonlocal_estimates_fixed_point_memory} implies that nonlocal operator \(\mW[J(\tilde{q}),\kappa] \in \mathcal Z(T)\) and hence, by \cref{T:Q-welldefined} there exists a unique weak entropy solution to
    \begin{align*}
        \partial_{t}q+\diverg\big(\bF(t,\bx,\mW[J(\tilde{q}),\kappa](t,\bx),q\big)&=0, && (t,\bx)\in(0,T)\times\R^n,\\
        q(0,\cdot)&=q_{\circ}(0, \cdot), && \text{on }\R^n.
    \end{align*}
    which following \cref{defn:fixedpoint_memory} is presented by \(\mF[\tilde{q}]\). Therefore, it remains to show that \(\mF[\tilde{q}]\in \mY(T)\) on $(0, T) \times \R^n$ for \(T\in\R_{>0}\) sufficiently small.

\begin{description}
\item[The \(\mL^{\infty}\)-bound:] Recalling how the \(\mL^{\infty}\)-norm can grow over time in \cref{theo:existence_uniqueness_smooth}, we can estimate the corresponding differential inequality as follows. Starting with 
    \[\bPhi(t,\bx,q)\coloneqq \bF\big(t,\bx,\mW[J(\tilde q),\kappa](t,\bx),q\big),\ (t,\bx,q)\in(0,T)\times\R^n\times\R,\]
we have 
\begin{align*}
\partial_{t}\|q(t,\cdot)\|_{\mL^{\infty}(\R^n)}&\leq\esssup_{\bz\in\R^n}\Big|\diverg\bF\big(t,\bz,\mW[J(\tilde{q}),\kappa](t,\bz),\|q(t,\cdot)\|_{\mL^{\infty}(\R^n)}\big)\Big|\\
&\quad +\esssup_{\bz\in\R^n}\Big|\partial_{3}\bF\big(t,\bz,\mW[J(\tilde{q}),\kappa](t,\bz),\|q(t,\cdot)\|_{\mL^{\infty}(\R^n)}\big)\nabla\mW[J(\tilde{q}),\kappa](t,\bz)\Big|
\intertext{using the bounds \cref{eq:nonlocal_estimates_fixed_point_memory},}
&\leq \|\diverg\bF(t,\cdot,\ast,\star)\|_{\mL^{\infty}(\R^n \times \mI_{\omega,\rho})}\\
&\quad + \|\partial_{3}\bF(t,\cdot,\ast,\star)\|_{\mL^{\infty}(\R^n \times \mI_{\omega,\rho})}\norm{\kappa}_{\mL^1(\R_{>0};\mTV(\R^n;\R^n))}\|J'\|_{\mL^{\infty}(\mI_\rho)}B^{\infty}.
\end{align*}
Integrating in time over $[0, T^*]$, and considering the \cref{ass:flux_nonlocal} on the flux function $\bF$---in particular \cref{item:2_flux}, \cref{item:3_flux}---the previous computation yields
\begin{align*}
\|q(t,\cdot)\|_{\mL^{\infty}(\R^n)}&\leq \|q_{\circ}(t, \cdot)\|_{\mL^{\infty}(\R^n)}+t\|\diverg\bF\|_{\mL^{\infty}( \Omega_T \times \R^n \times \mI_{\omega,\rho})}\\
&\quad +t \|\partial_{3}\bF\|_{\mL^{\infty}( \Omega_T \times \R^n \times \mI_{\omega,\rho})}\norm{\kappa}_{\mL^1(\R_{>0};\mTV(\R^n;\R^n))}\|J'\|_{\mL^{\infty}(\mI_\rho)}B^{\infty} \\
&  \overset{!}{\le} 42 \norm{q_\circ(0,\cdot)}_{\mL^\infty(\R^n)}.
\end{align*}
This leads to 
\[
T_{1}\coloneqq \frac{41 \norm{q_\circ(0,\cdot)}_{\mL^\infty(\R^n)}}{\|\diverg\bF\|_{\mL^{\infty}( \Omega_T \times \R^n \times \mI_{\omega,\rho})}+\|\partial_{3}\bF\|_{\mL^{\infty}( \Omega_T \times \R^n \times \mI_{\omega,\rho})}\norm{\kappa}_{\mL^1(\R_{>0};\mTV(\R^n;\R^n))}\|J'\|_{\mL^{\infty}(\mI_\rho)}B^{\infty}}.
\]
In particular, we have
\[
\|q(t,\cdot)\|_{\mL^{\infty}(\R^n)}\leq B^{\infty}\quad, \forall t\in[0,T_{1}],
\]
and we have identified a time horizon on which the \(\mL^{\infty}\)-norm remains in the claimed box.

\item[The \(\mTV\)-bound:] We now turn to a \(\mTV\) bound on \(q\coloneqq \mF[\tilde{q}]\) for small time horizons. The \(\mTV\) estimate established in \cref{cor:specific_tailored}---specifically \cref{E:rho_star_w}---expresses such a bound in terms of the flux and of the function $w \in \mathcal Z(T)$. Substituting the estimates \cref{eq:nonlocal_estimates_fixed_point_memory} for the latter yields
\begin{align}
    \norm{\mF[\tilde q](t, \cdot)}_{\mTV(\R^n))}&\leq \abs{q_{\circ}(0, \cdot)}_{\mTV(\R^n)} \e^{\eta(B^{\infty}) t} + n W_n\mathrm{e}^{\eta(B^{\infty}) t}C\big(t,B^{\infty},B^{\mTV}\big),
    \label{eq:TV_bound_fixed_point_memory}
\end{align}
where $\eta(B^\infty)$ is given by
\begin{equation}\label{E:eta-B-infty}
\begin{split}
    \eta(B^{\infty})\coloneqq (2n+1)\Big(
        &\|\nabla\bF\|_{\mL^{\infty}( \Omega_T \times \R^n \times \mI_{\omega,\rho})}
        +\|\partial_{4}\bF\|_{\mL^{\infty}( \Omega_T \times \R^n \times \mI_{\omega,\rho})}\\
        &+ |\kappa|_{\mL^1(\R_{>0});\mTV(\R^n)}\|J'\|_{\mL^{\infty}(\mI_\rho)}B^{\infty}
        \|\partial_{3}\bF\|_{\mL^{\infty}( \Omega_T \times \R^n \times \mI_{\omega,\rho})}
    \Big),
\end{split}
\end{equation}
and, for any $t \in [0, T_1]$,
\begin{equation}
\begin{aligned}
C\big(t,B^{\infty},B^{\mTV}\big)\coloneqq&\|\nabla\diverg\bF\|_{\mL^{1}(\Omega_{T});\mL^{\infty}(\mI_\rho))}\\
        &\ +\Big(\norm{\kappa}_{\mL^1(\R_{\ge 0}; \mTV(\R^n;\R^n))}\|q_{\circ}(0, \cdot)\|_{\mL^{1}(\R^n)}\|\partial_{3}\diverg\bF\|_{\mL^{\infty}( \Omega_T \times \R^n \times \mI_{\omega,\rho})}\\
        &\ +\norm{\kappa}_{\mL^1(\R_{\ge0}; \mTV(\R^n;\R^n))}^{2}\|q_{\circ}(0, \cdot)\|_{\mL^{1}(\R^n)}\|J'\|_{\mL^{\infty}(\mI_\rho)}B^{\infty} \|\partial_{3}^{2}\bF\|_{\mL^{\infty}( \Omega_T \times \R^n \times \mI_{\omega,\rho})}\\
        &\ +\norm{\kappa}_{\mL^1(\R_{\ge 0}; \mTV(\R^n;\R^n))}B^{\mTV}\|\partial_{3}\bF\|_{\mL^{\infty}( \Omega_T \times \R^n \times \mI_{\omega,\rho})}\Big)\cdot  t  \|J'\|_{\mL^{\infty}(\mI_\rho)}
\end{aligned}
\label{eq:C_t_B_infty_TV_1_memory}
\end{equation}

if we assume \cref{item:7_flux}, first part or
\begin{equation}
\begin{aligned}
C\big(t,B^{\infty},B^{\mTV}\big)\coloneqq&\Big(\|q_{\circ}(0, \cdot)\|_{\mL^{1}(\R^n)}\|\partial_{3}\nabla\diverg\bF\|_{\mL^{\infty}( \Omega_T \times \R^n \times \mI_{\omega,\rho})}\\
        &\ +\Big(\norm{\kappa}_{\mL^1(\R_{\ge 0}; \mTV(\R^n;\R^n))}\|q_{\circ}(0, \cdot)\|_{\mL^{1}(\R^n)}\|\partial_{3}\diverg\bF\|_{\mL^{\infty}( \Omega_T \times \R^n \times \mI_{\omega,\rho})}\\
        &\ +\norm{\kappa}_{\mL^1(\R_{\ge 0}; \mTV(\R^n;\R^n))}^{2}\|q_{\circ}(0, \cdot)\|_{\mL^{1}(\R^n)}\|J'\|_{\mL^{\infty}(\mI_\rho)}B^{\infty} \|\partial_{3}^{2}\bF\|_{\mL^{\infty}( \Omega_T \times \R^n \times \mI_{\omega,\rho})}\\
        &\ +\norm{\kappa}_{\mL^1(\R_{\ge 0}; \mTV(\R^n;\R^n))}B^{\mTV}\|\partial_{3}\bF\|_{\mL^{\infty}( \Omega_T \times \R^n \times \mI_{\omega,\rho})}\Big)\cdot  t\|J'\|_{\mL^{\infty}(\mI_\rho)}
\end{aligned}
\label{eq:C_t_B_infty_TV_2_memory}
\end{equation}
in the case of \cref{item:7_flux}, second part.

By \cref{ass:general_nonlocal_memory,ass:flux_nonlocal}, the term $C\big(t,B^{\infty},B^{\mTV}\big)$ is finite by \cref{item:3_flux,item:4_flux,item:5_flux,item:7_flux}, and $\eta(B^{\infty})$ is finite by \cref{item:2_flux,item:3_flux}. Since $C\big(t,B^{\infty},B^{\mTV}\big)\to 0$ as $t\to 0$, we may choose $T_{2}\in(0,T_{1}]$ sufficiently small so that
\[
\abs{q_{\circ}(0, \cdot)}_{\mTV(\R^n)} \, \mathrm{e}^{\eta(B^{\infty}) t} + n W_n\mathrm{e}^{\eta(B^{\infty}) t}C\big(t,B^{\infty},B^{\mTV}\big)\leq 42|q_{\circ}(0, \cdot)|_{\mTV(\R^n;\R^n)}=B^{\mTV} \ \forall t\in[0,T_{2}]
\]
and hence 
\[
|\mF[\tilde{q}](t,\cdot)|_{\mTV(\R^n;\R^n)}\leq B^{\mTV}\ \forall t\in[0,T_{2}].
\]
\item[The \(\mL^{1}\)-bound:] This is a consequence of the fact that we deal with a conservation law. In particular, we consider the $\mL^1$ time continuity result in  \cref{theo:existence_uniqueness_less_regular_flux}.
Letting $\bPhi(t, \bx, q) = \bF(t, \bx, w(t, \bx), q)$, we have
\begin{equation}\label{E:conservation-bound_memory}
    \begin{split}
        \|\mF[\tilde q](t,\cdot)\|_{\mL^1(\R^n)}&\leq \norm{q_\circ(0, \cdot)}_{\mL^1(\R^n)} + \|\diverg\bF\|_{\mL^1(\Omega_T; \mL^{\infty}(\mathcal U^2))} + t \, D(T_1, B^\infty, B^{\mTV})
    \end{split}
\end{equation}
where, 
\begin{equation}
    \begin{split}
        D(T_1, B^\infty, B^{TV}) &\coloneqq \norm{J'}_{\mL^\infty(Q(T_1))} \abs{\kappa}_{\mL^1(\R_{>0};\mTV(\R^n))}  \Big( \norm{q_\circ(0,\cdot)}_{\mL^\infty(\R^n)} + B^{\mTV} \norm{\partial_3 F}_{\mL^\infty( \Omega_T \times \R^n \times \mI_{\omega,\rho})}\\
        & \qquad +\abs{\kappa}_{\mL^1(\R_{>0};\mTV(\R^n))} \norm{J'}_{\mL^\infty(\R^n)} \norm{q_\circ(0, \cdot)}_{\mL^1(\R^n)} B^\infty \norm{\partial_3^2 F}_{\mL^\infty( \Omega_T \times \R^n \times \mI_{\omega,\rho}})\Big)\\
        &\qquad+  \norm{\nabla F}_{\mL^\infty( \Omega_T \times \R^n \times \mI_{\omega,\rho})}) + \norm{\partial_4 F}_{\mL^\infty( \Omega_T \times \R^n \times \mI_{\omega,\rho})}\\
        &\qquad + \abs{\kappa}_{\mL^1(\R_{>0};\mTV(\R^n))} \norm{J'}_{\mL^\infty(Q(T_1))} B^\infty  \norm{\partial_3 F}_{\mL^\infty( \Omega_T \times \R^n \times \mI_{\omega,\rho})}.
    \end{split}
\end{equation}
Thanks to \cref{ass:flux_nonlocal}, the constant $D(t, B^\infty, B^{\mTV})$ is bounded and hence for sufficiently small $T$, we can set 
\begin{equation*}
     \norm{q_\circ(0,\cdot)}_{\mL^1(\R^n)} + \|\diverg\bF\|_{\mL^1(\Omega_T; \mL^{\infty}(\mI_{\omega,\rho}))} + t \, D(T_1, B^\infty, B^{TV}) \le 42 \norm{q_\circ(0, \cdot)}_{\mL^1(\R^n)}.
\end{equation*}
In particular, one can choose $T_2\in(0,T_1]$ as 
\[
0 < T_2 \le \frac{41 \norm{q_\circ(0, \cdot)}_{\mL^1(\R^n)} - \|\diverg\bF\|_{\mL^1(\Omega_{T_2}; \mL^{\infty}(\mI_{\omega,\rho}))}}{D(T_1, B^\infty, B^{\mTV})},
\]
which again is well-defined as $\|\diverg\bF\|_{\mL^1(\Omega_T; \mL^{\infty}(\mathcal U^2))}$ is monotonically decreasing as $T \to 0$. We have, therefore, shown that $\norm{\mF[\tilde q](t, \cdot)}_{\mL^1(\R^n)} \le 42\norm{q_\circ(0, \cdot)}_{\mL^1(\R^n)}$ over $[0, T_2]$. 
\end{description}
Altogether, we have shown that
\[
\mF[\mY(T_{2})]\subset\mY(T_{2}),
\]
meaning that \(\mF\) is a self-mapping. The proof is concluded.
\end{proof}


\begin{remark}[$\mL^1$ continuity in time.]
    The $\mL^1$-time continuity of $\mF[q]$ for and $q \in \mathcal Y(T)$, as in \cref{theo:existence_uniqueness_less_regular_flux} holds true immediately following the bound defined in \cref{E:conservation-bound}.
\end{remark}

\begin{lemma}[contraction of the fixed-point mapping in \(\mL^{1}(\R^n)\)]\label{lem:contraction_mapping_L_1_memory}
Let the mapping \(\mF\) be as in \cref{defi:fixed_point}. Let \cref{ass:general_nonlocal} hold. Then, there exists a \(T\in\R_{>0}\) such that for any \(q,\tilde{q}\in \mY(T)\) we have
 \[
    \|\mF[q]-\mF[\tilde{q}]\|_{\mC((-\infty,T];\mL^{1}(\R^n))}\leq \tfrac{1}{2}\|q-\tilde{q}\|_{\mC((-\infty,T];\mL^{1}(\R^n))}.
 \]
 In other words, \(\mF\) is a contraction in \(\mC\big([0,T];\mL^{1}(\R^n)\big)\).
\end{lemma}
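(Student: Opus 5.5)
The argument mirrors the proof of \cref{lem:contraction_mapping_L_1}; the only genuinely new ingredient is the time convolution in $\mW[J(\cdot),\kappa]$. First fix $T_2$ from \cref{lem:self_mapping_memory}, so that $\mF$ maps $\mY(T_2)$ into itself, and take $q,\tilde q\in\mY(T_2)$. Both functions coincide with $q_\circ$ on $\R_{\le 0}\times\R^n$, and so do $\mF[q]$ and $\mF[\tilde q]$ by \cref{defn:fixedpoint_memory}. Hence both sides of the claimed inequality reduce to suprema over $[0,T]$, i.e.\ to norms in $\mC([0,T];\mL^1(\R^n))$. Define the fluxes $\bPhi(t,\bx,\rho)\coloneqq\bF(t,\bx,\mW[J(q),\kappa](t,\bx),\rho)$ and $\hat\bPhi$ analogously with $\tilde q$. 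Both have the same initial datum $q_\circ(0,\cdot)$. Apply the general stability estimate \cref{E:rho_star_general_statbility} of \cref{theo:existence_uniqueness_less_regular_flux}, available through \cref{cor:specific_tailored}. The initial-datum term then vanishes, and we are left with the analogue of \cref{eq:contraction_mapping_the_final_frontier}, consisting of a $\partial_4$-difference term, a term with $\|\nabla\diverg\bPhi\|$, and a term with $\|\diverg\bPhi-\diverg\hat\bPhi\|_{\mL^1(\R^n;\mL^\infty(\mI_\rho))}$.

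The key step is to bound the differences of the nonlocal terms by $\|q-\tilde q\|_{\mC([0,t];\mL^1(\R^n))}$. Linearity of $\mW$ in its first argument gives $\mW[J(q),\kappa]-\mW[J(\tilde q),\kappa]=\mW[J(q)-J(\tilde q),\kappa]$. Since the integrand vanishes for $s<0$, this equals
\begin{equation*}
\int_0^t\int_{\R^n}\kappa(t-s,\bx-\by)\big(J(q(s,\by))-J(\tilde q(s,\by))\big)\dd\by\dd s.
\end{equation*}
The estimates \cref{E:WL1-memory} and \cref{E:DWL1-memory}, applied to this difference with $|J(q)-J(\tilde q)|\le\|J'\|_{\mL^\infty(\mI_\rho)}|q-\tilde q|$, yield $\mL^1$ bounds by $\|J'\|\,\|q-\tilde q\|_{\mL^\infty((0,t);\mL^1)}$ and $\|J'\|\|\kappa\|_{\mL^1(\R_{>0};\mTV)}\|q-\tilde q\|_{\mL^\infty((0,t);\mL^1)}$, respectively. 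For the $\mL^\infty$ bound required by the $\partial_4$-difference term, we use
\begin{equation*}
|\mW[J(q)-J(\tilde q),\kappa](t,\bx)|\le\|\kappa\|_{\mL^\infty(\R_{>0}\times\R^n)}\|J'\|\int_0^t\|q(s)-\tilde q(s)\|_{\mL^1}\dd s,
\end{equation*}
which holds because $\kappa\in\mL^\infty$ by \cref{ass:general_nonlocal_memory}. This is even better than the purely spatial case, since it carries an extra factor $t$.

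With these bounds we define $C_1$ and $C_2$ exactly as in the proof of \cref{lem:contraction_mapping_L_1}, with $|\gamma|_{\mTV}$ and $\|\gamma\|_{\mL^\infty}$ replaced by $\|\kappa\|_{\mL^1(\R_{>0};\mTV)}$ and $\|\kappa\|_{\mL^\infty}$. For $\|\nabla\diverg\bPhi\|$ we reuse $C(s,B^\infty,B^{\mTV})$ from \cref{eq:C_t_B_infty_TV_1_memory} or \cref{eq:C_t_B_infty_TV_2_memory}. The quantity $\kappa_{\bF}(B^\infty)$ is as before, using $\|\mW\|_{\mL^\infty}\le\|J'\|B^\infty$. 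This gives $\|\mF[q]-\mF[\tilde q]\|_{\mC([0,t];\mL^1)}\le\Lambda(t)\|q-\tilde q\|_{\mC([0,t];\mL^1)}$ with $\Lambda(t)\to0$ as $t\to0$. We then choose $T_3\le T_2$ with $\Lambda\le\frac12$. The main obstacle is the bookkeeping in the difference $\diverg\bPhi-\diverg\hat\bPhi$, specifically the product-rule term $\partial_3\bF\,\nabla\mW$. There we must split it as $\partial_3\bF(\cdot,\mW[J(q)])\nabla\mW[J(q)-J(\tilde q)]$ plus $(\partial_3\bF(\cdot,\mW[J(q)])-\partial_3\bF(\cdot,\mW[J(\tilde q)]))\nabla\mW[J(\tilde q)]$, and control the second piece via $\partial_3^2\bF$, the $\mL^\infty$ bound \cref{E:DWLinfty-memory}, and the $\mL^1$ bound on $\mW[J(q)-J(\tilde q)]$. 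Finally, because the history is fixed, the supremum over $(-\infty,T]$ equals the supremum over $[0,T]$, which concludes the argument.
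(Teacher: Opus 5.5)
Your proposal is correct and takes the paper's route: since $q$, $\tilde q$, $\mF[q]$ and $\mF[\tilde q]$ all coincide with the historical datum on $\R_{\le 0}$, the estimate reduces to $[0,T]$, where the proof of \cref{lem:contraction_mapping_L_1} is repeated with $\gamma$ replaced by $\kappa$. The paper states this reduction in one line, while you also write out the memory-specific bounds on $\mW[J(q)-J(\tilde q),\kappa]$ that it tacitly needs, notably the $\|\kappa\|_{\mL^\infty}$ bound for the $\partial_4$-difference term.
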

\begin{proof}
    We first note that over $(-\infty, 0) \times \R^n$, by \cref{defn:fixedpoint_memory}, the claim trivially holds. Therefore, it is sufficient to prove the theorem on $[0, T] \times \R^n$. In this case, the proof directly follows from that of \cref{lem:contraction_mapping_L_1}.
\end{proof}
We can finally state the critical lemma of this section, establishing the existence of a fixed point.
\begin{lemma}[existence and uniqueness of a fixed point of $\mF$] \label{lem:well-posedness-fixedpoint} 
Let \cref{ass:flux_nonlocal} holds and let $\mF$ be defined as in \cref{defn:fixedpoint_memory}. Then there exists a time horizon $T \in \R_{>0}$ and a unique function 
\begin{equation*}
    q^* \in \mC\big(\R_{\le T}; \mL^1(\R^n)\big) \cap \mL^\infty(\R_{<T}; \mL^\infty(\R^n) \cap \mTV(\R^n))
\end{equation*}
such that $\mF[q^*] = q^*$ in $\mC\big((-\infty, T]; \mL^1(\R^n)\big)$ and $q^* \equiv q_\circ$ on $(t, \bx) \in (-\infty, 0) \times \R^n$.
\end{lemma}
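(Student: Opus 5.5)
The plan is to apply the Banach fixed-point theorem to $\mF$ on the admissible set $\mY(T)$ of \cref{defi:admissible_set_memory}, mirroring the proof of \cref{lem:existence_uniqueness_fixed-point}. The underlying metric space is $\mC\big((-\infty,T];\mL^{1}(\R^n)\big)$ with the supremum norm. Its completeness follows from completeness of $\mL^1(\R^n)$, restricting to bounded functions, which is legitimate since every element of $\mY(T)$ satisfies the uniform bound $\|q(t,\cdot)\|_{\mL^1(\R^n)}\le 42\|q_\circ(0,\cdot)\|_{\mL^1(\R^n)}$.

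The first step is to check that $\mY(T)$ is nonempty and closed in this space.

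\textbf{Nonemptiness.} I would take the concatenation of $q_\circ$ on $\R_{\le0}$ with the constant-in-time extension $q_\circ(0,\cdot)$ on $[0,T]$. This lies in $\mY(T)$ provided the historical datum obeys the bounds in $\mY(T)$ for negative times. If \cref{ass:general_nonlocal_memory} does not literally give this, I would implicitly enlarge $B^\infty$, $B^{\mTV}$ and the $\mL^1$ bound to the suprema over $\R_{\le0}$, which changes nothing in \cref{lem:self_mapping_memory}.

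\textbf{Closedness.} Let $q_k\to q$ in $\mC((-\infty,T];\mL^1)$. Then the constraint $q\equiv q_\circ$ on $\R_{\le0}$ and the $\mL^1$ bound pass to the limit trivially. For each $t$, a subsequence converges a.e., which preserves the $\mL^\infty$ bound. The $\mTV$ bound passes to the limit by lower semicontinuity of the total variation under $\mL^1_{\loc}$ convergence, exactly as in the proof of \cref{theo:existence_uniqueness_less_regular_flux}. Time continuity is inherited from uniform convergence.

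The second step is to combine the two preceding lemmas. \cref{lem:self_mapping_memory} gives a horizon $T_2$ with $\mF[\mY(T_2)]\subset\mY(T_2)$. \cref{lem:contraction_mapping_L_1_memory} gives a horizon on which $\mF$ has Lipschitz constant $\tfrac12$. Taking $T$ as the minimum of the two, and noting that both properties persist when the horizon is shortened since all bounds are monotone in $t$, Banach's theorem yields a unique $q^*\in\mY(T)$ with $\mF[q^*]=q^*$. By \cref{defn:fixedpoint_memory}, $q^*\equiv q_\circ$ on $(-\infty,0)\times\R^n$. The regularity $q^*\in\mC(\R_{\le T};\mL^1)\cap\mL^\infty(\R_{<T};\mL^\infty\cap\mTV)$ follows from membership in $\mY(T)\subset\mathcal S$.

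I expect the main obstacle to be the contraction step that is being invoked. The key estimate is controlling $\|\mW[J(q)-J(\tilde q),\kappa](t,\cdot)\|$ and its gradient by $\|q-\tilde q\|_{\mC((-\infty,t];\mL^1)}$, using \cref{E:WL1-memory,E:DWL1-memory}. Since $q=\tilde q$ for negative times, only the window $[0,t]$ contributes. As a result, the constants $C_1,C_2$ from \cref{lem:contraction_mapping_L_1} carry over with $|\gamma|_{\mTV}$ replaced by $\|\kappa\|_{\mL^1(\R_{>0};\mTV)}$ and $\|\gamma\|_{\mL^\infty}$ replaced by $\|\kappa\|_{\mL^\infty(\R_{>0};\mL^\infty)}$, and $\Lambda(t)\to0$ as $t\to0$ as before. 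If extra care is needed, I would verify that bound explicitly. Uniqueness in the statement is understood within $\mY(T)$; uniqueness among all functions in the stated class would require enlarging the constants $B^\infty$, $B^{\mTV}$ to cover a given competitor and shrinking $T$ accordingly.
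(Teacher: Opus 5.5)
Your proposal is correct and follows the paper's route: the paper's proof applies the Banach fixed-point theorem on the closed set $\mY(T)$, using \cref{lem:self_mapping_memory} and \cref{lem:contraction_mapping_L_1_memory}, and says nothing more. Your additional checks fill in details the paper leaves implicit: closedness via lower semicontinuity of the total variation, persistence of self-mapping and contraction when $T$ is shortened, and especially nonemptiness of $\mY(T)$, which genuinely requires enlarging $B^{\infty}$, $B^{\mTV}$ and the $\mL^1$ bound whenever the historical datum exceeds the values fixed at $t=0$.
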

\begin{proof}
    By \cref{defi:admissible_set_memory}, the set $\mathcal Y(T)$ is closed subset of the properly chosen Banach space under the $\mL^\infty$-norm topology. The result therefore, follows by \cref{lem:self_mapping_memory} and \cref{lem:contraction_mapping_L_1_memory}.    
\end{proof}
Now, collecting \cref{lem:well-posedness-fixedpoint}, together with all the results in \cref{lem:self_mapping_memory}, such as the proven bounds \cref{eq:TV_bound_fixed_point_memory}, \cref{E:conservation-bound_memory}, the general stability result of \cref{eq:contraction_mapping_the_final_frontier}, we have proven the following theorem.
\begin{theorem}
\label{theo:existence_uniqueness_nonlocal_memory_small_time_horizon}
    Let \cref{ass:flux_nonlocal} and \cref{ass:general_nonlocal_memory} holds. Then, there exists a time horizon $T \in \R_{>0}$ and a unique entropy solution  
    \begin{equation*}
        q \in \mC\big(\R_{\le T}; \mL^1(\R^n)\big) \cap \mL^\infty(\R_{<T}; \mL^\infty \cap \mTV(\R^n)), \quad q \equiv q_\circ \quad \textnormal{on $(-\infty, 0) \times \R^n$}
    \end{equation*}
    of \cref{E:model_nonlocal_memory} in the sense of \cref{defi:entropy_solution}. Furthermore, for any $t,s \in (-\infty, T]$, we have that 
\begin{equation}
\label{E:q_fixed_point_solution_memory}
\begin{aligned}
\|q(t,\cdot)\|_{\mL^{\infty}(\R^n)}
    &\leq 42\|q_{\circ}(0, \cdot)\|_{\mL^{\infty}(\R^n)}\\
|q(t,\cdot)|_{\mTV(\R^n)}
    &\leq \abs{q_{\circ}(0, \cdot)}_{\mTV(\R^n)} \e^{\eta(B^{\infty}) t} + n W_n\mathrm{e}^{\eta(B^{\infty}) t}C\big(t,B^{\infty},B^{\mTV}\big),\\
\|q(t,\cdot)-q(s,\cdot)\|_{\mL^{1}(\R^n)}
    &\leq C^{\infty}(t,s) + \abs{t -s}\, D(T_1, B^\infty, B^{\mTV})
\end{aligned}
\end{equation}
where $C^{\infty}(t,s)\coloneqq \int_{\min\{s,t\}}^{\max\{s,t\}}\int_{\R^n} \|\diverg\bF(\tau, \bx, *)\|_{\mL^{\infty}(\mI(B^\infty)))} \dd \bx \dd \tau $ for \((s,t)\in [0,T]^{2}\), which vanishes as $\abs{t -s } \to 0$, and $\eta(B^\infty)$ is defined as in \cref{E:eta-B-infty}. 

Furthermore, let $q$ and $\tilde q$ be solutions of \cref{E:model_nonlocal_memory} with respect to the initial conditions $q_0$ and $\tilde{q}_0$, respectively. In addition, let $\bPhi$, $\hat{\bPhi}$ be the fluxes defined at \cref{fluxes}. Then, we have the following general stability result
\begin{equation}
\begin{split}
    &\|q(t,\cdot)-\tilde{q}(t,\cdot)\|_{\mL^{1}(\R^n)}\\
    &\leq t\e^{\kappa_{\bF}t}|q_{\circ}|_{\mTV(\R^n)}
        \esssup_{(t,\bx,q)\in\Omega_T\times\mI_\rho}
        \big|\partial_{4}\bPhi(t,\bx,q)
            -\partial_{4}\hat{\bPhi}(t,\bx,q)\big|\\
    &\quad +nW_{n}
        \esssup_{(t,\bx,q)\in\Omega_T\times\mI_\rho}
        \big|\partial_{4}\bPhi(t,\bx,q)
            -\partial_{4}\hat{\bPhi}(t,\bx,q)\big|
        \int_{0}^{t}s\e^{\kappa_{\bF}(t-s)}
        \|\nabla\diverg\bPhi(s,\cdot,\ast)\|_{\mL^{1}(\R^n;\mL^{\infty}(\mI_\rho))}\dd s\\
    &\quad +\int_{0}^{t}\e^{\kappa_{\bF}(t-s)}
        \|\diverg\bPhi(s,\ast,\cdot)
            -\diverg\hat{\bPhi}(s,\ast,\cdot)\|_{\mL^{1}(\R^n;\mL^{\infty}(\mI_\rho))}\dd s.
\end{split}
\end{equation}
\end{theorem}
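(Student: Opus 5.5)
The plan is to follow exactly the template used for \cref{theo:existence_uniqueness_nonlocal_small_time_horizon}, now exploiting the memory-specific lemmas. First, \cref{lem:well-posedness-fixedpoint} supplies, on some horizon $T\in\R_{>0}$, a unique $q^*\in\mY(T)$ with $q^*\equiv q_\circ$ on $(-\infty,0)\times\R^n$ and $\mF[q^*]=q^*$. The task is then to check that this fixed point is an entropy solution of \cref{E:model_nonlocal_memory} in the sense of \cref{defi:entropy_solution}, and conversely that every such entropy solution is a fixed point.

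For the first direction, set $w\coloneqq\mW[J(q^*),\kappa]$. By \cref{lem:bounds_nonlocal_memory} and the bounds \cref{eq:nonlocal_estimates_fixed_point_memory}, $w\in\mZ(T)$, and $q^*|_{[0,T]}=\mQ[w]$ is the Kru\v{z}kov entropy solution of \cref{eq:nonlocal_explicit_w} in the sense of \cref{defi:entropy_solution_local}, with flux $\bPhi(t,\bx,\rho)=\bF(t,\bx,w(t,\bx),\rho)$ and initial datum $q_\circ(0,\cdot)$.

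Here $w(t,\cdot)\in\mathcal{W}^{1,\infty}$, and the chain rule holds: for $\mathcal{W}^{1,\infty}$ functions composed with a Lipschitz-in-$w$ flux, it holds a.e. in $\bx$. It therefore gives
\[
\diverg\bPhi(t,\bx,\alpha)=\diverg\bF(t,\bx,w,\alpha)+\partial_3\bF(t,\bx,w,\alpha)\circ\nabla w .
\]
Inserting this identity into the local entropy inequality produces exactly the three-line memory inequality of \cref{defi:entropy_solution}. The historical condition holds by construction of $\mF$, and continuity at $t=0$ follows from $q_\circ\in\mC(\R_{\le0};\mL^1)$ together with $\mQ[w](0)=q_\circ(0,\cdot)$.

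For uniqueness, take any entropy solution $\tilde q$ on $[0,T']$ with $T'\le T$. Reading the chain-rule identity backwards shows $\tilde q=\mQ[\mW[J(\tilde q),\kappa]]=\mF[\tilde q]$. To invoke the contraction of \cref{lem:contraction_mapping_L_1_memory}, I need $\tilde q\in\mY(T')$, which I expect to be the main obstacle, since an arbitrary entropy solution is only in $\mL^\infty$. The approach is to restrict to solutions in the regularity class stated in the theorem, so that the $\mL^\infty$, $\mTV$ and $\mL^1$ norms are finite. Time-continuity, combined with the a priori estimates of \cref{lem:self_mapping_memory}, then places $\tilde q$ in the admissible set on a possibly shorter horizon $T''$. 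Uniqueness of the fixed point in $\mY(T'')$ gives $\tilde q=q^*$ there, and a continuation argument extends the agreement to $[0,T]$.

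Finally, the estimates \cref{E:q_fixed_point_solution_memory} are read off directly:
\begin{itemize}
\item the $\mL^\infty$ bound from the first step of \cref{lem:self_mapping_memory};
\item the $\mTV$ bound from \cref{eq:TV_bound_fixed_point_memory};
\item time continuity from \cref{E:rho_star-timecontinuity}, applied with the flux $\bPhi$ and bounded via the constant $D$ in \cref{E:conservation-bound_memory};
\item the general stability estimate from \cref{E:rho_star_general_statbility}, applied to the fluxes \cref{fluxes} with $\gamma$ replaced by $\kappa$. This is the same inequality as \cref{eq:contraction_mapping_the_final_frontier}, with the initial-data term retained.
\end{itemize}

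For $t\le0$ all statements are trivial, since $q=q_\circ$ there.
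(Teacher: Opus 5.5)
Your overall route is the paper's. Its proof of this theorem only combines the fixed point of \cref{lem:well-posedness-fixedpoint} with the bounds of \cref{lem:self_mapping_memory} and the stability estimate \cref{eq:contraction_mapping_the_final_frontier}. Your chain-rule identification of fixed points with entropy solutions in the sense of \cref{defi:entropy_solution}, and your uniqueness argument outside $\mY(T)$, spell out what the paper leaves implicit.

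\textbf{The step that fails is your last sentence.} For $t\le 0$ the estimates \eqref{E:q_fixed_point_solution_memory} do not follow from $q=q_\circ$, and in general they are false.
\begin{itemize}
\item There the $\mTV$ bound becomes a decay condition on the history, $|q_\circ(t,\cdot)|_{\mTV(\R^n)}\le \e^{\eta(B^\infty)t}\big(|q_\circ(0,\cdot)|_{\mTV(\R^n)}+nW_nC(t,B^\infty,B^{\mTV})\big)$. If, say, $\nabla\diverg\bF\equiv 0$, then $C(t,B^\infty,B^{\mTV})\le 0$ for $t<0$ in both \cref{eq:C_t_B_infty_TV_1_memory} and \cref{eq:C_t_B_infty_TV_2_memory}. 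So a history that is constant in time on $[-1,0]$ with $|q_\circ(0,\cdot)|_{\mTV(\R^n)}>0$ violates it as soon as $\eta(B^\infty)>0$.
\item The time-continuity bound at negative times would require $t\mapsto q_\circ(t,\cdot)$ to be Lipschitz in $\mL^1(\R^n)$; moreover $\bF$, hence $C^\infty$, is not defined there. \cref{ass:general_nonlocal_memory} only gives continuity.
\item Even the $\mL^\infty$ bound on $(-\infty,0)$ holds only because it is built into \cref{defi:admissible_set_memory}. It is an unstated restriction on the history, and without it $\mY(T)=\emptyset$.
\end{itemize}
What can actually be proved is \eqref{E:q_fixed_point_solution_memory} for $t,s\in[0,T]$. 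That is exactly what \cref{lem:self_mapping_memory} and \cref{E:rho_star-timecontinuity} give.

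\textbf{Three smaller points.}
\begin{itemize}
\item In the continuation step of your uniqueness argument, the admissible set cannot simply be reused at the time $t_0$ up to which $\tilde q$ and $q^*$ agree. Its $42$-fold bounds would be pinned to the data at $t_0$ and would have to hold on the entire past, which need not be the case. Instead, restart the stability estimate \cref{E:rho_star_general_statbility} at $t_0$ for the two frozen fluxes. Use $\mL^\infty$, $\mTV$ and $\mL^1$ bounds valid for both solutions on all of $[0,T]$. Since the nonlocal terms differ only through $[t_0,t]$, this gives a contraction factor on $[t_0,t_0+h]$ with $h$ independent of $t_0$, so finitely many steps cover $[0,T]$. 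The same device also handles your first step directly.
\item You are right to keep the term $\e^{\kappa_{\bF}t}\|q_\circ(0,\cdot)-\tilde q_\circ(0,\cdot)\|_{\mL^1(\R^n)}$ in the general stability estimate. The printed inequality omits it and is false for different data, since at $t=0$ its right-hand side vanishes.
\item The chain rule is not available for fluxes that are merely Lipschitz in $w$, because the partial derivatives need not exist on the graph of $w$. Here it is justified by the Lipschitz continuity of $\partial_3\bF$ and the boundedness of $\partial_3\nabla\bF$, from \cref{item:3_flux,item:4_flux,item:5_flux} of \cref{ass:flux_nonlocal}.
\end{itemize}
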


\subsection{Systems of nonlocal balance laws with nonlinear locality}
The previous results can readily be extended to systems in which the coupling is only in the nonlocal operator and not in the local nonlinearity. We devote this short section to state the existence and uniqueness theorem for such models, without being too detailed in the related proofs.

We begin by defining the model of interest. Let \(N\in\N_{\geq1}\) be given; we consider the following system of nonlocal conservation laws
\begin{align}
\partial_{t}\bq^{k}&=-\diverg\bbF^{k}\big(t,\bx,\mbW[\bJ^{k},\bq,\bgamma^{k}](t,\bx),\bq^{k}(t,\bx)\big), && k\in\{1,\ldots,N\},\ (t,\bx)\in (0,T)\times\R^n,\label{eq:system}\\
    \bq(0,\cdot)&=\bq_{\circ}(\cdot), && \text{ on } \R^n, \nonumber
    \intertext{supplemented with a vectorized nonlocal operator for \(k\in\{1,\ldots,n\}\),}
\Big(\mbW[\bJ,\bq,\bgamma^{k}]\Big)_{l}&\coloneqq\mW[\bJ^{k}_{l}(\bq_{l}),\bgamma^{k}_{l}]&& l\in\{1,\ldots,N\} \text{ on } (0,T)\times\R^n,\nonumber
\end{align}
with \(\mW\) as in \cref{E:model_nonlocal}. We observe that the equations are coupled together solely through the nonlocal operator, reason why we talk about \emph{local nonlinearity}. 

In line with the previous analysis, we make the following assumptions on the involved functions.

\begin{assumption}
\label{ass:system}
We assume the following.
    \begin{description}
    \item[Initial datum:] \(\bq_{\circ}\in\mBV(\R^n;\R^n)\cap\mL^{\infty}(\R^n;\R^n)\).
    \item[Nonlocal kernel:] \(\bgamma\in\mBV\big(\R^n;\R_{\geq0}^{N\times N}\big)\cap\mL^{\infty}\big(\R^n;\R^{N\times N}\big)\) so that \(\|\bgamma_{l}^{k}\|_{\mL^{1}(\R^n)}= 1\),\ \((k,l)\in\{1,\ldots,N\}^{2}\).
    \item[Nonlocal nonlinearity:] \(\bJ\in\mW^{1,\infty}_\textnormal{loc}\big(\R;\R^{N\times N}\big)\) so that \(\bJ(0)=\boldsymbol{0}\).
    \item[Flux:]\(\bF^{k}\) satisfies, 
     for all open and bounded \(\mU\subset\R\),
        \begin{enumerate}[leftmargin=-10pt]
          \item \(\bF^{k}\in\mL^{\infty}((0,T);\mL^{\infty}(\R^n\times\mU^{N+2}))\),
          \item \(\nabla\bF^{k}\in\mL^{\infty}((0,T)\times\R^n\times\mU^{N};\mathcal{W}^{1, \infty}(\mU))\),
         \item \(\partial_{\boldsymbol{3}}\bF^{k}\in \mL^{\infty}((0,T)\times\R^n\times\mU^{N};\mathcal{W}^{1, \infty}(\mU))\),
         \item \(\partial_{\boldsymbol{3}}\nabla\bF^{k}\in \mL^{\infty}((0,T)\times\R^n\times\mU^{N+2})\),
         \item \(\partial_{\boldsymbol{3}}^{2}\bF^{k}\in \mL^{\infty}((0,T)\times\R^n\times\mU^{N+2})\),
    \item \(\diverg\bF^{k}\in\mL^{1}((0,T)\times\R^n;\mL^{\infty}(\mU^{N+2}))\ \text{ or there is \(l\in\{1,\ldots,N\}\) such that}\
    \diverg\bF^{k}(\cdot,\ast,\ldots,0,\ldots,\star)\equiv 0\), where the \(0\) evaluation is meant at the \((1+n+l)\)-th component of \(\bF^{k}\),
\item \(\nabla\diverg\bF^{k}\in\mL^{1}((0,T)\times\R^n;\mL^{\infty}(\mU^{N+2}))\)
   or there is \(l\in\{1,\ldots,N\}\) such that
    \(\nabla\diverg\bF^{k}(\cdot,\ast,\ldots,0,\ldots,\star)\equiv 0 \wedge 
    \partial_{\boldsymbol{3}}\nabla\diverg\bF\in\mL^{\infty}((0,T)\times\R^n\times\mU^{N+2})\), where the \(0\) evaluation is at the \((1+n+l)\)-th component of \(\bF^{k}\).
    \end{enumerate}
\end{description}
\end{assumption}
\begin{remark}[similarity of \cref{eq:system} to the scalar case]
The system case considered in \cref{defi:bPhi_delta} is similar in its assumptions to \cref{ass:general_nonlocal}, in which the nonlocal dependency is now not on a scalar quantity but a vectorial one. It is, however, crucial in this class of models for different \(\bq^{k}\) to be coupled only via the nonlocal quantity, not the local one; otherwise, significant difficulties arise (see, for instance, \cite{Bressan2000}). This is also why the entropy condition for the scalar case in \cref{eq:nonlocal_explicit_w} can be adjusted to the system case, in which the entropy inequality acts on each \(\bq^{k},\ k\in\{1,\ldots,N\}\) separately. We do not detail this further; we leave the proper alignment of an entropy condition to the reader.
\end{remark}
With the assumptions and the model class stated, we can present the existence and uniqueness result for the system case.
\begin{theorem}[existence and uniqueness for \cref{eq:system} of weak entropy solutions]
Let \cref{ass:system} hold. 
Then, there exists a time horizon \(T\in\R_{>0}\) and a unique weak entropy solution
\[
\bq\in\mC\big([0,T];\mL^{1}(\R^n;\R^n)\big)\cap \mL^{\infty}\big((0,T);\mL^{\infty}\cap\mTV(\R^n;\R^n)\big).
\]
of \cref{eq:system}.
\end{theorem}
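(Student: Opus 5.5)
The plan is to mirror the scalar fixed-point argument of \cref{theo:existence_uniqueness_nonlocal_small_time_horizon} componentwise. Because the components are coupled only through the nonlocal operator, once all nonlocal terms are frozen the system decouples into $N$ independent scalar local conservation laws.

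\textbf{Setup and solution operator.} For $\bq=(\bq^1,\dots,\bq^N)$ in the product of admissible sets $\prod_{k}\mY^k(T)$, we define $\mY^k(T)$ as in \cref{defi:admissible_set}. The bounds are $B^{\infty,k}=42\|\bq^k_\circ\|_{\mL^\infty}$, $B^{\mTV,k}=42|\bq^k_\circ|_{\mTV}$, and the $\mL^1$ bound is $42\|\bq^k_\circ\|_{\mL^1}$. Freeze the vector
\[
w^k\coloneqq \mbW[\bJ^k,\bq,\bgamma^k]\in \mZ(T)^N.
\]
This membership follows from applying \cref{lem:bounds_nonlocal_space} to each entry $\mW[\bJ^k_l(\bq_l),\bgamma^k_l]$, using $\bJ(0)=\boldsymbol 0$ and $\|\bgamma^k_l\|_{\mL^1}=1$. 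Next, set $\bPhi^k(t,\bx,\rho)\coloneqq\bF^k(t,\bx,w^k(t,\bx),\rho)$. We verify the hypotheses of \cref{cor:specific_tailored} for each $k$ with the vector-valued $w^k$ in place of the scalar $w$. The chain-rule computations in its proof carry over verbatim, with $\partial_3$ replaced by the gradient $\partial_{\boldsymbol 3}$ in the $N$ nonlocal slots and products replaced by contractions over $l$. \Cref{ass:system} is designed exactly so that each constant $C^{\infty,\infty}$, $C^{1,\infty}$ remains finite. For the ``or'' alternatives, a single vanishing slot $l$ suffices: one integrates only along that component using $\|w^k_l\|_{\mL^1}$. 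This yields the solution operator $\bm{\mathsf Q}$ and the map $\bm{\mathsf F}[\bq]\coloneqq(\mQ^k[w^k])_{k=1}^N$.

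\textbf{Self-mapping.} We repeat \cref{lem:self_mapping} for each component. The $\mL^\infty$ differential inequality for $\|\bq^k(t,\cdot)\|_{\mL^\infty}$ now involves $\sum_l|\partial_{3,l}\bF^k|\,|\nabla w^k_l|$, which is bounded via \eqref{E:DWLinfty}. The $\mTV$ bound uses \eqref{eq:TV_bound_tailored}, where $C(t,\cdot)$ collects sums over $l$ of the terms in \eqref{eq:C_t_B_infty_TV_1}; it is again $O(t)$ under the first alternative up to the fixed $\|\nabla\diverg\bF^k\|_{\mL^1(\Omega_t)}$, which tends to $0$ as $t\to0$. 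The $\mL^1$ bound follows as in \eqref{E:conservation-bound}. Taking $T$ to be the minimum over $k$ of the resulting horizons gives $\bm{\mathsf F}[\prod_k\mY^k(T)]\subset\prod_k\mY^k(T)$.

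\textbf{Contraction.} We equip $\mC([0,T];\mL^1(\R^n;\R^N))$ with the norm $\sum_k\sup_t\|\bq^k(t)\|_{\mL^1}$. For each $k$, we apply the stability estimate \eqref{E:rho_star_general_statbility} to $\bPhi^k$ and $\hat\bPhi^k$, with identical initial data. The three difference terms are bounded exactly as in \cref{lem:contraction_mapping_L_1}. The difference $\partial_4\bPhi^k-\partial_4\hat\bPhi^k$ is controlled by $\sum_l\|\bgamma^k_l\|_{\mL^\infty}\|\bJ^k_l{}'\|\,\|\bq_l-\tilde\bq_l\|_{\mL^1}$. The divergence difference in $\mL^1$ is controlled by $\sum_l C_{2,l}\|\bq_l-\tilde\bq_l\|_{\mL^1}$. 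The essential point is that the right-hand side of component $k$ depends on all components $l$, not only on $k$. Summing over $k$ gives a factor $\Lambda(t)$ multiplying $\|\bq-\tilde\bq\|$, and this factor is at most $N\max_{k,l}$ of the scalar $\Lambda$'s, so it tends to $0$. Choosing $T$ with $\Lambda(T)\le\tfrac12$ makes the map a contraction. The product admissible set is closed, so Banach's theorem yields a unique fixed point. The fixed point is an entropy solution componentwise, since each $\bq^k$ solves its decoupled local law with $w^k$ evaluated along the fixed point. Uniqueness of entropy solutions in the class follows because any such solution, after possibly shrinking $T$ so that it lies in the admissible set, is a fixed point.

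\textbf{Main obstacle.} I expect the main difficulty to be checking that \cref{cor:specific_tailored} genuinely extends to a vector-valued $w$ under the ``or'' alternatives of \cref{ass:system}. Vanishing in only one slot $l$ must still give $\diverg\bPhi^k\in\mL^1$ and $\nabla\diverg\bPhi^k\in\mL^1$. To handle this, I would first write
\[
\diverg\bF^k(\cdot,w^k,\rho)=\diverg\bF^k(\cdot,w^k,\rho)-\diverg\bF^k(\cdot,w^k|_{w_l=0},\rho),
\]
bound this difference by $\|\partial_{3,l}\diverg\bF^k\|_{\mL^\infty}|w^k_l|$, and then integrate in $\bx$ using \eqref{E:WL1}.
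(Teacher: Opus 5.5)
Your proposal is correct and follows the same argument as the paper's one-line proof: you run the scalar fixed-point scheme of \cref{theo:existence_uniqueness_nonlocal_small_time_horizon} on the product of the componentwise admissible sets, use that freezing the nonlocal terms decouples the system into $N$ scalar local laws, and close with a contraction in $\mC([0,T];\mL^{1}(\R^n;\R^N))$. You supply details the paper leaves implicit (the vector-valued version of \cref{cor:specific_tailored}, the single-slot ``or'' alternatives handled by the mean-value bound in the vanishing slot, and $\Omega_t$ rather than $\Omega_T$ in the $\mTV$ constant so that it vanishes as $t\to0$), and the only loose ends, which the paper's proof has as well, are that the bounds $42\|\bq^k_\circ\|$ collapse if some $\bq^k_\circ\equiv 0$ (use $42\max\{\cdot,1\}$ instead), and that your uniqueness-in-the-class remark needs a restart in time, since shrinking $T$ only gives agreement on a shorter interval.
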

\begin{proof}
    This is a consequence of the proof of the scalar case in \cref{theo:existence_uniqueness_nonlocal_small_time_horizon} when considering the fixed-point mapping in the corresponding product space and the contraction in \(\mC([0,T];\mL^{1}(\R^n;\R^n))\).
\end{proof}

\begin{remark}[extension to longer time horizons]
There is an analogous result to \cref{cor:large_time_horizon} for systems and long time horizons. Adjusting the condition in \cref{eq:F_maximum_principle} to
\[
\exists \bq^{\min},\bq^{\max}\in\R^n: \bq^{\min}\leq \bq^{\max} 
\]
such that \[
\bq^{\min}\leq \bq_{\circ}\leq \bq^{\max}
\]
and assuming that for every \(k\in\{1,\ldots,N\}\), it holds that
\[
\bF^{k}(\cdot,\ast,\star,\bq^{\min}_{k})\equiv\boldsymbol{0}\equiv\bF^{k}(\cdot,\ast,\star,\bq^{\max}_{k}),
\]
then it is possible to obtain the existence and uniqueness of systems in \cref{eq:system} on any finite time horizon.   
\end{remark}

\subsection{Nonlocal conservation laws with time delay in the nonlocal quantity and nonlinearity in the local quantity}\label{subsec:delay}

Another straightforward extension of our work with local and nonlocal-in-time equations is the one to delayed nonlocal conservation laws.
Here, we briefly sketch how the previously obtained results, particularly in \cref{sec:stability}, can be applied to this setting.

The results of this subsection are directly applicable to the traffic flow models with delay, as in \cite{ciaramaglia2025multi,ciaramaglia2024nonlocaltrafficflowmodels} under weaker assumptions on the involved objects, particularly on the regularity of the nonlocal kernels. It is important to note that the delay term we consider is only present in the nonlocal quantity, and not in the local quantity. Indeed, the latter situation typically leads to equations that are not well-defined (see, e.g., \cite[Section 1.2]{Keimer2019}). 

In more detail, we consider the following equation 
\begin{equation}
\label{eq:delay}
\begin{aligned}
    \partial_{t}q +\diverg\bbF\big(t,\bx,\mW[J(q),\gamma](t-\delta,\bx),q\big)&=0 && (t,\bx)\in(0,T)\times\R^n\\
    q(t,\bx)&=q_{\circ}(t,\bx) && (t,\bx)\in (-\delta,0]\times\R^n
\end{aligned}
\end{equation}
In line with the previous analysis, the assumptions on the flux, nonlocality and initial datum read as follows.

\begin{assumption}
\label{ass:delay}
We make the following assumptions.
    \begin{description}
        \item[Initial datum:] \(q_{\circ}\in \mC\big([-\delta,0];\mL^{1}(\R^n)\big)\cap \mL^{\infty}\big((-\delta,0);\mBV\cap\mL^{\infty}(\R^n)\big)\).
        \item[Nonlocal kernel:] \(\gamma\in\mBV(\R^n)\cap\mL^{\infty}(\R^n)\).
        \item[Nonlocal nonlinearity:] \(J\in\mW^{1,\infty}_{\textnormal{loc}}(\R)\), so that \(J(0)=0\).
        \item[Flux:] The same assumptions about the flux as in \cref{ass:general_nonlocal_memory}.
    \end{description}
\end{assumption}

Given this, we can present our existence and uniqueness results. Once again, we begin with the wellposedness on a short time horizon. 
\begin{theorem}[existence and uniqueness for \cref{eq:delay} of weak entropy solutions]
\label{theo:existence_uniqueness_delay}
    Let \(\delta\in\R_{>0}\) be given, and let \cref{ass:delay} hold. Then there eists a time horizon $T \in \R_{>0}$ and a weak entropy solution    
    \[
q\in \mC\big([0,T^{*}];\mL^{1}(\R^n)\big)\cap \mL^{\infty}\big((0,T^{*});\mL^{\infty}\cap\mBV(\R^n)\big)
    \]
    of \cref{eq:delay}.
\end{theorem}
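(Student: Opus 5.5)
The key observation is that on the interval $[0,\delta]$ the nonlocal term $\mW[J(q),\gamma](t-\delta,\bx)$ depends only on the prescribed datum $q_{\circ}$ on $(-\delta,0]$. The delay problem is therefore explicitly decoupled, and I would prove \cref{theo:existence_uniqueness_delay} by a method of steps rather than a contraction. This is the standard approach for delay equations.

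First I would set $T\coloneqq\min\{\delta,T^{*}\}$ and define
\[
w(t,\bx)\coloneqq \mW[J(q_{\circ}),\gamma](t-\delta,\bx),\qquad (t,\bx)\in(0,T)\times\R^n .
\]
Applying \cref{lem:bounds_nonlocal_space} pointwise in $t-\delta\in(-\delta,0)$ to $q_{\circ}(t-\delta,\cdot)$ gives the required bounds. Here $\|q_{\circ}(s,\cdot)\|_{\mL^{1}}$, $\|q_{\circ}(s,\cdot)\|_{\mL^\infty}$ and $|q_{\circ}(s,\cdot)|_{\mTV}$ are uniformly bounded in $s$ by \cref{ass:delay}, since $\mBV$ includes $\mL^{1}$ and $\mC([-\delta,0];\mL^{1})$ gives boundedness. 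The lemma yields $w\in\mL^{\infty}((0,T);\mathcal{W}^{1,\infty}\cap\mathcal{W}^{1,1}(\R^n))$ and $\nabla w\in\mL^{\infty}((0,T);\mTV(\R^n;\R^n))$, that is, $w\in\mZ(T)$. The kernel normalization $\|\gamma\|_{\mL^{1}}=1$ is not assumed in \cref{ass:delay}; I would simply carry the factor $\|\gamma\|_{\mL^{1}}$ through the bounds \eqref{E:WL1}--\eqref{E:WLinfty}.

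Second, I would invoke \cref{lem:well-posed-Q}, i.e.\ \cref{cor:specific_tailored} with $\bPsi=\bF$ and the given $w$. This produces the unique entropy solution $q=\mQ[w]$ of \eqref{eq:nonlocal_explicit_w} with initial datum $q_{\circ}(0,\cdot)$, which lies in $\mBV\cap\mL^\infty$. The solution comes with the $\mL^\infty$, $\mTV$ and $\mL^{1}$ time-continuity bounds \eqref{E:rho_star_w} on a short horizon $T^{*}$ determined there. Because $w$ coincides with the delayed nonlocal term of this very $q$, extended by $q_{\circ}$ on $(-\delta,0]$, the entropy inequality of \cref{defi:entropy_solution_local} for the flux $\bF(t,\bx,w(t,\bx),\cdot)$ is the entropy inequality for \eqref{eq:delay}. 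Expanding the total divergence via the chain rule gives exactly the extra term with $\partial_3\bF\circ\nabla w$ in \cref{defi:entropy_solution}. This establishes existence on $[0,T]$, and uniqueness on this interval is inherited from uniqueness of $\mQ$.

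Third, I would explain that the statement only asks for a short horizon, but optionally one can iterate. On $[\delta,2\delta]$ the term $\mW[J(q),\gamma](t-\delta)$ uses the solution just constructed on $[0,\delta]$, which has the same regularity class, so the same argument restarts. This repeats as long as the $\mL^\infty$ and $\mTV$ bounds from \eqref{E:rho_star_w} stay finite.

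The main obstacle is the regularity bookkeeping in the first step, specifically verifying that $w\in\mZ(T)$ with measurability in time. This should follow from $q_{\circ}\in\mC([-\delta,0];\mL^{1})$, since the map $q_{\circ}(s,\cdot)\mapsto\mW[J(q_{\circ}(s,\cdot)),\gamma]$ is continuous from $\mL^{1}\cap\mL^\infty$ into $\mathcal{W}^{1,1}$. A second subtlety is making sure that the existence horizon $T^{*}$ of \cref{cor:specific_tailored} is positive even when $\delta$ is large; this reduces to the $\mL^\infty$ growth ODE \eqref{eq:L_infty_increase}.
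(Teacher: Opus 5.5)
Your proposal is correct and follows the paper's own method-of-steps proof: on the first slice the delayed term $\mW[J(q_{\circ}),\gamma](t-\delta,\cdot)$ is a known coefficient, so \cref{cor:specific_tailored} applies directly on $[0,\min\{\delta,T^{*}\}]$, and marching through $[\delta,2\delta]$ and later slices is an optional continuation. Your extra bookkeeping fills in details the paper leaves implicit: you carry the factor $\|\gamma\|_{\mL^{1}(\R^n)}$ because \cref{ass:delay} drops the normalization, you check that $w\in\mZ(T)$, and you use the chain rule to identify the entropy inequality, which produces the $\partial_{3}\bF\circ\nabla w$ term.
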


\begin{proof}
    We construct the solution by marching through time slices $[0,\delta]$, $[\delta, 2\delta]$, and so on, as long as the $\mL^\infty$-norm of the solution remains finite.

    On the first interval $t\in[0,\delta]$, the delay shifts the nonlocal term to $\mW[J(q_\circ),\gamma](t-\delta,\bx)$, which depends only on the prescribed initial datum $q_\circ$ and not on the unknown $q$. The problem therefore reduces to
    \begin{align*}
\partial_{t}q+\diverg\bbF\big(t,\bx,\mW[J(q_{\circ}),\gamma](t-\delta,\bx),q\big)&=0,&& (t,\bx)\in(0,\delta)\times\R^n,\\
q(0,\bx)&=q_{\circ}(0,\bx),&& \text{ on }\R^n,
    \end{align*}
    which is a conservation law with an explicitly given, space- and time-dependent coefficient. By the regularity assumption on $q_\circ$ and the properties of $\mW$, the hypotheses of \cref{cor:specific_tailored} are satisfied, yielding a unique weak entropy solution on some time horizon $T^* \leq \delta$.

    Provided the $\mL^\infty$-norm of $q$ has not blown up at $t = \delta$, the solution evaluated there is $\mL^1$, essentially bounded, and of bounded total variation. We may then set $\tilde{q}(t,\bx) \coloneqq q(t+\delta,\bx)$ and consider the shifted problem
    \begin{align*}
\partial_{t}\tilde{q}+\diverg\bbF\big(t+\delta,\bx,\mW[J(q),\gamma](t,\bx),\tilde{q}\big)&=0,&& (t,\bx)\in(0,\delta)\times\R^n,\\
\tilde{q}(0,\bx)&=q(\delta,\bx),&& \text{ on }\R^n,
    \end{align*}
    where now the nonlocal term depends on the solution $q$ already obtained on $[0,\delta]$, and is therefore again explicitly known. Applying \cref{cor:specific_tailored} once more yields a unique weak entropy solution on a further time horizon. This procedure is then iterated, extending the solution one slice at a time, until either the prescribed final time $T$ is reached or the $\mL^\infty$-norm of the solution blows up.
\end{proof}

\begin{remark}[extension to longer time horizons]
There is, once more, an analogous result to \cref{cor:large_time_horizon} that only takes advantage of the local properties of the flux. Assuming in addition to \cref{ass:delay} that
 there exist \(q_{\min},q_{\max}\in\R\) such that \(q_{\min}\leq q_{\max}\), 
    \begin{equation}
    \bF(\cdot,\ast,\star,q_{\min})=\boldsymbol{0}=\bF(\cdot,\ast,\star,q_{\max}),
    \end{equation}
    and 
    \[
        q_{\min}\leq q_{\circ}(0,\cdot)\leq q_{\max} \qquad\text{on } \R^n \text{ a.e.,}
    \]
    then the solution to the nonlocal conservation law with delay \(q\) exists on any finite time horizon \(T\in\R_{>0}\) and satisfies the following maximum principle:
    \[
    q_{\min}\leq q(t,\bx)\leq q_{\max} \quad \text{for }(t,\bx)\in (0,T)\times\R^n \text{ a.e.}
    \]
\end{remark}

\begin{remark}[memory and delay]
    In the memory setting of \cref{subsec:memory}, the nonlocal operator integrates up to the current time, whereas in the delay case it only involves the density strictly in the past. If the time-dependent kernel in \cref{subsec:memory} were supported away from the current time---an admissible choice within our framework---the existence and uniqueness argument would reduce to the slicing procedure described in \cref{theo:existence_uniqueness_delay}.
\end{remark}

\begin{remark}[existence and uniqueness of nonlocal balance laws with nonlinearity in the locality]\label{rem:balance_laws}
The theory developed in \cref{sec:nonlocal_conservation_laws} concerns conservation laws, but can be extended with minor modifications to balance laws, that is, to
\begin{equation}
    \begin{aligned}    
\partial_t q +\diverg_{\bx}\big(\bF\big(t, \bx,\mW[J(q),\gamma](t,\bx), q(t,\bx)\big)\big)&=h\big(t,\bx,q,\mW[J(q),\gamma](t,\bx)\big), &&(t, \bx) \in (0, T) \times \R^n,\\
\mW[J(q),\gamma](t,\bx)&=\int_{\R^n}\gamma(\bx-\by)J(q(t,\by))\dd\by,&& (t,\bx)\in (0,T)\times\R^n,\\
q(0, \cdot) &\equiv q_{\circ},&& \bx\in\R^n,
\end{aligned}
\end{equation}
with $h:(0,T)\times\R\times\R^{2}\rightarrow\R$ satisfying suitable regularity conditions. This includes the settings with memory and delay treated above. We refer the reader to \cite{Mercier2011improved} for the required estimates and assumptions on $h$.
\end{remark}

\section{Numerical simulations}\label{sec:numerics}

We conclude this work by presenting numerical simulations with two complementary goals. The first is to provide numerical evidence supporting the theoretical framework developed in the preceding sections. The second is to explore the qualitative behavior introduced by memory effects in nonlocal conservation laws.

We stress that a rigorous numerical analysis of the scheme, including convergence rates, efficiency, and error estimates, is beyond the scope of this paper and is deferred to future work. However, our contraction result provides theoretical grounding for the iterative numerical procedure we employ, and the simulations below confirm its practical viability. The code is publicly available at \url{https://github.com/ALive95/CLAWS}.

\subsection{Numerical scheme}

A key advantage of the fixed-point framework developed above is that it naturally decouples the nonlocal (and possibly memory) term from the local conservation law dynamics. At each iteration, the nonlocal operator $\mW$ is evaluated on the current iterate and then frozen, reducing the problem to a standard conservation law with a space- and time-dependent flux. This means that any classical numerical scheme for local conservation laws can be applied directly, without the need to design a solver tailored to handle memory. For concreteness, we employ a Lax--Friedrichs and a Godunov scheme, depending on the experiment.

We discretize the domain $[0,T]\times [x_L,x_R]$ into $N_t$ time steps of size $\Delta t$ and $N_x$ cells of size $\Delta x$, with cell centers $x_i$ and time levels $t^n$. The Courant--Friedrichs--Lewy (CFL) condition $\alpha\,\Delta t/\Delta x \leq 1$ is enforced throughout, where $\alpha$ bounds the wave speed of the frozen flux. At each time step $t^n$, the nonlocal term $\mW_i^n \approx \mW[J(q^n),\gamma](t^n,x_i)$ is approximated via numerical quadrature (direct convolution, or FFT-based circular convolution for periodic boundary conditions; see below for an in-depth explanation).

With $\mW_i^n$ frozen, the numerical scheme of choice defines a discrete solution operator $\mQ_h[\mW^n] \approx \mQ[\mW]$, which maps a frozen discrete nonlocal field $\mW^n = (\mW_i^n)_i$ to the updated density $(q_i^{n+1})_i$. The discrete counterpart of the fixed-point operator $\mF$ in \cref{defi:fixed_point} is then
\begin{equation}
    \mF_h[q^n] \coloneqq \mQ_h\bigl[\mW_h[J(q^n),\gamma]\bigr],
    \label{eq:discrete_fp}
\end{equation}
where $\mW_h$ denotes the discrete nonlocal operator.

The full nonlocal scheme is implemented as a Picard iteration. Given an iterate $q^{(k)}$, we compute $\mW[J(q^{(k)}),\gamma]$ and use the numerical scheme to obtain $q^{(k+1)}=\mF_h[q^{(k)}]$. Iteration is terminated when
\[
    \|q^{(k+1)}-q^{(k)}\|_{\infty} < \texttt{tol},
\]
with \texttt{tol}$=10^{-7}$ in all experiments below. Convergence to a unique fixed point is guaranteed by \cref{lem:contraction_mapping_L_1} on a sufficiently short time horizon, and the numerical experiments confirm rapid convergence (typically within $5$--$10$ iterations per time step).

\begin{remark}[on the memory kernel computation]

For conservation laws with memory \tup{see \cref{E:model_nonlocal_memory}}, the nonlocal term involves integration over the full history:
\[
    \mW[J(q),\kappa](t,x) = \int_0^t\int_{\R}\kappa(t-s,x-y)J(q(s,y))\dd y\dd s.
\]
This is discretized as a causal quadrature sum,
\[
    \mW_i^n = \sum_{k<n}\Delta t\sum_j\Delta x\,\kappa(t^n-t^k,x_i-x_j)J(q^k_j),
\]
where the summation is performed strictly over past time levels $k<n$ to preserve the causal structure of the fixed-point formulation. For each fixed lag $\tau = t^n - t^k$, the inner sum is a discrete spatial convolution in $i$, requiring $\mathcal{O}(N_x^2)$ operations in general. Under periodic boundary conditions, this reduces to $\mathcal{O}(N_x\log N_x)$ via FFT-based circular convolution \cite{HAIRER198887}.
Accumulating over all past time levels shows that the overall cost per time step is $\mathcal{O}(N_t N_x \log N_x)$, and $\mathcal{O}(N_t^2 N_x \log N_x)$ over the full simulation.

    For separable kernels that are exponential in time, namely,
\[
    \kappa(\tau,z)=\tfrac{1}{\tau_0}\e^{-\tau/\tau_0}\gamma(z),
\]
it is possible to exploit the separability to accelerate the computations. Indeed, applying the elementary identity $\e^{-(t^n-t^k)/\tau_0} = \e^{-\Delta t/\tau_0}\cdot \e^{-(t^{n-1}-t^k)/\tau_0}$ collapses the full temporal sum into the recursive update \cite[Chapter 10]{SimoHughes1998}
\[
\begin{dcases}
    S^{n+1}_i = \e^{-\Delta t/\tau_0}S^n_i + \Delta t\sum_j\Delta x\,\gamma(x_i-x_j)J(q^n_j),\\
    \mW_i^n = S^n_i,
\end{dcases}
\]
reducing the temporal cost per step from $\mathcal{O}(N_t)$ to $\mathcal{O}(1)$ and the total complexity to $\mathcal{O}(N_t N_x \log N_x)$.
\end{remark}

\subsection{Example: LWR model with nonlocal flux and memory}

We begin by considering the classical one-dimensional LWR-type traffic model
\begin{equation}
    \partial_t q + \partial_x\Big(q(1-q)V_{\max}\big(1-\mW[J(q),\gamma](t,x)\big)\Big) = 0,\quad (t,x)\in(0,T)\times(0,L),
    \label{eq:LWR_numerical}
\end{equation}
with $V_{\max}=1$, $J(q)=q$, $L=4$, $T=5$, and outflow boundary conditions. The initial datum consists of two rectangular plateaus,
\[
    q_0(x) = 0.8\,\mathbf{1}_{[0.5,1.2]}(x) + 0.7\,\mathbf{1}_{[2.2,2.9]}(x),
\]
generating two shocks whose interaction is influenced by the nonlocal term. The spatial kernel $\gamma$ is a one-sided raised cosine supported on $[-R,0]$ with $R=0.4$,
\[
    \gamma(z) = \tfrac{2}{R}\cos^2\!\left(\tfrac{\pi z}{2R}\right)\mathbf{1}_{[-R,0]}(z),
\]
reflecting the fact that drivers react to traffic ahead of them. We consider four configurations for the nonlocal operator $\mW$, namely, pure spatial nonlocality and three memory kernels---exponential, Erlang, and triangular in time---as follows:
\[
    K_{\mathrm{exp}}(\tau) = \e^{-\tau}, \qquad
    K_{\mathrm{Erl}}(\tau) = \tau\,\e^{-\tau}, \qquad
    K_{\mathrm{tri}}(\tau) = 2(1-\tau)\,\mathbf{1}_{[0,1]}(\tau).
\]
For the memory cases, the full nonlocal term reads
\[
    \mW[J(q),\kappa](t,x) = \int_0^t \int_{\R} K(t-s)\gamma(x-y)\,q(s,y)\dd y\dd s,
\]
and the historical datum is $q_{\mathrm{hist}}(t,x) = q_0(x)\,\e^{t}$ for $t\leq 0$, which decays to zero as $t\to-\infty$ and matches $q_0$ at $t=0$. 
The model components are visualized in \cref{fig:components}.

\begin{figure}[t!]
    \centering
    \includegraphics[width=\linewidth]{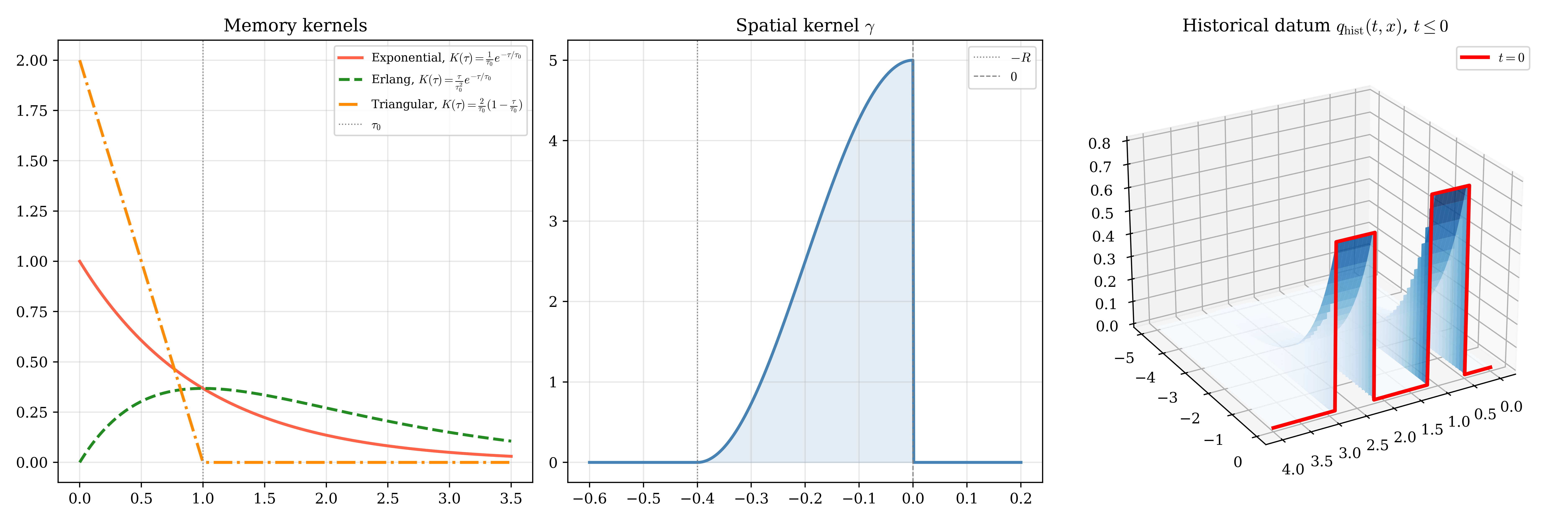}
   \caption{Model components for \eqref{eq:LWR_numerical}. Left: temporal memory kernels $K(\tau)$ (exponential, Erlang, triangular). Center: one-sided spatial kernel $\gamma(z)$ supported on $[-R,0]$, reflecting drivers' anticipation of traffic ahead. Right: historical datum $q_{\mathrm{hist}}(t,x)=q_0(x)\e^{t}$ for $t\leq 0$; the red profile marks $t=0$.}
\label{fig:components}
\end{figure}

For this experiment, we adopt the Godunov scheme, whose update reads
\begin{equation}
    q_i^{n+1} = q_i^n - \tfrac{\Delta t}{\Delta x}\Bigl(G_{i+1/2}^n - G_{i-1/2}^n\Bigr),
    \label{eq:Godunov}
\end{equation}
where the numerical flux at interface $x_{i+1/2}$ is given by the exact Riemann solution
\[
    G_{i+1/2}^n \coloneqq
    \begin{cases}
        \min_{u\in[q_i^n,\,q_{i+1}^n]} F(t^n,x_{i+1/2},\mW_{i+1/2}^n,u) & \text{ if } q_i^n \leq q_{i+1}^n,\\[4pt]
        \max_{u\in[q_{i+1}^n,\,q_i^n]} F(t^n,x_{i+1/2},\mW_{i+1/2}^n,u) & \text{ if } q_i^n > q_{i+1}^n,
    \end{cases}
\]
with $\mW_{i+1/2}^n = \tfrac{1}{2}(\mW_i^n + \mW_{i+1}^n)$. For the concave flux of \eqref{eq:LWR_numerical}, the optimizer is attained at $q^*=\tfrac{1}{2}$ regardless of the frozen values $(t,x,w)$, so each minimization reduces to at most three function evaluations.

\Cref{fig:basic_density} shows the evolution of the density $q$ on the whole space-time domain, while \Cref{fig:basic_snapshots} plots the density profile at several instants for all four cases. The memory kernels induce dynamics markedly different from those in the purely spatial case. One notable feature is that the shocks appear to propagate faster: this is likely due to the fact that $q_{\mathrm{hist}}(t,x) \leq q_0(x)$ for $t\leq 0$, so the memory integral $\mW$ is generally smaller than its purely spatial counterpart, resulting in a higher effective speed. A second distinctive feature is the formation of a kink between the two density maxima, which is absent in the spatial case: the passage of the first bump leaves a trace in $\mW$ that modifies the local speed in the intermediate region.

\begin{figure}[h!]
    \centering
    \includegraphics[width=0.95\textwidth]{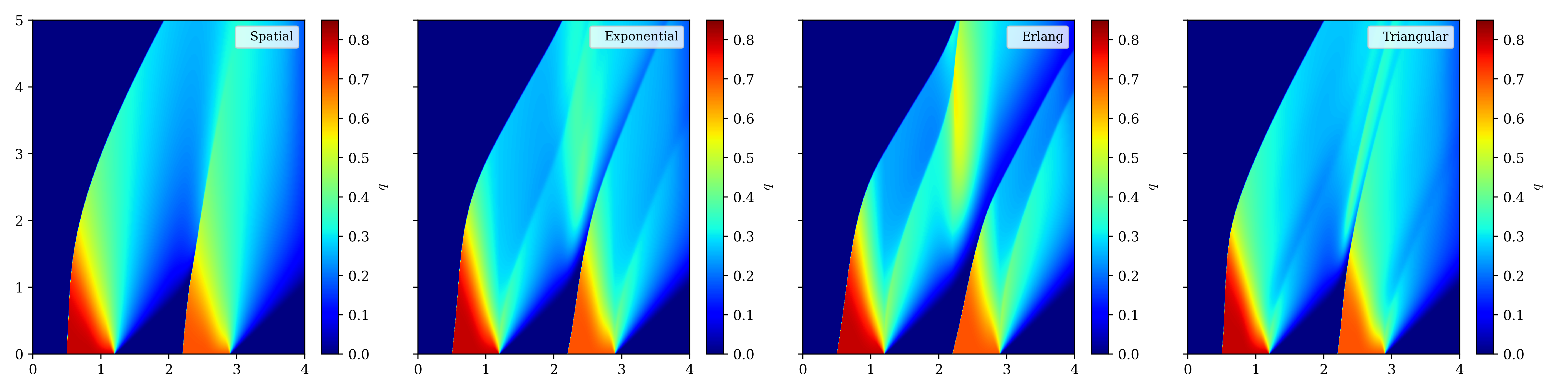}
    \caption{Full density evolution for the spatial case and the three memory kernels for \eqref{eq:LWR_numerical}.}
    \label{fig:basic_density}
\end{figure}

\begin{figure}[h!]
    \centering
    \includegraphics[width=0.8\textwidth]{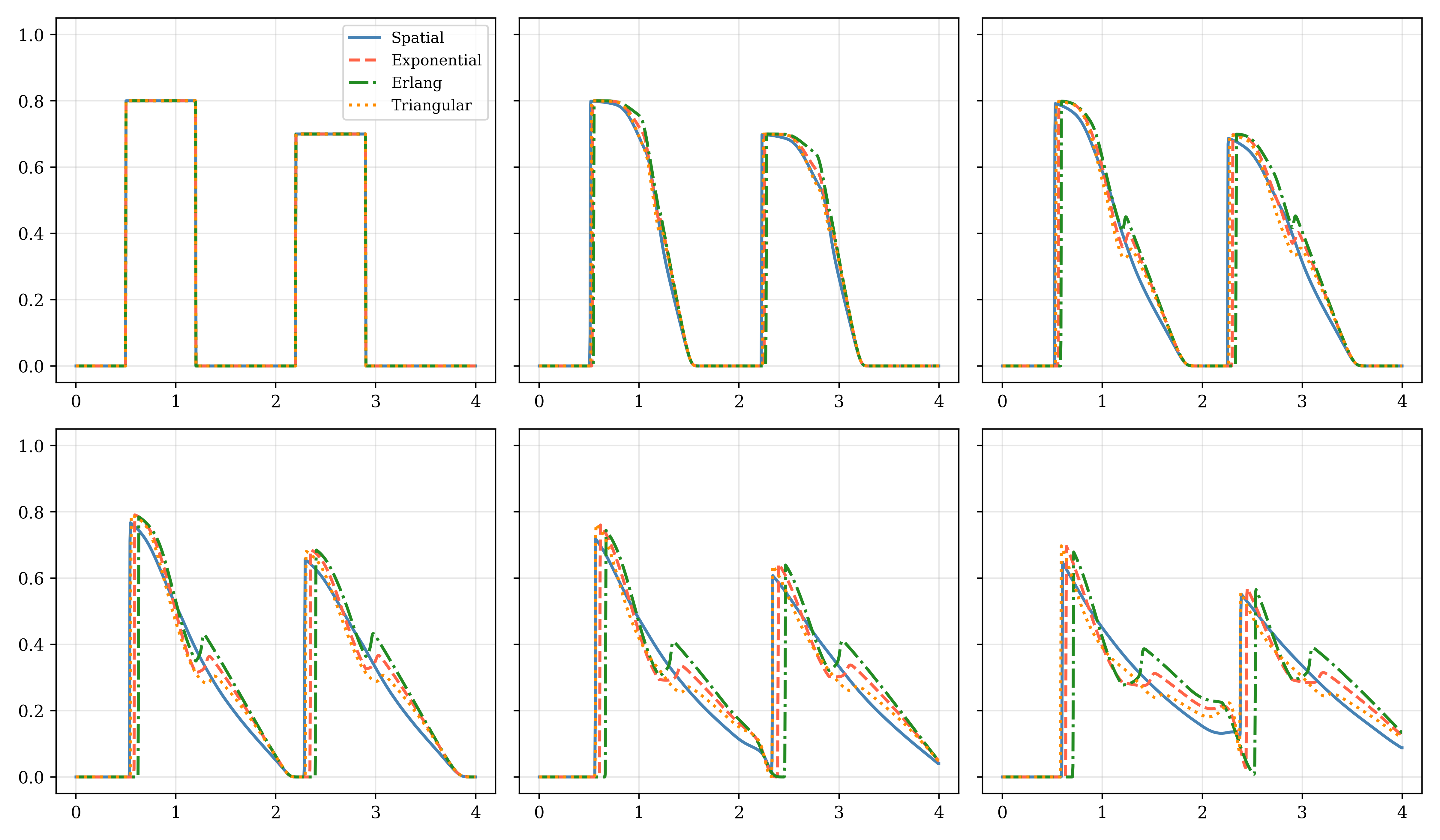}
    \caption{Density snapshots at times $t\in\{0,0.3,0.6,0.9,1.2,1.5\}$ for the spatial case and the three memory kernels for \eqref{eq:LWR_numerical}.}
    \label{fig:basic_snapshots}
\end{figure}

\subsection{Validation: Chiarello--Goatin experiment}

As a further demonstration, we replicate the numerical experiment from \cite[Section~4]{chiarello2018global}. The model in this experiment is a nonlocal LWR equation on the periodic domain $(-1,1)$ and $(0,0.5)$ in time, with constant initial datum $\rho_0=0.6$:
\begin{equation}
    \partial_t\rho + \partial_x\Big(V_{\max}(t,x)\,\rho(1-\rho)\,v\big(\mW[J(\rho),w_{\eta,\delta}](t,x)\big)\Big) = 0, \quad (t,x)\in(0,0.5)\times(-1,1),
    \label{eq:Goatin}
\end{equation}
where $J(\rho)=\rho$, $v(\rho)=(1-\rho)^{m-1}(1+\rho)^m$, and
\[
    \mW[J(\rho),w_{\eta,\delta}](t,x) = \int_{-1}^{1}w_{\eta,\delta}(x-y)\,\rho(t,y)\dd y.
\]
Here, $w_{\eta,\delta}$ is a quintic kernel of support radius $\eta$ shifted by $\delta$ (see \cite[eq.~(4.8)]{chiarello2018global}), and $V_{\max}(t,x)$ is a Gaussian-smoothed piecewise constant speed limit as defined in \cite[eq.~(4.6)]{chiarello2018global}. This example is of particular interest because the flux depends explicitly on both space and time through $V_{\max}(t,x)$, illustrating the full generality of the framework of \cref{sec:related-work}. We use $N_x=2000$ cells and $\mathrm{CFL}=0.9$. Following \citep{chiarello2018global}, in this case we adopt the Lax-Friedrichs scheme. Thus, the solution update reads
\begin{equation}
    q_i^{n+1} = \tfrac{q_{i+1}^n+q_{i-1}^n}{2} - \tfrac{\Delta t}{2\Delta x}\Big(F(t^n,x_{i+1/2},\mW_{i+1}^n,q_{i+1}^n)-F(t^n,x_{i-1/2},\mW_{i-1}^n,q_{i-1}^n)\Big).
    \label{eq:Lax--Friedrichs}
\end{equation}
We validate the fixed-point solver by comparing it against a direct explicit Lax--Friedrichs scheme for the parameter set $m=3$, $\eta=0.1$, $\delta=0.06$. The direct scheme is exactly the one described in \cite{chiarello2018global}: it advances the solution one step at a time without any Picard iteration, using the density at the current step to evaluate the nonlocal term. \Cref{fig:Goatin_fp_vs_direct_A} shows the space--time density produced by each method. The two solutions agree closely, confirming that the fixed-point iteration converges to the correct discrete solution.

\begin{figure}[h!]
    \centering
    \includegraphics[width=0.7\textwidth]{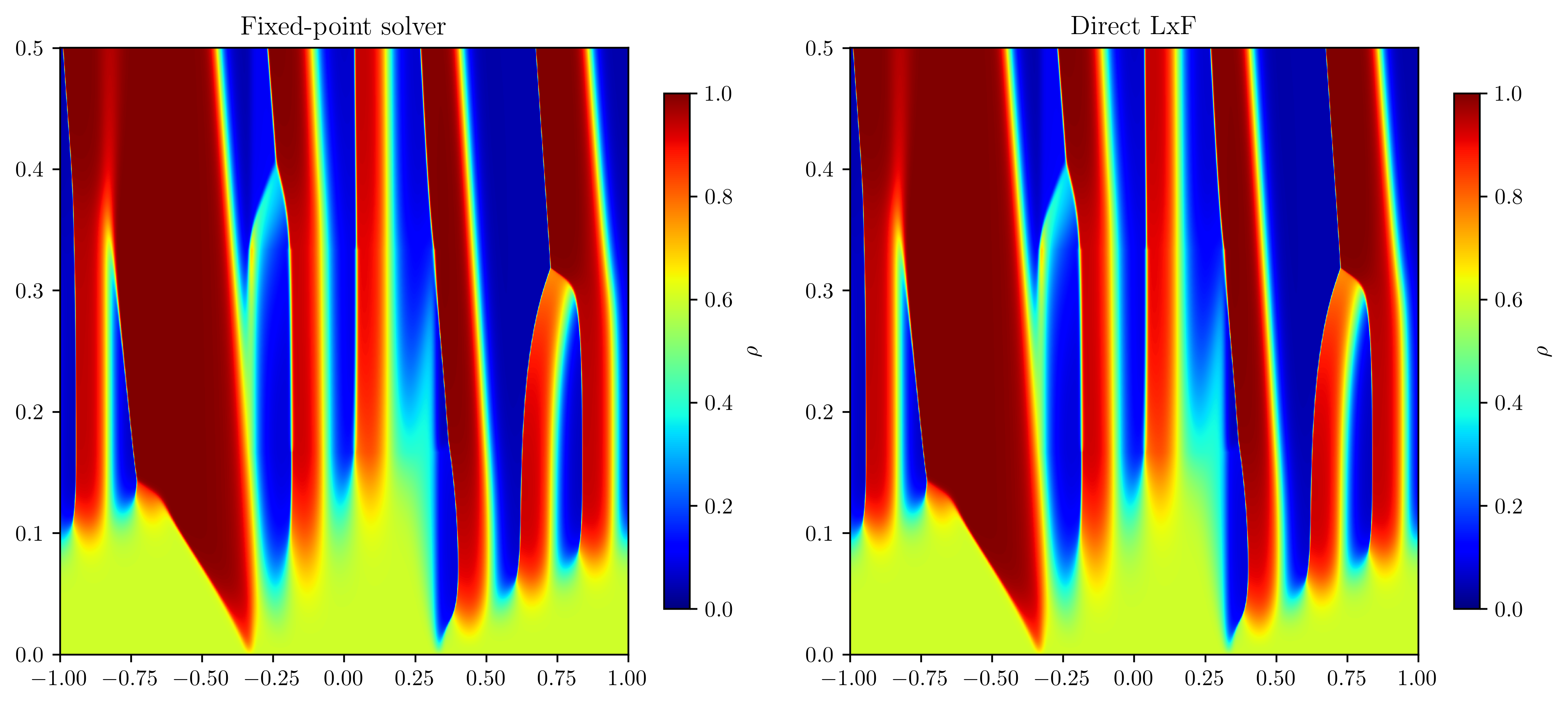}
    \caption{Fixed-point solver (left) vs.\ direct Lax--Friedrichs (center) and pointwise error (right) for \eqref{eq:Goatin} with $m=3$, $\eta=0.1$, $\delta=0.06$.}
    \label{fig:Goatin_fp_vs_direct_A}
\end{figure}

Having validated the scheme, we turn to the effect of memory. We augment \eqref{eq:Goatin} with a factorized memory kernel $\kappa(\tau,z)=K(\tau)\,w_{\eta,\delta}(z)$, considering the same three choices for the temporal part $K$ as in the LWR experiment: exponential, Erlang, and triangular, all with the same time scale $\tau_0$. The spatial kernel $w_{\eta,\delta}$ is unchanged. For the parameter set $m=3$, $\eta=1.0$, $\delta=0$, \cref{fig:Goatin_fp_vs_direct_B} compares the space--time density of the purely spatial model against those of the three memory variants. Memory regularizes the dynamics: the time-averaging effect of $K$ smooths out sharp features and slows the formation of high-density regions, with the degree of smoothing depending on how broadly $K$ distributes weight over the past.

The difference is further demonstrated in \cref{fig:Goatin_snapshots_B}, which overlays density profiles at selected instants for all four models. The memory kernels visibly delay and diffuse the shock structure relative to the purely spatial case, with the Erlang kernel producing the broadest profile owing to its heavier tail.

\begin{figure}[h!]
    \centering
    \includegraphics[width=\textwidth]{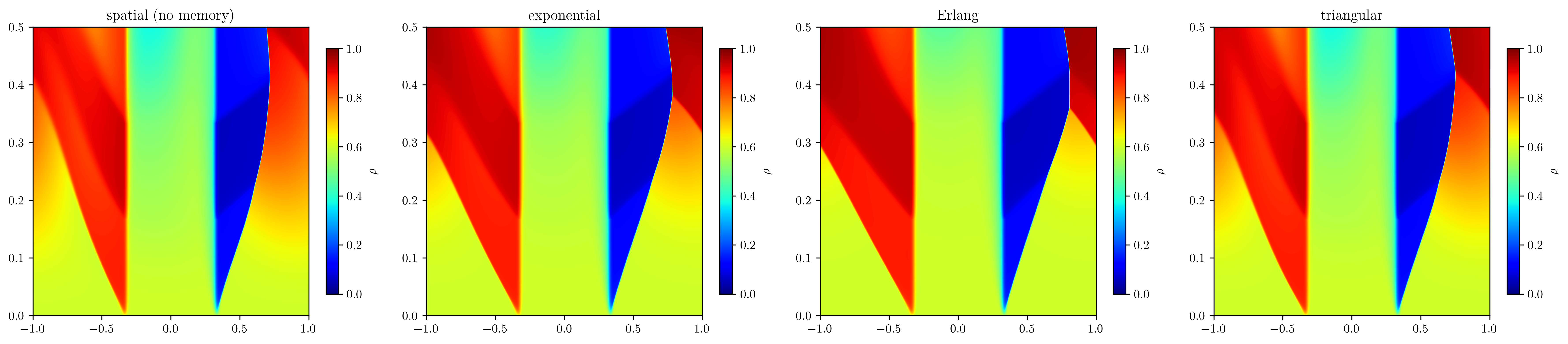}
    \caption{Space--time density for \eqref{eq:Goatin} with $m=3$, $\eta=1.0$, and $\delta=0$: from left to right, a spatial-only model and three memory kernels (exponential, Erlang, triangular). A comparison of the fixed-point solver and direct Lax--Friedrichs is shown for each, along with pointwise errors.}
    \label{fig:Goatin_fp_vs_direct_B}
\end{figure}

\begin{figure}[h!]
    \centering
    \includegraphics[width=\textwidth]{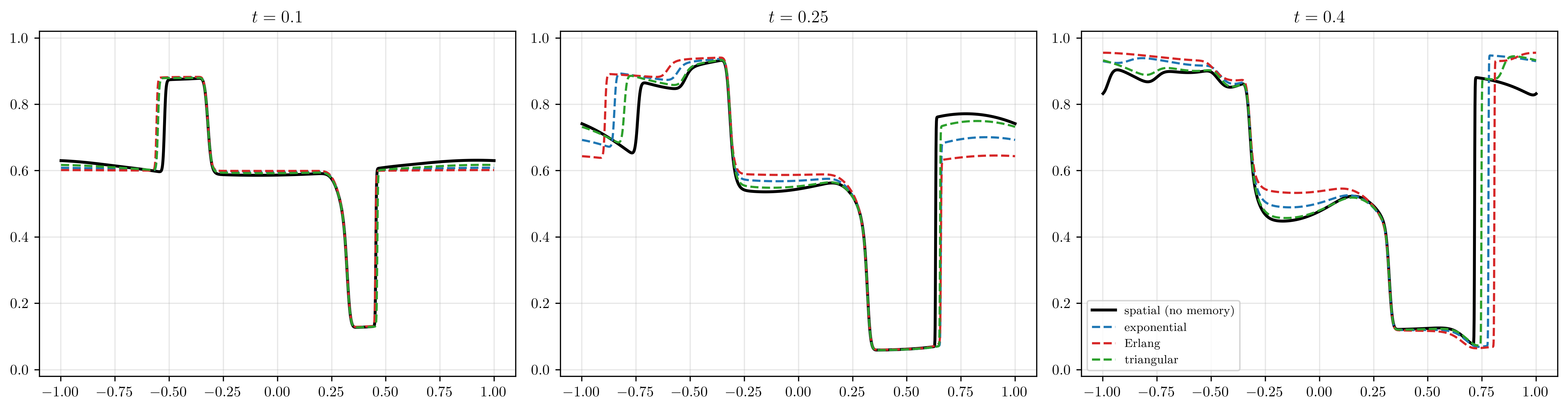}
    \caption{Density profiles at $t\in\{0.1,0.25,0.4\}$ for \eqref{eq:Goatin} with $m=3$, $\eta=1.0$, and $\delta=0$: spatial-only model (solid) and three memory kernels (dashed).}
    \label{fig:Goatin_snapshots_B}
\end{figure}

\section{Future work and open problems}
\label{s:future}

In this work, we have established existence and uniqueness of entropy solutions for a broad class of nonlocal conservation laws in $\R^n$, via a fixed-point approach. The same method extends naturally to equations where the nonlocal operator incorporates a time memory or a time delay, yielding analogous well-posedness results in both settings. Several directions remain open for future investigation.
\begin{enumerate}
    \item So far, we have only considered conservation laws, while in applications balance laws are often required (appearing, for example, in traffic flow modelling when lane-changing is allowed, see \cite{bayen2022modeling,chiarello2024singular,holden2019models}). A particular interesting case is that such right hand side might also depend on the nonlocal operator, in line with what we also point out in \cref{rem:balance_laws}. A comprehensive analysis of the minimal assumptions for the wellposedness of nonlocal balance laws, in the same spirit of this contribution, is an interesting future avenue for research.
    \item Another important thread is the investigation of the singular limit problem, i.e., whether one obtains the unique entropy solution of the corresponding ``local'' conservation law when the nonlocal kernel tends to a Dirac-distribution. 
    For nonlocal conservation laws this has been studied intensively over the last decade (see \cite{bressan2021entropy,coclite2022general,nitti2025singular,colombo2023nonlocal,keimer2019approximation} just to name a few related results). Notwithstanding, the key feature of the considered class of nonlocal conservation laws was that the local dependency of the flux be linear. For nonlinear locality, a singular limit result is not available so far.
    \item A natural question connecting for the models with memory and delay introduced in \cref{subsec:memory,subsec:delay} is whether solutions converge to those of the memoryless, delay-free equation as the temporal kernel concentrates to a Dirac distribution or the delay tends to zero. A positive answer would confirm the robustness of the model class with respect to these perturbations. In the delay case, first results in this direction are already available in \cite{ciaramaglia2024nonlocaltrafficflowmodels,ciaramaglia2025multi,Keimer2019}.
\end{enumerate}

\bibliographystyle{elsarticle-harv}
\bibliography{references}

\begin{thebibliography}{70}
\expandafter\ifx\csname natexlab\endcsname\relax\def\natexlab#1{#1}\fi
\providecommand{\url}[1]{\texttt{#1}}
\providecommand{\href}[2]{#2}
\providecommand{\path}[1]{#1}
\providecommand{\DOIprefix}{doi:}
\providecommand{\ArXivprefix}{arXiv:}
\providecommand{\URLprefix}{URL: }
\providecommand{\Pubmedprefix}{pmid:}
\providecommand{\doi}[1]{\href{http://dx.doi.org/#1}{\path{#1}}}
\providecommand{\Pubmed}[1]{\href{pmid:#1}{\path{#1}}}
\providecommand{\bibinfo}[2]{#2}
\ifx\xfnm\relax \def\xfnm[#1]{\unskip,\space#1}\fi
\bibitem[{Aggarwal et~al.(2015)Aggarwal, Colombo and Goatin}]{aggarwal2015nonlocal}
\bibinfo{author}{Aggarwal, A.}, \bibinfo{author}{Colombo, R.M.}, \bibinfo{author}{Goatin, P.}, \bibinfo{year}{2015}.
\newblock \bibinfo{title}{Nonlocal systems of conservation laws in several space dimensions}.
\newblock \bibinfo{journal}{SIAM Journal on Numerical Analysis} \bibinfo{volume}{53}, \bibinfo{pages}{963–983}.
\newblock \URLprefix \url{http://dx.doi.org/10.1137/140975255}, \DOIprefix\doi{10.1137/140975255}.
\bibitem[{Aggarwal et~al.(2025)Aggarwal, Holden and Vaidya}]{aggarwal2025error}
\bibinfo{author}{Aggarwal, A.}, \bibinfo{author}{Holden, H.}, \bibinfo{author}{Vaidya, G.}, \bibinfo{year}{2025}.
\newblock \bibinfo{title}{Error estimates for systems of nonlocal balance laws modelling dense multilane vehicular traffic}.
\newblock \bibinfo{journal}{Nonlinearity} \bibinfo{volume}{38}, \bibinfo{pages}{105007}.
\newblock \URLprefix \url{http://dx.doi.org/10.1088/1361-6544/ae0b24}, \DOIprefix\doi{10.1088/1361-6544/ae0b24}.
\bibitem[{Aggarwal and Vaidya(2024)}]{aggarwal2025convergence}
\bibinfo{author}{Aggarwal, A.}, \bibinfo{author}{Vaidya, G.}, \bibinfo{year}{2024}.
\newblock \bibinfo{title}{Convergence of the numerical approximations and well-posedness: {N}onlocal conservation laws with rough flux}.
\newblock \bibinfo{journal}{Mathematics of Computation} \URLprefix \url{http://dx.doi.org/10.1090/mcom/3976}, \DOIprefix\doi{10.1090/mcom/3976}.
\bibitem[{Amorim et~al.(2015)Amorim, Colombo and Teixeira}]{amorim2015numerical}
\bibinfo{author}{Amorim, P.}, \bibinfo{author}{Colombo, R.M.}, \bibinfo{author}{Teixeira, A.}, \bibinfo{year}{2015}.
\newblock \bibinfo{title}{On the numerical integration of scalar nonlocal conservation laws}.
\newblock \bibinfo{journal}{ESAIM: Mathematical Modelling and Numerical Analysis} \bibinfo{volume}{49}, \bibinfo{pages}{19–37}.
\newblock \URLprefix \url{http://dx.doi.org/10.1051/m2an/2014023}, \DOIprefix\doi{10.1051/m2an/2014023}.
\bibitem[{Aw and Rascle(2000)}]{aw2000resurrection}
\bibinfo{author}{Aw, A.}, \bibinfo{author}{Rascle, M.}, \bibinfo{year}{2000}.
\newblock \bibinfo{title}{Resurrection of ``second order'' models of traffic flow}.
\newblock \bibinfo{journal}{SIAM Journal on Applied Mathematics} \bibinfo{volume}{60}, \bibinfo{pages}{916–938}.
\newblock \URLprefix \url{http://dx.doi.org/10.1137/S0036139997332099}, \DOIprefix\doi{10.1137/s0036139997332099}.
\bibitem[{Bayen et~al.(2022)Bayen, Friedrich, Keimer, Pflug and Veeravalli}]{bayen2022modeling}
\bibinfo{author}{Bayen, A.}, \bibinfo{author}{Friedrich, J.}, \bibinfo{author}{Keimer, A.}, \bibinfo{author}{Pflug, L.}, \bibinfo{author}{Veeravalli, T.}, \bibinfo{year}{2022}.
\newblock \bibinfo{title}{Modeling multilane traffic with moving obstacles by nonlocal balance laws}.
\newblock \bibinfo{journal}{SIAM Journal on Applied Dynamical Systems} \bibinfo{volume}{21}, \bibinfo{pages}{1495–1538}.
\newblock \URLprefix \url{http://dx.doi.org/10.1137/20M1366654}, \DOIprefix\doi{10.1137/20m1366654}.
\bibitem[{Beckers and Friedrich(2026)}]{beckers2026monotone}
\bibinfo{author}{Beckers, A.}, \bibinfo{author}{Friedrich, J.}, \bibinfo{year}{2026}.
\newblock \bibinfo{title}{Monotone-based numerical schemes for two-dimensional systems of nonlocal conservation laws}.
\newblock \URLprefix \url{https://arxiv.org/abs/2601.20494}, \DOIprefix\doi{10.48550/ARXIV.2601.20494}.
\bibitem[{Bellomo et~al.(2002)Bellomo, Delitala and Coscia}]{bellomo2002mathematical}
\bibinfo{author}{Bellomo, N.}, \bibinfo{author}{Delitala, M.}, \bibinfo{author}{Coscia, V.}, \bibinfo{year}{2002}.
\newblock \bibinfo{title}{On the mathematical theory of vehicular traffic flow {I}: {F}luid dynamic and kinetic modelling}.
\newblock \bibinfo{journal}{Mathematical Models and Methods in Applied Sciences} \bibinfo{volume}{12}, \bibinfo{pages}{1801–1843}.
\newblock \URLprefix \url{http://dx.doi.org/10.1142/S0218202502002343}, \DOIprefix\doi{10.1142/s0218202502002343}.
\bibitem[{Bellomo and Dogbe(2011)}]{bellomo2011modeling}
\bibinfo{author}{Bellomo, N.}, \bibinfo{author}{Dogbe, C.}, \bibinfo{year}{2011}.
\newblock \bibinfo{title}{On the modeling of traffic and crowds: {A} survey of models, speculations, and perspectives}.
\newblock \bibinfo{journal}{SIAM Review} \bibinfo{volume}{53}, \bibinfo{pages}{409–463}.
\newblock \URLprefix \url{http://dx.doi.org/10.1137/090746677}, \DOIprefix\doi{10.1137/090746677}.
\bibitem[{Betancourt et~al.(2011)Betancourt, B\"{u}rger, Karlsen and Tory}]{betancourt2011nonlocal}
\bibinfo{author}{Betancourt, F.}, \bibinfo{author}{B\"{u}rger, R.}, \bibinfo{author}{Karlsen, K.H.}, \bibinfo{author}{Tory, E.M.}, \bibinfo{year}{2011}.
\newblock \bibinfo{title}{On nonlocal conservation laws modelling sedimentation}.
\newblock \bibinfo{journal}{Nonlinearity} \bibinfo{volume}{24}, \bibinfo{pages}{855–885}.
\newblock \URLprefix \url{http://dx.doi.org/10.1088/0951-7715/24/3/008}, \DOIprefix\doi{10.1088/0951-7715/24/3/008}.
\bibitem[{Blandin and Goatin(2016)}]{blandin2016well}
\bibinfo{author}{Blandin, S.}, \bibinfo{author}{Goatin, P.}, \bibinfo{year}{2016}.
\newblock \bibinfo{title}{Well-posedness of a conservation law with non-local flux arising in traffic flow modeling}.
\newblock \bibinfo{journal}{Numerische Mathematik} \bibinfo{volume}{132}, \bibinfo{pages}{217–241}.
\newblock \URLprefix \url{http://dx.doi.org/10.1007/s00211-015-0717-6}, \DOIprefix\doi{10.1007/s00211-015-0717-6}.
\bibitem[{Borsche et~al.(2015)Borsche, Colombo, Garavello and Meurer}]{borsche2015differential}
\bibinfo{author}{Borsche, R.}, \bibinfo{author}{Colombo, R.M.}, \bibinfo{author}{Garavello, M.}, \bibinfo{author}{Meurer, A.}, \bibinfo{year}{2015}.
\newblock \bibinfo{title}{Differential equations modeling crowd interactions}.
\newblock \bibinfo{journal}{Journal of Nonlinear Science} \bibinfo{volume}{25}, \bibinfo{pages}{827–859}.
\newblock \URLprefix \url{http://dx.doi.org/10.1007/s00332-015-9242-0}, \DOIprefix\doi{10.1007/s00332-015-9242-0}.
\bibitem[{Bressan(2000)}]{Bressan2000}
\bibinfo{author}{Bressan, A.}, \bibinfo{year}{2000}.
\newblock \bibinfo{title}{Hyperbolic Systems of Conservation Laws: The One-Dimensional Cauchy Problem}.
\newblock \bibinfo{publisher}{Oxford University PressOxford}.
\newblock \URLprefix \url{http://dx.doi.org/10.1093/oso/9780198507000.001.0001}, \DOIprefix\doi{10.1093/oso/9780198507000.001.0001}.
\bibitem[{Bressan and Shen(2021)}]{bressan2021entropy}
\bibinfo{author}{Bressan, A.}, \bibinfo{author}{Shen, W.}, \bibinfo{year}{2021}.
\newblock \bibinfo{title}{Entropy admissibility of the limit solution for a nonlocal model of traffic flow}.
\newblock \bibinfo{journal}{Communications in Mathematical Sciences} \bibinfo{volume}{19}, \bibinfo{pages}{1447–1450}.
\newblock \URLprefix \url{http://dx.doi.org/10.4310/CMS.2021.v19.n5.a12}, \DOIprefix\doi{10.4310/cms.2021.v19.n5.a12}.
\bibitem[{Brezis(2011)}]{brezis2011}
\bibinfo{author}{Brezis, H.}, \bibinfo{year}{2011}.
\newblock \bibinfo{title}{Functional Analysis, Sobolev Spaces and Partial Differential Equations}.
\newblock \bibinfo{publisher}{Springer New York}.
\newblock \URLprefix \url{http://dx.doi.org/10.1007/978-0-387-70914-7}, \DOIprefix\doi{10.1007/978-0-387-70914-7}.
\bibitem[{Bruno et~al.(2011)Bruno, Tosin, Tricerri and Venuti}]{bruno2011non}
\bibinfo{author}{Bruno, L.}, \bibinfo{author}{Tosin, A.}, \bibinfo{author}{Tricerri, P.}, \bibinfo{author}{Venuti, F.}, \bibinfo{year}{2011}.
\newblock \bibinfo{title}{Non-local first-order modelling of crowd dynamics: {A} multidimensional framework with applications}.
\newblock \bibinfo{journal}{Applied Mathematical Modelling} \bibinfo{volume}{35}, \bibinfo{pages}{426–445}.
\newblock \URLprefix \url{http://dx.doi.org/10.1016/j.apm.2010.07.007}, \DOIprefix\doi{10.1016/j.apm.2010.07.007}.
\bibitem[{B\"{u}rger and Karlsen(2003)}]{burger2003diffusively}
\bibinfo{author}{B\"{u}rger, R.}, \bibinfo{author}{Karlsen, K.H.}, \bibinfo{year}{2003}.
\newblock \bibinfo{title}{On a diffusively corrected kinematic-wave traffic flow model with changing road surface conditions}.
\newblock \bibinfo{journal}{Mathematical Models and Methods in Applied Sciences} \bibinfo{volume}{13}, \bibinfo{pages}{1767–1799}.
\newblock \URLprefix \url{http://dx.doi.org/10.1142/S0218202503003112}, \DOIprefix\doi{10.1142/s0218202503003112}.
\bibitem[{Chalons et~al.(2018)Chalons, Goatin and Villada}]{chalons2018high}
\bibinfo{author}{Chalons, C.}, \bibinfo{author}{Goatin, P.}, \bibinfo{author}{Villada, L.M.}, \bibinfo{year}{2018}.
\newblock \bibinfo{title}{High-order numerical schemes for one-dimensional nonlocal conservation laws}.
\newblock \bibinfo{journal}{SIAM Journal on Scientific Computing} \bibinfo{volume}{40}, \bibinfo{pages}{A288–A305}.
\newblock \URLprefix \url{http://dx.doi.org/10.1137/16M110825X}, \DOIprefix\doi{10.1137/16m110825x}.
\bibitem[{Chiarello et~al.(2023)Chiarello, Contreras and Villada}]{chiarello2023existence}
\bibinfo{author}{Chiarello, F.A.}, \bibinfo{author}{Contreras, H.D.}, \bibinfo{author}{Villada, L.M.}, \bibinfo{year}{2023}.
\newblock \bibinfo{title}{Existence of entropy weak solutions for {1D} non-local traffic models with space-discontinuous flux}.
\newblock \bibinfo{journal}{Journal of Engineering Mathematics} \bibinfo{volume}{141}.
\newblock \URLprefix \url{http://dx.doi.org/10.1007/s10665-023-10284-5}, \DOIprefix\doi{10.1007/s10665-023-10284-5}.
\bibitem[{Chiarello and Goatin(2018)}]{chiarello2018global}
\bibinfo{author}{Chiarello, F.A.}, \bibinfo{author}{Goatin, P.}, \bibinfo{year}{2018}.
\newblock \bibinfo{title}{Global entropy weak solutions for general non-local traffic flow models with anisotropic kernel}.
\newblock \bibinfo{journal}{ESAIM: Mathematical Modelling and Numerical Analysis} \bibinfo{volume}{52}, \bibinfo{pages}{163–180}.
\newblock \URLprefix \url{http://dx.doi.org/10.1051/m2an/2017066}, \DOIprefix\doi{10.1051/m2an/2017066}.
\bibitem[{Chiarello et~al.(2019)Chiarello, Goatin and Rossi}]{chiarello2019stability}
\bibinfo{author}{Chiarello, F.A.}, \bibinfo{author}{Goatin, P.}, \bibinfo{author}{Rossi, E.}, \bibinfo{year}{2019}.
\newblock \bibinfo{title}{Stability estimates for non-local scalar conservation laws}.
\newblock \bibinfo{journal}{Nonlinear Analysis: Real World Applications} \bibinfo{volume}{45}, \bibinfo{pages}{668–687}.
\newblock \URLprefix \url{http://dx.doi.org/10.1016/j.nonrwa.2018.07.027}, \DOIprefix\doi{10.1016/j.nonrwa.2018.07.027}.
\bibitem[{Chiarello and Keimer(2024)}]{chiarello2024singular}
\bibinfo{author}{Chiarello, F.A.}, \bibinfo{author}{Keimer, A.}, \bibinfo{year}{2024}.
\newblock \bibinfo{title}{On the singular limit problem in nonlocal balance laws: Applications to nonlocal lane-changing traffic flow models}.
\newblock \bibinfo{journal}{Journal of Mathematical Analysis and Applications} \bibinfo{volume}{537}, \bibinfo{pages}{128358}.
\newblock \URLprefix \url{http://dx.doi.org/10.1016/j.jmaa.2024.128358}, \DOIprefix\doi{10.1016/j.jmaa.2024.128358}.
\bibitem[{Ciaramaglia et~al.(2025a)Ciaramaglia, Goatin and Puppo}]{ciaramaglia2025multi}
\bibinfo{author}{Ciaramaglia, I.}, \bibinfo{author}{Goatin, P.}, \bibinfo{author}{Puppo, G.}, \bibinfo{year}{2025}a.
\newblock \bibinfo{title}{A multi-class non-local macroscopic model with time delay for mixed autonomous / human-driven traffic}.
\newblock \URLprefix \url{https://arxiv.org/abs/2501.09440}, \DOIprefix\doi{10.48550/ARXIV.2501.09440}.
\bibitem[{Ciaramaglia et~al.(2025b)Ciaramaglia, Goatin and Puppo}]{ciaramaglia2024nonlocaltrafficflowmodels}
\bibinfo{author}{Ciaramaglia, I.}, \bibinfo{author}{Goatin, P.}, \bibinfo{author}{Puppo, G.}, \bibinfo{year}{2025}b.
\newblock \bibinfo{title}{Non-local traffic flow models with time delay: Well-posedness and numerical approximation}.
\newblock \bibinfo{journal}{Discrete and Continuous Dynamical Systems - B} \bibinfo{volume}{30}, \bibinfo{pages}{874–907}.
\newblock \URLprefix \url{http://dx.doi.org/10.3934/dcdsb.2024113}, \DOIprefix\doi{10.3934/dcdsb.2024113}.
\bibitem[{Cockburn et~al.(2012)Cockburn, Karniadakis and Shu}]{cockburn2012discontinuous}
\bibinfo{author}{Cockburn, B.}, \bibinfo{author}{Karniadakis, G.E.}, \bibinfo{author}{Shu, C.W.}, \bibinfo{year}{2012}.
\newblock \bibinfo{title}{Discontinuous Galerkin Methods: Theory, Computation and Applications}. volume~\bibinfo{volume}{11}.
\newblock \bibinfo{publisher}{Springer Berlin Heidelberg}.
\newblock \URLprefix \url{http://dx.doi.org/10.1007/978-3-642-59721-3}, \DOIprefix\doi{10.1007/978-3-642-59721-3}.
\bibitem[{Cockburn and Shu(2001)}]{cockburn2001runge}
\bibinfo{author}{Cockburn, B.}, \bibinfo{author}{Shu, C.W.}, \bibinfo{year}{2001}.
\newblock \bibinfo{title}{Runge–{K}utta discontinuous {G}alerkin methods for convection-dominated problems}.
\newblock \bibinfo{journal}{Journal of Scientific Computing} \bibinfo{volume}{16}, \bibinfo{pages}{173–261}.
\newblock \URLprefix \url{http://dx.doi.org/10.1023/A:1012873910884}, \DOIprefix\doi{10.1023/a:1012873910884}.
\bibitem[{Coclite et~al.(2022a)Coclite, Coron, De~Nitti, Keimer and Pflug}]{coclite2022general}
\bibinfo{author}{Coclite, G.M.}, \bibinfo{author}{Coron, J.M.}, \bibinfo{author}{De~Nitti, N.}, \bibinfo{author}{Keimer, A.}, \bibinfo{author}{Pflug, L.}, \bibinfo{year}{2022}a.
\newblock \bibinfo{title}{A general result on the approximation of local conservation laws by nonlocal conservation laws: The singular limit problem for exponential kernels}.
\newblock \bibinfo{journal}{Annales de l’Institut Henri Poincaré C, Analyse non linéaire} \bibinfo{volume}{40}, \bibinfo{pages}{1205–1223}.
\newblock \URLprefix \url{http://dx.doi.org/10.4171/aihpc/58}, \DOIprefix\doi{10.4171/aihpc/58}.
\bibitem[{Coclite et~al.(2025)Coclite, De~Nitti and Huang}]{nitti2025singular}
\bibinfo{author}{Coclite, G.M.}, \bibinfo{author}{De~Nitti, N.}, \bibinfo{author}{Huang, K.}, \bibinfo{year}{2025}.
\newblock \bibinfo{title}{Singular limit for a class of nonlocal conservation laws via compensated compactness}.
\newblock \URLprefix \url{https://arxiv.org/abs/2511.15631}, \DOIprefix\doi{10.48550/ARXIV.2511.15631}.
\bibitem[{Coclite et~al.(2022b)Coclite, De~Nitti, Keimer and Pflug}]{coclite2022existence}
\bibinfo{author}{Coclite, G.M.}, \bibinfo{author}{De~Nitti, N.}, \bibinfo{author}{Keimer, A.}, \bibinfo{author}{Pflug, L.}, \bibinfo{year}{2022}b.
\newblock \bibinfo{title}{On existence and uniqueness of weak solutions to nonlocal conservation laws with {BV} kernels}.
\newblock \bibinfo{journal}{Zeitschrift f\"{u}r angewandte Mathematik und Physik} \bibinfo{volume}{73}.
\newblock \URLprefix \url{http://dx.doi.org/10.1007/s00033-022-01766-0}, \DOIprefix\doi{10.1007/s00033-022-01766-0}.
\bibitem[{Colombo et~al.(2023)Colombo, Crippa, Marconi and Spinolo}]{colombo2023nonlocal}
\bibinfo{author}{Colombo, M.}, \bibinfo{author}{Crippa, G.}, \bibinfo{author}{Marconi, E.}, \bibinfo{author}{Spinolo, L.V.}, \bibinfo{year}{2023}.
\newblock \bibinfo{title}{Nonlocal traffic models with general kernels: Singular limit, entropy admissibility, and convergence rate}.
\newblock \bibinfo{journal}{Archive for Rational Mechanics and Analysis} \bibinfo{volume}{247}.
\newblock \URLprefix \url{http://dx.doi.org/10.1007/s00205-023-01845-0}, \DOIprefix\doi{10.1007/s00205-023-01845-0}.
\bibitem[{Colombo et~al.(2024)Colombo, Crippa and Spinolo}]{colombo2026multi}
\bibinfo{author}{Colombo, M.}, \bibinfo{author}{Crippa, G.}, \bibinfo{author}{Spinolo, L.V.}, \bibinfo{year}{2024}.
\newblock \bibinfo{title}{On multidimensional nonlocal conservation laws with {BV} kernels}.
\newblock \URLprefix \url{https://arxiv.org/abs/2408.02423}, \DOIprefix\doi{10.48550/ARXIV.2408.02423}.
\bibitem[{Colombo and Garavello(2025)}]{colombo2025non}
\bibinfo{author}{Colombo, R.M.}, \bibinfo{author}{Garavello, M.}, \bibinfo{year}{2025}.
\newblock \bibinfo{title}{Non-local hyperbolic dynamics of clusters}.
\newblock \bibinfo{journal}{Mathematical Modelling of Natural Phenomena} \bibinfo{volume}{20}, \bibinfo{pages}{10}.
\newblock \URLprefix \url{http://dx.doi.org/10.1051/mmnp/2025011}, \DOIprefix\doi{10.1051/mmnp/2025011}.
\bibitem[{Colombo et~al.(2012)Colombo, Garavello and L{\'e}cureux-Mercier}]{colombo2012class}
\bibinfo{author}{Colombo, R.M.}, \bibinfo{author}{Garavello, M.}, \bibinfo{author}{L{\'e}cureux-Mercier, M.}, \bibinfo{year}{2012}.
\newblock \bibinfo{title}{A class of nonlocal models for pedestrian traffic}.
\newblock \bibinfo{journal}{Mathematical Models and Methods in Applied Sciences} \bibinfo{volume}{22}.
\newblock \URLprefix \url{http://dx.doi.org/10.1142/S0218202511500230}, \DOIprefix\doi{10.1142/s0218202511500230}.
\bibitem[{Colombo et~al.(2010)Colombo, Herty and Mercier}]{colombo2011control}
\bibinfo{author}{Colombo, R.M.}, \bibinfo{author}{Herty, M.}, \bibinfo{author}{Mercier, M.}, \bibinfo{year}{2010}.
\newblock \bibinfo{title}{Control of the continuity equation with a non local flow}.
\newblock \bibinfo{journal}{ESAIM: Control, Optimisation and Calculus of Variations} \bibinfo{volume}{17}, \bibinfo{pages}{353–379}.
\newblock \URLprefix \url{http://dx.doi.org/10.1051/cocv/2010007}, \DOIprefix\doi{10.1051/cocv/2010007}.
\bibitem[{Contreras et~al.(2025)Contreras, Goatin and Villada}]{contreras2025wellposedness}
\bibinfo{author}{Contreras, H.D.}, \bibinfo{author}{Goatin, P.}, \bibinfo{author}{Villada, L.M.}, \bibinfo{year}{2025}.
\newblock \bibinfo{title}{{Well-posedness of a nonlocal upstream-downstream traffic model}}.
\newblock \URLprefix \url{https://hal.science/hal-05302417}. \bibinfo{note}{working paper or preprint}.
\bibitem[{Coron et~al.(2010)Coron, Kawski and Wang}]{Coron2010analysis}
\bibinfo{author}{Coron, J.M.}, \bibinfo{author}{Kawski, M.}, \bibinfo{author}{Wang, Z.}, \bibinfo{year}{2010}.
\newblock \bibinfo{title}{Analysis of a conservation law modeling a highly re-entrant manufacturing system}.
\newblock \bibinfo{journal}{Discrete and Continuous Dynamical Systems - B} \bibinfo{volume}{13}, \bibinfo{pages}{1337–1359}.
\newblock \URLprefix \url{http://dx.doi.org/10.3934/dcdsb.2010.14.1337}, \DOIprefix\doi{10.3934/dcdsb.2010.14.1337}.
\bibitem[{de~Courcel(2025)}]{courcel2025well-posedness}
\bibinfo{author}{de~Courcel, A.C.}, \bibinfo{year}{2025}.
\newblock \bibinfo{title}{Well-posedness of multidimensional nonlocal conservation laws with nonlinear mobility and bounded force}.
\newblock \URLprefix \url{https://arxiv.org/abs/2512.13535}, \DOIprefix\doi{10.48550/ARXIV.2512.13535}.
\bibitem[{Crippa and Lécureux-Mercier(2012)}]{crippa2012existence}
\bibinfo{author}{Crippa, G.}, \bibinfo{author}{Lécureux-Mercier, M.}, \bibinfo{year}{2012}.
\newblock \bibinfo{title}{Existence and uniqueness of measure solutions for a system of continuity equations with non-local flow}.
\newblock \bibinfo{journal}{Nonlinear Differential Equations and Applications NoDEA} \bibinfo{volume}{20}, \bibinfo{pages}{523–537}.
\newblock \URLprefix \url{http://dx.doi.org/10.1007/s00030-012-0164-3}, \DOIprefix\doi{10.1007/s00030-012-0164-3}.
\bibitem[{{Di Francesco} et~al.(2019){Di Francesco}, Fagioli and Radici}]{DiFrancesco2019deterministic}
\bibinfo{author}{{Di Francesco}, M.}, \bibinfo{author}{Fagioli, S.}, \bibinfo{author}{Radici, E.}, \bibinfo{year}{2019}.
\newblock \bibinfo{title}{Deterministic particle approximation for nonlocal transport equations with nonlinear mobility}.
\newblock \bibinfo{journal}{Journal of Differential Equations} \bibinfo{volume}{266}, \bibinfo{pages}{2830--2868}.
\newblock \URLprefix \url{https://www.sciencedirect.com/science/article/pii/S0022039618305102}, \DOIprefix\doi{https://doi.org/10.1016/j.jde.2018.08.047}.
\bibitem[{Friedrich et~al.(2018)Friedrich, Kolb and Göttlich}]{friedrich2018godunov}
\bibinfo{author}{Friedrich, J.}, \bibinfo{author}{Kolb, O.}, \bibinfo{author}{Göttlich, S.}, \bibinfo{year}{2018}.
\newblock \bibinfo{title}{A {G}odunov type scheme for a class of {LWR} traffic flow models with non-local flux}.
\newblock \bibinfo{journal}{Networks and Heterogeneous Media} \bibinfo{volume}{13}, \bibinfo{pages}{531--547}.
\newblock \URLprefix \url{https://www.aimsciences.org/article/id/29185119-abc7-4540-a74f-a5ed9e4c34ad}, \DOIprefix\doi{10.3934/nhm.2018024}.
\bibitem[{Goatin et~al.(2025)Goatin, Göttlich and Ziegler}]{goatin2025nonlocalmodelheterogeneousmaterial}
\bibinfo{author}{Goatin, P.}, \bibinfo{author}{Göttlich, S.}, \bibinfo{author}{Ziegler, F.}, \bibinfo{year}{2025}.
\newblock \bibinfo{title}{A non-local model for heterogeneous material flow on conveyor belts}.
\newblock \URLprefix \url{https://arxiv.org/abs/2510.17500}, \href{http://arxiv.org/abs/2510.17500}{{\tt arXiv:2510.17500}}.
\bibitem[{Goatin and Scialanga(2016)}]{goatin2016well}
\bibinfo{author}{Goatin, P.}, \bibinfo{author}{Scialanga, S.}, \bibinfo{year}{2016}.
\newblock \bibinfo{title}{Well-posedness and finite volume approximations of the {LWR} traffic flow model with non-local velocity}.
\newblock \bibinfo{journal}{Networks and Heterogeneous Media} \bibinfo{volume}{11}, \bibinfo{pages}{107–121}.
\newblock \URLprefix \url{http://dx.doi.org/10.3934/nhm.2016.11.107}, \DOIprefix\doi{10.3934/nhm.2016.11.107}.
\bibitem[{Hairer et~al.(1988)Hairer, Lubich and Schlichte}]{HAIRER198887}
\bibinfo{author}{Hairer, E.}, \bibinfo{author}{Lubich, C.}, \bibinfo{author}{Schlichte, M.}, \bibinfo{year}{1988}.
\newblock \bibinfo{title}{Fast numerical solution of weakly singular volterra integral equations}.
\newblock \bibinfo{journal}{Journal of Computational and Applied Mathematics} \bibinfo{volume}{23}, \bibinfo{pages}{87--98}.
\newblock \URLprefix \url{https://www.sciencedirect.com/science/article/pii/0377042788903329}, \DOIprefix\doi{https://doi.org/10.1016/0377-0427(88)90332-9}.
\bibitem[{Harten et~al.(1997)Harten, Engquist, Osher and Chakravarthy}]{harten1997uniformly}
\bibinfo{author}{Harten, A.}, \bibinfo{author}{Engquist, B.}, \bibinfo{author}{Osher, S.}, \bibinfo{author}{Chakravarthy, S.R.}, \bibinfo{year}{1997}.
\newblock \bibinfo{title}{Uniformly high order accurate essentially non-oscillatory schemes, {III}}.
\newblock \bibinfo{journal}{Journal of Computational Physics} \bibinfo{volume}{131}, \bibinfo{pages}{3–47}.
\newblock \URLprefix \url{http://dx.doi.org/10.1006/jcph.1996.5632}, \DOIprefix\doi{10.1006/jcph.1996.5632}.
\bibitem[{Holden and Risebro(2015)}]{holden2015front}
\bibinfo{author}{Holden, H.}, \bibinfo{author}{Risebro, N.H.}, \bibinfo{year}{2015}.
\newblock \bibinfo{title}{Front Tracking for Hyperbolic Conservation Laws}.
\newblock \bibinfo{publisher}{Springer Berlin Heidelberg}.
\newblock \URLprefix \url{http://dx.doi.org/10.1007/978-3-662-47507-2}, \DOIprefix\doi{10.1007/978-3-662-47507-2}.
\bibitem[{Holden and Risebro(2019)}]{holden2019models}
\bibinfo{author}{Holden, H.}, \bibinfo{author}{Risebro, N.H.}, \bibinfo{year}{2019}.
\newblock \bibinfo{title}{Models for dense multilane vehicular traffic}.
\newblock \bibinfo{journal}{SIAM Journal on Mathematical Analysis} \bibinfo{volume}{51}, \bibinfo{pages}{3694–3713}.
\newblock \URLprefix \url{http://dx.doi.org/10.1137/19M124318X}, \DOIprefix\doi{10.1137/19m124318x}.
\bibitem[{Hu et~al.(2024)Hu, Lee and Zheng}]{hu2021shock}
\bibinfo{author}{Hu, Y.}, \bibinfo{author}{Lee, Y.}, \bibinfo{author}{Zheng, S.}, \bibinfo{year}{2024}.
\newblock \bibinfo{title}{Shock formation in traffic flow models with nonlocal look ahead and behind flux}. \bibinfo{publisher}{Springer Nature Switzerland}.
\newblock p. \bibinfo{pages}{301–317}.
\newblock \URLprefix \url{http://dx.doi.org/10.1007/978-3-031-69710-4_13}, \DOIprefix\doi{10.1007/978-3-031-69710-4_13}.
\bibitem[{Keimer and Pflug(2017)}]{keimer2017existence}
\bibinfo{author}{Keimer, A.}, \bibinfo{author}{Pflug, L.}, \bibinfo{year}{2017}.
\newblock \bibinfo{title}{Existence, uniqueness and regularity results on nonlocal balance laws}.
\newblock \bibinfo{journal}{Journal of Differential Equations} \bibinfo{volume}{263}, \bibinfo{pages}{4023–4069}.
\newblock \URLprefix \url{http://dx.doi.org/10.1016/j.jde.2017.05.015}, \DOIprefix\doi{10.1016/j.jde.2017.05.015}.
\bibitem[{Keimer and Pflug(2019a)}]{Keimer2019}
\bibinfo{author}{Keimer, A.}, \bibinfo{author}{Pflug, L.}, \bibinfo{year}{2019}a.
\newblock \bibinfo{title}{Nonlocal conservation laws with time delay}.
\newblock \bibinfo{journal}{Nonlinear Differential Equations and Applications NoDEA} \bibinfo{volume}{26}.
\newblock \URLprefix \url{http://dx.doi.org/10.1007/s00030-019-0597-z}, \DOIprefix\doi{10.1007/s00030-019-0597-z}.
\bibitem[{Keimer and Pflug(2019b)}]{keimer2019approximation}
\bibinfo{author}{Keimer, A.}, \bibinfo{author}{Pflug, L.}, \bibinfo{year}{2019}b.
\newblock \bibinfo{title}{On approximation of local conservation laws by nonlocal conservation laws}.
\newblock \bibinfo{journal}{Journal of Mathematical Analysis and Applications} \bibinfo{volume}{475}, \bibinfo{pages}{1927–1955}.
\newblock \URLprefix \url{http://dx.doi.org/10.1016/j.jmaa.2019.03.063}, \DOIprefix\doi{10.1016/j.jmaa.2019.03.063}.
\bibitem[{Keimer et~al.(2018)Keimer, Pflug and Spinola}]{keimer2018existence}
\bibinfo{author}{Keimer, A.}, \bibinfo{author}{Pflug, L.}, \bibinfo{author}{Spinola, M.}, \bibinfo{year}{2018}.
\newblock \bibinfo{title}{Existence, uniqueness and regularity of multi-dimensional nonlocal balance laws with damping}.
\newblock \bibinfo{journal}{Journal of Mathematical Analysis and Applications} \bibinfo{volume}{466}, \bibinfo{pages}{18–55}.
\newblock \URLprefix \url{http://dx.doi.org/10.1016/j.jmaa.2018.05.013}, \DOIprefix\doi{10.1016/j.jmaa.2018.05.013}.
\bibitem[{Kloeden and Lorenz(2016)}]{kloeden2016nonlocal}
\bibinfo{author}{Kloeden, P.E.}, \bibinfo{author}{Lorenz, T.}, \bibinfo{year}{2016}.
\newblock \bibinfo{title}{Nonlocal multi-scale traffic flow models: analysis beyond vector spaces}.
\newblock \bibinfo{journal}{Bulletin of Mathematical Sciences} \bibinfo{volume}{6}, \bibinfo{pages}{453–514}.
\newblock \URLprefix \url{http://dx.doi.org/10.1007/s13373-016-0090-5}, \DOIprefix\doi{10.1007/s13373-016-0090-5}.
\bibitem[{Kružkov(1970)}]{kruvzkov1970first}
\bibinfo{author}{Kružkov, S.N.}, \bibinfo{year}{1970}.
\newblock \bibinfo{title}{First order quasilinear equations in several independent variables}.
\newblock \bibinfo{journal}{Mathematics of the USSR-Sbornik} \bibinfo{volume}{10}, \bibinfo{pages}{217–243}.
\newblock \URLprefix \url{http://dx.doi.org/10.1070/SM1970v010n02ABEH002156}, \DOIprefix\doi{10.1070/sm1970v010n02abeh002156}.
\bibitem[{L{\'e}cureux-Mercier(2011)}]{Mercier2011improved}
\bibinfo{author}{L{\'e}cureux-Mercier, M.}, \bibinfo{year}{2011}.
\newblock \bibinfo{title}{Improved stability estimates for general scalar conservation laws}.
\newblock \bibinfo{journal}{Journal of Hyperbolic Differential Equations} \bibinfo{volume}{08}, \bibinfo{pages}{727–757}.
\newblock \URLprefix \url{http://dx.doi.org/10.1142/S021989161100255X}, \DOIprefix\doi{10.1142/s021989161100255x}.
\bibitem[{Leoni(2024)}]{leoni2024first}
\bibinfo{author}{Leoni, G.}, \bibinfo{year}{2024}.
\newblock \bibinfo{title}{A first course in Sobolev spaces}. volume \bibinfo{volume}{181}.
\newblock \bibinfo{publisher}{American Mathematical Society}.
\bibitem[{LeVeque(2002)}]{leveque2002finite}
\bibinfo{author}{LeVeque, R.J.}, \bibinfo{year}{2002}.
\newblock \bibinfo{title}{Finite Volume Methods for Hyperbolic Problems}.
\newblock \bibinfo{publisher}{Cambridge University Press}.
\newblock \URLprefix \url{http://dx.doi.org/10.1017/CBO9780511791253}, \DOIprefix\doi{10.1017/cbo9780511791253}.
\bibitem[{Lighthill and Whitham(1955a)}]{lighthill1955kinematic}
\bibinfo{author}{Lighthill, M.J.}, \bibinfo{author}{Whitham, G.B.}, \bibinfo{year}{1955}a.
\newblock \bibinfo{title}{On kinematic waves {I}. {F}lood movement in long rivers}.
\newblock \bibinfo{journal}{Proceedings of the Royal Society of London. Series A. Mathematical and Physical Sciences} \bibinfo{volume}{229}, \bibinfo{pages}{281–316}.
\newblock \URLprefix \url{http://dx.doi.org/10.1098/rspa.1955.0088}, \DOIprefix\doi{10.1098/rspa.1955.0088}.
\bibitem[{Lighthill and Whitham(1955b)}]{lighthill1955kinematic2}
\bibinfo{author}{Lighthill, M.J.}, \bibinfo{author}{Whitham, G.B.}, \bibinfo{year}{1955}b.
\newblock \bibinfo{title}{On kinematic waves {II}. {A} theory of traffic flow on long crowded roads}.
\newblock \bibinfo{journal}{Proceedings of the Royal Society of London. Series A. Mathematical and Physical Sciences} \bibinfo{volume}{229}, \bibinfo{pages}{317–345}.
\newblock \URLprefix \url{http://dx.doi.org/10.1098/rspa.1955.0089}, \DOIprefix\doi{10.1098/rspa.1955.0089}.
\bibitem[{Nelson(2002)}]{nelson2002traveling}
\bibinfo{author}{Nelson, P.}, \bibinfo{year}{2002}.
\newblock \bibinfo{title}{Traveling-wave solutions of the diffusively corrected kinematic-wave model}.
\newblock \bibinfo{journal}{Mathematical and Computer Modelling} \bibinfo{volume}{35}, \bibinfo{pages}{561–579}.
\newblock \URLprefix \url{http://dx.doi.org/10.1016/S0895-7177(02)80021-8}, \DOIprefix\doi{10.1016/s0895-7177(02)80021-8}.
\bibitem[{Piccoli and Tosin(2009)}]{piccoli2009vehicular}
\bibinfo{author}{Piccoli, B.}, \bibinfo{author}{Tosin, A.}, \bibinfo{year}{2009}.
\newblock \bibinfo{title}{Vehicular traffic: {A} review of continuum mathematical models}. \bibinfo{publisher}{Springer New York}.
\newblock p. \bibinfo{pages}{9727–9749}.
\newblock \URLprefix \url{http://dx.doi.org/10.1007/978-0-387-30440-3_576}, \DOIprefix\doi{10.1007/978-0-387-30440-3_576}.
\bibitem[{Radici and Stra(2023)}]{radici2023entropy}
\bibinfo{author}{Radici, E.}, \bibinfo{author}{Stra, F.}, \bibinfo{year}{2023}.
\newblock \bibinfo{title}{Entropy solutions of mildly singular nonlocal scalar conservation laws with congestion via deterministic particle methods}.
\newblock \bibinfo{journal}{SIAM Journal on Mathematical Analysis} \bibinfo{volume}{55}, \bibinfo{pages}{2001--2041}.
\newblock \URLprefix \url{https://doi.org/10.1137/21M1462994}, \DOIprefix\doi{10.1137/21M1462994}, \href{http://arxiv.org/abs/https://doi.org/10.1137/21M1462994}{{\tt arXiv:https://doi.org/10.1137/21M1462994}}.
\bibitem[{Richards(1956)}]{richards1956shock}
\bibinfo{author}{Richards, P.I.}, \bibinfo{year}{1956}.
\newblock \bibinfo{title}{Shock waves on the highway}.
\newblock \bibinfo{journal}{Operations research} \bibinfo{volume}{4}, \bibinfo{pages}{42--51}.
\bibitem[{Rosini(2013)}]{rosini2013macroscopic}
\bibinfo{author}{Rosini, M.D.}, \bibinfo{year}{2013}.
\newblock \bibinfo{title}{Macroscopic models for vehicular flows and crowd dynamics: {T}heory and applications: {C}lassical and non–classical advanced mathematics for real life applications}.
\newblock \bibinfo{publisher}{Springer International Publishing}.
\newblock \URLprefix \url{http://dx.doi.org/10.1007/978-3-319-00155-5}, \DOIprefix\doi{10.1007/978-3-319-00155-5}.
\bibitem[{Rossi et~al.(2020)Rossi, Weißen, Goatin and G\"{o}ttlich}]{Rossi2020}
\bibinfo{author}{Rossi, E.}, \bibinfo{author}{Weißen, J.}, \bibinfo{author}{Goatin, P.}, \bibinfo{author}{G\"{o}ttlich, S.}, \bibinfo{year}{2020}.
\newblock \bibinfo{title}{Well-posedness of a non-local model for material flow on conveyor belts}.
\newblock \bibinfo{journal}{ESAIM: Mathematical Modelling and Numerical Analysis} \bibinfo{volume}{54}, \bibinfo{pages}{679–704}.
\newblock \URLprefix \url{http://dx.doi.org/10.1051/m2an/2019062}, \DOIprefix\doi{10.1051/m2an/2019062}.
\bibitem[{Shu and Osher(1988)}]{shu1988efficient}
\bibinfo{author}{Shu, C.W.}, \bibinfo{author}{Osher, S.}, \bibinfo{year}{1988}.
\newblock \bibinfo{title}{Efficient implementation of essentially non-oscillatory shock-capturing schemes}.
\newblock \bibinfo{journal}{Journal of Computational Physics} \bibinfo{volume}{77}, \bibinfo{pages}{439–471}.
\newblock \URLprefix \url{http://dx.doi.org/10.1016/0021-9991(88)90177-5}, \DOIprefix\doi{10.1016/0021-9991(88)90177-5}.
\bibitem[{Shu and Osher(1989)}]{shu1989efficient}
\bibinfo{author}{Shu, C.W.}, \bibinfo{author}{Osher, S.}, \bibinfo{year}{1989}.
\newblock \bibinfo{title}{Efficient implementation of essentially non-oscillatory shock-capturing schemes, {II}}.
\newblock \bibinfo{journal}{Journal of Computational Physics} \bibinfo{volume}{83}, \bibinfo{pages}{32–78}.
\newblock \URLprefix \url{http://dx.doi.org/10.1016/0021-9991(89)90222-2}, \DOIprefix\doi{10.1016/0021-9991(89)90222-2}.
\bibitem[{Simo and Hughes(1998)}]{SimoHughes1998}
\bibinfo{author}{Simo, J.C.}, \bibinfo{author}{Hughes, T.J.R.}, \bibinfo{year}{1998}.
\newblock \bibinfo{title}{Computational Inelasticity}.
\newblock Interdisciplinary Applied Mathematics, \bibinfo{publisher}{Springer}, \bibinfo{address}{New York}.
\newblock \DOIprefix\doi{10.1007/b98904}.
\bibitem[{Simon(1986)}]{simon1986compact}
\bibinfo{author}{Simon, J.}, \bibinfo{year}{1986}.
\newblock \bibinfo{title}{Compact sets in the space ${L}^{p}(0, {T}; {B})$}.
\newblock \bibinfo{journal}{Annali di Matematica Pura ed Applicata} \bibinfo{volume}{146}, \bibinfo{pages}{65–96}.
\newblock \URLprefix \url{http://dx.doi.org/10.1007/BF01762360}, \DOIprefix\doi{10.1007/bf01762360}.
\bibitem[{Zhang(2000)}]{zhang2000structural}
\bibinfo{author}{Zhang, H.}, \bibinfo{year}{2000}.
\newblock \bibinfo{title}{Structural properties of solutions arising from a nonequilibrium traffic flow theory}.
\newblock \bibinfo{journal}{Transportation Research Part B: Methodological} \bibinfo{volume}{34}, \bibinfo{pages}{583–603}.
\newblock \URLprefix \url{http://dx.doi.org/10.1016/S0191-2615(99)00041-7}, \DOIprefix\doi{10.1016/s0191-2615(99)00041-7}.
\bibitem[{Zhang(2002)}]{zhang2002non}
\bibinfo{author}{Zhang, H.}, \bibinfo{year}{2002}.
\newblock \bibinfo{title}{A non-equilibrium traffic model devoid of gas-like behavior}.
\newblock \bibinfo{journal}{Transportation Research Part B: Methodological} \bibinfo{volume}{36}, \bibinfo{pages}{275–290}.
\newblock \URLprefix \url{http://dx.doi.org/10.1016/S0191-2615(00)00050-3}, \DOIprefix\doi{10.1016/s0191-2615(00)00050-3}.

\end{thebibliography}

\end{document}